\def\XXint#1#2#3{{\setbox0=\hbox{$#1{#2#3}{\int}$}
\vcenter{\hbox{$#2#3$}}\kern-.5\wd0}}
\newcommand{\const}{\mbox{\em const}}
\newcommand{\grad}{\nabla}
\renewcommand{\P}{{\mathcal P}}
\newcommand{\laplace}{\Delta}
\renewcommand{\div}{\grad\cdot}
\newcommand{\N}{\mathbf{N}}
\renewcommand{\S}{\mathbf{S}}
\newcommand{\B}{{\mathcal B}}
\newcommand{\A}{{\mathcal A}}
\renewcommand{\and}{\quad\mbox{and}\quad}
\newcommand{\R}{\mathbf{R}}
\newcommand{\I}{\mathbf{1}}
\newcommand{\Z}{\mathbf{Z}}
\newcommand{\g}{ \mathsf{g}}
\newcommand{\M}{\mathcal M}
\newcommand{\cS}{\mathcal S}
\newcommand{\epsgap}{\eps_{\mathrm{gap}}}
\DeclareMathOperator{\tr}{tr}
\DeclareMathOperator{\Lip}{Lip}
\DeclareMathOperator{\Span}{span}
\DeclareMathOperator{\spt}{spt}
\def\loc{{\mathrm{loc}}}
\renewcommand{\L}{\ensuremath{\mathcal{L}}}
\newcommand{\Ha}{\ensuremath{\mathcal{H}}}
\newcommand{\mres}{\mathbin{\vrule height 1.6ex depth 0pt width
0.13ex\vrule height 0.13ex depth 0pt width 1.3ex}}
\newcommand{\eps}{\varepsilon}
\newcommand{\la}{\langle}
\newcommand{\ra}{\rangle}
\numberwithin{equation}{section}
\newtheorem{prop}{Proposition}[section]
\newtheorem{theorem}{Theorem}[section]
\newtheorem{lemma}{Lemma}[section]
\newtheorem{remark}{Remark}[section]
\newtheorem{definition}{Definition}[section]
\newcommand{\tacka}{\, \cdot\,}
\begin{document}

\title{Invariant manifolds for the porous medium equation}

\author{Christian Seis\footnote{Institut f\"ur Angewandte Mathematik, Universit\"at Bonn, Endenicher Allee 60, 53115 Bonn, Germany}}

\maketitle

\begin{abstract}
In this paper, we investigate the speed of convergence and higher-order asymptotics of solutions to the porous medium equation posed in $\R^N$. Applying a nonlinear change of variables, we rewrite the equation as a diffusion on a  fixed domain with quadratic nonlinearity. The degeneracy is cured by viewing the dynamics on a hypocycloidic manifold. It is in this framework that we can prove a differentiable dependency of solutions on the initial data, and thus, dynamical systems methods are applicable. Our main result is the construction of  invariant manifolds in the phase space of solutions which are tangent at the origin to the eigenspaces of the linearized equation. We show how these invariant manifolds can be used to extract information on the higher-order long-time asymptotic expansions of solutions  to the porous medium equation.
\end{abstract}

\section{Introduction}

\subsection{Motivation}

The long-time asymptotic behavior of solutions is of fundamental importance in the study of nonlinear diffusion processes. Besides giving insight into characteristic qualitative properties, the long-time limit often sets a benchmark for the solutions' expected regularity.
The prototype for nonlinear diffusions is the porous medium equation
\begin{equation}\label{1}
\partial_t u - \laplace u^m = 0
\end{equation}
posed in $\R^N$. Here $m$ is a constant that we assume to be positive for a moment. As the name suggests, this equation in well-known for modelling the flow of gas through a porous medium, but also other applications are of relevance, e.g., as models for groundwater infiltration, population dynamics or heat radiation in plasmas, cf.\ \cite[Chapter2]{Vazquez07}. Solutions of \eqref{1} are assumed to be nonnegative.

The qualitative behavior of solutions in general and the long-time behavior in particular differs for different values of $m$. For instance, for $m>1$, 
the diffusion flux $m u^{m-1}$ vanishes where $u=0$, and thus, if the initial configuration is compactly supported, the solution retains a compact support for all times. Hence, in such a situation, the porous medium equation features a free boundary value problem and the mass spreads slowly over the full space. We speak of ``slow diffusion''. The situation is completely different if $m<1$. Sticking to the mass-preserving range $\frac{N-2}N<m<1$, solutions to \eqref{1} become positive instantaneously and remain positive. In this range, \eqref{1} is called  the  ``fast diffusion equation''. 

The characteristic long-time behavior of solutions to the porous medium equation \eqref{1} was first identified by Kamin in one dimension \cite{Kamin73,Kamin75} and by Friedman, Kamin, and V\'azquez in several dimensions \cite{FriedmanKamin80,KaminVazquez88}: Solutions approach the self-similar Barenblatt profiles \cite{ZeldovicKompaneec50,Barenblatt52,Pattle59} as they dwindle away to nothing.
The optimal rate of convergence crucially depends on the choice of the initial datum. There are no uniform rates available without imposing a decay estimate on the initial datum at infinity, see \cite[Theorem 1.3]{Vazquez03}. In the framework of entropy solutions, sharp rates were first found by Carrillo and  Toscani \cite{CarrilloToscani00}, Otto \cite{Otto01} and del Pino and  Dolbeault \cite{DelPinoDolbeault02}.

In the fast diffusion range,  optimal rates of convergence and higher order asymptotic corrections have been the subject of recent interest. In \cite{DenzlerKochMcCann15}, Denzler, Koch and McCann compute a number of higher-order modes of the long-time asymptotic expansions building up on a spectral calculation of Denzler and  McCann \cite{DenzlerMcCann05} for the linearized dynamics. An overview on preliminary results is also provided in \cite{DenzlerKochMcCann15}. In the slow diffusion range, $m>1$, the only known results on higher order asymptotics are a full asymptotic expansion of Angenent \cite{Angenent88} in one space dimension, using the spectral calculation of Zel'dovich  and Barenblatt \cite{ZeldovicBarenblatt58}, and an improved rate of convergence after centering the data by V\'azquez \cite{Vazquez83}, which is also proved in the one-dimensional setting only. Regarding the multidimensional dynamics, a preliminary study was conducted by the author, who diagonalized the operator associated to linearized porous medium equation \cite{Seis14}.

In the present paper, we successfully relate the information from \cite{Seis14} to the nonlinear slow diffusion dynamics.
This is achieved by exploiting ideas that were originally developed for finite-dimensional dynamical systems. We prove that there exist finite-dimensional invariant manifolds in the phase space of the solutions, which are tangent at the origin to the eigenspaces of the linear operator, and every solution approaches these manifolds at a precise rate. In other words, we prove to what extent the porous medium dynamics can be characterized by the dynamics of a system of ordinary differential equations.

Invariant manifold theorems can be used to study the higher-order asymptotics for the porous medium equation: Once an order of decay is specified, our result ensures the existence of a finite-dimensional invariant manifold such that all solutions that are sufficiently close to the self-similar Barenblatt solutions approach that manifold with at least this rate. Thanks to the discreteness of the spectrum,  every eigenmode is accessible, and as a consequence, one can in principle compute the long-time asymptotics to any order. We will give four explicit applications in Theorems \ref{T13}--\ref{T12b} below.

The difficulty in making the dynamical systems approach work  lies in the incompatibility between the space in which the spectral calculation was performed and the space in which solutions depend differentiably on their initial data. A framework in which the latter can be achieved was first developed by Koch in his habilitation thesis \cite{Koch99}. The main obstacle for establishing this differentiable dependency on the initial data is the variable support of solutions. In general, the domains on which solutions live differ from the one of the Barenblatt solution --- the latter can be chosen stationary upon going over to self-similar variables. To overcome this, Koch suggests a nonlinear change of dependent and independent variables which results in a reparametrization of the graphs of the solutions. In the new coordinates, solutions live on the unit ball and Barenblatts become constants. The parabolicity of the resulting equation can be restored  by trading the Euclidean metric for the hypocycloid metric, that is, the metric on the unit ball which is obtained when measuring distances along hypocycloids.
It turns out that the solutions of the new equation depend even analytically on the initial data. To retrieve the information from \cite{Seis14}, we have to modify the problem slightly for solutions which are far away from the constant attractor.

The situation considered in the present paper differs from the one studied in \cite{DenzlerKochMcCann15} in many regards. Because solutions to the fast diffusion equation are positive everywhere, it suffices to consider the evolution in relative variables to establish differentiable dependency on the initial data. Parabolicity is restored by carrying out the analysis on the asymptotically cylindrical cigar manifold, and the well-posedness theory for the rescaled equation is fairly standard. The main difficulty in \cite{DenzlerKochMcCann15} is the occurrence of continuous spectrum found in \cite{DenzlerMcCann05}, above which only a finite number of eigenvalues are accessible. The authors need to work in weighted spaces to penetrate into the spectrum. The choice of norms has a severe drawback: The resulting Banach spaces are not Hilbert spaces and the linearized operators are no longer self-adjoint.

The present work  can be considered as a model study for  higher order asymptotic expansions for slow diffusions, such as the thin film equation with linear mobility 
\[
\partial_t u + \div\left(u\grad\laplace u\right) = 0.
\]
The global attractor is again a Barenblatt solution \cite{BernisPeletierWilliams92,FerreiraBernis97,CarrilloToscani02,MatthesMcCannSavare09}. A formal long time asymptotic expansion was obtained by Bernoff and Witelski \cite{BernoffWitelski02} in one space dimension, but no rigorous results on the higher order asymptotics are known. As a first step towards a rigorous understanding, building up on the spectral calculation in \cite{Seis14}, McCann and the author diagonalized the linear operator by exploiting the simple polynomial relation between the linear operators of the thin film equation and of the porous medium equation \cite{McCannSeis15}. In a slightly simpler context, John constructed solutions to the thin film equation that depend differentiably on the initial data  \cite{John15}.

\subsection{Examples for higher-order asymptotics}

The applications of the invariant manifold approach, Theorems \ref{T13}--\ref{T12b} below, are most conveniently stated for the   pressure variable in self-similar coordinates.  By virtue of a  standard rescaling argument, see, e.g., \cite[Section 1.1]{Seis14}, the porous medium equation can be equivalently written as a nonlinear Fokker--Planck equation
\begin{equation}
\label{1z}
\partial_t u - \laplace u^m -\div(xu)=0,
\end{equation}
which is often referred to as the ``confined'' porous medium equation. Under the same rescaling, the Barenblatt solutions $u_*(R,\tacka)$ become stationary,
\[
\frac{m}{m-1} u_{*}(R,x)^{m-1} = \frac12(R-|x|^2)_+,
\]
where $(f)_+ = \max\{f,0\}$, and the radius $R$ of its support is determined by the mass of the solution. For studying the regularity of solutions, it is convenient to write the equation for the so-called pressure variable $v = \frac{m}{m-1}u$. Upon changing the time scale from $t$ to $(m-1)t$ and setting $\sigma=\frac{2-m}{m-1}$, the equation for $v$ reads
\begin{equation}\label{1a}
\partial_t v - v\laplace v - (\sigma+1) \left(|\grad v|^2 + x\cdot \grad v\right) - Nv=0.
\end{equation}
Existence and uniqueness of solutions to the Cauchy problem for the porous medium equation is established in \cite{BenilanCrandallPierre84}. In \cite{CaffarelliVazquezWolanski87}, Caffarelli, V\'azquez and Wolanski show that nonnegative solutions with compactly supported initial value have Lipschitz pressures after a certain time. In view of the Barenblatt pressure, this result is optimal. However, due to the existence of focusing solutions \cite{AronsonGraveleau93,AngenentAronson95}, one cannot hope to have Lipschitz regularity at any small time.

%

To simplify the following discussion, we additionally normalize the ``mass'' of the initial configuration $v_0$ such that
\begin{equation}
\label{1az}
\int v_0^{\sigma+1} \, dx = \int v_*^{\sigma+1}\, dx,
\end{equation}
where $v_* = v_*(1,\tacka)$. Then the long-time asymptotics of the porous medium equation is governed by $v_*$. More precisely,
\begin{equation}
\label{1b}
\lim_{t\uparrow\infty} \|v(t)- v_*\|_{L^{\infty}} = 0.
\end{equation}
We refer to V\'azquez' survey \cite{Vazquez03} for a proof of \eqref{1b}.

In this paper, we are interested in the characteristic behavior of solutions for large times, or equivalently, close to the Barenblatt solution. We thus select our initial data in a neighborhood of $v_*$. More precisely, we shall assume that $v_0$ is Lipschitz with
\begin{equation}
\label{1c}
\|v_0 - \rho\|_{L^{\infty}(\P(v_0))}\le \delta_0\quad\mbox{and}\quad\|\grad(v_0-\rho)\|_{L^{\infty}(\P(v_0))}\le \eps_0
\end{equation}
for some $\delta_0,\, \eps_0>0$, and where, $\rho(x) = \frac12(1-|x|^2)$. Regarding the validity of our main theorems, we could have equally chosen $v_*$ over $\rho$ in \eqref{1c}. The actual formulation, however, slightly simplifies the proofs. Notice that \eqref{1c} entails that $\spt(v_0)\subset B_{1+\delta}(0)$. 

Restricting ourself to the multidimensional setting (the one-dimensional case is considered in \cite{Angenent88}), the spectrum of the linearized (and suitably transformed) equation consists purely of the eigenvalues
\[
\lambda_{\ell k} = (\sigma+1)(\ell + 2k) + k(2k+2\ell+N-2),
\]
where $(\ell,k)\in \N_0\times\N_0$, see Proposition \ref{P2} below. We relabel the eigenvalues in increasing order, that is, $\{\lambda_{\ell k}\}_{(\ell,k)\in \N_0\times \N_0} = \{\lambda_j\}_{j\in\N_0}$ with $\lambda_j<\lambda_{j+1}$.

The knowledge of the spectrum  (and the corresponding eigenvalues) of the linearized equation yields information on the higher-order asymptotics for \eqref{1a} via the Invariant Manifold Theorem \ref{T5} below: 
If $j\in \N_0$ and $\lambda\in (\lambda_j,\lambda_{j+1})$ are arbitrarily fixed, then the long-time asymptotic behavior of solutions up to terms of order $e^{-\lambda_j t}$ is given by a finite system of ODEs, which result from restricting a transformed version of \eqref{1a} to a finite-dimensional invariant manifold. This manifold is tangent at the origin to the eigenspaces of the first $j$ eigenvalues. In principle, we can compute the long-time asymptotics to any order. In the following, we make four explicit applications: We derive the exact rate of convergence to the Barenblatt solution  and compute the corrections in the long-time asymptotic expansions up to third order by restricting the dynamics to the  invariant (sub-)manifolds.

Our first result is a stability result for the Barenblatt pressure. 
\begin{theorem}[Stability of the Barenblatt pressure]
\label{T13}
There exists $\eps_0,\, \delta_0>0$ with the following property. If $v_0\in C^{0,1}$ satisfies \eqref{1az} and \eqref{1c} and $v$ is the solution to \eqref{1a} with initial value $v_0$, then
\[
\| v(t) - v_*\|_{L^{\infty}(\R^N)}\lesssim e^{-(\sigma+1)t}\quad\mbox{for all }t\ge 0.
\]
Moreover, for any $k\in \N_0$ and $\beta\in \N_0^N$, it holds
\[
\| \partial_t^k\partial_x^{\beta} \left(v(t) - \rho\right)\|_{L^{\infty}(\P(v(t)))}\lesssim e^{-(\sigma+1)t}\quad\mbox{for all }t\ge 1.
\]
\end{theorem}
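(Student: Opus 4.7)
\textbf{Proof plan for Theorem \ref{T13}.} The strategy is to push the statement through the nonlinear change of variables described in the introduction, apply the Invariant Manifold Theorem~\ref{T5}, and then use parabolic smoothing to obtain the derivative bounds. Let me sketch the four steps.

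\emph{Step 1: Reduction to the fixed domain.} Starting from \eqref{1a}, I apply Koch's reparametrization of the graph of $v$ to obtain a quasilinear equation on the unit ball $B_1(0)$ whose (constant) stationary solution corresponds to the Barenblatt $v_*=\rho$, and which becomes uniformly parabolic once the Euclidean metric is replaced by the hypocycloid one. Writing $w$ for the deviation of the new unknown from the stationary constant, the smallness assumption \eqref{1c} translates into $w_0$ being small in the Banach space used in Koch's framework, and the solution map $w_0\mapsto w(t)$ is analytic. The mass normalization \eqref{1az} encodes itself as a codimension-one linear constraint on $w_0$ that is preserved by the flow (the zero-eigenvalue direction corresponding to rescaling of the Barenblatt).

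\emph{Step 2: Invariant manifold with $j=1$.} With the spectrum $\{\lambda_j\}$ relabeled as in the introduction, I take $j=1$, so that $\lambda_0=0$ and $\lambda_1=\sigma+1$, and let $\lambda\in(\lambda_1,\lambda_2)$. Theorem~\ref{T5} provides a two-dimensional invariant manifold $\M$ that is tangent at the origin to the eigenspace of $\lambda_0$ (rescalings of $v_*$) and to the $(\sigma+1)$-eigenspace (translations). The dynamics restricted to $\M$ reduces to an ODE system for a mass parameter (conserved by the flow) and a translation vector $\xi(t)\in\R^N$ which, to leading order, solves $\dot\xi=-(\sigma+1)\xi$. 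Every solution $w$ starting near the origin attracts to $\M$ at rate $e^{-\lambda t}$.

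\emph{Step 3: Sharp rate.} Because of \eqref{1az} the mass coordinate of the point on $\M$ shadowing $w(t)$ vanishes, so $w(t)$ is within $O(e^{-\lambda t})$ of a pure translation $\tau_{\xi(t)}v_*-v_*$ with $|\xi(t)|\lesssim e^{-(\sigma+1)t}$; since $\lambda>\sigma+1$, the translation term dominates and yields $\|w(t)\|_{L^\infty}\lesssim e^{-(\sigma+1)t}$. Pulling this back through the (locally Lipschitz) change of variables, and using that the nonlinear reparametrization maps the support of $v(t)$ onto $B_1(0)$ in a way controlled by $w$, gives the claimed $L^\infty$ bound on $v(t)-v_*$ over all of $\R^N$.

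\emph{Step 4: Derivatives.} On the transformed side the equation is a uniformly parabolic quasilinear equation on a fixed smooth domain, so standard Schauder/bootstrap smoothing gives, for $t\geq \frac12$,
\[
\|\partial_t^k\partial_x^\beta w(t)\|_{L^\infty(B_1)}\lesssim \|w(t-\tfrac12)\|_{L^\infty(B_1)}\lesssim e^{-(\sigma+1)t}.
\]
Inverting the reparametrization, which is smooth once $w$ is small, converts these into bounds on $\partial_t^k\partial_x^\beta(v(t)-\rho)$ inside $\P(v(t))$, for $t\geq 1$. The main technical difficulty I expect is Step~3, namely extracting the \emph{sharp} rate $e^{-(\sigma+1)t}$ from the invariant manifold (which a priori only delivers $e^{-\lambda t}$ for $\lambda<\lambda_2$); this requires identifying the flow on $\M$ with the translation action and carefully controlling the transfer of norms through the nonlinear change of variables, especially near the free boundary, so that the $L^\infty$ estimate on $w$ becomes an $L^\infty$ estimate on $v$ on the moving support.
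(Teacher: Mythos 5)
Your general framework (Koch's reparametrization, invariant manifolds for the perturbation equation, pull-back of the estimates through the change of variables, and parabolic smoothing for the derivative bounds) is the right one, and coincides with the paper's. However, both the choice of center manifold and the argument for the sharp rate differ from the paper, and the key step as you wrote it has a genuine gap.

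The paper proves Theorem~\ref{T13} through Theorem~\ref{T10}, which corresponds to $K=0$ in the notation of Section~\ref{S7}: the center manifold is the one-dimensional space of constants. Since the next eigenvalue above $\lambda_0=0$ is $\lambda_1=\sigma+1$, Theorem~\ref{T5} with $K=0$ only delivers the rate $e^{-\lambda t}$ for $\lambda<\sigma+1$, and the paper obtains the endpoint $e^{-(\sigma+1)t}$ through a bootstrap (estimate \eqref{46azzb}, using $\|R(g)\|_{L^2_\sigma}\lesssim\eps^{1/2}\|P_sg\|_{\Lip}^{1/2}\|P_sg\|_{L^2_\sigma}$). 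You instead propose $K=1$ ($j=1$ in your notation), with a center manifold containing both constants and translations, which in principle gives distance $O(e^{-\lambda t})$ to the manifold with $\lambda\in(\sigma+1,2(\sigma+1))$ directly. This could work, but note the paper uses this larger manifold only to prove Theorem~\ref{T11} and \emph{not} Theorem~\ref{T13}: doing so requires an independent analysis of the ODE dynamics for the manifold coordinates $(\tilde a(t),\tilde b(t))$, including the rate at which $\tilde a(t)$ tends to its limit. The paper performs exactly that analysis in the proofs of Theorems~\ref{T10} and~\ref{T11}.

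The real gap is in your Step~3. You claim that the mass normalization \eqref{1az} translates into a codimension-one \emph{linear} constraint on $w_0$ that is \emph{preserved by the flow}, and that this forces the mass coordinate of the shadow trajectory on $\M$ to vanish. Neither part is correct as stated. In the $w$-variables the mass of $v$ is a nonlinear functional of $w$ (involving the Jacobian of the reparametrization), and the \emph{linear} mean $\int w\,d\mu_\sigma$ is not conserved: from \eqref{13} one computes $\frac{d}{dt}\int w\,d\mu_\sigma=\beta\int F(w,\grad w)\,d\mu_{\sigma+1}>0$. In the paper's proof the constant $a$ in the long-time limit of $w$ is shown to vanish by appealing to the classical global attractor result \eqref{1b}: since $v(t)\to v_*$ and $v-\rho=w+\tfrac12w^2$, one gets $a^2+a=0$, hence $a=0$. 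Without some version of this argument, your proof cannot conclude that the constant mode vanishes, and the decay rate $e^{-(\sigma+1)t}$ for $\|w(t)\|_{L^\infty}$ would not follow. A second, more minor issue: in Step~4 the transformed equation is not uniformly parabolic on a fixed smooth domain in the Euclidean sense --- it degenerates at $\partial B_1(0)$, and the smoothing estimates you need (those of Theorem~\ref{T1}/Theorem~\ref{T10}) come from the Carleson-measure/hypocycloid-metric framework, not from standard Schauder theory; you acknowledge the hypocycloid metric in Step~1 but then describe the smoothing as if it were the classical case.
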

The same decay behavior in any spatial $C^k$ norm was already proved in \cite{LeeVazquez03} building up on Koch's smoothness results \cite{Koch99}. 

The rate of converge in Theorem \ref{T13} is sharp. It is saturated by spatial translations of the rescaled Barenblatt pressure $v_*$, that is, for any $b\in\R^N$, $v(t,x) = v_*(x- e^{-(\sigma+1)t}b)$ defines a solution to \eqref{1a} which approaches $v_*$ with rate $\sigma+1$.

The value $\lambda_{10}= \sigma+1$ is the smallest nonzero eigenvalue of the linearized operator, see  Proposition \ref{P2} below. The corresponding eigenfunctions generate  spatial translations, which leave the porous medium equation \eqref{1} invariant. The eigenvalue $\lambda_{00}=0$ corresponds to rescaling of mass, which leaves \eqref{1} invariant. Once the mass is fixed, see \eqref{1az}, this eigenvalues drops out of the spectrum \cite{Seis14}. 

The exact rate in Theorem \ref{T13} does not follow immediately from our invariant manifold theorem, Theorem \ref{T5} below, but requires a careful analysis of the nonlinear terms in the transformed equation, \eqref{13} below.
 
We obtain better rates of convergence when taking into account appropriate translations of the Barenblatt solution. In this way, we get rid of the eigenvalue $\sigma+1$ in the spectrum of the linearized operator, and the   decay rate is given by the next eigenvalue $2(\sigma+1)$.

\begin{theorem}[First order corrections --- spatial translations]
\label{T12}
Let $\lambda\in (\sigma+1,2(\sigma+1))$ be given. There exist $\eps_0,\,\delta_0>0$ with the following property: If $v_0\in C^{0,1}$ satisfies \eqref{1az} and \eqref{1c} and $v$ is the solution to \eqref{1a} with initial value $v_0$, then there exists a vector $b\in \R^N$ such that
\[
\|v(t) - v_*(\tacka - e^{-(\sigma +1)t}b)\|_{L^{\infty}(\R^N)}\lesssim e^{-\lambda t}\quad\mbox{for all }t\ge 0.
\]
Moreover, there exists a constant $C>0$ such that
\[
B_{1-r(t)}(0)\subset \P(v(t))\subset B_{1+r(t)}(0)\quad\mbox{for all }t\ge 0,
\]
where $r(t) = e^{-(\sigma+1)t}|b| + Ce^{-\lambda t}$.
\end{theorem}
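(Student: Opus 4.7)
The strategy is to apply the Invariant Manifold Theorem \ref{T5} with index $j$ chosen so that $\lambda_j = \sigma+1$ (the first nonzero eigenvalue, after the mass-normalization \eqref{1az} removes $\lambda_{00}=0$) and to identify the resulting $N$-dimensional invariant manifold with the family of translated Barenblatt pressures
\[
\mathcal{T} := \{v_*(\tacka - c)\colon c\in \R^N\}.
\]
Since translation is an exact symmetry of the porous medium equation \eqref{1}, substituting the ansatz $v(t,x) = v_*(x-c(t))$ into \eqref{1a} and invoking the stationary equation satisfied by $v_*$ in the variable $y = x-c(t)$ yields the autonomous linear ODE $\dot c = -(\sigma+1)c$, so that $c(t) = e^{-(\sigma+1)t}c(0)$. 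Hence $\mathcal{T}$ is an invariant $N$-dimensional submanifold for \eqref{1a}, whose tangent space at $v_*$ is spanned by $\partial_{x_1}v_*,\dots,\partial_{x_N}v_*$, i.e.\ the eigenspace of $\lambda_1 = \sigma+1$ (cf.\ Proposition \ref{P2}).

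Transferring $\mathcal{T}$ through Koch's change of variables produces a smooth invariant $N$-submanifold in the transformed phase space, still tangent at the origin to the translation eigenspace. The Invariant Manifold Theorem \ref{T5}, applied with the above $j$, then asserts that every solution $v$ emanating from initial data satisfying \eqref{1c} is shadowed by a trajectory on this manifold with error decaying like $e^{-\lambda t}$ for any $\lambda\in(\sigma+1,2(\sigma+1))$. Returning to the original coordinates, the shadowing trajectory takes the form $v_*(\tacka - c(t))$ with $c(t) = e^{-(\sigma+1)t}b$ for a suitable $b\in \R^N$ determined by the initial data, yielding the first estimate of the theorem.

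For the support inclusion, we combine the explicit identity $\spt v_*(\tacka - e^{-(\sigma+1)t}b) = B_1(e^{-(\sigma+1)t}b) \subset B_{1+e^{-(\sigma+1)t}|b|}(0)$ with a Lipschitz bound on the remainder $q(t) := v(t) - v_*(\tacka - e^{-(\sigma+1)t}b)$, namely $\|\nabla q(t)\|_{L^\infty(\P(v(t)))}\lesssim e^{-\lambda t}$, which one extracts from the parabolic regularization of the transformed equation, along the lines of Theorem \ref{T13}. Since $|\nabla v_*|\gtrsim 1$ near $\partial B_1$, a Lipschitz perturbation of size $e^{-\lambda t}$ shifts the zero set of $v$ by at most $O(e^{-\lambda t})$; the triangle inequality then yields both $\P(v(t))\subset B_{1+r(t)}(0)$ and the symmetric lower inclusion with $r(t) = e^{-(\sigma+1)t}|b| + Ce^{-\lambda t}$.

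The key technical point is the identification, in the second step, of the invariant manifold produced by Theorem \ref{T5} with (the Koch-transformed image of) $\mathcal{T}$. In general, the reduced dynamics on an invariant manifold tangent to the translation eigenspace is only of the form $\dot c = -(\sigma+1)c + O(|c|^2)$, which would give $c(t) = e^{-(\sigma+1)t}b + O(e^{-2(\sigma+1)t})$ rather than an exact exponential. What rescues the clean rate is the translation symmetry itself, which forces the dynamics on $\mathcal{T}$ to be \emph{purely} linear; the well-known non-uniqueness of local invariant manifolds is immaterial here, since the $O(|c|^2)$ discrepancy between different invariant manifolds tangent to the same eigenspace lies below the error threshold $e^{-\lambda t}$ that we seek.
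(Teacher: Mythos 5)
Your proposal takes a genuinely different and more slippery route than the paper, and it contains a real gap at the step you yourself flag as "the key technical point."

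The paper's actual proof of Theorem \ref{T12} is short: it applies Lemma \ref{L20} to pass to the transformed variable $w$, uses Theorem \ref{T10} (together with the mass normalization) to check the hypothesis $\lim_{t\to\infty}\int w(t)\,d\mu_\sigma=0$, then invokes Theorem \ref{T11} to get $\|w(t)-e^{-(\sigma+1)t}b\cdot z\|_{C^{0,1}}\lesssim e^{-\lambda t}$, and finally undoes the change of variables via Lemma \ref{L20a}. The whole rate analysis is buried in Theorem \ref{T11}, where the shadowing trajectory $\tilde w(t)\in W_c$ is expanded as $\tilde w(t)=\tilde a(t)+\tilde b(t)\cdot z+\theta(\tilde a(t),\tilde b(t))$, the ODEs $|\tilde a(t)|\lesssim e^{-2(\sigma+1)t}$ and $|\tilde b(t)-e^{-(\sigma+1)t}b|\lesssim e^{-2(\sigma+1)t}$ are extracted by differentiating the moments $\int\tilde w\,d\mu_\sigma$ and $\int\tilde w\,z\,d\mu_\sigma$ against the perturbation equation, and the quadratic tangency $\|\theta\|\lesssim(|\tilde a|+|\tilde b|)^2$ (property 1 of Theorem \ref{T5}) closes the argument.

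Your proposal replaces this computation with the claim that the invariant manifold produced by Theorem \ref{T5} can be identified with the explicit translation manifold $\mathcal{T}$. This is where the gap lies. Theorem \ref{T5} shadows $w(t)$ by a trajectory on $W_c$, which with $K=1$ is $(N{+}1)$-dimensional (it contains the constant mode corresponding to $\lambda_{00}=0$) and depends on the truncation; $\mathcal{T}$ (transformed) is $N$-dimensional, flow-invariant for the untruncated equation, and in general neither equal to $W_c$ nor a submanifold of it. Your sentence asserting that Theorem \ref{T5} shadows $v$ "by a trajectory on this manifold," where "this manifold" is the transformed $\mathcal{T}$, misstates what the theorem gives. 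The remedy you offer --- that "the $O(|c|^2)$ discrepancy between different invariant manifolds tangent to the same eigenspace lies below the error threshold" --- is not a proof: the two manifolds do not even have the same dimension, and the claim that the shadowing trajectory on $W_c$ is $O(e^{-2(\sigma+1)t})$-close to a purely exponential trajectory $c(t)=e^{-(\sigma+1)t}b$ is precisely what Theorem \ref{T11} establishes via the moment ODEs. You cannot get this for free from the symmetry of $\mathcal{T}$, because $\tilde w(t)$ lives on $W_c$, not on $\mathcal{T}$, and because the constant mode $\tilde a(t)$ must be shown (not assumed) to be $O(e^{-2(\sigma+1)t})$; the mass normalization alone does not make $\int w(t)\,d\mu_\sigma$ vanish at finite times, only in the limit $t\to\infty$.

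Two secondary remarks. First, your support-inclusion argument invokes a Lipschitz bound $\|\nabla q(t)\|_{L^\infty}\lesssim e^{-\lambda t}$ "along the lines of Theorem \ref{T13}," but the regularization estimates in Theorem \ref{T13} only give decay at rate $e^{-(\sigma+1)t}$, not $e^{-\lambda t}$; the improved rate for $\nabla q$ requires the $C^{0,1}$ bound from Theorem \ref{T11} transported through the change of variables. The paper in fact uses a more elementary route: the $L^\infty$ bound $|v(t,x)-\rho(x-e^{-(\sigma+1)t}b)|\lesssim e^{-\lambda t}$ together with $\rho(y)\sim 1-|y|$ near $\partial B_1(0)$ pins down $\partial\P(v(t))$ directly, exactly as in the proof of Theorem \ref{T13}. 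Second, your observation that $v_*(x-e^{-(\sigma+1)t}b)$ is an exact solution of \eqref{1a} is correct and does appear in the paper (after Theorem \ref{T13}) as the reason the rate $\sigma+1$ is sharp in the stability theorem; but converting this observation into a shadowing statement with the rate $e^{-\lambda t}$ requires the ODE estimates of Theorem \ref{T11}, which your proposal omits.
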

This result is obtained by a straightforward application of the invariant manifolds constructed in Theorem \ref{T5} below, see Theorem \ref{T11} below, and a change of variables. For an analogous earlier result in one space dimension, we refer to \cite{Vazquez83}.

The rate $2(\sigma+1)$, which it is not exactly achieved within this framework, corresponds to affine transformations of the Barenblatt solution.  This is the first nontrivial eigenvalue as it does not reflect a symmetry of \eqref{1}. Again, choosing appropriate affine transformations and centering the solution at the origin, we improve the rate of convergence once more. 

\begin{theorem}[Second order corrections --- affine transformations]
\label{T12a}
Let $\lambda\in (2(\sigma+1),\min\{2(\sigma+1)+N, 3(\sigma+1)\})$ be given. Suppose that $\sigma >-\frac12$. There exist $\eps_0,\, \delta_0>0$ with the following property: If $v_0\in C^{0,1}$ satisfies \eqref{1az}, \eqref{1c} and 
\begin{equation}
\label{1d}
\int v_0(x)^{\sigma+1}x\, dx=0,
\end{equation}
and $v$ is the solution to \eqref{1a} with initial value $v_0$,
then there exists  a symmetric and trace-free matrix $A\in \R^N\times \R^N$ such that
\[
\|v(t) - v_*(x - e^{-2(\sigma+1)t} Ax)\|_{L^{\infty}}\lesssim e^{-\lambda t}\quad\mbox{for all }t\ge 0.
\]
Moreover, there exists a constant $C>0$ such that
\[
B_{1-r(t)}(0)\subset \P(v(t))\subset B_{1+r(t)}(0)\quad\mbox{for all }t\ge 0,
\]
where $r(t) = e^{-2(\sigma+1)t}|A| + Ce^{-\lambda t}$.
\end{theorem}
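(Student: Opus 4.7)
The plan is to follow the proof of Theorem \ref{T12} one eigenvalue deeper into the spectrum. I would first invoke Theorem \ref{T5} with threshold $\lambda$ lying strictly between $\lambda_{20}=2(\sigma+1)$ and the subsequent eigenvalue $\min\{\lambda_{30},\lambda_{01}\}=\min\{3(\sigma+1),2(\sigma+1)+N\}$. The assumption $\sigma>-\frac{1}{2}$ guarantees that this gap is nondegenerate, so that the admissible range for $\lambda$ is nonempty and the resonance between $\lambda_{10}$ and $\lambda_{20}=2\lambda_{10}$ is harmless in the presence of the centering \eqref{1d}. Theorem \ref{T5} then produces, in the transformed (hypocycloid) coordinates, an invariant manifold $\M_\lambda$ tangent at the origin to the sum of the $\lambda_{10}$- and $\lambda_{20}$-eigenspaces, and asserts that every solution starting sufficiently close to the constant attractor converges to $\M_\lambda$ at rate $e^{-\lambda t}$ in the transformed norm.

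Next I would identify the flow on $\M_\lambda$ with an explicit two-parameter family of symmetries of the Barenblatt profile. Spatial translations $v_*(\cdot-e^{-(\sigma+1)t}b)$ sweep out the $\lambda_{10}$-directions, exactly as in the proof of Theorem \ref{T12}. The new ingredient is the family $v_*(x-e^{-2(\sigma+1)t}Ax)$, where $A$ ranges over symmetric trace-free matrices; its tangent at $A=0$ spans precisely the $\lambda_{20}$-eigenspace, whose dimension $\frac{N(N+1)}{2}-1$ matches the dimension of such matrices. The trace-free condition encodes the mass constraint \eqref{1az} to leading order, while symmetry removes rotations, which lie in the kernel of the linearization. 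By uniqueness of $\M_\lambda$ as the invariant manifold tangent to this subspace, every trajectory on $\M_\lambda$ agrees with some $v_*(x-e^{-(\sigma+1)t}b-e^{-2(\sigma+1)t}Ax)$ up to corrections of order $e^{-\lambda t}$.

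To exploit the centering hypothesis \eqref{1d}, I would use that the vector-valued functional $\int v^{\sigma+1}x\,dx$ is precisely the coordinate dual to the $\lambda_{10}$-eigenspace. A direct moment computation on \eqref{1a} yields $\frac{d}{dt}\int v^{\sigma+1}x\,dx=-(\sigma+1)\int v^{\sigma+1}x\,dx$, so \eqref{1d} forces this quantity to vanish for all times, and hence the translation parameter of the limiting trajectory is zero. Translating the $L^\infty$ bound on $\M_\lambda$ back to the pressure variable through Koch's nonlinear change of coordinates then gives the stated estimate on $v(t)-v_*(\cdot-e^{-2(\sigma+1)t}A\,\cdot)$. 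The support inclusions follow because $\spt v_*(\cdot-e^{-2(\sigma+1)t}A\,\cdot)=(I-e^{-2(\sigma+1)t}A)^{-1}B_1(0)$ is an ellipsoid squeezed between $B_{1\pm e^{-2(\sigma+1)t}|A|}(0)$, and the pointwise $L^\infty$ error of order $e^{-\lambda t}$ contributes the additional term $Ce^{-\lambda t}$ in $r(t)$.

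The main obstacle I anticipate is the parametrization step: confirming that $\M_\lambda$ is actually swept out by this explicit two-parameter family. The nonlinear change of variables to the unit ball does not commute with affine maps algebraically, so the linearization at $v_*$ of $A\mapsto v_*(\cdot-A\,\cdot)$ must be computed in the transformed coordinates and shown to hit the $\lambda_{20}$-eigenspace of the linearized operator precisely when $A$ ranges over symmetric trace-free matrices. Once this identification, together with the verification that affine deformations evolve to leading order as $\dot A=-2(\sigma+1)A$ under \eqref{1a}, is in place, the remainder of the argument is a direct adaptation of the proof of Theorem \ref{T12}.
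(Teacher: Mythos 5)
Your overall strategy (transform to the unit-ball variables, invoke Theorem~\ref{T5} with $K=2$, identify the center-manifold dynamics, and use centering to remove the translation mode) does track the paper's argument at a high level, but two of the steps you state as unproblematic are in fact where the real work lies, and one of your stated justifications is incorrect.

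First, your explanation of the restriction $\sigma>-\tfrac12$ is wrong. The spectral gap $\bigl(2(\sigma+1),\ \min\{3(\sigma+1),\,2(\sigma+1)+N\}\bigr)$ is nonempty for \emph{every} $\sigma>-1$ and $N\ge1$, so there is no degeneracy there and no ``resonance'' issue that $\sigma>-\tfrac12$ resolves. The condition is needed at an entirely different point: to pass from the preserved centering $\int v^{\sigma+1}x\,dx=0$ to the vanishing of the translation vector $b$. This requires comparing $\int v^{\sigma+1}x\,dx$ with $\int v_*(\cdot-e^{-(\sigma+1)t}b)^{\sigma+1}x\,dx$ and estimating $\|v^{\sigma+1}-v_*(\cdot-e^{-(\sigma+1)t}b)^{\sigma+1}\|_{L^\infty}$. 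When $-1<\sigma<0$ the map $s\mapsto s^{\sigma+1}$ is only H\"older continuous with exponent $\sigma+1<1$, so the first-order decay rate $\tilde\lambda$ from the preceding theorem gets raised to the power $\sigma+1$, and one only obtains $|b|\lesssim e^{-(\sigma+1)(\tilde\lambda-1)t}$. For this to force $b=0$ one needs some admissible $\tilde\lambda>1$ inside $(\sigma+1,2(\sigma+1))$, which is possible precisely when $\sigma>-\tfrac12$.

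Second, your assertion that $\int v^{\sigma+1}x\,dx$ ``is precisely the coordinate dual to the $\lambda_{10}$-eigenspace,'' so that its vanishing directly kills the translation parameter, glosses over the key nonlinear step. The dual coordinate to the $\lambda_{10}$-eigenspace in the transformed picture is $\int w(t)\,z\,d\mu_\sigma$, and the relationship between this quantity and $\int v^{\sigma+1}x\,dx$ is \emph{nonlinear} because the change of variables $x=(1+w)z$ is nonlinear. There is no algebraic identity forcing one to vanish when the other does. The paper therefore first invokes the first-order result (Theorem~\ref{T11}) to get $v(t)\approx v_*(\cdot-e^{-(\sigma+1)t}b)$ at some rate $\tilde\lambda\in(\max\{\sigma+1,1\},2(\sigma+1))$, and only then deduces $b=0$ via the moment estimate sketched above, which in turn verifies the hypothesis $\lim_{t\to\infty}e^{(\sigma+1)t}\int w(t)\,z\,d\mu_\sigma=0$ of Theorem~\ref{T17}. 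Your proposal would need to reproduce exactly this quantitative step; as written it is a gap, and it is precisely the place where the hypothesis $\sigma>-\tfrac12$ enters.

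Finally, a minor caveat: the center manifold $W_c$ is only \emph{tangent} at the origin to the finite-dimensional eigenspace, not literally swept out by translates and affine deformations of $v_*$; the nonlinear correction $\theta$ is quadratically small and must be tracked, which is what the bound~\eqref{44c} and the moment computations in Theorem~\ref{T17} accomplish. You flag this as an obstacle, and it is indeed where the quantitative content of the paper's Theorem~\ref{T17} resides rather than something that follows from ``uniqueness of $\M_\lambda$.''
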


Hence, the rate of convergence is given by the next eigenvalue in line, namely $\min\{2(\sigma+1)+N, 3(\sigma+1)\}$. That is, we arrive at the first crossing of eigenvalues: $2(\sigma+1)+N$ corresponds to dilations or spatial rescaling of the Barenblatt solutions, and $3(\sigma+1)$ comes from pear-shaped transformations, see Figure 1 in \cite{Seis14}.

 Our approach allows for a geometric interpretation of \eqref{1az} and \eqref{1d}: By fixing the mass and centering the solution at the origin, we achieve that the first two eigenmodes are suppressed. In other words, the dynamics live on a submanifold of the  invariant manifold which is parallel to the third eigenspace.

The restriction to $\sigma>-\frac12$ is an artifact of the formulation of the invariant manifold theorem, and we will not make an attempt at getting rid of it in the present paper.

In our final application of the Invariant Manifold Theorem \ref{T5},  affine transformations are modded out and we focus on dilations.

\begin{theorem}[Third order corrections --- dilations]
\label{T12b}
Suppose that $N<\sigma+1$Let $\lambda\in (2(\sigma+1)+N, 3(\sigma+1))$ be given. There exist $\eps_0,\, \delta_0>$ with the following property: If $v_0\in C^{0,1}$ satisfies \eqref{1az}, \eqref{1c} and \eqref{1d} and
\begin{equation}
\label{1e}
\int v_0(x)^{\sigma+1} x\cdot Mx\, dx = 0
\end{equation}
for every trace-free matrix $M\in \R^{N\times N}$, and $v$ is the solution to \eqref{1a} with initial value $v_0$, then there exists a constant $c\in \R$ such that
\[
\|v(t) - R(t)^{2-\frac1{\alpha}} v_*\left(\frac{x}{R(t)}\right)\|_{L^{\infty}}\lesssim e^{-\lambda t}\quad\mbox{for all }t\ge T,
\]
where  $\alpha =\frac{\sigma+1}{2(\sigma+1)+N}$ and $R(t) = \left(1-c e^{-(2(\sigma+1) +N)t}\right)^{\alpha}$.
Moreover, there exists a constant $C>0$ such that
\[
B_{1-r(t)}(0)\subset \P(v(t))\subset B_{1+r(t)}(0)\quad\mbox{for all }t\ge 0,
\]
where $r(t) = e^{-(2(\sigma+1)+N)t}\alpha |c| + Ce^{-\lambda t}$.
\end{theorem}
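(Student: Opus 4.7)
The plan is to apply the Invariant Manifold Theorem~\ref{T5} in the Koch hypocycloidic framework and then use the moment constraints \eqref{1az}--\eqref{1e} to restrict the dynamics to a one-dimensional invariant submanifold, which will turn out to coincide with the explicit family of time-shifted Barenblatt profiles. The hypothesis $N<\sigma+1$ guarantees $2(\sigma+1)+N<3(\sigma+1)$, so for any $\lambda\in(2(\sigma+1)+N,3(\sigma+1))$ Theorem~\ref{T5} yields a finite-dimensional invariant manifold $\M$ tangent at the origin to $E_{00}\oplus E_{10}\oplus E_{20}\oplus E_{01}$, i.e.\ to the eigenspaces of $\lambda_{00}=0$, $\lambda_{10}=\sigma+1$, $\lambda_{20}=2(\sigma+1)$, $\lambda_{01}=2(\sigma+1)+N$, and every initial datum close to $v_*$ produces a solution approaching $\M$ at rate $e^{-\lambda t}$.

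\textbf{Reduction via moment conditions.} Integrating \eqref{1a} (or equivalently the divergence-form equation satisfied by $u\propto v^{\sigma+1}$) against $1$, $x$ and the trace-free part of $x\otimes x$ gives
\[
\frac{d}{dt}\int v^{\sigma+1}dx=0,\qquad \frac{d}{dt}\int v^{\sigma+1}x\,dx=-\int v^{\sigma+1}x\,dx,
\]
\[
\frac{d}{dt}\int v^{\sigma+1}\bigl(x\otimes x-\tfrac{|x|^{2}}{N}I\bigr)dx=-2\int v^{\sigma+1}\bigl(x\otimes x-\tfrac{|x|^{2}}{N}I\bigr)dx,
\]
so the three conditions \eqref{1az}, \eqref{1d}, \eqref{1e} are preserved by the flow. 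Linearizing these moment functionals at $v_*$ yields functionals that are nondegenerate (and mutually orthogonal in the $v_*^{\sigma}$-weighted inner product of \cite{Seis14}) on $E_{00}$, $E_{10}$, $E_{20}$ respectively, and vanish on $E_{01}$. Hence the intersection of $\M$ with the invariant affine slice cut out by the moment conditions is a one-dimensional invariant submanifold $\M_1\subset\M$ tangent at the origin to $E_{01}$. Since the true solution lies in this slice for all time, it approaches $\M_1$ at rate $e^{-\lambda t}$.

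\textbf{Identification with the dilation family.} The self-similar structure of the porous medium equation provides the explicit one-parameter family
\[
V_c(t,x)=R(t)^{2-1/\alpha}\,v_*\bigl(x/R(t)\bigr),\qquad R(t)=\bigl(1-c\,e^{-(2(\sigma+1)+N)t}\bigr)^{\alpha},
\]
with $\alpha=(\sigma+1)/(2(\sigma+1)+N)$, of exact solutions to \eqref{1a}, corresponding under self-similar rescaling to ordinary time translations of the Barenblatt source solution. Each $V_c$ obeys \eqref{1az}--\eqref{1e}, and $\partial_c V_c|_{c=0}$ spans $E_{01}$. By local uniqueness of the $C^1$ invariant submanifold of $\M$ tangent to $E_{01}$ and lying in the moment slice, $\M_1$ coincides (near $v_*$) with $\{V_c(0,\cdot):|c|<c_0\}$. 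Consequently the $e^{-\lambda t}$-approach to $\M_1$ picks out a unique $c\in\R$ with $\|v(t)-V_c(t,\cdot)\|\lesssim e^{-\lambda t}$ in the Koch norm; translating back to the pressure variable produces the $L^{\infty}$ bound, while reading off the support $\spt V_c(t,\cdot)=\overline{B_{R(t)}(0)}$ and absorbing the $e^{-\lambda t}$-perturbation into $C e^{-\lambda t}$ yields the sandwich estimate for $\P(v(t))$.

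\textbf{Main obstacle.} The most delicate step will be the identification $\M_1=\{V_c(0,\cdot)\}$. It combines (i) a direct but unilluminating verification that the family $V_c$ solves \eqref{1a}, (ii) a careful linearization at $c=0$ in the Koch variables, where the stationary attractor is a constant and pure dilations become a nontrivial perturbation of the boundary parametrization, to confirm tangency to the correct one-dimensional eigenspace $E_{01}$, and (iii) a local uniqueness statement for invariant $C^1$ submanifolds inside a transverse intersection of invariant level sets, which must be extracted from the construction in Theorem~\ref{T5}. A secondary technical burden is tracking the transversality of the nonlinear moment functionals $F_0,F_1,F_2$ to the relevant eigenspaces through Koch's nonlinear reparametrization of both dependent and independent variables, so that the intersection in the previous paragraph is genuinely a smooth one-dimensional manifold.
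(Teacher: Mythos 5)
Your plan diverges from the paper's at the crucial step. The paper proves this result by first establishing Theorem~\ref{T14} on the level of the transformed variable $w$: it applies Theorem~\ref{T5} to obtain a shadowing trajectory $\tilde w(t)$ on $W_c^{\loc}$, expands $\tilde w(t) = \tilde a(t) + \tilde b(t)\cdot z + z\cdot\tilde A(t)z + \tilde c(t)(1-\gamma|z|^2) + \theta(\cdots)$, and then tracks the decay of the scalar/vector/matrix coefficients by differentiating the corresponding moments of $\tilde w$ against the perturbation equation \eqref{13}; the quadratic bound \eqref{44c} on $\theta$ makes the remainder higher order. Translating back to $v$ via Lemma~\ref{L20a}, after showing (as you also observe) that the moment conditions \eqref{1d}, \eqref{1e} are preserved and force the lower eigenmodes to vanish (e.g.\ $A=0$ in Theorem~\ref{T12a}), gives Theorem~\ref{T12b}. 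No identification of the invariant submanifold with the explicit dilation family is made at any stage.

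The identification $\M_1 = \{V_c(0,\cdot)\}$, which you correctly flag as the delicate step, does not in fact hold, for two compounding reasons. First, the center manifold $W_c$ from Theorem~\ref{T5} (Proposition~\ref{P3b}) is \emph{not} unique --- the paper states explicitly that it depends on the cut-off $\eta_{\eps,\delta}$. Second, the negative-orbit characterization in Theorem~\ref{T5}(3) requires $w(t)\in\B_{\eps,\delta}$ for all $t\le 0$, but the dilation solutions $V_c$ with $c\ne 0$ blow up at finite negative time: for $c>0$ the radius $R(t) = (1-c\,e^{-(2(\sigma+1)+N)t})^{\alpha}$ hits $0$ at $t_* = \frac{1}{2(\sigma+1)+N}\log c$, and for $c<0$ one has $R(t)\to\infty$ as $t\to -\infty$. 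In either case $V_c$ leaves every small neighborhood of $v_*$ backwards in time, so $V_c(0,\cdot)$ need not lie on $W_c^{\loc}$, and there is no ``local uniqueness of the $C^1$ invariant submanifold tangent to $E_{01}$'' to invoke. What is true, and what the theorem actually needs, is only that $\M_1$ and $\{V_c\}$ are tangent at the origin --- so they agree to first order, and since $2(2(\sigma+1)+N) > 3(\sigma+1) > \lambda$ the second-order discrepancy is absorbed by the error $e^{-\lambda t}$. But once you retreat to leading-order agreement you have given up the geometric shortcut and are effectively forced to prove Theorem~\ref{T14} by the coefficient analysis the paper uses.
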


In principle, similar statements on corrections of even higher order could be derived via the Invariant Manifold Theorem \ref{T5}, but the computations would get more and more involved the larger the eigenvalues were chosen.

The remainder of the paper is organized as follows: In the following section, we express the rescaled pressure equation 
\eqref{1a} in new variables that are suitable for our dynamical systems approach to work. We present a  theorem on well-posedness of the resulting equation and state our main theorem about invariant manifolds. Finally, we reformulate  Theorems \ref{T13}--\ref{T12b} in terms of the new variables. Section \ref{S:2} contains the proofs of Theorems \ref{T13}--\ref{T12b} as a consequence of the corresponding statements for the new variables. The rest of the paper is entirely devoted to the analysis of the transformed equation. In Section \ref{S:5}, we study its linear version and establish a well-posedness theory that is appropriate for addressing the nonlinear equation with a fixed point argument. This is the content of Section \ref{S:6}. The invariant manifold theorem is proved in Section \ref{S7}, and the applications are studied in Section \ref{S:8}.

\section{New variables and main theorem}\label{S:main}

The main difficulty of applying dynamical systems arguments to the porous medium equation \eqref{1} (or \eqref{1a}) consists of finding a setting in which solutions depend differentiably on the initial data. One problem appears to be the free moving boundary at which the regularity of  solutions breaks down and at which perturbations around the (stationary) Barenblatt solutions are hard to define. The  elegant mass-transport approach chosen in \cite{DenzlerMcCann05,Seis14} and which builds up on Otto's gradient flow interpretation \cite{Otto01}  overcomes this obstacle via Brenier's theorem  \cite{Brenier87,Brenier91}, but a differentiable dependency on the initial data using mass transport techniques seems out of reach.

A framework in which the latter can be achieved was introduced by Koch in his habilitation thesis \cite{Koch99}. Koch suggests a nonlinear change of dependent and independent variables which transforms the free boundary problem \eqref{1a} into a partial differential equation on a fixed domain. Moreover, in the new variables, Barenblatt solutions  $u_*(R,\tacka)$ are mapped onto the constant $R$.

With total mass fixed as in \eqref{1az} (i.e., $R=1$), at any time $t\ge0$, the transformation of the spatial coordinates reads
\begin{equation}\label{100}
z = \frac{x}{\sqrt{2 v(t,x) +|x|^2}}.
\end{equation}
It is clear that the new coordinate $z$ lies in the unit ball, and the transformation reduces to $z=x$ if $v$ is the Barenblatt solution $v_*$. Notice that $(z, \sqrt{2v_*(z)})$ is obtained by projecting $(x,\sqrt{2v(t,x)})$ orthogonally onto the graph of $\sqrt{2v_*}$, which is a half-sphere.   

We introduce the new dependent variables as perturbations around the Barenblatt solution. More precisely, if $w(t,z)$ denotes the distance between $(x,\sqrt{2v(t,x)})$ and the half-sphere, it holds:
\begin{equation}\label{101}
 1 + w(t,z) = \sqrt{2v(t,x) + |x|^2}.
 \end{equation}
As announced above, this change of variables transforms the Barenblatt solutions $v_*$ into the constant function $1$, or, in other words, perturbations are constantly zero, i.e., $w\equiv0$. The change of variables will be made rigorous in Lemmas \ref{L20} and \ref{L20a} below. These two lemmas in particular show that there is a diffeomorphism between the graph of  $v$ and the graph of $w$. In  Figure \ref{fig}
\begin{figure}[t]
   \centering
\scalebox{0.55}{\includegraphics{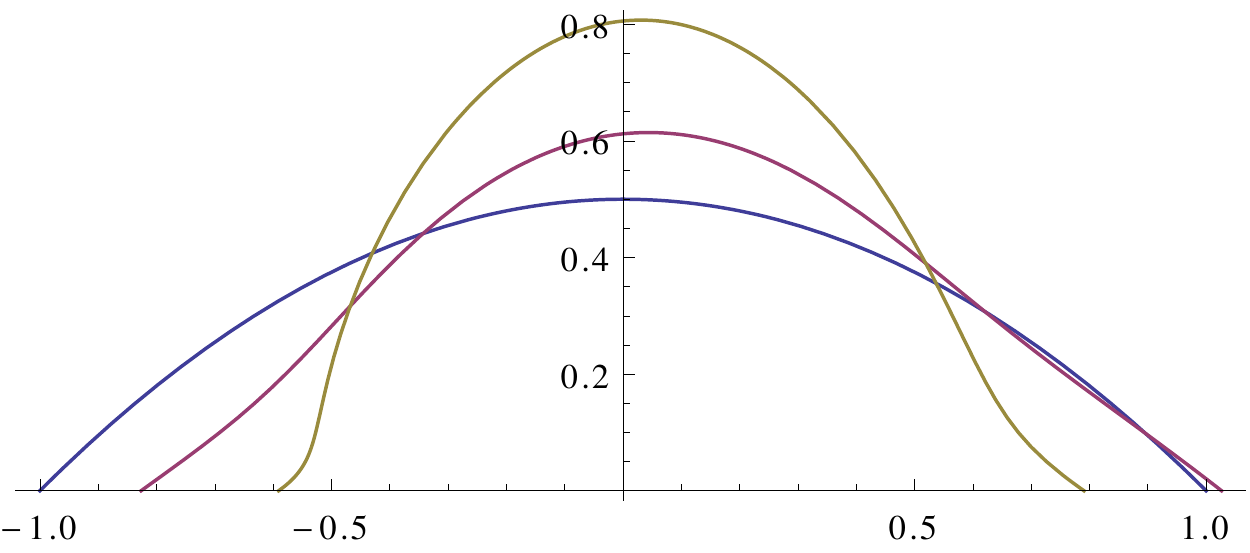}}
	\scalebox{0.55}{\includegraphics{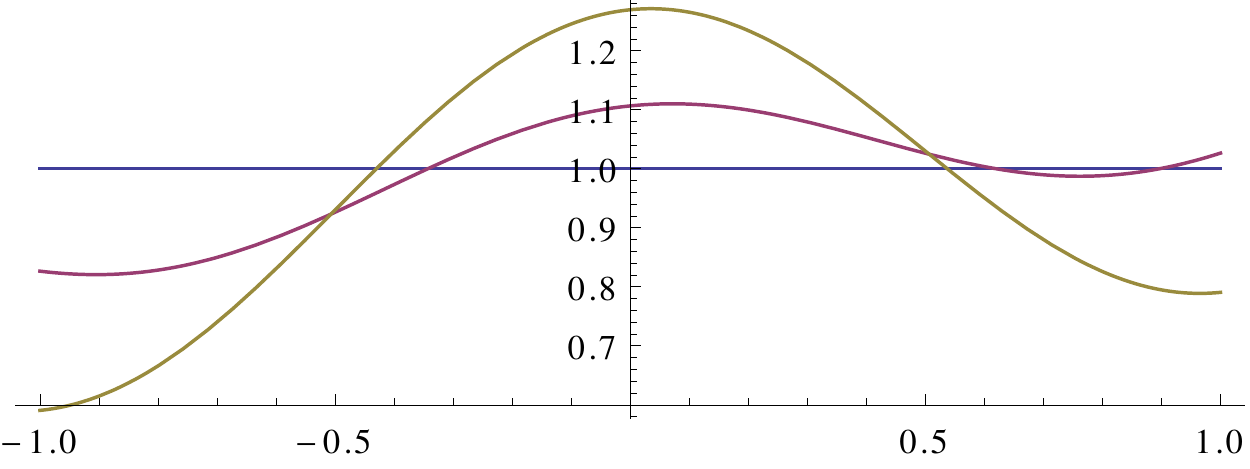}}
\caption{The plot on the left shows the graph of the Barenblatt pressure (blue curve) and two perturbations in the $(x,v)$-plane. The plot on the right shows the corresponding graphs in the $(z,1+w)$-plane.\label{fig}}
\end{figure}
above, we illustrate this change  of variables around the Barenblatt solution. 

For further references, we note that the inverse change of variables reads
\begin{eqnarray}
x &=& (w(t,z) +1)z,\label{45a}\\
v(t,x)- \rho(x)  &=& w(t,z) + \frac12w(t,z)^2.\label{45b}
\end{eqnarray}

The transformed equation for the $w$-variables is
\begin{eqnarray}
\lefteqn{\partial_t w - \rho^{-\sigma}\div\left(\rho^{\sigma+1}\grad w\right) }\nonumber \\
&=& (N+2\sigma+1) \rho \frac{|\grad w|^2}{1+w+z\cdot \grad w}  -\rho^{-\sigma}\div\left(\rho^{\sigma+1} z\frac{|\grad w^2|}{1+w+z\cdot \grad w}\right),\label{13}
\end{eqnarray}
see also Lemma \ref{L20a} below.
Since $w$ represents the perturbation around the constant (former Barenblatt) solution, we will often call \eqref{13} the (nonlinear) {\em perturbation equation}. 

In \cite{Koch99}, Koch develops a regularity theory for equations of this type in H\"older spaces. For a slightly simpler version of \eqref{13} in which the nonlinearity is independent of the function itself (and thus only depends on derivatives), Kienzler \cite{Kienzler14} improves on Koch's results and establishes a theory for small  Lipschitz initial data. Building up on Kienzler's techniques, we will construct smooth solutions to \eqref{13} for small initial data in $C^{0,1}$. These solutions are uniformly controlled in any $C^k$ norm by the Lipschitz norm of the initial datum, which prevents the nonlinear terms from degenerating. In addition, solutions depend analytically on the initial datum. More precisely, we prove

\begin{theorem}[Well-posedness for the perturbation equation]\label{T1}
There exists $\eps>0$ and $\delta>0$ such that for every $g\in \B_{\eps,\delta}$, there exists a unique solution $w$ to \eqref{13}. This solution  is smooth and depends analytically on $g$. Moreover,  $\|w\|_{L^{\infty}} + \|w\|_{\Lip}<1$ and
\begin{equation}
\label{7az}
t^{k + |\beta|} \left|\partial_t^k \partial_z^{\beta} \grad w(t,z)\right| \lesssim \|g\|_{\Lip},
\end{equation}
for any $(t,z)\in (0,\infty)\times B_1(0)$ and $k\in\N_0$ and $\beta\in \N_0^N$.
\end{theorem}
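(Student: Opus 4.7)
The plan is to set up a Picard iteration for the Duhamel formulation of \eqref{13}, building on the linear well-posedness and smoothing theory to be developed in Section \ref{S:5}. Write the equation schematically as $\partial_t w + \L w = \N[w]$, where $\L w = -\rho^{-\sigma}\div(\rho^{\sigma+1}\grad w)$ is the degenerate self-adjoint operator encoding the hypocycloidic geometry, and
\[
\N[w] \;=\; \frac{1}{1+w+z\cdot\grad w}\Bigl((N+2\sigma+1)\rho|\grad w|^2 - \rho^{-\sigma}\div\bigl(\rho^{\sigma+1} z|\grad w|^2\bigr)\Bigr).
\]
The crucial observation is that $\N[w]$ is quadratic in $\grad w$ and, provided $\|w\|_{\Lip}+\|w\|_{L^{\infty}}$ is small enough that the denominator stays uniformly bounded away from zero, it depends \emph{analytically} on $w$ and $\grad w$ through the Neumann series for $(1+w+z\cdot\grad w)^{-1}$. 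Writing $S(t)$ for the linear semigroup generated by $-\L$, we seek $w$ as a fixed point of the map
\[
\Phi[w](t) \;=\; S(t)g + \int_0^t S(t-s)\N[w(s)]\,ds.
\]

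First, I would define the solution space $X$ on $(0,\infty)\times B_1(0)$ by the norm
\[
\|w\|_X \;=\; \sup_{t>0}\bigl(\|w(t)\|_{L^\infty}+\|\grad w(t)\|_{L^\infty}\bigr) \;+\; \sup_{k,\beta}\; c_{k,\beta}\,\sup_{t>0,\,z\in B_1} t^{k+|\beta|}\bigl|\partial_t^k\partial_z^\beta \grad w(t,z)\bigr|,
\]
with suitable weights $c_{k,\beta}$ whose decay in $k+|\beta|$ will be used later to produce analyticity. The linear theory of Section \ref{S:5}, in the spirit of Kienzler's instantaneous-smoothing estimates for degenerate parabolic operators of the type $\L$, should yield $\|S(t)g\|_X \lesssim \|g\|_{\Lip}$ together with the inhomogeneous bound $\bigl\|\int_0^t S(t-s)f(s)\,ds\bigr\|_X \lesssim \|f\|_Y$ for a companion norm $Y$ measuring the right-hand side with appropriate weights. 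The specific (Lipschitz-plus-derivative) norm is chosen precisely so that it controls both the $L^\infty$ size of $\grad w$ (keeping $\N[w]$ well-defined) and all higher derivatives needed for Theorem \ref{T1}(iii).

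Second, I would estimate the nonlinearity. The quadratic structure and the Neumann expansion of $(1+w+z\cdot\grad w)^{-1}$ will give, for $\|w\|_X \ll 1$, a bound of the form $\|\N[w]\|_Y \lesssim \|w\|_X^2$ and a Lipschitz bound $\|\N[w_1]-\N[w_2]\|_Y \lesssim (\|w_1\|_X+\|w_2\|_X)\|w_1-w_2\|_X$. Taking $\eps,\delta$ sufficiently small and combining with the linear estimates, $\Phi$ contracts on a ball of radius $O(\|g\|_{\Lip})$ in $X$; the Banach fixed point theorem then produces a unique solution satisfying $\|w\|_{L^\infty}+\|w\|_{\Lip}<1$ as well as the smoothing bounds \eqref{7az}. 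Smoothness in space-time on $(0,\infty)\times B_1$ follows directly from the weighted bounds in $X$, while analyticity in $g$ follows from the analytic implicit function theorem applied to the fixed-point equation $w=\Phi[w]$, since $g\mapsto S(\cdot)g$ is linear and $w\mapsto\N[w]$ is a convergent power series in a neighborhood of the origin in $X$.

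The main obstacle will be the linear input itself: the operator $\L$ degenerates at $\del B_1$ with the anomalous weight $\rho^{\sigma+1}$, so the standard parabolic Schauder toolbox is unavailable, and the precise combination of $L^\infty\cap\Lip$ persistence with the strong smoothing \eqref{7az} has to be extracted from the hypocycloidic heat-kernel analysis carried out in Sections \ref{S:5}--\ref{S:6}. Once those linear estimates are in hand with constants uniform in $k$ and $\beta$ (so that $X$ can be chosen with weights $c_{k,\beta}$ going to zero rapidly enough), the remaining steps reduce to the bookkeeping sketched above, and the analytic fixed-point argument delivers the full theorem.
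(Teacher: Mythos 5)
Your overall scheme---fixed point for the Duhamel formulation together with the analytic implicit function theorem, fed by the linear theory of Section~\ref{S:5}---is the same skeleton the paper uses, and the paper even remarks that a direct fixed-point argument on the Carleson side is viable in principle. But there is a concrete gap in your linear input, and it is exactly the point the paper flags as the extra complication compared with Kienzler.

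The gap is the claimed bound $\|S(t)g\|_X\lesssim\|g\|_{\Lip}$. Your norm $\|\cdot\|_X$ includes $\sup_t\|w(t)\|_{L^\infty}$, yet $\|g\|_{\Lip}=\|\grad g\|_{L^\infty}$ only controls the gradient; it cannot control $\|S(t)g\|_{L^\infty}$ (take $g$ a nonzero constant, which the semigroup preserves). The Carleson-measure estimates of Theorem~\ref{T6} are likewise formulated entirely for $\grad w$, not for $w$ itself, so the uniform bound $\|w\|_{L^\infty}\lesssim\cdot$ needed to keep the denominator $1+w+z\cdot\grad w$ away from zero, and hence to keep $\N$ well-defined and analytic, is simply not produced by the smoothing theory you invoke. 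The paper resolves this not by upgrading the Carleson estimates to control $w$, but by running a parallel $L^2_\sigma$ energy theory (Lemma~\ref{L6}) together with the truncated equation \eqref{43} and the comparison principle of Lemma~\ref{L15a}: the $L^\infty$ bound on $w$ comes from $\|\zeta\|_{L^\infty}\lesssim\|\zeta\|_{L^2_\sigma}+\|\zeta\|_{\Lip}$ and the $L^2_\sigma$ semiflow estimate, not from the gradient-only smoothing. A self-contained fixed-point argument in your $X$ would have to supply an analogous $L^\infty$ (or $L^2_\sigma$) input; as written, $\Phi$ may not even map a small ball of $X$ into a region where $\N$ is defined.

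There is a second, more minor, mismatch. You fold the entire scale of estimates \eqref{7az} into the fixed-point norm with weights $c_{k,\beta}$, and invoke ``constants uniform in $k$ and $\beta$.'' Parabolic regularity constants are not uniform in derivative order, and the Banach algebra property of such a weighted sup-norm space (needed to sum the Neumann series for $(1+w+z\cdot\grad w)^{-1}$) is not automatic with the Leibniz rule. The paper avoids this by first closing the fixed-point/analytic-IFT argument in the low-regularity space $X(p)\cap C^{0,1}$ (only third-order Carleson control), and then deriving \eqref{7az} \emph{a posteriori} by the Kienzler-style bootstrapping argument (differentiating in angular and radial directions as in the proof of Lemma~\ref{L8a}). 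You should separate the two steps the same way: fixed point in a finitely-smooth space, then bootstrap to all derivatives.

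The analyticity-in-$g$ part of your argument is correct and matches the paper: the nonlinearity is a convergent power series near the origin and the analytic implicit function theorem applied to $w-\Phi[w]$ delivers analytic dependence. Just be careful that this is analyticity as a map $g\mapsto w$ between Banach spaces, not spatial real-analyticity---the decaying weights $c_{k,\beta}$ you float would be aiming at the latter, which is neither claimed nor needed.
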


Here and in the following, we use the notation
\[
\B_{\eps,\delta}: = \left\{ w\in C^{0,1}:\: \|w\|_{L^{\infty}}\le \delta,\, \|w\|_{\Lip}\le \eps\right\}.
\]

The analyticity of the solution is a byproduct of the fixed point argument applied in the construction of solutions of the nonlinear equation. The underlying idea has been first used by Angenent \cite{Angenent90b,Angenent90a} and has been further exploited by Koch and  Lamm \cite{KochLamm12}.

Before stating our results on invariant manifolds and long-time asymptotics, we need some preparations. Let $\Sigma(\L) = \{\lambda_{k}\}_{k\in\N_0}$ be the (discrete) spectrum of the linear operator $\L = -\rho\laplace +(\sigma+1)z\cdot \grad$ on the Hilbert space $L^2_{\sigma} = L^2(d\mu_{\sigma})$, where $d\mu_{\sigma} = \rho^{\sigma}dx$. See Proposition \ref{P2} below for more details on the spectrum of $\L$. Suppose that the eigenvalues are in increasing order and not repeated, so that $\lambda_k < \lambda_{k+1}$. 
For $K\in\N_0$ arbitrary, consider the subspace $E_c$ of $L^2_{\sigma}$ that is spanned by the eigenfunctions corresponding to the first $K$ eigenvalues, and denote by $E_s$ its orthogonal complement, i.e., $L^2_{\sigma} = E_c\oplus E_s$. Let $\Lambda$ and $\epsgap$ be positive constants so that $\Lambda\in \left(e^{-(K+1)} +\epsgap, e^{-k}-\epsgap\right)$. The semi-flow associated to \eqref{13} will be denoted by $S^t$, that is $S^t(g) = w(t)$ if $w$ denotes the solution of \eqref{13} with initial value $g$ given by Theorem \ref{T1} above.

We are now in the position to state our main result.

\begin{theorem}[Invariant manifolds]
\label{T5}
There exist constants $\eps,\, \delta>0$ and $\tilde \eps,\, \tilde \delta>0$, a function $\theta: E_c\to E_s$ and manifolds $W_c = \left\{ g_c + \theta(g_c):\: g_c\in E_c\right\}$ and $W^{\loc}_c  := W_c \cap \B_{\eps,\delta}$ with the following properties:
\begin{enumerate}
\item The function $\theta$ is a differentiable map from $\B_{\eps,\delta}$ to $C^{0,1}$ satisfying $\theta(0) = 0$, $D \theta(0) = 0$ and $\|\theta(g_c)\|_{C^{0,1}}\lesssim \|g_c\|_{C^{0,1}}^2$ for all $g_c\in E_c\cap \B_{\eps,\delta}$.
\item If $g\in W_c\cap \B_{\tilde \eps,\tilde \delta}$, then $S^t(g)\in W_c^{\loc}$ for all $t\ge0$.
Moreover, there exists a unique $\tilde g\in W_c^{\loc}$ such that $S^t(\tilde g)\in W_c^{\loc}$ for all $t\ge 0$ and 
\[
\|S^t(g) - S^t(\tilde g)\|_{C^1}\lesssim \Lambda^t \quad\mbox{for all }t\ge 0.
\]
\item If $\{w(t)\}_{t\le 0}$ is a negative semi-orbit of \eqref{13} with  $g\in W_c$ and $w(t)\in\B_{\eps,\delta}$ for all $t\le 0$, then $w(t)\in W_c^{\loc}$ for all $t\le 0$. 
\end{enumerate}
\end{theorem}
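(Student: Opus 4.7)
The approach is the Lyapunov--Perron method adapted to the semilinear parabolic equation \eqref{13}, which I write schematically as $\partial_t w + \L w = N(w)$. By inspection of \eqref{13} one has $N(0) = 0$ and $DN(0) = 0$, so the nonlinearity is at least quadratic near the origin. Using the spectral decomposition $L^2_\sigma = E_c \oplus E_s$ from Proposition \ref{P2}, denote the orthogonal projections by $P_c$ and $P_s$. Since $E_c$ is finite-dimensional and spanned by smooth eigenfunctions, the semigroup $e^{-\L t} P_c$ extends to all $t \in \R$ with $\|e^{-\L t} P_c\| \lesssim e^{-\lambda_K t}$, while $\|e^{-\L t} P_s\| \lesssim e^{-\lambda_{K+1} t}$ for $t \ge 0$. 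Setting $\mu := -\log \Lambda$, the hypothesis places $\mu$ strictly in the spectral gap $(\lambda_K, \lambda_{K+1})$, with margin $\epsgap$ on either side---this gap is the central quantitative input.

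To construct the graph map $\theta$, fix $g_c \in E_c \cap \B_{\eps,\delta}$ and seek a fixed point of the Lyapunov--Perron operator
\[
\mathcal T_{g_c}[w](t) := e^{-\L t} g_c + \int_0^t e^{-\L(t-s)} P_c N(w(s))\, ds + \int_{-\infty}^t e^{-\L(t-s)} P_s N(w(s))\, ds
\]
on a small ball in the Banach space $\mathcal E^- := \bigl\{w \colon (-\infty,0] \to C^{0,1} \, : \, \sup_{t \le 0} \Lambda^{-t} \|w(t)\|_{C^{0,1}} < \infty\bigr\}$, augmented by the time-weighted higher-regularity norms provided by Theorem \ref{T1}. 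The two integrals converge precisely because the weight $\Lambda^{-t}$ sits strictly between the admissible $E_c$-growth and $E_s$-decay rates; the quadratic estimate $\|N(w_1) - N(w_2)\|_\ast \lesssim (\|w_1\|_\ast + \|w_2\|_\ast) \|w_1 - w_2\|_\ast$ inherited from the analysis of Section \ref{S:6} yields a contraction on a small ball. Setting $\theta(g_c) := P_s w[g_c](0)$, the identities $\theta(0) = 0$, $D\theta(0) = 0$, and the quadratic bound $\|\theta(g_c)\|_{C^{0,1}} \lesssim \|g_c\|_{C^{0,1}}^2$ are read off directly from $N(0) = 0$ and $DN(0) = 0$; differentiability (in fact analyticity) of $\theta$ in the parameter $g_c$ is inherited from the analytic dependence on initial data in Theorem \ref{T1}, combined with the implicit function theorem applied to the fixed-point equation.

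The invariance properties are consequences of uniqueness. If $g = g_c + \theta(g_c) \in W_c \cap \B_{\tilde\eps,\tilde\delta}$ and $w(t) := S^t(g)$, then for every $\tau \ge 0$ the shifted curve $s \mapsto w(\tau + s)$ restricted to $(-\infty, 0]$ satisfies $\mathcal T_{P_c w(\tau)}[w(\tau + \cdot)] = w(\tau + \cdot)$ and lies in $\mathcal E^-$ by the uniform bounds of Theorem \ref{T1}; uniqueness forces $P_s w(\tau) = \theta(P_c w(\tau))$, i.e.\ $S^\tau(g) \in W_c^{\loc}$, giving forward invariance. Point 3 is the same argument applied to an arbitrary bounded negative semi-orbit, which is automatically a fixed point of $\mathcal T_{P_c w(0)}$. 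The asymptotic-phase statement in point 2 is obtained by a second, dual Lyapunov--Perron fixed point on the positive half-line: for given $g$, one parametrizes by $P_c \tilde g \in E_c$ and produces $\tilde g \in W_c^{\loc}$ together with $\phi(t) := S^t(g) - S^t(\tilde g)$ obeying $\|\phi(t)\|_{C^1} \lesssim \Lambda^t$, via a Duhamel formula that integrates the $E_s$-component forward from $0$ and the $E_c$-component backward from $+\infty$ so that the $\Lambda^t$-decay is preserved.

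The principal obstacle throughout is the tension between the two natural function-space settings: the spectral decomposition of $\L$ is a Hilbert-space statement in $L^2_\sigma$, whereas the nonlinear well-posedness of \eqref{13} (Theorem \ref{T1}) and the quadratic estimate on $N$ live in $C^{0,1}$-type spaces with Kienzler-style time-weighted norms needed to control the boundary degeneracy. Reconciling these amounts to showing that $P_c$, $P_s$, and $e^{-\L t}$ act boundedly in the stronger norm: for $P_c$ this is automatic by finite-dimensionality and the smoothness of the first $K$ eigenfunctions from \cite{Seis14}, but for $P_s$ it requires a semigroup analysis in the $C^{0,1}$-type space that is the technical heart of Sections \ref{S:5}--\ref{S:6}. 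Additionally, the denominators $1 + w + z \cdot \grad w$ in \eqref{13} demand that $\|w\|_{C^{0,1}}$ stay bounded away from $1$ uniformly in time along the whole Lyapunov--Perron iteration, which is what forces the nested smallness conditions $\tilde\eps \ll \eps$, $\tilde\delta \ll \delta$ in the statement.
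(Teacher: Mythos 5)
Your overall strategy --- Lyapunov--Perron in a time-weighted space, graph map $\theta$ defined by the $E_s$-projection of the fixed point at $t=0$, tangency from $N(0)=0$, $DN(0)=0$, a dual fixed point for the asymptotic phase --- is the standard one and would work, but there is a structural gap in the way you set it up that the paper specifically builds around, and your last paragraph shows you have sensed it without resolving it.

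The issue is that the Lyapunov--Perron operator $\mathcal T_{g_c}$ acts on $\mathcal E^-=\{w:(-\infty,0]\to C^{0,1}:\,\sup_{t\le 0}\Lambda^{-t}\|w(t)\|_{C^{0,1}}<\infty\}$, and a ball of small radius in this \emph{weighted} norm does not control $\sup_{t\le 0}\|w(t)\|_{C^{0,1}}$ at all: since $\Lambda<1$ and $t\le 0$, the weight $\Lambda^{-t}$ tends to $0$ as $t\to -\infty$, so $\|w(t)\|_{C^{0,1}}$ is permitted --- indeed forced, by the growth of the $E_c$-component --- to blow up like $\Lambda^{t}$ backward in time. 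Consequently the integrand $N(w(s))=f(w(s))$ with its denominator $1+w+z\cdot\grad w$ is not defined along the whole orbit, and neither the convergence of the $P_s$-integral over $(-\infty,t]$ nor the contraction estimate for $\mathcal T_{g_c}$ can be established. Shrinking $\tilde\eps,\tilde\delta$ relative to $\eps,\delta$ does not repair this; those nested smallness conditions in the statement of Theorem \ref{T5} serve a different purpose (ensuring the original flow agrees with a modified one, see below). The missing idea is to \emph{truncate} the nonlinearity: the paper introduces a cut-off $\eta_{\eps,\delta}$, replaces $F$ by $F_{\eps,\delta}=\eta_{\eps,\delta}F$ in Section \ref{S:6.1}, proves the truncated equation \eqref{43} generates a \emph{global} semi-flow (Theorem \ref{T4}) with the necessary Lipschitz and quadratic bounds on the time-one remainder $R=S-e^{-\L}$ (Lemma \ref{L17}), constructs $W_c$ and the stable foliation for the truncated flow (Propositions \ref{P3b} and \ref{P4}), and only then shows in the final step (the identity \eqref{44b}) that for data in $\B_{\tilde\eps,\tilde\delta}$ the truncated and original flows coincide, so the invariant manifold is inherited. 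Without a truncation, your fixed-point argument cannot get off the ground.

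Two secondary differences, not gaps but worth noting: the paper follows Koch's \emph{discrete}-time framework \cite{Koch97}, working with the time-one map $S=L+R$ and sequence spaces $\ell_{\Lambda_-,\Lambda_+}$, rather than the continuous integral equation; and it runs the fixed point in $L^2_\sigma$ (where the spectral projections $P_c$, $P_s$ are cleanest), then upgrades to $C^{0,1}$ using the smoothing estimate of Lemma \ref{L15b} (which says $S$ maps $L^2_\sigma$ contractively into $C^{0,1}$ after time one). Your version works directly in $C^{0,1}$, which requires establishing boundedness of $P_s$ and $e^{-\L t}$ in $C^{0,1}$ --- you flag this but do not supply it; the paper's route sidesteps it entirely via the $L^2_\sigma\to C^{0,1}$ smoothing gained from one time step. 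If you want to keep the continuous Lyapunov--Perron formulation, you should still truncate first, and you would do well to borrow the $L^2_\sigma$-then-smooth strategy to avoid having to re-derive a $C^{0,1}$ semigroup theory for $e^{-\L t}P_s$.
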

The first property shows that the local center manifold $W_c^{\loc}$ is tangent at the origin to the eigenspace $E_c$.
The second property shows that  $W_c^{\loc}$ is locally flow invariant under the semi-flow $\{S(t)\}_{t\ge0}$ for the nonlinear perturbation equation \eqref{42a} and shows to what degree solutions to the perturbation equation \eqref{13} are characterized by a system of ODEs.

We finally give four applications of the invariant manifold theorem. In the first example, we study the long-time limit of small data solutions of the perturbation equation.

\begin{theorem}[Stability]
\label{T10}
There exists positive constants $\tilde \eps$ and $\tilde \delta$ with the following properties: If $g\in \B_{\tilde \eps,\tilde \delta}$ and  $w$ solves \eqref{13} with initial datum $g$, then there exists a constant $a\in\R$ such that for all $k\in\N_0$ and $\beta\in \N_0^N$ it holds
\begin{equation}\label{7aza}
\|\partial_t^k\partial_z^{\beta}\left(w(t)-a\right)\|_{L^{\infty}} \lesssim e^{-(\sigma+1)t}\quad\mbox{for all }t\ge 1.
\end{equation}
\end{theorem}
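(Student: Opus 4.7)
The plan is to apply the Invariant Manifold Theorem \ref{T5} with $K=1$, so that $E_c$ is the one-dimensional eigenspace of $\L$ associated to $\lambda_0=0$, spanned by the constant function $\I$. Because $\grad w\equiv 0$ makes both sides of \eqref{13} vanish, every constant is a stationary solution, so the line $\{a\I:\,a\in\R\}$ is itself a one-dimensional invariant manifold tangent to $E_c$ at the origin; combined with property~1 of Theorem \ref{T5}, which forces $W_c^{\loc}$ to be a graph over $E_c$, this identifies $W_c^{\loc}$ (locally) with the line of constants and shows $\theta\equiv 0$ on $E_c$. Property~2 then produces, for each $g\in\B_{\tilde\eps,\tilde\delta}$, a constant $a_0\in\R$ (playing the role of $\tilde g$) with $\|w(t)-a_0\|_{C^1}\lesssim\Lambda^t$ for any prescribed $\Lambda\in(e^{-(\sigma+1)}+\epsgap,\,1-\epsgap)$.

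To improve this to the sharp exponent $\sigma+1$, I would bootstrap through the quadratic structure of the nonlinearity. Setting $u:=w-a_0$ and using $\grad a_0=0$, equation \eqref{13} becomes $\partial_t u+\L u=N[u,a_0]$, where $N$ is the right-hand side of \eqref{13} with denominator $1+a_0+u+z\cdot\grad u$; in particular one has the pointwise bound $|N[u,a_0]|\lesssim|\grad u|^2$ on the non-divergence piece and a divergence structure suited to integration by parts on the other. Choose $\Lambda\in(e^{-(\sigma+1)}+\epsgap,\,e^{-(\sigma+1)/2}-\epsgap)$, which is nonempty since $\sigma+1>0$ in the slow-diffusion regime, so that Step~1 yields $\Lambda^2<e^{-(\sigma+1)}$ and hence a nonlinear driving term that decays strictly faster than $e^{-(\sigma+1)t}$ and is time-integrable. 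Orthogonally decompose $u(t)=\alpha(t)\I+u_s(t)$ in $L^2_\sigma$ with $u_s\in E_s$. Since $\L\I=0$ and the divergence piece of $N$ vanishes on integration against $d\mu_\sigma$ (using $\rho^{\sigma+1}=0$ on $\partial B_1$), we obtain $\dot\alpha=\mu_\sigma(B_1)^{-1}\int(N+2\sigma+1)\rho^{\sigma+1}|\grad u|^2/(1+a_0+u+z\cdot\grad u)\,dx$, which is integrable; hence $\alpha(t)\to\alpha_\infty$ at rate $\Lambda^{2t}\lesssim e^{-(\sigma+1)t}$. On $E_s$ the operator $\L$ has spectral gap $\lambda_1=\sigma+1$, and Duhamel applied to $\partial_t u_s+\L u_s=P_sN[u,a_0]$ gives
\[
\|u_s(t)\|_{L^2_\sigma}\le e^{-(\sigma+1)t}\|u_s(0)\|_{L^2_\sigma}+\int_0^t e^{-(\sigma+1)(t-s)}\Lambda^{2s}\,ds\lesssim e^{-(\sigma+1)t}.
\]
Redefining $a:=a_0+\alpha_\infty$ then delivers $\|w(t)-a\|_{L^2_\sigma}\lesssim e^{-(\sigma+1)t}$.

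To propagate $L^2_\sigma$-decay of $u=w-a$ into $L^\infty$-decay of every $\partial_t^k\partial_z^\beta u$ at the \emph{same} rate, I would combine the previous step with parabolic smoothing for the linear semigroup $e^{-\tau\L}|_{E_s}$ developed in Section \ref{S:5}. Concretely, writing $u_s(t)=e^{-\L}u_s(t-1)+\int_{t-1}^t e^{-\L(t-s)}P_sN[u,a_0]\,ds$ and invoking an $L^2_\sigma\to C^k$ smoothing bound on $e^{-\L\tau}|_{E_s}$ uniformly over $\tau\in[1/2,1]$ preserves the $e^{-(\sigma+1)t}$ rate through Duhamel, while the constant part $\alpha(t)-\alpha_\infty$ is handled by applying \eqref{7az} to the shifted solution $s\mapsto w(t-1+s)-a$ on $s\in[0,1]$. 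The main technical obstacle is precisely this upgrade: naive interpolation between the uniform $C^k$-bounds of Theorem \ref{T1} and the $L^2_\sigma$-decay of Step~2 only produces a rate $e^{-(\sigma+1)(1-\eps)t}$ with loss $\eps>0$, so preserving the \emph{exact} exponent requires the sharp degenerate-parabolic smoothing machinery assembled in Sections \ref{S:5}--\ref{S:6}. A secondary subtlety is the identification of $W_c^{\loc}$ with the line of constants, since Theorem \ref{T5} only guarantees tangency at the origin; this is resolved by the stationarity of constants combined with the graph structure of $W_c^{\loc}$ over $E_c$.
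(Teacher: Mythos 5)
Your overall strategy coincides with the paper's: identify $W_c^{\loc}$ with the small constants (in the paper's convention this is $K=0$, with $E_c=\Span\{\psi_{010}\}$; the content of your ``$K=1$'' is the same), extract a preliminary decay rate $\Lambda$ from Theorem~\ref{T5}, split off the mean $\alpha(t)$ whose derivative is quadratic in $\grad w$, and use the spectral gap $\lambda_1=\sigma+1$ on $E_s$ to upgrade to the sharp rate. Your continuous-time Duhamel bootstrap for $u_s$ requires the additional condition $\Lambda^2<e^{-(\sigma+1)}$, which you achieve by taking $\Lambda$ close enough to $e^{-(\sigma+1)}$; this is correct. The paper instead passes to the discrete semi-flow $w_k=w(k)$, writes $w_k=L^kw_0+\sum_{j=1}^kL^{k-j}R(w_{j-1})$, and closes an induction via an interpolated variant of Lemma~\ref{L17}, $\|R(g)\|_{L^2_\sigma}\lesssim\eps^{1/2}\|P_sg\|_{\Lip}^{1/2}\|P_sg\|_{L^2_\sigma}$, which works for \emph{any} $\Lambda<1$; your version is slightly less robust but equivalent here because $\Lambda$ is a free parameter.

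The one real gap is in the $L^2_\sigma\to C^k$ upgrade. In the Duhamel representation $u_s(t)=e^{-\L}u_s(t-1)+\int_{t-1}^te^{-(t-s)\L}P_sN\,ds$, the integrand involves times $t-s\downarrow 0$ at which the linear semigroup has no smoothing, so bounding the integral in $C^k$ would require $C^k$ decay of $N$, and hence of higher derivatives of $w$ --- exactly what is being proved, a circularity. The paper sidesteps this by applying the \emph{nonlinear} smoothing estimate \eqref{43ea} from the proof of Lemma~\ref{L15b} to the pair of solutions $S^\tau(w(t-1))$ and $S^\tau(a)\equiv a$: since $a$ is a stationary point of the flow, this directly yields $|\partial_t^k\partial_z^\beta(w(t)-a)|\lesssim\|w(t-1)-a\|_{L^2_\sigma}\lesssim e^{-(\sigma+1)t}$ for all $t\ge1$, with no singular Duhamel integral at all. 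Replacing your linear-semigroup Duhamel by this nonlinear smoothing estimate closes the argument.
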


The rate of convergence in the above theorem is sharp. It is saturated by spatial translations. The exact rate does not follow automatically from Theorem \ref{T5}, but requires a detailed analysis of the nonlinear term.

Notice that any constant (except $1$) is a solution to the perturbation equation \eqref{13}. As a consequence, the constant long-time limit is not necessarily zero. On the level of the porous medium equation \eqref{1}, this corresponds to the invariance of solutions under the rescaling of mass. Once the mass is fixed as in \eqref{1az}, this constant drops out in \eqref{7aza}. In this way, we obtain the first order corrections in the long-time limit of \eqref{13}.

\begin{theorem}[First order corrections --- spatial translations]
\label{T11}Let $\lambda\in (\sigma+1,2(\sigma+1))$ be given.
There exists positive constants $\tilde \eps$ and $\tilde \delta$ with the following properties: If $g\in B_{\tilde \eps,\tilde \delta}$ and $w$ solves \eqref{13} with initial datum $g$ and satisfies
\[
\lim_{t\uparrow\infty}\int  w(t)\, d\mu_{\sigma}=0,
\]
then there exists a vector  $b\in \R^N$   such that
\[
\|w(t)-e^{-(\sigma+1)t}b\cdot z \|_{C^{0,1}} \lesssim e^{-\lambda t}\quad\mbox{for all }t\ge0.
\]
\end{theorem}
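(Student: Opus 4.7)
\textbf{Setup via Theorems~\ref{T10} and \ref{T5}.} Since $g\in\B_{\tilde\eps,\tilde\delta}$ with $\tilde\eps,\tilde\delta$ chosen small, Theorem~\ref{T10} produces $a\in\R$ with $\|w(t)-a\|_{C^{0,1}}\lesssim e^{-(\sigma+1)t}$ for $t\ge1$. Passing to the limit in the hypothesis $\int w(t)\,d\mu_\sigma\to0$ forces $a=0$, hence $\|w(t)\|_{C^{0,1}}\lesssim e^{-(\sigma+1)t}$ for $t\ge1$. Next, we apply Theorem~\ref{T5} with cut-off $K=2$, so that
\[
E_c=\Span\{1,z_1,\dots,z_N\}
\]
is spanned by the eigenfunctions of $\lambda_0=0$ and $\lambda_1=\sigma+1$; since $\lambda_2=2(\sigma+1)$, the choice $\Lambda=e^{-\lambda}$ falls strictly inside the spectral gap, and Theorem~\ref{T5} produces a unique $\tilde g\in W_c^{\loc}$ with $\|w(t)-\tilde w(t)\|_{C^1}\lesssim e^{-\lambda t}$, where $\tilde w(t):=S^t(\tilde g)$.

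\textbf{Reduced ODE on $W_c^{\loc}$.} Parametrising $\tilde w(t)=\xi(t)+\theta(\xi(t))$ with $\xi(t)=a_0(t)+b(t)\cdot z\in E_c$, the semi-flow projects to the finite-dimensional system
\[
\dot a_0=F_0(a_0,b),\qquad \dot b_i=-(\sigma+1)b_i+F_i(a_0,b),\quad i=1,\dots,N,
\]
with each $F_j$ at least quadratic in $(a_0,b)$. The decisive structural point is that every constant is a stationary solution of~\eqref{13}; by uniqueness of the centre manifold (Theorem~\ref{T5}, part (i)) this forces $\theta\equiv0$ on $\R\cdot1$ and $F_j(a_0,0)\equiv0$, leading to the factorised bound
\[
|F_j(a_0,b)|\lesssim|b|\,(|a_0|+|b|),\qquad j=0,\dots,N.
\]

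\textbf{Bootstrap via Duhamel.} The preliminary estimate $\|w(t)\|_{C^{0,1}}\lesssim e^{-(\sigma+1)t}$ transfers via $\|w-\tilde w\|_{C^1}\lesssim e^{-\lambda t}$ and the quadratic bound on $\theta$ to $|a_0(t)|+|b(t)|\lesssim e^{-(\sigma+1)t}$, hence $|F_j(\tau)|\lesssim e^{-2(\sigma+1)\tau}$. The variation-of-constants formula for $b$,
\[
e^{(\sigma+1)t}b(t)=e^{(\sigma+1)t_0}b(t_0)+\int_{t_0}^t e^{(\sigma+1)\tau}F(a_0,b)(\tau)\,d\tau,
\]
has an integrand bounded by $e^{-(\sigma+1)\tau}$ and thus absolutely convergent, yielding $b^0\in\R^N$ with $|b(t)-e^{-(\sigma+1)t}b^0|\lesssim e^{-2(\sigma+1)t}$. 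The analogous argument for $a_0$ gives $|a_0(t)-a_0^\infty|\lesssim e^{-2(\sigma+1)t}$, and combined with $|a_0(t)|\lesssim e^{-(\sigma+1)t}\to 0$ this forces $a_0^\infty=0$, so $|a_0(t)|\lesssim e^{-2(\sigma+1)t}$. Since $\|\theta(\xi(t))\|_{C^{0,1}}\lesssim\|\xi(t)\|_{C^{0,1}}^2\lesssim e^{-2(\sigma+1)t}$ as well, we obtain
\[
\tilde w(t)=e^{-(\sigma+1)t}b^0\cdot z+O_{C^{0,1}}\!\bigl(e^{-2(\sigma+1)t}\bigr),
\]
and a final triangle inequality with $\|w(t)-\tilde w(t)\|_{C^1}\lesssim e^{-\lambda t}$, using $2(\sigma+1)>\lambda$, yields the claim.

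\textbf{Main obstacle.} Theorem~\ref{T5} only guarantees $e^{-\lambda t}$ closeness to the centre manifold; extracting the sharp leading term $e^{-(\sigma+1)t}b^0\cdot z$ requires analysing the reduced dynamics. The crucial structural input is that constants solve~\eqref{13}, which pins $\theta$ to zero on $\R\cdot1$ and forces the nonlinearities to factor through $b$; without this, $a_0(t)$ could drift at the slow rate $e^{-(\sigma+1)t}$ and the Duhamel integral defining $b^0$ would fail to converge absolutely.
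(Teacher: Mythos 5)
Your proof follows the same conceptual route as the paper (centre manifold for the eigenvalues $\{\lambda_0,\lambda_1\}$, preliminary $e^{-(\sigma+1)t}$ decay from Theorem~\ref{T10}, bootstrap the reduced dynamics via Duhamel), but you execute the reduction more abstractly than the paper does, and a couple of small points need correction.

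\textbf{Indexing.} In the paper's convention $E_c$ is spanned by the eigenfunctions of $\lambda_0,\dots,\lambda_K$; the choice $E_c=\Span\{1,z_1,\dots,z_N\}$ therefore corresponds to $K=1$, not $K=2$ (compare the paper's proof, which explicitly takes $K=1$).

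\textbf{Why $\theta$ vanishes on constants.} The justification is not ``uniqueness of the centre manifold'' (indeed, the paper stresses $W_c$ is \emph{not} unique). The right reference is Theorem~\ref{T5} part (iii): for a constant $a$, the constant trajectory $w(t)\equiv a$ is a negative semi-orbit staying in $\B_{\eps,\delta}$, hence lies in $W_c^{\loc}$, which forces $\theta(a)=0$.

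\textbf{The ``factorisation through $b$'' is not actually what the argument hinges on.} In your bootstrap you only use $|F_j|\lesssim(|a_0|+|b|)^2\lesssim e^{-2(\sigma+1)\tau}$, and that already makes the Duhamel integral for $b$ absolutely convergent and makes $a_0(t)$ converge at rate $e^{-2(\sigma+1)t}$. What pins the limit $a_0^\infty=0$ is the standing hypothesis $\int w\,d\mu_\sigma\to 0$, not the factorisation. So the ``Main obstacle'' paragraph mis-identifies the crucial input; it is the mean-zero hypothesis that suppresses the slow drift of $a_0$, and the generic quadratic bound on the nonlinearity already suffices for the Duhamel estimate.

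\textbf{Comparison with the paper.} Rather than passing to an abstract reduced ODE $\dot\xi+\L_c\xi=P_cf$ on the centre manifold (which requires some care about regularity of the projected flow), the paper works with the explicit representations $\tilde a(t)=c\!\int\tilde w\,d\mu_\sigma$ and $\tilde b_i(t)=c\!\int\tilde w\,z_i\,d\mu_\sigma$, differentiates them using the PDE, and integrates by parts; the inhomogeneities are then literally $\int F(\tilde w,\grad\tilde w)\,d\mu_{\sigma+1}$ and $\int F(\tilde w,\grad\tilde w)\,z_i\,d\mu_{\sigma+1}$, bounded by $e^{-2(\sigma+1)t}$ via the $\grad\tilde w$ decay from Theorem~\ref{T10}. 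This sidesteps the need to discuss the structure of the nonlinearities $F_j$ in the centre-manifold coordinates altogether. Both approaches land on the same estimates \eqref{47}--\eqref{48} and then conclude by the quadratic bound \eqref{44c} on $\theta$.
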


The rate of convergence can be further improved when the center of mass is asymptotically fixed at the origin. In this case, spatial translations are suppressed and the dynamics are governed by affine transformations. Thus, we climb the eigenvalue ladder one rung up:

\begin{theorem}[Second order corrections --- affine transformations]\label{T17}
Let  $\lambda\in (2(\sigma+1),\min\{2(\sigma+1)+N), 3(\sigma+1)\})$ be given.
There exists positive constants $\tilde \eps$ and $\tilde \delta$ with the following properties: If $g\in B_{\tilde \eps,\tilde \delta}$ and $w$ solves \eqref{13} with initial datum $g$ and satisfies
\[
\lim_{t\uparrow\infty} \int w(t)\, d\mu_{\sigma}=0\quad\mbox{and}\quad \lim_{t\uparrow\infty} e^{(\sigma+1)t}\int w(t) z\, d\mu_{\sigma}=0.
\]
Then there exists a symmetric and trace-free matrix $A\in \R^{\N\times N}$ such that
\[
\|w(t)-e^{-2(\sigma+1)t}z\cdot Az \|_{C^{0,1}} \lesssim e^{-\lambda t}\quad\mbox{for all }t\gtrsim 1.
\]
\end{theorem}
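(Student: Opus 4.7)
The strategy is to apply the Invariant Manifold Theorem~\ref{T5} with $K=3$, so that the first $K$ distinct eigenvalues are $\{0,\sigma+1,2(\sigma+1)\}$ and $\lambda_{K+1}=\min\{2(\sigma+1)+N,\,3(\sigma+1)\}$; any $\lambda$ as in the hypothesis then corresponds to an admissible $\Lambda$ with $\Lambda^t=e^{-\lambda t}$. The center subspace decomposes $L^2_\sigma$-orthogonally as $E_c=E_c^{0}\oplus E_c^{1}\oplus E_c^{2}$, spanned by the constant $1$, the coordinates $z_i$, and the symmetric trace-free quadratic polynomials $z\cdot Mz$ (the remaining quadratic direction, associated with $\lambda_{01}=2(\sigma+1)+N$, belongs to $E_s$). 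Theorem~\ref{T5} supplies a unique $\tilde g\in W_c^{\loc}$ whose orbit $\tilde w(t):=S^t(\tilde g)$ stays in $W_c^{\loc}$ and satisfies $\|w(t)-\tilde w(t)\|_{C^1}\lesssim e^{-\lambda t}$.

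Decompose this center-manifold trajectory as $\tilde w(t)=a(t)+b(t)\cdot z+z\cdot A(t)z+\theta(\tilde w_c(t))$, where $\tilde w_c:=a+b\cdot z+z\cdot Az\in E_c$, $A(t)$ is symmetric trace-free, and $\|\theta(\tilde w_c)\|_{C^{0,1}}\lesssim \|\tilde w_c\|_{C^{0,1}}^2$. Projecting \eqref{13} onto the three center eigenspaces produces an analytic ODE system
\[
\dot a=N_a(a,b,A),\qquad \dot b=-(\sigma+1)b+N_b(a,b,A),\qquad \dot A=-2(\sigma+1)A+N_A(a,b,A),
\]
with $N_\bullet$ quadratic at the origin. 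Crucially, since any constant $w\equiv a$ is a stationary solution of \eqref{13} (both sides vanish identically), $(a,0,0)$ is an equilibrium for every $a$, so $N_\bullet(a,0,0)\equiv 0$; by Hadamard's lemma each $N_\bullet$ factors as $N_\bullet=b\cdot B_\bullet(a,b,A)+A:C_\bullet(a,b,A)$ with smooth $B_\bullet,C_\bullet$ vanishing at the origin. By the mutual $L^2_\sigma$-orthogonality of the three center eigenspaces and of $E_s$, one reads off $\int\tilde w\,d\mu_\sigma=c_0\,a$ and $\int\tilde w\,z\,d\mu_\sigma=c_1\,b$ with $c_0,c_1>0$; combining with $\|w-\tilde w\|_{C^1}\lesssim e^{-\lambda t}$ and the two moment hypotheses gives the asymptotic conditions $a(t)\to 0$ and $e^{(\sigma+1)t}b(t)\to 0$.

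The technical core is then a purely finite-dimensional ODE statement, which I would establish by a multi-round bootstrap. Starting from the baseline $|(b,A)|\lesssim e^{-(\sigma+1)t}$ inherited from Theorem~\ref{T10}, one iteratively improves the rates in decreasing order of stability. First, integrating $\dot a=N_a$ from $t$ to $\infty$ using $a(\infty)=0$ and the Hadamard factorization upgrades $|a|$ to $\lesssim e^{-(\sigma+1)t}$, then to $\lesssim e^{-2(\sigma+1)t}$, and so on. Second, the hypothesis $e^{(\sigma+1)t}b(t)\to 0$ makes the backward Duhamel formula $b(t)=-\int_t^\infty e^{(\sigma+1)(s-t)}N_b(s)\,ds$ valid (it precisely eliminates the homogeneous $e^{-(\sigma+1)t}$-contribution), and feeding in the improved estimates on $a,A$ produces the rate $|b(t)|\lesssim e^{-\lambda t}$. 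Finally, the integrating-factor form $(e^{2(\sigma+1)t}A)'=e^{2(\sigma+1)t}N_A$, together with the now sufficiently fast decay of the nonlinearity, yields a limit $A_\infty$ (symmetric trace-free) with $A(t)=e^{-2(\sigma+1)t}A_\infty+O(e^{-\lambda t})$; repeating the bootstrap with the sharpened $A$-estimate closes the argument to give $|a(t)|+|b(t)|\lesssim e^{-\lambda t}$.

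Combining the pieces,
\begin{align*}
w(t)-e^{-2(\sigma+1)t}\,z\cdot A_\infty z
 &=(w(t)-\tilde w(t))+a(t)+b(t)\cdot z\\
 &\quad+z\cdot\bigl(A(t)-e^{-2(\sigma+1)t}A_\infty\bigr)z+\theta(\tilde w_c(t)),
\end{align*}
and each summand is $O(e^{-\lambda t})$ in $C^{0,1}$: the first by Theorem~\ref{T5}, the middle three by the bootstrap, and $\theta(\tilde w_c)$ by $\|\theta(\tilde w_c)\|_{C^{0,1}}\lesssim\|\tilde w_c\|_{C^{0,1}}^2\lesssim e^{-4(\sigma+1)t}$, comfortably smaller than $e^{-\lambda t}$ since $\lambda<3(\sigma+1)$. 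The main obstacle is the ODE bootstrap: the neutrality of the constant mode $a$ (eigenvalue $0$) is delicate because the constant fiber is a one-parameter family of equilibria, and only the combination of the moment hypothesis pinning $a(\infty)=0$ with the PDE invariance $N_a(a,0,0)\equiv 0$---which kills the nonlinear self-coupling along the neutral fiber---allows converting the qualitative asymptotic information into the quantitative exponential rate.
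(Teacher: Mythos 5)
Your overall strategy matches the paper's exactly: apply Theorem~\ref{T5} with the center subspace $E_c$ spanned by constants, the $z_i$, and the trace-free quadratics $z\cdot Az$ (note the paper uses the index $K=2$ for this, so that $\lambda_0,\dots,\lambda_K = \{0,\sigma+1,2(\sigma+1)\}$; your ``$K=3$'' counts eigenvalues rather than using the paper's zero-based index), produce $\tilde w$ on the center manifold with $\|w-\tilde w\|_{C^{0,1}}\lesssim e^{-\lambda t}$, decompose $\tilde w$ into its $E_c$-coefficients plus the $\theta$-remainder, use $L^2_\sigma$-orthogonality to identify $a$ and $b$ with the low moments, pin $a(\infty)=0$ and $e^{(\sigma+1)t}b(t)\to 0$ from the hypotheses, and bootstrap the decay rates of $a$, $b$, $A$ using the fact that the nonlinearity is quadratic and vanishes on the constant fiber. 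All of this is faithful to the paper's proof.

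Where you diverge is in \emph{how} you extract the nonlinear estimate. You pass to the abstract reduced ODE system on $W_c$ and invoke Hadamard's lemma to write $N_\bullet=b\cdot B_\bullet+A:C_\bullet$ with $B_\bullet,C_\bullet$ vanishing at the origin, which gives the bilinear bound $N_\bullet\lesssim|(b,A)|\,|(a,b,A)|$. The paper instead differentiates the explicit moments $\int z_iz_j\tilde w\,d\mu_\sigma$ in time and integrates by parts (equation~\eqref{48a}), which exposes the \emph{sharper} quadratic structure of the nonlinearity, namely $\tilde F=F(\tilde w,\grad\tilde w)\lesssim|\grad\tilde w|^2$, together with the $\delta_{ij}$-coupling to the mean that conveniently cancels on the trace-free quadratic modes. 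Two consequences of this difference are worth flagging. First, your bilinear Hadamard bound is borderline for the $A$-equation (it produces $N_A\lesssim e^{-2(\sigma+1)t}$ at the first pass, which is the resonant rate), so a direct integration yields $A(t)\lesssim t\,e^{-2(\sigma+1)t}$ and you need an extra bootstrap round (or two) to remove the polynomial factors; the paper's quadratic bound $|\grad\tilde w|^2\lesssim e^{-(4(\sigma+1)-2\eps)t}$ is strictly supercritical from the start and avoids the $t$-factors altogether. Second, Hadamard's lemma requires the reduced vector field to be differentiable, hence $\theta$ to be $C^1$ in the relevant norm, whereas Proposition~\ref{P3b} only establishes that $\theta$ is Lipschitz with $\theta(0)=0$, $D\theta(0)=0$ and a quadratic bound; the paper's moment computation sidesteps the regularity of $\theta$ entirely because it works with the exact PDE solution $\tilde w$ rather than with the abstract reduced flow. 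These are not fatal gaps --- the bootstrap still closes and the quadratic bound on $\theta$ can substitute for smoothness --- but they make your version technically heavier at exactly the point where the paper's concrete computation is lighter.
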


We have a fist eigenvalue crossing when $N=\sigma+1$: 
It holds that $\lambda_{3}= \min\{2(\sigma+1)+N,3(\sigma+1)\}$. On the level of the porous medium equation, the eigenvalue $2(\sigma+1)+N$ corresponds to dilations,  which is, by the invariance of the porous medium equation under time shifts, an artifact of the dynamics. We can access the corresponding eigenvalue by moding out certain second order moments that are preserved under \eqref{1}. On the level of the perturbation equation \eqref{13}, this is achieved in the following

\begin{theorem}[Third order corrections --- dilations]
\label{T14}
Suppose that $N<\sigma +1$. Let $\lambda\in (2(\sigma+1)+N, 3(\sigma+1))$ be given.
There exists positive constants $\tilde \eps$ and $\tilde \delta$ with the following properties: If $g\in B_{\tilde \eps,\tilde \delta}$ and $w$ solves \eqref{13} with initial datum $g$ and satisfies
\begin{gather*}
\lim_{t\uparrow\infty} \int w(t)\, d\mu_{\sigma} =0,\\
 \lim_{t\uparrow\infty} e^{(\sigma+1)t}\int w(t) z\, d\mu_{\sigma}=0,\\
   \lim_{t\uparrow\infty}e^{2(\sigma+1)t } \int w(t) M\!:\!z\otimes z  \, d\mu_{\sigma} =0
\end{gather*}
for every trace-free matrix $M\in \R^{N\times N}$. Then there exists $c\in \R$ such that
\[
\|w(t)-e^{-2((\sigma+1)+N)t}c(1-\gamma|\tacka|^2) \|_{C^{0,1}} \lesssim e^{-\lambda t}\quad\mbox{for all }t\gtrsim 1,
\]
where $\gamma = N^{-1}\left(2(\sigma+1)+N\right)$.
\end{theorem}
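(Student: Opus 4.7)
The plan is to follow the scheme established by Theorems \ref{T10}, \ref{T11}, and \ref{T17}, climbing one further rung of the spectral ladder to mod out the dilation direction. First I would apply the Invariant Manifold Theorem \ref{T5} with $K$ chosen so that the first $K$ distinct eigenvalues are $\{0,\,\sigma+1,\,2(\sigma+1),\,2(\sigma+1)+N\}$; the hypothesis $N<\sigma+1$ is precisely what guarantees this ordering, with $\lambda_K=3(\sigma+1)$ being the next eigenvalue. For $\Lambda:=e^{-\lambda}$ with $\lambda\in(2(\sigma+1)+N,3(\sigma+1))$, Theorem \ref{T5} produces a finite-dimensional invariant manifold $W_c^{\loc}$ together with, for each sufficiently small $g$, a unique companion datum $\tilde g\in W_c^{\loc}$ whose orbit $\tilde w(t)=S^t(\tilde g)$ stays in $W_c^{\loc}$ and satisfies $\|w(t)-\tilde w(t)\|_{C^1}\lesssim e^{-\lambda t}$. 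The task thus reduces to analyzing the finite-dimensional dynamics on $W_c^{\loc}$.

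To set up coordinates, I decompose $E_c$ into the four mutually $L^2_{\sigma}$-orthogonal eigenspaces of $\L$: the constants, the linear modes $z_i$, the symmetric-traceless quadratics $z\cdot Mz$, and the one-dimensional radial mode $\phi(z):=1-\gamma|z|^2$ with $\gamma=N^{-1}(2(\sigma+1)+N)$, for which a direct computation yields $\L\phi=(2(\sigma+1)+N)\phi$. Writing $\tilde w=a+b\cdot z+z\cdot Az+c\,\phi+\theta(\cdot)$ with $a,c\in\R$, $b\in\R^N$ and $A$ symmetric and trace-free, and projecting \eqref{13} onto these four eigenspaces yields the autonomous system
\begin{align*}
\dot a &= \mathcal{N}_a,& \dot b &= -(\sigma+1)\,b + \mathcal{N}_b,\\
\dot A &= -2(\sigma+1)\,A + \mathcal{N}_A,& \dot c &= -(2(\sigma+1)+N)\,c + \mathcal{N}_c,
\end{align*}
in which each $\mathcal{N}_\star$ is analytic in $(a,b,A,c)$ and vanishes to second order at the origin, by Theorem \ref{T1} and the quadratic bound $\|\theta(g_c)\|_{C^{0,1}}\lesssim\|g_c\|_{C^{0,1}}^2$ from Theorem \ref{T5}.

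Next I would translate the three moment hypotheses into asymptotic conditions on the coefficients. Since $\theta$ is quadratic at the origin and $w-\tilde w=O(e^{-\lambda t})$ in $C^1$, pairing $w(t)$ with $1$, $z$, and $M\!:\!z\otimes z$ in $d\mu_\sigma$ and exploiting the orthogonality of the four eigenspaces produces leading terms proportional to $a(t)$, $b(t)$, and $A(t)$ respectively, plus quadratic remainders and an exponentially small error. The three vanishing-limit hypotheses are therefore equivalent to
\[
a(t)\to0,\qquad e^{(\sigma+1)t}b(t)\to0,\qquad e^{2(\sigma+1)t}A(t)\to0.
\]
A sectorial bootstrap now propagates the improved decay up the ladder: applying Duhamel backward from $t=\infty$ in each of the $a$-, $b$- and $A$-equations to fix the constants of integration, and successively feeding the improved bounds back into the quadratic nonlinearities $\mathcal{N}_a,\mathcal{N}_b,\mathcal{N}_A$, yields
\[
|a(t)|+e^{(\sigma+1)t}|b(t)|+e^{2(\sigma+1)t}|A(t)|\lesssim e^{-\lambda t}.
\]

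Finally, the scalar equation $\dot c=-(2(\sigma+1)+N)\,c+\mathcal{N}_c$ is integrated forward in time. Because $2(2(\sigma+1)+N)>3(\sigma+1)>\lambda$ and by the bounds just obtained one has $\mathcal{N}_c(t)=O(e^{-\lambda t})$, Duhamel identifies a unique $c_0\in\R$ with $c(t)=c_0\,e^{-(2(\sigma+1)+N)t}+O(e^{-\lambda t})$. Reassembling $w=\tilde w+(w-\tilde w)$, absorbing $\theta(\cdot)$ and the $(a,b,A)$-parts of $\tilde w$ into the $O(e^{-\lambda t})$ error, and invoking the parabolic smoothing \eqref{7az} to convert the $C^1$ bound into a $C^{0,1}$ bound for $t\gtrsim1$, produces the advertised estimate. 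The main obstacle is the sectorial bootstrap in the preceding paragraph: quadratic interactions among different modes threaten to reintroduce slowly decaying forcings, and the four sectors must be processed in the correct order ($a$, then $b$, then $A$, then $c$) so that at each step the relevant nonlinearity is already known to decay faster than the target rate $e^{-\lambda t}$.
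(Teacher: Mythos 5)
Your proposal follows the same overall strategy as the paper: invoke Theorem \ref{T5} with $K=3$ (so that $E_c$ is spanned by the constants, the linear modes, the trace-free quadratics $z\cdot Az$, and $\phi=1-\gamma|z|^2$), decompose the companion $\tilde w$ as an eigenbasis expansion plus the quadratic remainder $\theta$, translate the three moment hypotheses into asymptotic suppression of $a(t)$, $b(t)$, $A(t)$, then extract the $c(t)\phi$ leading behavior from the scalar ODE and conclude via the quadratic bound $\|\theta\|_{C^{0,1}}\lesssim\|g_c\|_{C^{0,1}}^2$. The paper does not explicitly write out the projected ODE system, but uses the equivalent moment-evolution identity \eqref{48a} and imports the decay $|\tilde a|,|\tilde b|,|\tilde A|\lesssim e^{-4(\sigma+1)t}$ directly from the proof of Theorem \ref{T17} (with the matrix from that theorem equal to zero), so the two presentations differ only in bookkeeping.

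One item in your sectorial bootstrap is overstated. You claim $e^{2(\sigma+1)t}|A(t)|\lesssim e^{-\lambda t}$, i.e.\ $|A(t)|\lesssim e^{-(\lambda+2(\sigma+1))t}$. The Duhamel-backward argument with $\dot A=-2(\sigma+1)A+\mathcal{N}_A$ and $\mathcal{N}_A\lesssim\|\grad\tilde w\|_\infty^2\lesssim e^{-2(2(\sigma+1)+N-\eps)t}$ yields $|A(t)|\lesssim e^{-(4(\sigma+1)+2N-2\eps)t}$, and this is stronger than $e^{-(\lambda+2(\sigma+1))t}$ only when $\lambda\le 2(\sigma+1)+2N-2\eps$. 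Since $\lambda$ may be taken arbitrarily close to $3(\sigma+1)$, this requires $N\ge(\sigma+1)/2$, which is not guaranteed by the hypothesis $N<\sigma+1$. However, what the argument actually needs is only $|A(t)|\lesssim e^{-\lambda t}$ (so that the $z\cdot Az$ contribution to $\tilde w$ falls into the error), and that does follow from the bootstrap bound since $4(\sigma+1)+2N>3(\sigma+1)>\lambda$. If you drop the $e^{2(\sigma+1)t}$ prefactor (and similarly the $e^{(\sigma+1)t}$ prefactor on $b$, which is fine but also unnecessary) and simply record $|a(t)|+|b(t)|+|A(t)|\lesssim e^{-\lambda t}$, the remainder of your argument closes exactly as in the paper.
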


In principle, every eigenvalue is accessible through the Invariant Manifold Theorem \ref{T5}. However, as the complexity of the corresponding eigenfunctions increases with every eigenvalue, we refrain from studying further examples.

\newpage

\section{Higher order asymptotics for the pressure variable}\label{S:2}

In the present section, we show how the statements on the long-time asymptotics for the perturbation equation \eqref{13}, Theorems \ref{T10}--\ref{T14}, imply the corresponding statements on the long-time asymptotics for the confined pressure equation \eqref{1a}, Theorems \ref{T13}--\ref{T12b}.

It is convenient to recall and introduce some notation. 
For any function $v:[0,\infty)\times \R^N\to \R$ we denote by $\P(v)$ its positivity set and by $\P(v(t))$ its positivity set at time $t$, that is, $\P(v) = \left\{(t,x)\in [0,\infty)\times \R^N:\: v(t,x)>0\right\}$ and $\P(v(t)) = \{x\in\R^N:\: v(t,x)>0\}$. 

In a first step, we show that the transformation \eqref{100}, \eqref{101} is well-defined and show how smallness conditions for $v_0$ in \eqref{1c} turn into smallness conditions for the new variables.

\begin{lemma}[Change of coordinates I]\label{L20}
Suppose that $v_0\in C^{0,1}$ satisfies \eqref{1c} for some $\eps_0,\, \delta_0>0$ with $\eps_0+2\delta_0<1$. 
Let $\Phi_0: \P(v_0)\to  B_1(0)$ be given by 
\[
\Phi_0(x) = \frac{x}{\sqrt{2v_0(x) + |x|^2}},
\]
for all $x\in\P(v_0)$. Then the following holds:
\begin{enumerate}
\item The mapping $\Phi_0$ is a diffeomorphism.
\item Let $g: B_1(0)\to \R$ be given by
\[
g(\Phi_0(x))  = \sqrt{2v_0(x) +|x|^2} - 1,
\]
for all $x\in \P(v_0)$. Then 
\[
\|g\|_{L^{\infty}}\le 2\delta_0\quad\mbox{and}\quad
\|g\|_{\Lip} \le \frac{1+2\delta_0}{1-2\delta_0 - \eps_0}\eps_0.
\]
\end{enumerate}
\end{lemma}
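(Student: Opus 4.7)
The plan is to view $\Phi_0$ as radial projection (in $\R^{N+1}$, from the origin) of the graph of $\sqrt{2v_0}$ onto the unit upper hemisphere, which is exactly the graph of $\sqrt{2\rho}$. Setting $p(x) := \sqrt{2v_0(x) + |x|^2}$ so that $\Phi_0(x) = x/p(x)$ and $g(\Phi_0(x)) = p(x) - 1$, the two identities
\[
p^2 - 1 \;=\; 2(v_0 - \rho), \qquad p\,\nabla p \;=\; \nabla v_0 + x \;=\; \nabla(v_0 - \rho)
\]
turn \eqref{1c} into the uniform bounds $p \in [\sqrt{1-2\delta_0},\sqrt{1+2\delta_0}]$ and $|\nabla p| \le \eps_0/p$. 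In particular, $p$ stays bounded away from $0$, so $\Phi_0$ is smooth on $\P(v_0)$.

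For part (1) I first compute
\[
\det D\Phi_0(x) \;=\; \frac{2v_0(x) - x\cdot\nabla v_0(x)}{p(x)^{N+2}} \;=\; \frac{p^2 - x\cdot\nabla(v_0-\rho)(x)}{p(x)^{N+2}},
\]
and use $|x| \le p$ (equivalent to $|\Phi_0(x)| \le 1$) together with $|\nabla(v_0-\rho)|\le\eps_0$ to bound the numerator below by $p(p-\eps_0) > 0$. Hence $\Phi_0$ is a local diffeomorphism of $\P(v_0)$ into $B_1(0)$. For the global bijection, for each $z\in B_1(0)$ I look for $x$ on the ray $\{tz : t>0\}$ with $\Phi_0(x) = z$; this reduces to solving $F_z(t) := t^2(1-|z|^2) - 2v_0(tz) = 0$ for $t>0$. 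Since $F_z(0) = -2v_0(0) < 0$ and $F_z(t) \to +\infty$, a root exists; and at any zero $t^\star$ the identity
\[
t^\star F_z'(t^\star) \;=\; 2\bigl(2v_0 - x\cdot\nabla v_0\bigr)\big|_{x=t^\star z} \;>\; 0
\]
(by the same positivity as for the Jacobian) forces $F_z'(t^\star) > 0$, so the root is unique. This yields both injectivity and surjectivity.

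For part (2), the $L^\infty$ estimate is immediate from
\[
|g(\Phi_0(x))| \;=\; |p-1| \;=\; \frac{|p^2-1|}{p+1} \;=\; \frac{2|v_0-\rho|}{p+1} \;\le\; 2\delta_0.
\]
For the Lipschitz bound I differentiate $(1+g(z))^2 = 2v_0(x) + |x|^2$ in $z$ with $x = (1+g(z))z$, and again exploit $\nabla v_0 + x = \nabla(v_0-\rho)$ to replace $\nabla v_0$ by $\nabla(v_0-\rho)$ in both the numerator \emph{and} (after rearranging $z\cdot\nabla v_0 = z\cdot\nabla(v_0-\rho)-(1+g)|z|^2$) the denominator. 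The result is
\[
\nabla g(z) \;=\; \frac{(1+g(z))\,\nabla(v_0-\rho)(x)}{(1+g(z))- z\cdot\nabla(v_0-\rho)(x)}.
\]
Estimating numerator and denominator separately by $|g|\le 2\delta_0$, $|\nabla(v_0-\rho)|\le\eps_0$, and $|z|\le 1$ yields $|\nabla g|\le(1+2\delta_0)\eps_0/(1-2\delta_0-\eps_0)$.

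The main subtlety is this last step: if one differentiates the graph relation naively, one is left with $z\cdot\nabla v_0$ in the denominator, which carries no smallness (the full $\nabla v_0$ is only of order one near $v_0 = 0$). Invoking $\nabla v_0 + x = \nabla(v_0-\rho)$ in the denominator as well as the numerator produces the small quantity $z\cdot\nabla(v_0-\rho)$, whose magnitude is bounded by $\eps_0$, and it is this cancellation that delivers the sharp constant $1-2\delta_0-\eps_0$. Everything else — the Jacobian computation, the ray argument for the bijection, and the $L^\infty$ bound on $g$ — is a short consequence of the two identities for $p$ stated at the outset.
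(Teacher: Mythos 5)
Your proof is correct and follows essentially the same route as the paper: compute $\det D\Phi_0$, rewrite $2v_0 - x\cdot\nabla v_0$ in terms of $v_0-\rho$ to use \eqref{1c}, and for the Lipschitz bound apply the cancellation $\nabla v_0 + x = \nabla(v_0-\rho)$ to both the numerator and the denominator, arriving at the same formula $\nabla g = \frac{(1+g)\nabla(v_0-\rho)}{(1+g) - z\cdot\nabla(v_0-\rho)}$ (the paper states this in the equivalent form $\frac{1+g}{1+g+z\cdot\nabla g} = \frac{2v_0 - x\cdot\nabla v_0}{2v_0+|x|^2}$). The one place you add substance beyond the paper is the global bijectivity in part (1): the paper passes from ``Jacobian is positive'' directly to ``$\Phi_0$ is a diffeomorphism,'' while your ray argument, reducing $\Phi_0(x)=z$ to the scalar equation $F_z(t) = t^2(1-|z|^2) - 2v_0(tz) = 0$ and showing $F_z' > 0$ at any root via the same positivity as the Jacobian, genuinely supplies the missing injectivity and surjectivity; this is a worthwhile completion of a step the paper leaves terse.
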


\begin{proof}
1. The Jacobi matrix of $\Phi_0$ is given by
\[
\grad \Phi_0(x) = \frac{\I}{\sqrt{2v_0(x) +|x|^2}}  - \frac{x\otimes\left(\grad v_0(x) + x\right)}{\left(2 v_0(x)+|x|^2\right)^{3/2}},
\]
where $\I$ denotes the identity matrix in $\R^N\times \R^N$. We compute its Jacobian determinant:
\begin{eqnarray*}
\det \grad\Phi_0(x) &= & \left(2 v_0(x) +|x|^2\right)^{-\frac{N}2}\det \left(\I - \frac{x\otimes\left(\grad v_0(x) + x\right)}{2v_0(x) + |x|^2}\right) \\
&=&\left(2v_0(x) + |x|^2\right)^{-\frac{N}2-1} \left(2v_0(x) - x\cdot \grad v_0(x)\right),
\end{eqnarray*}
where the second identity follows from the auxiliary formula\footnote{
This formula immediately follows from the calculation
\[
\left(\begin{array}{cc} \I & 0\\ b^T &1 \end{array}\right)\left(\begin{array}{cc} \I + a\otimes b & a\\0 &1 \end{array}\right)\left(\begin{array}{cc} \I & 0\\-b^T &1 \end{array}\right) = \left(\begin{array}{cc} \I & a\\0 &1 + a\cdot b \end{array}\right),
\]
}
$\det \left(\I + a\otimes b\right) = 1 + a\cdot b$.
Because $2v_0 + |x|^2 = 2(v_0-\rho) +1$ and $2v_0 +x\cdot \grad v_0 = 2(v_0-\rho) -x\cdot\grad(v_0-\rho) +1$, we deduce from \eqref{1c} that the Jacobian  is positive on the positivity set of $v_0$. Hence $\Phi_0$ is a diffeomorphism.

2. Notice that $g$ is well-defined because $\Phi_0$ is a diffeomorphism. The uniform bound on $g$ is a consequence of the first assumption on $v_0$ in \eqref{1c}. For the Lipschitz bound, notice that 
\[
\grad v_0(x) + x = \frac{1+g(z)}{1+g(z)+z\cdot\grad g(z)} \grad g(z)
\]
if $z = \Phi_0(x)$. Therefore, multiplying by $x = (1+g)z$ gives
\[
\frac{1+g}{1+g+z\cdot \grad g}= 1-\frac{x\cdot \grad v_0 + |x|^2}{2v_0 +|x|^2}
 = \frac{2v_0 - x\cdot \grad v_0}{2v_0 +|x|^2},
\]
and the statement follows via \eqref{1c}.

%

\end{proof}

The following lemma justifies the inverse transformation \eqref{45a}, \eqref{45b}. Moreover, we show that under this transformation, solutions to the perturbation equation \eqref{13} become solutions to the confined pressure equation \eqref{1a}. Thanks to the uniqueness of the associated initial value problems, both equations are thus equivalent.

\begin{lemma}[Change of variables II]\label{L20a}
Let $\eps,\, \delta$ be as in Theorem \ref{T1}. Suppose  $w$ is the solution to \eqref{13} with initial value $g\in \B_{\eps,\delta}$. Consider $\Psi :(0,\infty)\times B_1(0)\to (0,\infty)\times \R^N$ given by
\[
\Psi(t,z) = (t, (1+w(t,z))z),\quad\mbox{for }(t,z)\in (0,\infty)\times B_1(0).
\]
Then the following holds:
\begin{enumerate}
\item $\Psi$ is a diffeomorphism onto its image.
\item Let $v$ be defined by
\[
v(t,x) = \left\{\begin{array}{ll}\rho(z)(1+w(t,z))^2 & \mbox{if }(t,x) = \Psi(t,z) \mbox{ for some } z\in B_1(0),\\ 0 &\mbox{otherwise}.\end{array}\right.
\]
Then $v$ is $ C^{0,1}$ and solves the confined pressure equation \eqref{1a}.
\end{enumerate}
\end{lemma}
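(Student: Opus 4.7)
\textbf{Plan for Lemma \ref{L20a}.} The two parts are essentially independent and I treat them in order.

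For part~1, since $\Psi$ preserves time and is smooth on $(0,\infty)\times B_1(0)$ by Theorem \ref{T1}, it suffices to prove that for each fixed $t>0$ the spatial map $\phi_t(z):=(1+w(t,z))z$ is a diffeomorphism of $B_1(0)$ onto its image. I would compute
\[
\nabla_z\phi_t = (1+w)\I + z\otimes\nabla_z w,
\]
and apply the determinant formula $\det(\I + a\otimes b) = 1 + a\cdot b$ already used in Lemma \ref{L20} to obtain
\[
\det \nabla_z \phi_t = (1+w)^{N-1}\bigl(1+w+z\cdot\nabla_z w\bigr).
\]
Theorem \ref{T1} guarantees $\|w\|_{L^\infty} + \|w\|_{\Lip}<1$, so both factors are strictly positive on $B_1(0)$, and $\phi_t$ is a local diffeomorphism. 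For global injectivity I would exploit the radial structure: along any ray $z=r\omega$ with $\omega\in S^{N-1}$, the scalar map $r\mapsto (1+w(t,r\omega))r$ has derivative $1+w+z\cdot\nabla_z w>0$ and is therefore strictly increasing in $r$. Since $\phi_t(z)$ is always a positive scalar multiple of $z$, any two preimages of the same point must lie on a common ray through the origin, and monotonicity along rays then forces equality.

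For part~2, I first verify the $C^{0,1}$ regularity of $v$. Inside the image of $\Psi$, $v$ inherits the smoothness of $w$ and of the inverse of $\phi_t$. Near the free boundary, the factor $\rho(z)=\tfrac12(1-|z|^2)$ vanishes linearly as $|z|\uparrow 1$, so $v$ tends to zero there and matches the zero extension outside the image of $\Psi$. The gradient bound follows from the identity $\nabla_x v = (\nabla_z\phi_t)^{-T}\nabla_z[\rho(1+w)^2]$, combined with the uniform Jacobian lower bound from part~1 and the Lipschitz bound on $w$; the Lipschitz estimate across the free boundary requires noting that $\nabla_z[\rho(1+w)^2]$ is bounded while the image set contracts smoothly to the unit sphere.

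The core of the proof is to verify that $v$ satisfies \eqref{1a}. My strategy is a direct chain rule computation. Using the defining identity $x=(1+w(t,z))z$ I would derive the transformation laws
\[
\nabla_x = (\nabla_z\phi_t)^{-T}\nabla_z,\qquad \partial_t\big|_x = \partial_t\big|_z - \dot z(t,z)\cdot\nabla_z,
\]
where $\dot z$ is determined implicitly from $\partial_t x\big|_z=0$. Combining these with the key algebraic identities $2v(t,x)+|x|^2=(1+w(t,z))^2$ and $v(t,x) = \rho(z)(1+w(t,z))^2$ that are built into the construction, I substitute into \eqref{1a} and simplify. After cancellation of the common factor $1+w+z\cdot\nabla w$ (which appeared in the Jacobian and reappears as the denominator of the nonlinearity in \eqref{13}), the equation should collapse to exactly \eqref{13}, which holds by hypothesis. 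The main obstacle is purely computational: tracking how the combinations $v\laplace v$, $|\grad v|^2$, $x\cdot\grad v$ and $Nv$ in \eqref{1a} reorganize into $\rho^{-\sigma}\div(\rho^{\sigma+1}\grad w)$ plus the quadratic nonlinearity of \eqref{13}. A more economical route is to reverse the derivation: the change of variables \eqref{100}--\eqref{101} was designed in \cite{Koch99} precisely so that classical solutions of \eqref{1a} on their positivity set correspond to solutions of \eqref{13} on $B_1(0)$, so the identity of the two equations is, modulo notation, contained in that reference.
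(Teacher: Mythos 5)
Part~1 of your proposal follows the same route as the paper (compute the Jacobian of the spatial map, apply $\det(\I+a\otimes b)=1+a\cdot b$, conclude positivity of the determinant), and you have actually made it more complete: positivity of the Jacobian only gives a \emph{local} diffeomorphism, and the paper asserts global bijectivity without comment. Your observation that $\phi_t(z)$ is a positive scalar multiple of $z$, so that any two preimages of a point lie on one ray, together with strict monotonicity of $r\mapsto(1+w(t,r\omega))r$ (radial derivative $1+w+z\cdot\grad w>0$), is exactly the missing injectivity argument. This is a genuine improvement in rigor.

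Part~2, however, has a real gap. You correctly identify the chain-rule transformation laws $\grad_x=(\grad_z\phi_t)^{-T}\grad_z$ and the time-derivative correction, and you correctly identify the key algebraic identity $2v+|x|^2=(1+w)^2$, but you never actually carry out the substitution; you state that the transformed equation ``should collapse to exactly \eqref{13}.'' That collapse \emph{is} the content of the lemma, and it is not automatic: one has to verify nontrivial cancellations (in the paper's version, the intermediate term $2z\cdot(D^2 w\,\grad w)/(1+w+z\cdot\grad w)$ appearing in $\laplace v+N$ is absorbed by recognizing it as $z\cdot\grad$ of the nonlinearity, and the factor $1+w+z\cdot\grad w$ that appears on the left must be seen to divide cleanly). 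The paper devotes the bulk of its proof to this computation, writing out explicitly the transformed expressions for $\partial_t v$, $\grad v+x$, and $\laplace v+N$, combining them, and multiplying through by $(1+w)^{-2}v=\rho$. Your alternative route of appealing to Koch's thesis for ``the identity of the two equations'' is a citation, not a proof, and the lemma as stated is meant to be self-contained (in particular, it is what licenses reading off \eqref{13} as the correct perturbation equation). To complete the proof you need to execute the computation and exhibit the cancellation.

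Your $C^{0,1}$ regularity discussion is adequate: $\grad_z\bigl[\rho(1+w)^2\bigr]=-(1+w)^2 z+2\rho(1+w)\grad w$ is bounded, the Jacobian is bounded below, and $v\to 0$ linearly as $|z|\uparrow 1$, so $v$ is globally Lipschitz on $\R^N$ after the zero extension. This matches what the paper does (briefly) as well.
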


\begin{proof}
1. We start with the computation of the Jacobian matrix of $\Psi$, 
\[
(\partial_t\Psi(t,z),\grad\Psi(t,z)) = \left(\begin{array}{cc} 1 & 0\\ \partial_t(t,z) z& z\otimes \grad w(t,z) + (1+w(t,z))\I\end{array}\right).
\]
Arguing similarly as in the proof of the previous lemma, we deduce that
\[
\det  (\partial_t\Psi(t,z),\grad\Psi(t,z)) = (1+w(t,z))^{N-1}\left(1 + w(t,z) + z\cdot \grad w(t,z)\right),
\]
and the expression on the right is positive by Theorem \ref{T1}. Thus $\Psi$ is a diffeomorphism onto is image.

2. It is clear that $v$ is well-defined, bounded and  Lipschitz on $\R^N$ because $w$ is bounded and Lipschitz. To verify that $v$ indeed solves \eqref{1a}, we give a quite formal argument. Starting point is the identity
\[
\sqrt{2v(t,y) +  |x|^2} = 1+  w\left(t, \frac{x}{\sqrt{2v(t,x) +|x|^2}}\right).
\]
A multiple use of the elementary differentiation rules yields 
\begin{eqnarray*}
\left(1+ w  + z\cdot \grad  w\right)\partial_t v  &=& (1+ w)^2\partial_t  w,\\
\left(1+ w + z\cdot \grad  w\right)\left(\grad v +x\right)& =& (1+ w)\grad  w,
\end{eqnarray*}
and
\begin{eqnarray*}
\lefteqn{\left(1+ w + z\cdot \grad  w\right)\left(\laplace v + N\right) }\\
&=& \laplace  w - \frac{2 z\cdot \left(D^2  w\grad  w\right)+ |\grad  w|^2}{1+ w+z\cdot \grad  w} + \frac{|\grad  w|^2\left(  2z\cdot \grad  w + z\otimes z: D^2 w\right)}{\left(1+ w+z\cdot \grad  w\right)^2}.
\end{eqnarray*}
Noting that
\[
z\cdot \grad\left( \frac{|\grad  w|^2}{1+ w+z\cdot \grad  w}\right) = \frac{2z\cdot\left(D^2 w\grad  w\right)}{1+ w+z\cdot \grad  w} - \frac{|\grad  w|^2}{1+ w+z\cdot \grad  w}\left(2z\cdot \grad  w + z\otimes z:D^2 w\right),
\]
the previous formula simplifies to
\[
 \left(1+ w + z\cdot \grad  w\right)\left(\laplace v + N\right)
=\laplace  w + (N-1) \frac{|\grad  w|^2}{1+ w+ z\cdot \grad  w} - \div\left(z \frac{|\grad  w|^2}{1+ w+ z\cdot \grad  w}\right).
\]	
Multiplying both sides of the equation by $(1+ w)^{-2} v = \rho(z)$, a short computation leads us to
\begin{eqnarray*}
\lefteqn{(1+ w)^{-2}\left(1+ w + z\cdot \grad  w\right)\left(v\laplace v + Nv\right)}\\
&=& \div\left(\rho\left(\grad  w - z\frac{|\grad  w|^2}{1+ w+z\cdot \grad  w}\right)\right) \\
&&\mbox{} + z\cdot \grad  w - |z|^2 \frac{|\grad  w|^2}{1+ w+z\cdot \grad  w} +(N-1) \rho \frac{|\grad  w|^2}{1+ w+z\cdot \grad  w}.
\end{eqnarray*}
Without much effort we can now derive \eqref{1a}.
\end{proof}

By the virtue of the preceding two lemmas, we are now in the position to deduce Theorems \ref{T13}--\ref{T12b} directly from  Theorems \ref{T10}--\ref{T14}.

\begin{proof}[Proof of Theorem \ref{T13}]
Let $\tilde \eps$ and $\tilde \delta$ be as in Theorem \ref{T10}. Let $g$ be defined as in Lemma \ref{L20}. Then $g\in\B_{\tilde \eps,\tilde \delta}$ is $\eps_0$ and $\delta_0$ are chosen sufficiently small. Let $w$ be the solution to \eqref{13} with initial value $g$. By Theorem \ref{T10}, there exists a constant $a\in \R$ such that
\begin{eqnarray}
\| w(t) - a\|_{L^{\infty}}&\lesssim & e^{-(\sigma+1)t}\quad\mbox{for all }t\ge0,\label{45cz},\\
\|\partial_t^k \partial_z^{\beta} (w(t) - a)\|_{L^{\infty}}&\lesssim & e^{-(\sigma+1)t}\quad\mbox{for all }t\ge1,\label{45c},
\end{eqnarray}
for all $k\in \N_0$ and $\beta\in\N_0^N$. In particular, $w(t) \to a$ for large times. Since on the other hand
\begin{equation}
\label{45d}
v-\rho = w + \frac12 w^2 \quad\mbox{in }\P(v),
\end{equation}
by Lemma \ref{L20a}, and $v\to v_* = (\rho)_+$ by \eqref{1b}, we must have that $a^2+a=0$, and thus $a=0$ since $\|w\|_{L^{\infty}}\le \tilde \delta <1$.
It immediately follows from \eqref{45cz} and \eqref{45d} that
\begin{equation}
\label{45ez}
|v(t,x) - \rho(x)| \lesssim e^{-(\sigma+1)t}\quad\mbox{for all }x\in \P(v(t)), \, t\ge 0.
\end{equation}
This estimate in particular implies that
\[
1 - Ce^{-(\sigma+1)t} \le |x|\le 1+C e^{-(\sigma+1)t}
\]
for all $x\in \partial \P(v(t))$ and $t\ge0$ and some $C>0$. Therefore $B_{1 - Ce^{-(\sigma+1)t}}(0)\subset \P(v(t))\subset B_{1 + Ce^{-(\sigma+1)t}}(0)$. 
Moreover, in \eqref{45ez}, we
 can easily replace $\rho$ by $v_*=(\rho)_+$ because $v-v_* = v\le v-\rho$ outside of $B_1(0)$. On the other hand, since $B_1(0)\setminus \P(v(t)) \subset B_1(0)\setminus B_{1 - Ce^{-(\sigma+1)t}}(0)$, it holds that $|v_*(x)|\lesssim e^{-(\sigma+1)t}$ outside of $\P(v(t))$. In any case
 \[
 |v(t,x) - v_*(x)|\lesssim e^{-(\sigma +1)t} \quad\mbox{for all }t\ge0.
 \]
To deduce the corresponding statement for higher order derivatives, we argue as follows. From the proof of Lemma \ref{L20a} we recall that
\[
\partial_i(v-\rho) = \frac{1+w}{1+w+z\cdot \grad w}\partial_i w.
\]
The expression on the right-hand side and all of its derivatives with respect to $z$ are bounded by $e^{-(\sigma+1)t}$ thanks to \eqref{45c}. This implies the desired statement for $(k,|\beta|) = (0,1)$. Moreover, derivatives of the diffeomorphism
\[
z= \Phi_t(x)  = \frac{x}{\sqrt{2v(t,x) + |x|^2}}
\]
are uniformly  bounded, for instance,
\[
\grad_x \Phi_t = \frac{\I}{\sqrt{2v + |x|^2}} - \frac{x\otimes (\grad v+ x)}{\sqrt{2v + |x|^2}^3} = \frac{\I - z\otimes \grad w}{1+w} ,
\]
and the control of higher order derivatives follows via the chain rule and \eqref{45c}. Hence, for any $\beta\in \N_0^N$, it holds that 
\[
\|\partial_x^{\beta} (v-\rho)\|_{L^{\infty}(\P(v(t))}\lesssim e^{-(\sigma+1)t}\quad\mbox{for all }t\ge0,
\]
and the control of the temporal derivatives follows upon using the equation \eqref{1a} in the form
\[
\partial_t v = v\laplace(v-\rho) +(\sigma+1) \grad v\cdot \grad(v-\rho).
\]
This concludes the proof of Theorem \ref{T13}.
\end{proof}

\begin{proof}[Proof of Theorem \ref{T12}]
We let $\tilde \eps$ and $\tilde \delta$ be as in Theorem \ref{T11}, and define $g$ as in Lemma \ref{L20}. We suppose that $\eps_0$ and $\delta_0$ are small enough so that $g\in \B_{\tilde \eps,\tilde \delta}$. From the statement of Theorem \ref{T10} and the proof of Theorem \ref{T13}, we deduce that
\begin{equation}\label{45e}
\|w(t)\|_{L^{\infty}}\lesssim e^{-(\sigma+1)t}\quad\mbox{for all }t\ge 0,
\end{equation}
and thus,
\[
\lim_{t\uparrow\infty} \int w(t)\, d\mu_{\sigma} = 0.
\]
Consequently, thanks to Theorem \ref{T11} there exists a vector $b\in\R^N$ such that
\begin{equation}
\label{45f}
\|w(t) - e^{-(\sigma+1)t} b\cdot z\|_{C^{0,1}}\lesssim e^{-\lambda t}\quad\mbox{for all }t\ge 0.
\end{equation}
Thus, Lemma \ref{L20a} and \eqref{45e} imply that
\[
v(t,x) - \rho(x-e^{-(\sigma+1)t} b) = w(t,z) -e^{-(\sigma+1)t}b\cdot z +  O(e^{-\lambda t}),
\]
for all $(t,z)\in\P(v)$, and the right-hand side is of order $e^{-\lambda t}$ by \eqref{45f}. Finally, as in the proof of Theorem \ref{T13}, we can easily switch from $\rho$ to $v_* = (\rho)_+$ on the left-hand side, extent the statement to all $x\in \R^N$ and derive estimates on the positivity set of $v$. 
\end{proof}

\begin{proof}[Proof of Theorem \ref{T12a}]
We start recalling the well-known fact that the confined porous medium equation \eqref{1z} preserves zero-center of mass. Indeed, a short computation (performed on the level of \eqref{1a}) reveals that
\[
\frac{d}{dt} \int v^{\sigma+1} x_i\, dx = -(\sigma+1) \int v^{\sigma+1} x_i\, dx,
\]
and thus, the centering condition \eqref{1d} is valid for all times. 
Let $\tilde \eps$ and $\tilde \delta$ be as in Theorem \ref{T17}, and define $g$ as in Lemma \ref{L20}. We choose $\eps_0$ and $\delta_0$ small enough so that $g\in \B_{\tilde \eps,\tilde \delta}$. Let $\tilde \lambda\in (\max\{\sigma+1,1\}2(\sigma+1))$. By Theorem \ref{T11}, there exists a vector $b\in \R^N$ such that
\begin{equation}
\label{45g}
|v(t,x) - v_*(x-e^{-(\sigma+1)t}b)|\lesssim e^{-\tilde \lambda t}\quad\mbox{for all }x\in \R^N,t\ge0.
\end{equation}
We claim that $b=0$. Indeed, by the symmetry of $v_*$, it holds that
\[
e^{-(\sigma+1)t} |b| \sim \left|\int v_*(x)^{\sigma+1} e^{-(\sigma+1)t }b\, dx\right| = \left|\int v_*(x)^{\sigma+1} \left(x+ e^{-(\sigma+1)t }b\right)\, dx\right|.
\]
By a change of variables and because $v$ is centered at zero, we further have that
\[
e^{-(\sigma+1)t} |b| \lesssim \int \left|v_*(x - e^{-(\sigma+1)t} b)^{\sigma+1} - v(t,x)^{\sigma+1}\right| |x|\, dx.
\]
In the case $\sigma\ge0$, an application of \eqref{45g} yields $|b|\lesssim e^{-(\tilde \lambda - (\sigma+1))t} $, and thus, letting $t\uparrow \infty$, we see that $b=0$.
Likewise, if $-\frac12<\sigma<1$, we apply \eqref{45g} and obtain $|b|\lesssim e^{-(\tilde \lambda-1)(\sigma+1)t} $, and again, $b=0$ follows upon passing to the limit in $t$.
Because $b=0$, it holds that
\[
\lim_{t\uparrow\infty} e^{(\sigma+1)t} \int w(t,z)z\, d\mu_{\sigma} = 0,
\]
and thus, Theorem \ref{T17} is applicable. The argument of how to deduce Theorem \ref{T12a} from Theorem \ref{T17} proceeds similarly as in the proof of the previous theorem. We omit the details.
%
\end{proof}

\begin{proof}[Proof of Theorem \ref{T12b}]
The proof is conceptionally very similar to the proof of the previous theorem. We notice first that the second moment condition in \eqref{1e} is preserved under the evolution \eqref{1a} because
\[
\frac{d}{dt} \int v^{\sigma+1}x\cdot Mx\, dx = -2(\sigma+1)\int v^{\sigma+1} x\cdot Mx\, dx
\]
for every trace-free matrix $M\in\R^{N\times N}$. Let $\tilde \eps,\, \tilde \delta$ be as in Theorem \ref{T14} and define $g$ as in Lemma \ref{L20}. We choose $\eps_0$ and $\delta_0$ small enough so that $g\in \B_{\tilde \eps,\tilde \delta}$. Let $\tilde \lambda \in \left(2(\sigma+1), 2(\sigma+1) +N\right)$. Hence Theorem \ref{T12a} implies that
\begin{equation}
\label{45h}
|v(t,x) - v_*(x-e^{-2(\sigma+1)t} Ax)|\lesssim e^{-\tilde \lambda t}\quad\mbox{for all }x\in \R^N,t\ge0,
\end{equation}
for some symmetric and trace-free matrix $A\in\R^{N\times N}$. We claim that $A=0$. Indeed, we can find a trace-free matrix $M$ with $|M|\sim 1$ and
\[
|A|\sim \left|\int v_*(x)^{\sigma+1} x\cdot (MA)x\, dx\right|.
\]
Since $v_*$ satisfies \eqref{1e} by symmetry, we thus have
\begin{eqnarray*}
e^{-2(\sigma+1)t} |A| &\sim &\left|\int v_*(x)^{\sigma+1} (x+e^{-2(\sigma+1)t }Ax)\cdot M(x+e^{-2(\sigma+1)t}Ax)\, dx\right| +  O(e^{-4(\sigma+1)t}) \\
&=& \left| \int v_*(x-e^{-2(\sigma+1)t}Ax)^{\sigma+1}x\cdot Mx\, dx\right|+  O(e^{-4(\sigma+1)t}).
\end{eqnarray*}
Invoking \eqref{1e} for $v$, we further have
\[
e^{-2(\sigma+1)t} |A| \lesssim \| v(t)^{\sigma+1} - v_*(x-e^{-2(\sigma+1)t}Ax)^{\sigma+1}\|_{L^{\infty}} +  O(e^{-4(\sigma+1)t}).
\]
Then, because $\sigma\ge 0$ by assumption, an application of \eqref{45h} yields that $e^{-2(\sigma+1)t}|A|\lesssim e^{-\tilde \lambda t}$ for all $t\ge0$, and thus $\tilde \lambda > 2(\sigma+1)$ implies $A=0$. The remainder of the proof is very similar to the proofs of the previous theorems. We only remark  that
\[
v(t,x) - \frac12(1-|x|^2) - e^{-(2(\sigma+1)+N)t} c(1-\gamma |x|^2) = O(e^{-\lambda t})
\]
follows from 
Theorem \ref{T14}. We then compute
\[
\frac1{2R(t)^{\frac1{\alpha}}}\left(R(t)^2 - |x|^2\right)  = \frac12(1-|x|^2) +c e^{-(2(\sigma+1)+N)t} (1-\gamma|x|^2) +  O(e^{-\lambda t}),
\]
where we have used that $ (1-2\alpha)\gamma =1$. The statement of the theorem now can be deduced as before.
\end{proof}

\newpage

\section{The linear problem}\label{S:5}

In this section, we study the initial value problem for the degenerate parabolic equation
\begin{equation}
\label{7a}
\partial_t w - \rho^{-\sigma}\div\left(\rho^{\sigma+1}\grad w\right)=f
\end{equation}
in $B_1(0)$, where $\rho(z) = \frac12(1-|z|^2)$ and $\sigma = \frac{2-m}{m-1}\in(-1,\infty)$. A big effort is made to obtain suitable regularity estimates that serve as a basis for the well-posedness theory developed later in Section \ref{S:5} for the nonlinear perturbation equation \eqref{13}. Here, the choice of the topology is crucial. To reach the ultimate objective, the construction of invariant manifolds for the nonlinear equation in Section \ref{S7},  a differentiable dependence of solutions on the initial data is necessary. For nonlinearities of the form
\[
f = \rho F - \rho^{-\sigma} \div\left( \rho^{\sigma+1}zF\right)
\]
in which $F$ is a quadratic function of $\grad w$, we expect (and, in fact, will prove) that such a differentiable dependence can be established for solutions with small Lipschitz initial data. In order to obtain suitable regularization estimates, we need to work with space-time norms that deteriorate as $t\downarrow0$. We thus naturally arrive at Carleson measures.

%

The analysis of the present section is inspired by Koch and  Lamm's recent approach to tackle semilinear parabolic equations with rough coefficients \cite{KochLamm12} and, even more, by Kienzler's adaption of this approach to a subelliptic parabolic equation \cite{Kienzler14}, which closely resembles ours. The authors use standard tools from the theory of constant coefficient linear equations such as (Gaussian) decay estimates and Calderon--Zygmund-type estimates to establish bounds on the solutions in certain Carleson measures that reflect the regularity theory for the corresponding linear equation. The underlying idea goes back to Koch and  Tataru's work \cite{KochTataru01}, where a Carleson measure formulation of a {\it BMO} norm turned out to be the crucial ingredient in the study of regularity for the small datum Navier--Stokes equation. Many ingredients from \cite{Kienzler14} appeared earlier in Koch's habilitation thesis \cite{Koch99}, where Koch established a Schauder theory for H\"older continuous initial data.



The main difference between \eqref{7a} (and likewise the equations studied in \cite{Koch99,Kienzler14}) and the equations considered in \cite{KochLamm12} is the failure of strict parabolicity of \eqref{7a} at the boundary of the domain. However, we can easily overcome this problem by the following observation:
Equation \eqref{7a} can be interpreted as a heat flow on a certain so-called {\em weighted manifold}, that is, a Riemannian manifold to which a new volume element is assigned, typically a positive multiple of the one induced by the Riemannian metric. The theories of weighted manifolds and heat flows thereon can be developed parallel to the Riemannian counterparts, see, e.g., \cite{Grigoryan06}. In particular, Gaussian decay estimates on the heat kernel exist and a Calderon--Zygmund theory is available. We do thus expect that suitable Carleson measure estimates can be established if we carry over Koch and  Lamm's approach from the Euclidean to the Riemannian setting. In fact, we will still work on the Euclidean unit ball, but we will alter the metric accordingly. By working with the new metric, which is a  Carnot--Carath\'eodory distance, see, e.g., \cite{BellaicheRisler96}, we restore the parabolicity of the equation. In the context of the porous medium equation, the application of such Carnot--Carath\'eodory distances has been proved useful in the pioneering  works \cite{DaskalopoulosHamilton98,Koch99}, and recently in \cite{Kienzler14}. See also \cite{DenzlerKochMcCann15} and \cite{John15} for a similar perspective on the fast diffusion equation and thin film equation, respectively.

Our goal is the following result:

\begin{theorem}
\label{T6}
Let $p>\max\left\{N+2,\frac1{\sigma+1}\right\}$.
For every function $g\in C^{0,1}$ and $f\in Y(p)$, there exists a unique solution $w$ to \eqref{7a}. Moreover, $w\in X(p)$ and
\[
\|w\|_{X(p)}  + \|w\|_{\Lip}\lesssim \|f\|_{Y(p)} + \|g\|_{\Lip}.
\]
\end{theorem}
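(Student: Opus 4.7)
My plan is to recast \eqref{7a} as an inhomogeneous heat equation on a weighted Riemannian manifold and then extract the estimates in $X(p)$ and $\Lip$ from sharp kernel bounds together with a Calder\'on--Zygmund / Carleson measure argument in the spirit of Koch--Lamm \cite{KochLamm12} and Kienzler \cite{Kienzler14}. Concretely, equip $B_1(0)$ with the measure $d\mu_\sigma = \rho^\sigma\,dz$ and with the intrinsic (Carnot--Carath\'eodory) metric $d_\rho$ under which the principal symbol of
\[
\L := -\rho^{-\sigma}\div\left(\rho^{\sigma+1}\grad\,\tacka\right)
\]
becomes uniformly elliptic. Standard form theory on $L^2(d\mu_\sigma)$ makes $\L$ a nonnegative self-adjoint operator, whose Friedrichs extension generates a Markovian semigroup $\{e^{-t\L}\}_{t\ge 0}$. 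Existence of a weak solution to \eqref{7a} is then given by Duhamel's formula
\[
w(t) = e^{-t\L}g + \int_0^t e^{-(t-s)\L} f(s)\,ds,
\]
and uniqueness follows from a routine energy estimate in $L^2(d\mu_\sigma)$ applied to the difference of two solutions.

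The technical core of the proof is the set of pointwise bounds on the heat kernel $K(t,z,\zeta)$ in the geometry $(d_\rho,\mu_\sigma)$: two-sided Gaussian estimates together with first and second order gradient bounds of the form
\[
\bigl|\partial_t^k \grad_z^\beta K(t,z,\zeta)\bigr| \lesssim t^{-k-|\beta|/2}\,\mu_\sigma\!\left(B_{\sqrt t}(z)\right)^{-1}\exp\!\left(-c\,\frac{d_\rho(z,\zeta)^2}{t}\right),
\]
which are exactly of the type established in the linear portions of \cite{Koch99,Kienzler14}. These bounds yield directly that $\|e^{-t\L}g\|_{\Lip}\lesssim \|g\|_{\Lip}$ uniformly in $t$ and produce the Carleson norm control of the homogeneous part by standard interpolation against the square-function characterisation encoded in $X(p)$.

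For the inhomogeneous term I would then run the Koch--Lamm--Kienzler scheme adapted to the weighted manifold: decomposing the Duhamel integral into a local part (handled by the sharp $L^p$--Calder\'on--Zygmund estimate for $\grad^2 e^{-t\L}$ on $L^p(d\mu_\sigma)$) and a tail (handled by the Gaussian decay), one obtains both $\|\int_0^t e^{-(t-s)\L}f(s)\,ds\|_{X(p)}$ and $\|\int_0^t e^{-(t-s)\L}f(s)\,ds\|_{\Lip}$ controlled by $\|f\|_{Y(p)}$. The hypothesis $p>N+2$ is the parabolic Morrey/Sobolev threshold associated with the homogeneous dimension of the geometry $(d_\rho,\mu_\sigma)$ (so that $Y(p)$ embeds into a space of Lipschitz functions after time integration), while $p>\frac{1}{\sigma+1}$ is needed to absorb the weight $\rho^{\sigma+1}$ that enters whenever the Carleson cube touches $\partial B_1(0)$.

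The main obstacle is this last Calder\'on--Zygmund-type Carleson estimate for the Duhamel integral: the heat kernel bounds are standard but their combination with the two-sided degeneration of $\rho^\sigma$ near $\partial B_1$, and the need for a Lipschitz (rather than merely BMO) bound on $\grad w$, require a careful splitting at the Carleson scale together with a Whitney-type argument that distinguishes interior balls from boundary balls where the intrinsic ball $B^{d_\rho}_{\sqrt t}(z)$ is anisotropic. Once this estimate is in place, the theorem follows by summing the two bounds and appealing to uniqueness.
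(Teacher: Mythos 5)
Your overall strategy matches the paper's: $L^2_{\sigma}$ form theory for existence and uniqueness, Gaussian kernel bounds on the weighted manifold, and a Koch--Lamm--Kienzler local/tail decomposition for the Carleson estimates. The identification of $p>N+2$ with the parabolic Morrey threshold in the interior and $p>\frac{1}{\sigma+1}$ with absorbing the boundary weight $\rho^{\sigma+1}$ is also accurate.

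However, the proposed gradient bound on the heat kernel is wrong in a way that matters. You write $|\partial_t^k\grad_z^\beta K(t,z,\zeta)| \lesssim t^{-k-|\beta|/2}\mu_{\sigma}(B_{\sqrt t}(z))^{-1}e^{-cd_\rho^2/t}$, i.e.\ each Euclidean derivative costs $t^{-1/2}$. The correct estimate (Proposition~\ref{P2b}) carries an additional factor $(\sqrt{t}+\sqrt{\rho(z)})^{-|\beta|}$, so that near $\partial B_1(0)$ (where $\rho(z)\lesssim t$) a Euclidean derivative actually costs $t^{-1}$, not $t^{-1/2}$. This anisotropy is not cosmetic: the intrinsic ball $B_{\sqrt t}^d(z)$ has Euclidean thickness $\sim t$ in the normal direction near the boundary, so differentiating in Euclidean coordinates is doubly expensive there. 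The factor $r(r+\sqrt{\rho(z)})$ in front of $\|\grad^2 w\|_{L^p}$ in the definition of $\|\tacka\|_{X(p)}$ is exactly the dual of this degeneracy; with your homogeneous bound, the Duhamel argument would never produce the $X(p)$ norm as stated, and the Lipschitz bound on $\grad w$ in terms of $\|g\|_{\Lip}$ (which is a bound on the Euclidean gradient) would be unobtainable.

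Separately, your claim that the homogeneous Lipschitz bound $\|e^{-t\L}g\|_{\Lip}\lesssim\|g\|_{\Lip}$ follows ``directly'' from the kernel estimates ``by interpolation against the square-function characterisation'' is not an argument. The mechanism the paper actually uses (Lemmas~\ref{L10} and~\ref{P2a}) is an exponentially weighted Caccioppoli/Gronwall estimate with weight $e^{\chi_{a,b}(\tacka,z_0)}$, applied to $w-g(z_0)$ so that the initial data is controlled by $\|g\|_{\Lip}\,|\tacka-z_0|$, and then a sharp estimate of $\|e^{\chi}|\tacka-z_0|\|_{L^2_{\sigma}}$ against $r(r+\sqrt{\rho(z_0)})|B_r^d(z_0)|_{\sigma}^{1/2}$. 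This weighted trick is what converts a pointwise Lipschitz bound on $g$ into a pointwise bound on $\grad w$; it is not an interpolation statement, and a naive square-function argument would at best give BMO-type control rather than Lipschitz.

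Finally, to pass from the small-time Carleson bounds to the global estimate you must also handle the $T\ge1$ part of the norms; the paper uses the spectral gap on $\dot H^1_{\sigma+1}/\R$ (Lemma~\ref{L10a}, based on the computation in~\cite{Seis14}) to get exponential decay for large times, which you did not address.
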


Here, we have used the notation $X(p) = \{ w: \|w\|_{X(p)}<\infty\}$ and $Y(p) = \{f:\: \|f\|_{Y(p)}<\infty\}$, where 
\begin{eqnarray*}
\|w\|_{ X(p)}&:=& \sup_{\substack{ z\in \overline{B_1(0)} \\0<r\le\sqrt2}} |Q_{r}^d(z)|^{-\frac1p}\left(r^2 \|\partial_t \grad w\|_{L^p(Q_{r}^d(z))} +  r(r+\sqrt{\rho(z)})   \|\grad^2 w\|_{L^p(Q_{r}^d(z))}\right.\\
&&\mbox{} \quad\quad\left.\phantom{\sqrt{(\rho)}}+  r^2 \|\rho\grad^3 w\|_{L^p(Q_{r}^d(z))}\right)\\
&& \mbox{}+\sup_{T\ge 1} \left(\|\partial_t\grad w\|_{L^p(Q(T))} + \|\grad^2 w\|_{L^p(Q(T))} +\|\rho \grad^3 w\|_{L^p(Q(T))}\right),\\
\|f\|_{Y(p)} &:=& 
 \sup_{\substack{ z\in \overline{B_1(0)} \\0<r\le\sqrt2}} |Q_{ r}^d(z)|^{-\frac1p}\left(\frac{r}{
 r+\sqrt{\rho(z)}}\|f\|_{L^p(Q_{ r}^d( z))} +  r^2 \|\grad f\|_{L^p(Q_{ r}^d( z))}\right)\\
&&\mbox{}+  \sup_{T\ge 1} \left(\|f\|_{L^p(Q(T))} + \|\grad  f\|_{L^p(Q(T))}  \right),
\end{eqnarray*}
and where $Q_r^d(z)$ and $Q(T)$ denotes the (intrinsic) time-space cylinders,
\[
Q_r^d(z)  = \left(\frac{r^2}2,r^2\right)\times B_r^d(z)\quad\mbox{and}\quad Q(T) = (T,T+1)\times B_1(0).
\]
Here, the intrinsic balls are relatively open subsets of the closed Euclidean unit ball, $B_r^d(z)= \left\{z'\in \overline{B_1(z)}: d(z,z')<r\right\}$ and $d$ is the {\em new} metric on $B_1(0)$ which is induced by the heat flow interpretation. In particular, $B_r^d(z) = \overline{B_1(0)}$ for every sufficiently large $r$. Finally, the Lipschitz norms are taken with respect to the {\em spatial} Euclidean topology, that is, $\|w\|_{\Lip} = \|\grad w\|_{L^{\infty}}$. 

By the linearity of the equation, we may and do split the problem into the homogeneous problem, where $f=0$, Proposition \ref{P2a}, and the problem with $g=0$, Proposition \ref{P3}. From these two cases, Theorem \ref{T6} follows by superposition.

Though an abstract theory for weighted manifolds is well developed, cf.\ \cite{Grigoryan06} and references therein, we will present a self-contained theory in the subsequent subsections since, with regard to the nonlinear equation \eqref{13} to which the regularity estimates of this section will be applied, results have to be tailored to our needs.

We conclude this introduction with a precise definition of weighted manifolds and the description of the heat flow interpretation of \eqref{7a}. Suppose $(\M, \g)$ is a Riemannian manifold and $\omega$ a positive function on $\M$. If we denote by $d\mathrm{vol}_{\g}$ the volume element on $(\M,\g)$ and set $d\mu = \omega\, d\mathrm{vol}_{\g}$, then the triple $(\M,\g,\mu)$ constitutes a weighted Riemannian manifold. If the Laplacian is defined via the Dirichlet form on $(\M,\g,\mu)$, its representation in local coordinates reads
\[
\laplace_{\mu} w = \frac1{\omega \sqrt{\det\g}}\sum_{i,j=1}^N \partial_i\left(\g^{ij}\omega\sqrt{\det\g} \, \partial_j w\right),
\]
where $\{\g^{ij}\}_{ij}$ is the inverse matrix of $\g$.

In the situation at hand, we simply choose the conformally flat Riemannian metric $\g = \rho^{-1}(dx)^2$ on the Euclidean unit ball $B_1(0)$, and define the weight $\omega = \rho^{\sigma+N/2}$. With these ingredients, \eqref{7a} becomes the heat equation
\[
\partial_t w - \laplace_{{\mu}}w = f.
\]
Notice that by setting $s = \arcsin(|x|)$, the metric transforms into
\[
\frac12 \g = \frac12 \rho^{-1}(dx)^2 = (ds)^2 + (\tan s)^2 (d\ell_{\S^{N-1}})^2,
\]
with $d\ell_{\S^{N-1}}$ being the length element on the unit sphere. This representation indicates that the manifold degenerates at its boundary where $\lim_{s\to \frac{\pi}2} \tan s =\infty$. Loosely speaking, $\g$ can be considered to be ``half way'' between the Euclidean metric $(dx)^2$ and the metric on the hyperbolic Poincar\'e disk $\rho^{-2}(dx)^2$. We expect a deeper understanding of $\M$ from the study of geodesic curves conducted in Subsection \ref{S:5.1} below.

Our program for this present section is the following: In Subsection \ref{S:5.1}, we compute the geodesic distance on $(\M,\g)$ as a function of coordinates on $B_1(0)$. In Subsection \ref{S:5.2}, we study the homogeneous heat equation and derive Gaussian estimates for the heat kernel. These estimates can be used to establish maximal regularity estimates for the heat semi-group. Finally, Subsection \ref{S:5.3} contains the maximal regularity estimates for the inhomogeneous equation.


\subsection{Intrinsic distance, balls and volumes}\label{S:5.1}
In the following, we study the geodesic distance on the (weighted) manifold introduced in the introduction of the present section. It is convenient to express this distance in terms of the conformally flat coordinates, that is, we study the distance induced by the metric $\g = \rho^{-1}(dx)^2$ on the Euclidean ball $B_1(0)$. In view of the interpretation of \eqref{7a} as a heat equation on $(\M,\g,\mu)$, this distance function on $B_1(0)$ can be thought of as the {\em intrinsic distance} for diffusions of the form \eqref{7a}, in the sense that its second power measures the typical time scale at which heat is exchanged between two points. 

The intrinsic distance for \eqref{7a} is (modulo a factor of $\sqrt2$) defined as the quantity
\[
\tilde d(z_1,z_2)  = \inf\left\{ L(\Gamma): \Gamma\mbox{ joins $z_1$ and $z_2$}\right\},
\]
where
\[
L(\Gamma) = \int_a^b \frac{|\Gamma'(\tau)|}{\sqrt{1-|\Gamma(\tau)|^2}}\,d\tau,
\]
if $\Gamma(a)=z_1$ and $\Gamma(b)=z_2$. It is a standard observation that minimizing $L$ is equivalent to minimizing
\[
E(\Gamma) = \int_a^b \frac{|\Gamma'(\tau)|^2}{1-|\Gamma(\tau)|^2}\,d\tau,
\]
because $L(\Gamma)^2\le (b-a) E(\Gamma)$ by H\"older's inequality with equality precisely if $\Gamma$ is geodesic, that is $\frac{|\Gamma'|^2}{1-|\Gamma|^2}=\const$. The functional $E$ is strictly convex and admits hence a unique minimizer. Parametrizing the minimizer $\Gamma $ by arclength, the geodesic equation reads
\begin{equation}\label{5}
|\Gamma'|^2=1-|\Gamma|^2,
\end{equation}
and the Euler--Lagrange equations are
\begin{equation}\label{6}
\left(\frac{\Gamma'}{1-|\Gamma|^2}\right)' = \frac{\Gamma}{1-|\Gamma|^2}.
\end{equation}
It is not difficult to see that geodesic curves through $z_1$ and $z_2$ are confined to the two-dimensional plane that is spanned by the points $z_1$, $z_2$ and $0$ in $\R^N$. Upon a rotation, we may thus write $\Gamma = (x,y,0,\dots,0)^T\in \R\times\R\times \R^{N-2}$, so that \eqref{5} and \eqref{6} translate into
\[
(x')^2 + (y')^2 = 1- x^2 -y^2,
\]
and
\[
\left(\frac{x'}{1-x^2-y^2}\right)' = \frac{x}{1-x^2-y^2}\quad\mbox{and}\quad \left(\frac{y'}{1-x^2-y^2}\right)' = \frac{y}{1-x^2-y^2}.
\]
These equations are solved by hypocycloids, i.e., traces of fixed points on small circles of radius $r\le 1/2$ that roll along the interior boundary of the unit ball:
\begin{eqnarray*}
x(t ) &=& (1-r) \cos\left(\sqrt{\frac{r}{1-r}} t\right) + r\cos\left(\sqrt{\frac{1-r}r} t\right),\\
y(t ) &=& (1-r) \sin\left(\sqrt{\frac{r}{1-r}} t\right) - r\sin\left(\sqrt{\frac{1-r}r} t\right).
\end{eqnarray*}
Here we have rotated and, if necessary, flipped over the two-dimensional disk in such a way that the geodesics hit the boundary at $(1,0)$ and $(\cos(2\pi r),\sin(2\pi r))$. 

In spite of the good understanding of geodesic curves, except from a few particular cases, it is difficult to explicitly calculate the intrinsic distance between two points on $\M$. On the positive side, the geodesic distance $\tilde d$ on $(\M,\g)$ is equivalent to the semimetric
\[
d(z_1,z_2) = \frac{|z_1-z_2|}{\sqrt{\rho(z_1)} + \sqrt{\rho(z_2)} + \sqrt{|z_1-z_2|}}
\]
defined on $\overline{B_1(0)}$:

\begin{prop}\label{P1}
It holds
\[
\tilde d(z_1,z_2) \sim d(z_1,z_2)
\]
for all $z_1,z_2\in \overline{B_1(0)}$.
\end{prop}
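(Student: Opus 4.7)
The plan is to reduce the problem, by symmetry, to a two-dimensional one-parameter minimization. Since $\rho$ is radial, the metric $\g = \rho^{-1}(dz)^2$ is rotationally invariant and every geodesic joining $z_1, z_2$ stays in the plane $\mathrm{span}(z_1, z_2) \subset \R^N$; I may therefore assume $N = 2$. In the coordinates $s = \arcsin|z|$ and $\omega = z/|z| \in S^{N-1}$ the metric takes the form $\g = 2(ds^2 + \tan^2(s)\,|d\omega|^2)$, so the factor $\tan s$---replacing the spherical $\sin s$---drives all the degeneracy at the boundary.

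The first step is to establish the equivalence
\begin{equation*}
\tilde d(z_1,z_2) \sim \inf_{s_* \in [0,\, \min(s_1, s_2)]} F(s_*), \qquad F(s_*) := (s_1 - s_*) + (s_2 - s_*) + \tan(s_*)\,\theta,
\end{equation*}
with $s_i := \arcsin|z_i|$, $\omega_i := z_i/|z_i|$, and $\theta := d_{S^{N-1}}(\omega_1, \omega_2)$. For the upper bound I use the three-leg test curve $z_1 \to \sin(s_*)\omega_1 \to \sin(s_*)\omega_2 \to z_2$ (radial, then an arc on the sphere $\{|z| = \sin s_*\}$ of angular length $\theta$, then radial), whose $\g$-length is exactly $\sqrt 2\, F(s_*)$. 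For the lower bound any path $\Gamma(t) = \sin(s(t))\,\omega(t)$ satisfies $|\Gamma'|_\g \ge |s'| + \tan(s)|\omega'|$ by $\sqrt{a^2+b^2} \ge (|a|+|b|)/\sqrt 2$; writing $s_{\min} := \min_t s(t) \in [0, \min(s_1, s_2)]$ and using $\int|s'|\,dt \ge (s_1 - s_{\min}) + (s_2 - s_{\min})$ together with $\int \tan(s)|\omega'|\,dt \ge \tan(s_{\min})\,\theta$ gives $L(\Gamma) \ge F(s_{\min})$; infimizing over paths produces $\tilde d \gtrsim \inf_{s_*} F(s_*)$.

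The second step is to match $\inf F$ with $d$. The critical equation $F'(s_*) = -2 + \sec^2(s_*)\,\theta = 0$ yields $\cos(s_*) = \sqrt{\theta/2}$; when this critical point lies in the feasible interval, $\inf F \sim \sqrt\theta$, otherwise $\inf F = F(\min(s_1, s_2)) \sim |s_1 - s_2| + \theta\cot(\min(s_1, s_2))$. Translating back through $\sqrt{\rho(z_i)} = \cos(s_i)/\sqrt 2$ and $|z_1 - z_2|^2 = (\sin s_1 - \sin s_2)^2 + 2\sin(s_1)\sin(s_2)(1 - \cos\theta)$, one verifies case by case that these regimes match the two regimes $d \sim \sqrt{|z_1-z_2|}$ (when $\sqrt{|z_1-z_2|}$ dominates the denominator) and $d \sim |z_1-z_2|/(\sqrt{\rho(z_1)}+\sqrt{\rho(z_2)})$ (otherwise) of the semimetric. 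The ``interior'' case where $\min(s_1, s_2)$ is bounded away from $\pi/2$ is immediate, since there $\g$ is uniformly equivalent to the Euclidean metric and $\tilde d \sim d \sim |z_1-z_2|$. The main obstacle is not geometric but combinatorial: the final matching requires a case split based on the relative sizes of $\theta$, $u_i := \pi/2 - s_i$ and $|z_1-z_2|$.
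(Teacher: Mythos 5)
Your route is genuinely different from the paper's and, in my view, cleaner. Both proofs establish the upper bound via a competitor that moves radially inward, then angularly, then radially outward; the paper restricts to two ad hoc choices of intermediate radius (matching the smaller $|z_i|$, or hitting the boundary), whereas you infimize over the intermediate level $s_*$ and obtain the two-sided characterization $\tilde d \sim \inf_{s_*}F(s_*)$ in one stroke. The lower bound also differs: the paper derives $\tilde d \gtrsim \min_{\psi\geq\sqrt{\rho(z_2)}}\{\psi - \sqrt{\rho(z_1)} + |z_1-z_2|/\psi\}$ directly from the two estimates $\tilde d\gtrsim|z_1-z_2|/\sup\sqrt{\rho}$ and $\tilde d\gtrsim\sup\sqrt{\rho}-\sqrt{\rho(z_1)}$, whereas you split the $\g$-length density pointwise into radial and angular pieces via $\sqrt{a^2+b^2}\geq(|a|+|b|)/\sqrt2$ and bound each against $F(s_{\min})$; the two are morally the same but yours exhibits the convex one-parameter structure explicitly, which then makes the optimization over $s_*$ explicit ($\cos^2 s_*=\theta/2$). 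Two small remarks: (i) in your second step, $F(\min(s_1,s_2))=|s_1-s_2|+\theta\tan(\min(s_1,s_2))$, not $\cot$; (ii) you should note that when $\theta>2$ the critical equation has no solution and $F$ is monotone on $[0,\min(s_1,s_2)]$, but then all quantities are of order one and the equivalence is trivial. The actual gap in your write-up is that the final matching of $\inf F$ against $d$ is only described as a case split in the parameters $\theta$, $\pi/2 - s_i$, $|z_1-z_2|$ and not carried out; this is routine but does need to be done, since it is precisely where the specific algebraic form of the semimetric $d$ enters. (The paper's proof has the same flavor of case analysis in its step two, just organized around a different pivot.)
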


In the remainder of the present section, we will mostly choose $d$ over $\tilde d$. It is thus convenient to speak of $d$ in the following as a distance (or even the {\em intrinsic} distance), even though it lacks a proper triangle inequality.


In our proof of Proposition \ref{P1}, we follow \cite[Chapter 4.3]{Koch99}.

\begin{proof} We start with a study of two special cases.  For two points on a straight line through the origin, we chose $r=1/2$, so that $x(t) = \cos(t)$ and $y(t)=0$. For convenience, we suppose that $z_1=\alpha z_2$ for some $\alpha \in [0,1]$.  Then
\begin{equation}\label{6a}
\tilde d(z_1,z_2)  = \left|\arccos(|z_1|) - \arccos(|z_2|)\right| \sim |\sqrt{\rho(z_1)} - \sqrt{\rho(z_2)}|.
\end{equation}
It remains to observe that the latter expression is equivalent to $d(z_1,z_2)$ if $z_1=\alpha z_2$.

Now consider two points on the boundary, $|z_1|=|z_2|=1$, say $z_1= (1,0)$ and $z_2 = (\cos(2\pi r),\sin (2\pi r))$. If $R(t)$ denotes the distance from the origin parametrized by arc length (cf.\ \eqref{5}), then
\[
R(t)^2 = 1-4r(1-r)\sin^2\left(\frac{t}{2\sqrt{r(1-r)}}\right),
\]
and thus $\tilde d(z_1,z_2) = 2\pi\sqrt{r(1-r)}$. On the other hand, a direct calculation yields $|z_1-z_2| = 2\sin (\pi r)$. Solving both identities for $r$ then gives \begin{equation}\label{6b}
\tilde d(z_1,z_2)\sim \sqrt{|z_1-z_2|}.
\end{equation}

In the general case, we fix two non-parallel, non-zero $z_1,z_2\in\overline{ B_1(0)}$. We start with proving the upper bound
\begin{equation}
\label{7}
\tilde d(z_1,z_2)\lesssim d(z_1,z_2).
\end{equation}
We distinguish two cases. First, if $\sqrt{|z_1-z_2|}\le \sqrt{\rho(z_1)} + \sqrt{\rho(z_2)}$, we have
\[
\left|\sqrt{\rho(z_1)} - \sqrt{\rho(z_2)}\right| \le 2 \frac{\left|\rho(z_1) - \rho(z_2)\right| }{\sqrt{\rho(z_1)|} + \sqrt{\rho(z_2)} + \sqrt{|z_1-z_2|}} \lesssim d(z_1,z_2),
\]
where we have used the definition of $\rho$ in the second inequality. We do note loose generality by assuming that $\rho(z_1)\le \rho(z_2)$. Hence, by triangle inequality
\[
\tilde d(z_1,z_2)\le \tilde d\left( z_1,\frac{|z_2|}{|z_1|}z_1\right) + \tilde d\left(\frac{|z_2|}{|z_1|}z_1,z_2\right),
\]
that is, we replace the geodesics by first moving from $z_1$ along a straight line in direction of the origin until we reach the sphere of radius $|z_2|$. From there, we move along the hypocycloid towards $z_2$. By \eqref{6a} and the previous estimate, the length of the first piece of this curve is controlled by $d(z_1,z_2)$. The length of the curve on the hypocycloid is cruelly estimated against the Euclidean distance, that is
\[
\tilde d\left(\frac{|z_2|}{|z_1|}z_1,z_2\right)\lesssim \frac{\left|\frac{|z_2|}{|z_1|}z_1 - z_2\right|}{\sqrt{\rho(z_2)}}\le \frac{|z_1-z_2|}{\sqrt{\rho(z_2)}}.
\]
The latter is estimated by $d(z_1,z_2)$ because of the assumptions $\rho(z_1)\le\rho(z_2)$ and $\sqrt{|z_1-z_2|}\le \sqrt{\rho(z_1)} + \sqrt{\rho(z_2)}$.

In the second case, if $\sqrt{\rho(z_1)} +\sqrt{\rho(z_2)} \le \sqrt{|z_1-z_2|}$, we use the triangle inequality
\[
\tilde d(z_1,z_2)\le \tilde d\left(z_1,\frac{z_1}{|z_1|}\right) + \tilde d\left(\frac{z_1}{|z_1|},\frac{z_2}{|z_2|}\right) + \tilde d\left(\frac{z_2}{|z_2|},z_2\right),
\]
that means, this time our competitor curve first runs straight to the boundary and then along the hypocycloid to connect the projection points $\frac{z_1}{|z_1|}$ and $\frac{z_2}{|z_2|}$. Hence, applying \eqref{6a} and \eqref{6b}, we estimate $\tilde d(z_1,z_2)\lesssim \sqrt{\rho(z_1)} +\sqrt{\rho(z_2)} +\sqrt{|z_1-z_2|}$. The latter term is controlled by $d(z_1,z_2)$ thanks to the above assumption. This concludes the proof of \eqref{7}.

We finally turn to the proof of the opposite estimate
\begin{equation}
\label{8}
d(z_1,z_2)\lesssim \tilde d(z_1,z_2).
\end{equation}
We suppose that $\rho(z_1)\le \rho(z_2)$. Our first goal is the lower bound
\begin{equation}
\label{8a}
\tilde d(z_1,z_2)\gtrsim   \min_{\psi\ge \sqrt{\rho(z_2)}} \left\{ \psi- \sqrt{\rho(z_1)} + \frac{|z_1-z_2|}{\psi}\right\} .
\end{equation}
Indeed, on the one hand, we have the trivial estimate
\[
\tilde d(z_1,z_2)\gtrsim \frac{|z_1-z_2|}{ \psi_*}
\]
where $ \psi_* = \sup_t \sqrt{\rho(\Gamma(t))}$. On the other hand, via the fundamental theorem and because $\Gamma(t)\in B_1(0)$,
\[
\sup_t \sqrt{1- |\Gamma(t)|^2} \le \sqrt{1 - |z_1|^2} + \int_a^b\frac{|\Gamma'(t)|}{\sqrt{1-|\Gamma(t)|^2}}\, dt,
\]
and thus $\psi_*  - \sqrt{\rho(z_1)}\lesssim\tilde d(z_1,z_2)$. Since $\psi_*\ge \sqrt{\rho(z_2)}$, we have thus proved \eqref{8a}. To deduce \eqref{8}, we again distinguish to cases. If the optimal $\psi$ satisfies the estimate $\psi\ge \sqrt{\rho(z_1)} +\sqrt{\rho(z_2)}+\sqrt{|z_1-z_2|}$, then
\[
d(z_1.z_2)\le \sqrt{|z_1-z_2|} \le \psi  - \sqrt{\rho(z_1)} - \sqrt{\rho(z_2)}\stackrel{\eqref{8a}}{\lesssim}\tilde d(z_1,z_2).
\]
If not, we conclude
\[
d(z_1,z_2) \le \frac{|z_1-z_2|}{\psi}\stackrel{\eqref{8a}}{\lesssim }\tilde d(z_1,z_2).
\]
This proves the lower bound \eqref{8}.
\end{proof}

The knowledge of geodesics and of the intrinsic distance improves our understanding of the Riemannian manifold $\M$. The boundary is at a finite distance from the interior of the ball. The shortest distance is realized by straight lines and is exactly $d(z,\partial B) = \arccos(|z|)\sim \sqrt{\rho(z)}$. By contrast, the length of curves parallel to the boundary diverges. For instance, the length of a circle of radius $R$ is $\frac{2\pi R}{\sqrt{1-R^2}}$.

The study of the intrinsic distance function reveals the diffusion time scales on $\M$.  On a general manifold, diffusion over a distance $d$ happens in a time $t$ of order $t\sim d^2$. In our situation, geodesics hit the boundary orthogonally. Hence, diffusion near the boundary happens primarily in normal direction and distances scale $d\sim \sqrt{d_{\mathit{Eucl}}}$ with the Euclidean distance $d_{\mathit{Eucl}}$. The diffusion time scale close to the boundary is thus $t\sim d_{\mathit{Eucl}}$. On the other hand, in the interior where $d\sim d_{\mathit{Eucl}}$, the diffusion time is as in the Euclidean setting, $t\sim d_{\mathit{Eucl}}^2$. The role of different diffusion time scales with regard to the actual position relative to the boundary will be reflected in our Calderon--Zygmund theory developed in Subsections \ref{S:5.2} and \ref{S:5.3} below. Notice that close to the boundary, the diffusion part $-\rho\laplace$ and the drift part $(\sigma+1) z\cdot \grad$ of our linear operator are comparable. Here, the fact that $\sigma +1>0$ turns out to be of importance: Mass is transported towards the boundary and thus boundary conditions are not required. On the contrary, we will see that solutions satisfy some natural boundary conditions, which were described as ``asymptotic boundary conditions'' in \cite{Seis14}. See also Remarks \ref{R1} and \ref{R1a} in Subsection \ref{S:5.1a}  below.

The relevance of intrinsic distances for the regularity theory of the (linearized) porous medium equation was first exploited by Daskalopulos and  Hamilton \cite{DaskalopoulosHamilton98} and Koch \cite{Koch99}.

We conclude this subsection with a study of intrinsic balls and their volumes. An open intrinsic ball of radius $r$ around $z$ is defined by
\[
B_r^d(z):= \left\{ y\in \overline{B_1(0)}:\: d(z,y)< r\right\}.
\]
The first statement of Lemma \ref{L8} below shows that such a ball is comparable to the Euclidean ball with the same center $z$ but radius $r\left(r +\sqrt{\rho(z)}\right)$.

\begin{lemma}\label{L8}
There exists a constant $C>0$ such that for all $r>0$ and $z\in \overline{B_1(0)}$, it holds that
\[
B_{r\left(r+\sqrt{\rho(z)}\right)}(z)\cap \overline{B_1(0)} \subset B_r^d(z) \subset B_{C r\left(r+\sqrt{\rho(z)}\right)}(z)\cap \overline{B_1(0)}.
\]
\end{lemma}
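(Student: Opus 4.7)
The plan is to verify both inclusions directly from the explicit formula for the semimetric $d$, without invoking the geodesic characterization at all. I introduce the shorthand $s_1 = \sqrt{\rho(z)}$, $s_2 = \sqrt{\rho(y)}$, and $u = \sqrt{|y-z|}$, so that the defining inequality $d(z,y) < r$ reads
\[
u^2 \,<\, r(s_1+s_2+u).
\]

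First I would handle the left inclusion. Assume $|y-z| < r(r+s_1)$ and $|y|\le 1$, i.e.\ $u^2 < r^2 + rs_1$. A short case split on the size of $u$ relative to $r$ finishes it: if $u \le r$, then $u^2 \le ru$, whereas if $u > r$ the hypothesis rearranges to $u(u-r) = u^2 - ru \le u^2 - r^2 < rs_1$. Either way one obtains $u^2 < r(s_1+u) \le r(s_1+s_2+u)$, which is exactly $d(z,y) < r$.

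For the right inclusion, the main auxiliary input is the elementary inequality
\[
|\rho(z)-\rho(y)| \,=\, \tfrac12\bigl||z|^2-|y|^2\bigr| \,\le\, |y-z| \,=\, u^2,
\]
from which $s_2 \le \sqrt{s_1^2 + u^2} \le s_1 + u$. Substituting this into the hypothesis $u^2 < r(s_1+s_2+u)$ collapses the right-hand side to a function of $s_1$, $u$, $r$ alone and produces the quadratic inequality $u^2 - 2ru - 2rs_1 < 0$. Solving for $u$ via the quadratic formula and then squaring yields $|y-z| = u^2 \le C\,r(r+s_1)$ with an explicit constant $C$ (e.g.\ $C=8$), which is the claimed Euclidean bound.

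I do not anticipate any genuine obstacle here. The only mild subtlety is the observation $s_2 \le s_1 + u$, valid for points in $\overline{B_1(0)}$, which is needed to eliminate $s_2$ from the inequality before a clean quadratic bound on $u$ can be extracted. Once this is noted, both directions reduce to pure algebra, and no further geometric information about geodesics or hypocycloids is required beyond Proposition~\ref{P1} and the definition of $d$.
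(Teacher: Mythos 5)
Your argument is correct, and for the right inclusion it follows a genuinely different and more elementary route than the paper. The paper first extracts $|z-z'| \lesssim r\bigl(\sqrt{\rho(z)}+\sqrt{\rho(z')}\bigr) + r^2$ by Young's inequality and then, to eliminate $\sqrt{\rho(z')}$ in the case $|z'|<|z|$, invokes Proposition~\ref{P1} together with the explicit geodesic formula $\tilde d(z,\tilde z) = \arcsin(|z|) - \arcsin(|z'|) \sim \sqrt{\rho(z')}-\sqrt{\rho(z)}$ to get $\sqrt{\rho(z')} \lesssim \sqrt{\rho(z)} + r$. You bypass the geodesic machinery entirely by observing that $\rho$ is $1$-Lipschitz on $\overline{B_1(0)}$, so $\rho(y) \le \rho(z) + |y-z|$ and hence $\sqrt{\rho(y)} \le \sqrt{\rho(z)} + \sqrt{|y-z|}$; substituting this into $d(z,y)<r$ leaves a quadratic in $\sqrt{|y-z|}$ that is solved by elementary algebra. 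This makes the whole lemma a pure consequence of the defining formula for the semimetric $d$ — in fact, contrary to the cautious phrasing in your last sentence, Proposition~\ref{P1} is not used at all in your proof, which is a small improvement in logical economy (and your argument also covers $r>1$ without comment, whereas the paper reduces to $r\le 1$). For the left inclusion, the paper uses monotonicity of $\ell\mapsto \ell/(a+\sqrt{\ell})$ while you do a case split on $u$ versus $r$; both are fine, though you should note that in the subcase $u\le r$ the strict inequality $u^2 < r(s_1+u)$ requires a half-sentence of justification (if $s_1=0$, the hypothesis $u^2<r^2$ forces $u<r$, which restores strictness — and the one truly degenerate point $u=s_1=s_2=0$, i.e.\ $y=z$ on the boundary, must be handled by the convention $d(z,z)=0$, an edge case the paper also leaves implicit). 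The trade-off: the paper's route emphasizes the geometric picture via geodesics, which is useful for intuition and reuse elsewhere in the section, while yours is shorter, self-contained, and purely algebraic.
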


\begin{proof} It is enough to study the case $r\le 1$. We fix $z, z' \in \overline{B_1(0)}$. Suppose that $d(z,z')<r$. We will first establish the second inclusion by showing that
\begin{equation}
\label{9a}
|z-z'|\lesssim r\left(r+\sqrt{\rho(z)}\right).
\end{equation}
From the definition of $d$ we notice that
\[
|z-z'| < r \left( \sqrt{\rho(z)} + \sqrt{\rho(z')} + \sqrt{|z-z'|}\right),
\]
which with the help of Young's inequality turns into
\[
|z-z'| \lesssim r\left(\sqrt{\rho(z)} + \sqrt{\rho(z')}\right) + r^2.
\]
If $|z'|\ge |z|$, then the claim follows. Otherwise, if $|z'| < |z|$, we set $\tilde z : =\frac{|z'|}{|z|} z$. We recall that the shortest curve connecting $z$ and $\tilde z$ lies on a straight line and this line segment realizes the minimal distance between $z$ and the ball of radius $|\tilde z|$ around the origin. Hence,  using Proposition \ref{P1}, $\tilde d(z,\tilde z)\le \tilde d(z,z')\lesssim r$. On the other hand, because the geodesic curve is given by $\Gamma(t) = \frac{z}{|z|}\sin(t) $ where $t\in [\arcsin(|z'|),\arcsin(|z|)]$, it holds 
\[
\tilde d(z,\tilde z) = \arcsin(|z|) - \arcsin(|z'|)  = \int_{|z|}^{|z'|} \frac1{\sqrt{1-\tau^2}}\,d\tau \sim \sqrt{\rho(z')} - \sqrt{\rho(z)}.
\]
We thus deduce that $\sqrt{\rho(z')} \lesssim \sqrt{\rho(z)} + r$, and therefore \eqref{9a} follows.

To prove the first inclusion, we assume that
$|z-z'|< r\left(r+\sqrt{\rho(z)}\right)$. Then, by the definition of $d$ and monotonicity
\[
d(z,z')< \frac{r\left( r+\sqrt{\rho(z)}\right)}{\sqrt{\rho(z)} + \sqrt{\rho(z')} + \sqrt{r\left(r+\sqrt{\rho(z)}\right)}}\le r.
\]
This concludes the proof of Lemma \ref{L8}.
\end{proof}

Towards the end of this subsection, we will gather some technical results that prove to be helpful in the subsequent analysis. The first result provides two comparison formulas for balls whose centers are either relatively close or relatively far away from the boundary.

\begin{lemma}\label{L8c} There exists $C>0$ such that for all $z\in \overline{B_1(0)}$ and $r>0$ the following holds:
\begin{enumerate}
\item If $z\not=0$ and $\sqrt{\rho(z)}\le r$, then
\[
B_{r^2 } \left(\frac{z}{|z|}\right)\cap\overline{B_1(0)}\subset B_{Cr}^d(z)\quad\mbox{and}\quad B_r^d(z)\subset B_{Cr^2}\left(\frac{z}{|z|}\right)\cap\overline{B_1(0)},
\]
and $\sqrt{\rho(z')}\lesssim r$ for all $z'\in B^d_r(z)$.
\item If $\sqrt{\rho(z)}\ge r$, then
\[
B_{r\sqrt{\rho(z)}}(z)\cap\overline{B_1(0)} \subset B_{r}^d(z)\quad\mbox{and}\quad B_r^d(z)\subset B_{Cr\sqrt{\rho(z)}}(z)\cap\overline{B_1(0)},
\]
and $\rho(z')\lesssim \rho(z)$ for all $z'\in B_r^d(z)$. Moreover, if $\sqrt{\rho(z)} \ge 2Cr$, then also $\rho(z')\gtrsim \rho(z)$ for all $z'\in B_r^d(z)$.
\end{enumerate}
\end{lemma}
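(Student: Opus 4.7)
The plan is to reduce both cases of Lemma~\ref{L8c} to the general inclusion from Lemma~\ref{L8}, exploiting the fact that in each case one of the two terms $r$ and $\sqrt{\rho(z)}$ in the Euclidean radius $r(r+\sqrt{\rho(z)})$ dominates the other. The pointwise estimates on $\rho(z')$ will then follow by combining these Euclidean inclusions with the elementary Lipschitz bound $|\rho(z')-\rho(z)| \le |z-z'|$, which is immediate from $\rho(\zeta) = \tfrac12(1-|\zeta|^2)$.

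First I would handle Case (ii), since it requires no center shift. The assumption $\sqrt{\rho(z)} \ge r$ gives $r + \sqrt{\rho(z)} \sim \sqrt{\rho(z)}$, so the two inclusions are just a direct restatement of Lemma~\ref{L8}. For the pointwise upper bound on $\rho(z')$, any $z' \in B_r^d(z)$ satisfies $|z'-z| \lesssim r\sqrt{\rho(z)}$, whence
\[
\rho(z') \le \rho(z) + |z-z'| \le \rho(z) + C r\sqrt{\rho(z)} = \sqrt{\rho(z)}\bigl(\sqrt{\rho(z)} + Cr\bigr) \le (C+1)\rho(z).
\]
For the lower bound under the stronger hypothesis $\sqrt{\rho(z)} \ge 2Cr$, the same computation reversed yields $\rho(z') \ge \rho(z) - Cr\sqrt{\rho(z)} \ge \tfrac12 \rho(z)$.

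Next I would treat Case (i). Now $\sqrt{\rho(z)} \le r$, so $r+\sqrt{\rho(z)} \sim r$, and Lemma~\ref{L8} gives $B_r^d(z) \sim B_{r^2}(z) \cap \overline{B_1(0)}$ up to constants in the radii. To replace the center $z$ by its boundary projection $z/|z|$, I would observe that
\[
\bigl|z - z/|z|\bigr| = 1-|z| = \frac{2\rho(z)}{1+|z|} \le 2\rho(z) \le 2r^2,
\]
and then apply the Euclidean triangle inequality in both directions (absorbing the $2r^2$ term into the constant $C$). This yields the pair of inclusions centered at $z/|z|$. For the pointwise bound $\sqrt{\rho(z')} \lesssim r$, I would use $|z'-z| \lesssim r^2$ from the Euclidean inclusion, together with $\rho(z') \le 1-|z'| \le (1-|z|) + |z'-z| \lesssim r^2$.

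No step is a serious obstacle; the lemma is essentially a case-splitting corollary of Lemma~\ref{L8}. The only point that requires a little care is verifying that the center-shift in Case (i) is permissible; this is cleanly handled because the displacement $1-|z|$ is of order $\rho(z) \le r^2$, precisely the scale of the Euclidean ball being shifted. One small sanity check worth making is that the hypothesis $z \neq 0$ in Case (i) together with $\sqrt{\rho(z)} \le r$ forces $|z|$ to be bounded below (for $r \le 1/2$ say, $|z| \ge 1/\sqrt{2}$), so that $z/|z|$ is indeed well-defined and close to $z$; for larger $r$ the intrinsic ball swallows much of $\overline{B_1(0)}$ and the comparison is trivial after adjusting $C$.
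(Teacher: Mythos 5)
Your proof is correct and takes essentially the same route as the paper: both reduce each case to Lemma~\ref{L8} by observing which of $r$ or $\sqrt{\rho(z)}$ dominates, and both control $\rho(z')$ by elementary pointwise bounds on the Euclidean inclusion. The only cosmetic difference is in case~(i), where the paper transfers the center via the intrinsic distance (using $d(z,z/|z|)\sim\sqrt{\rho(z)}$ and the triangle inequality for $d$), whereas you transfer it via the Euclidean displacement $|z-z/|z||=1-|z|\lesssim r^2$; both amount to the same absorption of the center shift into the constant $C$.
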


\begin{proof}
The inclusions in the first statement are immediate consequences of the triangle inequality for $d$, the fact that $d(z,z/|z|)\sim \sqrt{\rho(z)}$, and Lemma \ref{L8}. For instance, for any $z'\in B_r^d(z)$, it holds
\[
d\left(z',\frac{z}{|z|}\right) \lesssim d(z',z) + \tilde d\left(z,\frac{z}{|z|}\right)\lesssim r+ \sqrt{\rho(z)}\lesssim r,
\]
which implies the second inclusion via Lemma \ref{L8}. The first inclusion can be established analogously. Moreover, $\sqrt{\rho}\lesssim r$ in $B_r^d(z)$ follows from the second inclusion.

The second statement is an application of Lemma \ref{L8} and the assumption. Indeed, the comparison formula for the balls follows from Lemma \ref{L8} by further estimating the radii. Then, by triangle inequality we have for all $z'\in B_r^d(z)\subset B_{Cr\sqrt{\rho(z)}}(z)$ that $|z|\le C \rho(z) + |z'|$ and thus $\rho(z') \lesssim \rho(z)$. Moreover, using $|z'|\le Cr\sqrt{\rho(z)} +|z|$, we obtain
\[
1- |z'| \ge 1- |z| - Cr\sqrt{\rho(z)} \ge \frac12\rho(z) +\frac12\sqrt{\rho(z)}\left(\sqrt{\rho(z)} - 2
Cr\right).
\]
For $\sqrt{\rho(z)} \ge 2Cr$ this implies $\rho(z')\gtrsim \rho(z)$ as desired.
\end{proof}

The next lemma provides estimates on the intrinsic volumes of balls, where ``intrinsic'' refers to the volume form inherited from the weighted manifold discussed in the introduction to this section. To fix a notation reminiscent of $\sigma$, we set $\mu_{\sigma} : = \rho^{\sigma}\,\L^N\mres B_1(0) \left( = \omega\,  \mathrm{vol}_{\g}\right)$, where $\L^N$ denotes the Lebesgue measure, and for any measurable $A\subset \R^N$, we write $|A|_{\sigma} = \mu_{\sigma}(A)$. Notice that for any $\sigma>-1$, $\mu_{\sigma}$ defines an absolutely continuous finite Radon measures, which has precisely the same null sets as $\L^N\mres B_1(0)$.

\begin{lemma}\label{L8b}
For any $r\lesssim 1$ and $z\in \overline{B_1(0)}$, it holds
\[
|B_r^d(z)|_{\sigma} \sim r^N(r+\sqrt{\rho(z)})^{N+2\sigma}.
\]
%
%
\end{lemma}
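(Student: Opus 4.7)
The plan is to separate the estimate into a bulk regime and a boundary regime, using Lemmas \ref{L8} and \ref{L8c} to convert intrinsic balls into familiar Euclidean ones, and then to compute the weighted Lebesgue integral $\int \rho^{\sigma}\, dx$ over each piece. We may assume without loss of generality that $r\le 1$.

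\emph{Bulk case:} $\sqrt{\rho(z)} \ge 2Cr$. Here Lemma \ref{L8c}(2) tells us that $B_r^d(z)$ is comparable, up to multiplicative constants, to the Euclidean ball $B_{r\sqrt{\rho(z)}}(z)$, and moreover that $\rho(z')\sim \rho(z)$ uniformly for $z'\in B_r^d(z)$. Hence
\[
|B_r^d(z)|_\sigma \;\sim\; \rho(z)^{\sigma}\,\bigl|B_{r\sqrt{\rho(z)}}(z)\bigr| \;\sim\; r^{N}\rho(z)^{N/2+\sigma}\;\sim\; r^{N}(r+\sqrt{\rho(z)})^{N+2\sigma},
\]
because $r+\sqrt{\rho(z)}\sim \sqrt{\rho(z)}$ in this regime.

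\emph{Boundary case:} $\sqrt{\rho(z)} < 2Cr$. Now $r+\sqrt{\rho(z)}\sim r$, so we must show $|B_r^d(z)|_\sigma\sim r^{2N+2\sigma}$. By Lemma \ref{L8} (or Lemma \ref{L8c}(1) if $z\ne 0$), $B_r^d(z)$ sits between two Euclidean balls of radii $\sim r^2$, and it is contained in a spherical cap of thickness $\sim r^2$ near $\partial B_1$. To evaluate the integral of $\rho^{\sigma}$ over such a cap, pick coordinates so that (after translation by $e_1$) points take the form $y=e_1+w=(1+w_1,w')$ with $|w|\lesssim r^2$ and $|y|\le 1$, which gives $\rho(y)=-w_1-\tfrac12(w_1^{2}+|w'|^{2})\sim |w_1|$ on the part of the cap where $w_1\in[-Cr^2,0]$. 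Integrating,
\[
|B_r^d(z)|_\sigma \;\sim\; \int_{|w'|\lesssim r^2}\!\!\int_{0}^{Cr^{2}} s^{\sigma}\,ds\,dw' \;\sim\; r^{2(N-1)}\,r^{2(\sigma+1)} \;=\; r^{2N+2\sigma},
\]
which is finite precisely because $\sigma>-1$ and agrees with the claimed rate. The lower bound follows from restricting the same integration to a smaller sub-cap provided by the inclusion $B_{r^2}(z/|z|)\cap\overline{B_1(0)}\subset B^d_{Cr}(z)$ of Lemma \ref{L8c}(1) (with an appropriate rescaling of $r$). The case $z=0$ is degenerate but straightforward: $B_r^d(0)$ is comparable to a Euclidean ball $B_{r^2}(0)$ when $r$ is small, and to all of $B_1(0)$ once $r\gtrsim 1$.

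The only mildly delicate step is the boundary case when $\sigma<0$, where $\rho^{\sigma}$ is singular at $\partial B_1(0)$; here one must verify that the boundary-adapted parametrization above captures the singular part correctly and that the integrability hypothesis $\sigma>-1$ is precisely what makes the integral of $s^{\sigma}$ over $[0,Cr^{2}]$ converge and produce the power $r^{2(\sigma+1)}$. Stitching the two regimes together (the transitional strip $r\le\sqrt{\rho(z)}\le 2Cr$ is covered by either estimate since $\rho(z)\sim r^{2}$ there) yields the claimed two-sided bound.
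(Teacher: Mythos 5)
Your proof is correct and takes essentially the same route as the paper. The paper simply declares the lemma to be an immediate consequence of Lemma \ref{L8} (the Euclidean sandwich for intrinsic balls); your argument spells out the weighted-volume computation that the paper leaves implicit, splitting into the bulk case where $\rho$ is comparable to $\rho(z)$ on the ball and the boundary case handled via the cap parametrization with the convergent $\int_0^{Cr^2}s^\sigma\,ds$ integral for $\sigma>-1$.
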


\begin{proof}
The statement is an immediate consequence of Lemma \ref{L8}.
\end{proof}

We finally have:

\begin{lemma}\label{L8d}
Let $r>0$ and $z,z'\in \overline{B_1(0)}$. Then it holds
\[
\frac{|B_r^d(z)|_{\sigma}}{|B_r^d(z')|_{\sigma}}\lesssim \left(1 +\frac{d(z,z')}r\right)^{\max\{N,2N+2\sigma\}}.
\]
In particular,
\[
\frac{r+\sqrt{\rho(z)}}{r+\sqrt{\rho(z')}} \lesssim \left(1+\frac{d(z,z')}r\right)^{\frac{\max\{N,2N+2\sigma\}}{|N+2\sigma|}}.
\]
\end{lemma}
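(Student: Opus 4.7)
The plan is to reduce everything to the volume formula of Lemma \ref{L8b} together with a Lipschitz-type bound on $z \mapsto \sqrt{\rho(z)}$ with respect to the intrinsic distance $d$. By Lemma \ref{L8b},
\[
\frac{|B_r^d(z)|_\sigma}{|B_r^d(z')|_\sigma} \sim \left(\frac{r+\sqrt{\rho(z)}}{r+\sqrt{\rho(z')}}\right)^{N+2\sigma},
\]
so the first assertion reduces to controlling $(r+\sqrt{\rho(z)})/(r+\sqrt{\rho(z')})$ (and, when $N+2\sigma<0$, its reciprocal) by a power of $1+d(z,z')/r$.

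The key ingredient is the estimate
\[
|\sqrt{\rho(z)} - \sqrt{\rho(z')}| \lesssim d(z,z').
\]
I would extract this from the argument leading to \eqref{8a} in the proof of Proposition \ref{P1}: along any admissible curve $\Gamma$ joining $z$ to $z'$, the fundamental theorem of calculus yields $\psi_* - \sqrt{\rho(z)} \lesssim \tilde d(z,z')$, where $\psi_* = \sup_t\sqrt{\rho(\Gamma(t))}$. Since $\psi_* \ge \max\{\sqrt{\rho(z)},\sqrt{\rho(z')}\}$, Proposition \ref{P1} then delivers the displayed Lipschitz bound. Writing $a := r+\sqrt{\rho(z)}$ and $b := r+\sqrt{\rho(z')}$, this gives $|a-b|\lesssim d(z,z')$ and therefore
\[
\max\left\{\frac{a}{b},\frac{b}{a}\right\} \le 1 + \frac{Cd(z,z')}{r}.
\]

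To match the stated exponent I would split on the sign of $N+2\sigma$. When $N+2\sigma\ge 0$, the factor $(a/b)^{N+2\sigma}$ is at most $(1+Cd/r)^{N+2\sigma} \le (1+Cd/r)^{2N+2\sigma}$ if $a\ge b$, and trivially at most $1$ if $a<b$; either sub-case fits under $(1+Cd/r)^{\max\{N,2N+2\sigma\}} = (1+Cd/r)^{2N+2\sigma}$. When $N+2\sigma<0$, I would write $(a/b)^{N+2\sigma} = (b/a)^{|N+2\sigma|}$ and proceed symmetrically; the constraint $\sigma>-1$ forces $|N+2\sigma|\le N$, so the bound fits under $(1+Cd/r)^N = (1+Cd/r)^{\max\{N,2N+2\sigma\}}$. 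The ``in particular'' statement is most directly obtained straight from the Lipschitz bound $a/b \le 1+Cd/r$: since $\max\{N,2N+2\sigma\}/|N+2\sigma|\ge 1$ (itself a consequence of $\sigma>-1$) and $x\mapsto x^p$ is non-decreasing on $[1,\infty)$ for $p\ge 1$, the gap in the exponent is absorbed.

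The only nontrivial step is the Lipschitz estimate on $\sqrt{\rho}$, and even that is essentially already present in the proof of Proposition \ref{P1}; everything after it is a short case analysis and bookkeeping with exponents.
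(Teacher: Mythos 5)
Your proof is correct, and it takes a genuinely different route from the paper's. The paper invokes the (quasi-)triangle inequality for $d$ to establish the inclusion $B_r^d(z)\subset B_{C(r+d(z,z'))}^d(z')$, then applies Lemma \ref{L8b} at the \emph{enlarged} radius $R=C(r+d)$: from $|B_r^d(z)|_\sigma\le|B_R^d(z')|_\sigma$ it expands the resulting ratio $(R/r)^N\bigl((R+\sqrt{\rho(z')})/(r+\sqrt{\rho(z')})\bigr)^{N+2\sigma}$, which is where the exponent $\max\{N,2N+2\sigma\}$ arises. You instead apply Lemma \ref{L8b} only at radius $r$ so that the $r^N$ factors cancel, reducing the claim to control of $(r+\sqrt{\rho(z)})/(r+\sqrt{\rho(z')})$, and then import the Lipschitz bound $|\sqrt{\rho(z)}-\sqrt{\rho(z')}|\lesssim d(z,z')$, which is indeed readable off the curve-length inequality that feeds into \eqref{8a} in the proof of Proposition \ref{P1} (take $\sqrt{\rho(z_2)}\le\psi_*(\Gamma)\lesssim\sqrt{\rho(z_1)}+L(\Gamma)$ and infimize over $\Gamma$). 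Your route has some advantages: it avoids the quasi-triangle inequality for the semimetric $d$ entirely, yields the ``in particular'' statement directly from the Lipschitz bound rather than by unwinding the first estimate, and, in the case $N+2\sigma\ge 0$, actually produces the sharper exponent $N+2\sigma$ rather than $2N+2\sigma$. The one bookkeeping point you leave implicit is the regime $r\gtrsim 1$, where Lemma \ref{L8b}'s volume asymptotics do not apply; there the intrinsic balls saturate at $\overline{B_1(0)}$ and both assertions are trivial, as the paper notes at the outset of its proof.
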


\begin{proof}
The results are trivial if $r\gtrsim 1$. Otherwise, if $r\ll1$, we invoke the triangle inequality to verify the inclusion $B_r^d(z)\subset B_{C\left(r + d(z,z')\right)}^d(z')$. An application of  Lemma \ref{L8b} then yields the desired estimates.
\end{proof}

\subsection{Existence, uniqueness and energy estimates}\label{S:5.1a}

The objective of this subsection is to give a precise meaning of a solution to \eqref{7a} and the corresponding initial value problem, state existence and uniqueness results and derive first  regularity estimates in the natural Hilbert space setting. 

We start with some considerations from a functional analysis point of view. The (sub-)elliptic operator occurring in \eqref{7a}, $\L:= -\rho\laplace +(\sigma+1)z\cdot \grad$, is nonnegative symmetric with respect to the $L^2_{\sigma} := L^2(\mu_{\sigma})$ inner product, because
\begin{equation}\label{60}
\int \varphi \L\psi\, d\mu_{\sigma} = \int \grad \varphi \cdot \grad \psi\, d\mu_{\sigma+1}\quad\mbox{for all }\varphi,\psi\in C^{\infty}(\overline{B_1(0)}).
\end{equation}
As the space of test functions, $C^{\infty}(\overline{B_1(0)})$, is densely contained in the weighted Sobolev spaces $H^k_{\sigma,\dots,\sigma+k}$, where
\[
H^0_{\sigma} = L^2_{\sigma},\quad H^k_{\sigma,\dots,\sigma+k} = L^2_{\sigma}\cap \dot H^1_{\sigma+1}\cap\dots\cap \dot H^k_{\sigma+k},
\]
and $\dot H^{\ell}_{\sigma+\ell} = \dot H^{\ell}(\mu_{\sigma+\ell})$, cf.\ \cite[Lemma 2]{Seis14}, $\L$ extends to a self-adjoint operator on $L^2_{\sigma}$ (Friedrich's extension). The following notion of weak solutions seems thus to be natural:

\begin{definition}\label{D1}
Let $0<T\le\infty$ and $f\in L^1((0,T); L^2_{\sigma})$ and $g\in L_{\sigma}^2$.
We call $w$ a {\em weak solution} to \eqref{7a}  with initial datum $g$, if
\begin{equation}
\label{61}
-\int_0^T \int w \partial_t\zeta\, d\mu_{\sigma} dt + \int_0^T\int \grad w\cdot \grad \zeta\, d\mu_{\sigma +1} dt
=\int g\zeta\, d\mu_{\sigma} +  \int_0^T\int f\zeta \, d\mu_{\sigma} dt
\end{equation}
for all $\zeta\in C^{\infty}([0,T)\times \overline{B_1(0)})$ with $\spt \zeta \subset [0,T)\times \overline{B_1(0)}$.
\end{definition}

Here and in the following, we use the convention that if domains of integration in the spatial variables are not specified, then we integrate over $B_1(0)$.

\begin{remark}[Asymptotic boundary condition]\label{R1}
Notice that the definition of weak solutions (and the one of $\L$ above) contains some (weak) information about the behavior of solutions near the boundary. Indeed, if $w$ is a smooth solution, then necessarily
\[
\lim_{|z|\uparrow1} \rho(z)^{\sigma+1}z\cdot \grad w(z) = 0.
\]
This condition rules out functions that grow rapidly at the boundary. It is proved in the course of the present section (and announced in Theorem \ref{T6} above), that for sufficiently regular data, solutions are in fact $C^1$ up to the boundary. Such solutions trivially satisfy the asymptotic boundary conditions above. In fact, these boundary conditions are also the natural boundary conditions in the sense of calculus of variations as \eqref{60} can be derived as the Euler--Lagrange equations for the (weighted) Dirichlet energy, cf.\ \cite[Lemma 5]{Seis14}. See also Remark \ref{R1a}.\end{remark}

Our first result concerns the well-posedness of the initial value problem in the Hilbert space setting.

\begin{lemma} \label{L6}
Let $0<T<\infty$ and $f\in L^1((0,T); L_{\sigma}^2 )$ and $g\in L_{\sigma}^2$. There exists a unique weak solution $w$ to \eqref{7a} with initial datum $g$. Moreover, $w\in C([0,T]; L_{\sigma}^2 )$ with $w(0)=g$ and the energy identity
\[
\frac12 \|w(T)\|^2_{L_{\sigma}^2} +  \int_0^{T} \|\grad w\|_{L_{\sigma+1}^2}^2\, dt = \frac12 \|g\|_{L_{\sigma}^2}^2  + \int_0^{T} \int fw\, d\mu_{\sigma}dt
\]
holds.
\end{lemma}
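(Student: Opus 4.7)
The plan is to exploit the self-adjoint Hilbert space structure identified just before Definition~\ref{D1}. Since $\L = -\rho\laplace + (\sigma+1) z \cdot \grad$ is densely defined, nonnegative, and symmetric on $L^2_\sigma$ via \eqref{60}, its Friedrich's extension is a nonnegative self-adjoint operator with quadratic form domain $\dot H^1_{\sigma+1}$; by the spectral theorem, $-\L$ therefore generates a strongly continuous contraction semigroup $\{S(t)\}_{t \ge 0}$ on $L^2_\sigma$. I would then construct the candidate solution by Duhamel's formula
\[
w(t) := S(t) g + \int_0^t S(t-s) f(s)\, ds,
\]
which belongs to $C([0,T]; L^2_\sigma)$ with $w(0)=g$ because $f \in L^1((0,T); L^2_\sigma)$ and $S(\cdot)$ is strongly continuous. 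Verification of \eqref{61} is a direct computation: for $\zeta$ first taken in the spectral domain of $\L$, integrate by parts in $t$, use $\partial_t S(t) \varphi = -\L S(t) \varphi$, and invoke the quadratic form identity \eqref{60}; a density argument then extends the identity to general admissible test functions $\zeta \in C^\infty([0,T) \times \overline{B_1(0)})$.

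For uniqueness, the difference $w$ of two weak solutions satisfies the equation with $f = 0$ and $g = 0$. Since $w$ itself lacks the regularity to be a legal test function in Definition~\ref{D1}, I would pass through its temporal Steklov average $w_h(t) := \frac{1}{h} \int_t^{t+h} w(s)\, ds$, which satisfies the distributional equation with averaged datum $w_h(0) \to 0$ in $L^2_\sigma$ and can be tested against itself after further approximation in the form domain $\dot H^1_{\sigma+1}$. This produces
\[
\tfrac{1}{2}\|w_h(T)\|_{L^2_\sigma}^2 + \int_0^T \|\grad w_h\|_{L^2_{\sigma+1}}^2\, dt = \tfrac{1}{2}\|w_h(0)\|_{L^2_\sigma}^2,
\]
and letting $h \downarrow 0$ forces $w \equiv 0$. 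The same scheme with $f \ne 0$, $g \ne 0$ delivers the stated energy identity, the cross term $\int_0^T \int f w\, d\mu_\sigma\, dt$ being absolutely convergent since $f \in L^1((0,T); L^2_\sigma)$ and $w \in L^\infty((0,T); L^2_\sigma)$ by the semigroup representation.

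The principal obstacle is the rigorous implementation of the self-testing step in the degenerate weighted setting, where spatial mollification does not commute cleanly with the coefficient $\rho^{\sigma+1}$. A cleaner alternative that I would likely prefer is a Galerkin scheme in the eigenfunctions of $\L$ (whose discreteness is recorded in Proposition~\ref{P2}): the finite-dimensional approximants $w_n$ are smooth and satisfy the energy identity by inspection, uniform energy bounds supply weak convergence to a limit that coincides with the Duhamel semigroup solution by uniqueness, and the identity for $w$ itself is then recovered in the limit using strong $L^2_\sigma$-continuity (furnished by the semigroup formula) together with the spectral representation of $\|S(t) g\|_{L^2_\sigma}^2$, which upgrades weak lower semicontinuity of the dissipation to equality.
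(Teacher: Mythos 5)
Your proposal is correct but follows a genuinely different route from the paper. The paper constructs the solution by an implicit time-discretization (Rothe scheme) combined with the elliptic solvability results from the appendix of \cite{Seis14}, and extracts the energy identity by testing the weak formulation with $w$ itself, justified by density of smooth functions in $H^1_{\sigma,\sigma+1}$; uniqueness is then an immediate corollary of the energy estimate and linearity. You instead exploit the self-adjoint structure recorded just before Definition~\ref{D1}: the Friedrichs extension of $\L$ generates a strongly continuous contraction semigroup on $L^2_\sigma$, so the Duhamel formula furnishes the solution, and you recover uniqueness and the energy identity via Steklov averaging (or Galerkin in the eigenbasis). Both approaches are legitimate and standard. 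Your semigroup route is shorter modulo abstract functional analysis and delivers $C([0,T];L^2_\sigma)$ regularity directly from the strong continuity of $S(t)$, whereas the paper's time-discretization stays close to the weak formulation and the elliptic machinery that is reused throughout Section~\ref{S:5}. One caveat worth flagging: when you pass from verifying \eqref{61} on spectral-domain test functions to the full class $C^\infty([0,T)\times\overline{B_1(0)})$, the density argument leans on exactly the inclusion of $C^\infty(\overline{B_1(0)})$ as a dense subspace of $H^1_{\sigma,\sigma+1}$, which the paper imports from \cite[Lemma 2]{Seis14}; make that reference explicit, since otherwise there is a gap in identifying mild solutions with weak solutions in the sense of Definition~\ref{D1}.
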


\begin{proof}
The proof of such a result follows a standard procedure. For instance, existence can be obtained via an implicit time-discretization and the elliptic theory derived in the appendix of \cite{Seis14}. The energy identity follows from choosing $w$ as a test function in the weak formulation, which is admissible by the density of smooth functions in $H^1_{\sigma,\sigma+1}$. Uniqueness is a consequence of the energy estimate and the linearity of the equation.
\end{proof}

\begin{remark}[No boundary conditions]\label{R1a}
It is remarkable that the weak notion of solutions in Definition \ref{D1} guarantees uniqueness {\em without} specifying the boundary condition on $\partial B_1(0)$. This observation goes far beyond Remark \ref{R1}: In fact, the parabolic equation \eqref{7a} is well-posed in $L^2_{\sigma}$ only if boundary conditions are {\em not} prescribed. Roughly speaking, this phenomenon stems from the fact that close to the boundary, $\rho\laplace$ and $z\cdot \grad$ are both of the same order and the latter term drives mass {\em towards} the boundary. It seems that phenomena of this type have been systemically studied only very recently --- at least in the unweighted setting, see \cite{FornaroMetafunePallaraPruess07,
FornaroMetafunePallaraSchnaubelt15}.
\end{remark}

We prove maximal regularity in $L^2_{\sigma}$ for the equation with zero initial data.

\begin{lemma}\label{L7}
Let $0<T<\infty$ and $f\in L^2((0,T);L^2_{\sigma}(B))$. Suppose that $w$ is the weak solution to \eqref{7a} with initial value $g\equiv 0$. Then the mapping $t\mapsto \|\grad w(t)\|_{L_{\sigma+1}^2}$ is continuous in $[0,T]$ and $\grad w(0)=0$. Moreover,
\[
\int_0^T \|\partial_t w \|_{L^2_{\sigma} }^2 \, dt + \int_0^T \|\grad w\|^2_{L_{\sigma}^2}\, dt + \int_0^{T} \|\grad^2w\|_{L_{\sigma+2}^2}^2\, dt \lesssim \int_0^{T} \|f\|^2_{L^2_{\sigma}}\, dt.
\]
\end{lemma}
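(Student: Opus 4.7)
The plan is to refine the energy identity of Lemma \ref{L6} by testing the equation with $\partial_t w$ in the weighted inner product, then converting the resulting bound on $\L w = -\rho\laplace + (\sigma+1)z\cdot\grad w$ into the claimed $H^2$-type estimate via elliptic regularity. The formal manipulations are justified by approximation (Galerkin truncation or smoothing of $f$, as in Lemma \ref{L6}).

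\textbf{Step 1 (time derivative and trace at $0$).} Choosing $\partial_t w$ as a test function and invoking the symmetry identity \eqref{60}, the cross term becomes
\[
\int \L w\cdot \partial_t w\,d\mu_\sigma \;=\; \int\grad w\cdot \grad\partial_t w\,d\mu_{\sigma+1}\;=\;\tfrac12\tfrac{d}{dt}\|\grad w\|_{L^2_{\sigma+1}}^2,
\]
so that, after absorbing $\int f\partial_t w\,d\mu_\sigma$ by Cauchy--Schwarz/Young, integrating in time, and using $g\equiv 0$ (which yields $\grad w(0)=0$ by approximation),
\[
\int_0^T\|\partial_t w\|_{L^2_\sigma}^2\,dt\;+\;\|\grad w(T)\|_{L^2_{\sigma+1}}^2\;\lesssim\;\int_0^T\|f\|_{L^2_\sigma}^2\,dt.
\]
Continuity of $t\mapsto\|\grad w(t)\|_{L^2_{\sigma+1}}$ on $[0,T]$ and $\grad w(0)=0$ follow from $\partial_t w\in L^2_tL^2_\sigma$ and the identity for $\frac{d}{dt}\|\grad w\|_{L^2_{\sigma+1}}^2$.

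\textbf{Step 2 ($\L w$ estimate).} Rewriting \eqref{7a} as $\L w=f-\partial_t w$ and using Step 1,
\[
\int_0^T\|\L w\|_{L^2_\sigma}^2\,dt\;\lesssim\;\int_0^T\|f\|_{L^2_\sigma}^2\,dt.
\]

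\textbf{Step 3 (elliptic regularity, the main obstacle).} It remains to upgrade the $L^2_\sigma$-bound on $\L w$ to
\[
\|\grad w\|_{L^2_\sigma}^2+\|\grad^2 w\|_{L^2_{\sigma+2}}^2\;\lesssim\;\|\L w\|_{L^2_\sigma}^2,
\]
a time-frozen estimate for the degenerate elliptic operator $\L$. The natural route is a Bochner-type calculation: applying \eqref{60} with $\varphi=\L w$ and integrating by parts once more, one expresses $\int|\L w|^2\,d\mu_\sigma$ as a quadratic form featuring $\|\Delta w\|_{L^2_{\sigma+2}}^2$ (which controls $\|\grad^2 w\|_{L^2_{\sigma+2}}^2$ modulo further integrations by parts, with the weight $\rho^{\sigma+2}$ vanishing at $\partial B_1(0)$ producing only lower-order boundary corrections), together with coercive contributions from the drift part of the form $(\sigma+1)\|z\cdot\grad w\|_{L^2_\sigma}^2$ and $(\sigma+1)\|\grad w\|_{L^2_{\sigma+1}}^2$. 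The sign $\sigma+1>0$ is essential here: it gives the remainder terms the correct sign and provides Hardy-type control of $\|\grad w\|_{L^2_\sigma}$ by $\|\grad^2 w\|_{L^2_{\sigma+2}}$ plus $\|\grad w\|_{L^2_{\sigma+1}}^2$ --- reflecting the \textit{asymptotic boundary condition} of Remark \ref{R1}, which is what permits improving the natural $\rho^{\sigma+1}$ energy weight to the stronger $\rho^\sigma$. An elliptic estimate of exactly this form is developed in the appendix of \cite{Seis14} and can be invoked directly.

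The main difficulty lies precisely in Step 3: the degeneracy of the principal part at $|z|=1$ prevents a direct application of classical interior elliptic regularity, and the $\rho^\sigma$-weight for $\grad w$ (stronger than what the energy identity supplies) must be recovered from the interplay between the second-order term and the drift. Once Step 3 is available, integrating in $t$ and combining with Step 2 yields the three terms on the left-hand side of the displayed inequality.
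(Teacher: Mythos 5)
Your Steps 1 and 2 (testing with $\partial_t w$, reducing to the time-frozen elliptic problem $\L w = \tilde f$) match the paper exactly. The issue is in Step 3, where you propose a single Bochner-type computation and assert it produces coercive drift contributions ``of the form $(\sigma+1)\|z\cdot\grad w\|_{L^2_\sigma}^2$''. If you actually expand $\|\L w\|_{L^2_\sigma}^2$ and integrate the cross term by parts, the net coefficient of $\|z\cdot\grad w\|_{L^2_\sigma}^2$ is $-(\sigma+1)^2$, i.e.\ \emph{negative}: the $+(\sigma+1)^2$ from the direct expansion is overtaken by a $-2(\sigma+1)^2$ coming from the cross term once you move the derivative off $\laplace w$. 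So the naive identity does not supply the $\rho^\sigma$-weighted gradient bound ``for free'' from the drift, and the scheme as written does not close.

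The paper instead proceeds in three sequential estimates with different test functions: first $\|\grad w\|_{L^2_{\sigma+1}}\lesssim\|\tilde f\|_{L^2_\sigma}$ by testing with $\rho^\sigma w$ and the Hardy--Poincar\'e inequality; then the crucial improvement $\|\grad w\|_{L^2_\sigma}\lesssim\|\tilde f\|_{L^2_\sigma}$ by a Rellich/Pohozaev-type argument, testing with $\rho^\sigma\eta\,\frac{z}{|z|^2}\cdot\grad w$ localized away from the origin; and only then $\|\grad^2 w\|_{L^2_{\sigma+2}}\lesssim\|\tilde f\|_{L^2_\sigma}$ by testing with $-\rho^{\sigma+1}\laplace w$, using the first two bounds to absorb the cross terms. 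The Rellich test function is the concrete mechanism that converts $\sigma+1>0$ and the asymptotic boundary behavior into the weight improvement from $\rho^{\sigma+1}$ to $\rho^\sigma$; it is the missing ingredient in your proposal, and it cannot be replaced simply by citing the appendix of \cite{Seis14}, which is invoked in the paper for existence (Lemma \ref{L6}) rather than for this maximal-regularity estimate.
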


\begin{proof}
Let $0\le t_1<t_2\le T$ be arbitrarily given. Using a regularized version of $\zeta = \chi_{[t_1,t_2]} \partial_t w$ as test function in \eqref{61}, we infer the identity
\[
\int_{t_1}^{t_2} \int(\partial_t w)^2\, d\mu_{\sigma} dt + \frac12 \|\grad w(t_2)\|_{L_{\sigma +1}^2}^2  = \frac12\|\grad w(t_1)\|_{L_{\sigma +1}^2}^2 + \int_{t_1}^{t_2} \int f\partial_t w\, d\mu_{\sigma}dt,
\]
from which we deduce that $\|\partial_t w\|_{L^2(L^2_{\sigma})}\lesssim \|f\|_{L^2(L^2_{\sigma})}$, because $g=0$ and $\grad w(0)=0$. Moreover, further examination of the above identity reveals that $[0,T)\ni t\mapsto \|\grad w(t)\|_{L_{\sigma+1}^2}\in \R$ is a continuous mapping. 
We deduce hence that it is enough to study the elliptic problem
\begin{equation}\label{21a}
-\rho^{-\sigma} \div\left(\rho^{\sigma+1}\grad w\right) = \tilde f:= f- \partial_t w\in L^2((0,T); L_{\sigma}^2)
\end{equation}
pointwise in time, that is, we suppose that $\tilde f\in L^2_{\sigma}$. Then the statement of the lemma is a consequence of
 the maximal regularity estimate
\begin{equation}\label{18}
\| \grad^2 w\|_{L_{\sigma+2}^2}\lesssim \|\tilde f\|_{L_{\sigma}^2}
\end{equation}
and
\begin{equation}\label{20}
\|\grad w\|_{L_{\sigma}^2}\lesssim \|\tilde f\|_{L_{\sigma}^2}.
\end{equation}
Our arguments for \eqref{18} and \eqref{20} will be quite formal and can be made rigorous using standard approximation techniques and the density of smooth functions in $H^2_{\sigma,\sigma+1,\sigma+2}$. We start by proving the auxiliary estimate
\begin{equation}\label{19}
\|\grad w\|_{L_{\sigma+1}^2}\lesssim \|\tilde f\|_{L_{\sigma}^2}.
\end{equation}
For this purpose, we test \eqref{21a} with $\rho^{\sigma} w$ and obtain using the Cauchy--Schwarz inequality and $\int \tilde f\, d\mu_{\sigma}=0$ (notice that spatially constant functions are admissible testfunctions) that
\[
\int |\grad w|^2 \, d\mu_{\sigma+1}\le \left(\int \tilde f^2 \, d\mu_{\sigma} \int (w-c)^2 \, d\mu_{\sigma}\right)^{1/2},
\]
where $c$ is an arbitrary constant. Combined with the Hardy--Poincar\'e inequality
\[
\inf_{c\in \R} \int (w-c)^2\, d\mu_{\sigma} \lesssim \int |\grad w|^2\, d\mu_{\sigma+1},
\]
which is proved in \cite[Lemma 3]{Seis14}, this implies \eqref{19}.

For \eqref{20}, we first let $\eta$ be a smooth cut-off function that is supported outside $B_{\frac14}(0)$ such that $\eta=1$ outside $B_{\frac12}(0)$. Testing \eqref{21a} with $\rho^{\sigma} \eta\frac{z}{|z|^2}\cdot \grad w$ then yields after multiple integration by parts
\[
\frac{\sigma+1}2 \int \eta |\grad w|^2 \, d\mu_{\sigma} \le \int \eta \tilde f \frac{z}{|z|^2} \cdot \grad w \, d\mu_{\sigma} + C\int |\grad w|^2 \, d\mu_{\sigma+1},
\]
and via Young's inequality
\[
\int_{B_1(0)\setminus B_{\frac12}(0)} |\grad w|^2 \, d\mu_{\sigma} \lesssim \int \tilde f^2 \, d\mu_{\sigma} + \int |\grad w|^2\, d\mu_{\sigma+1}\stackrel{\eqref{19}}{\lesssim} \int \tilde f^2 \, d\mu_{\sigma}.
\]
On the other hand, because $\rho\sim 1$ in $B_{\frac12}(0)$, we get from \eqref{19} that
\[
\int_{B_\frac12(0)} |\grad w|^2\, d\mu_{\sigma}\lesssim \int \tilde f^2\, d\mu_{\sigma}.
\]
The last two estimates together yield the desired bound \eqref{20}.

We finally turn to the (formal) proof of \eqref{18}, which is based on a suitable choice of testfunction and a multiple integration by parts. Indeed, testing with $ -\rho^{\sigma +1} \laplace w$, we derive
\begin{eqnarray*}
\lefteqn{\int |\grad^2 w|^2\, d\mu_{\sigma+2}}\\
&=&-\int \tilde f\laplace w\, d\mu_{\sigma+1}  + (\sigma +2)\int z\cdot (\grad^2 w \grad w)\, d\mu_{\sigma +1} - \int \laplace w z\cdot \grad w\, d\mu_{\sigma+1},
\end{eqnarray*}
from which we deduce \eqref{18} with the help of  the Cauchy--Schwarz and Young inequalities and \eqref{20}.
\end{proof}

We conclude this subsection with a symmetry observation for the homogeneous problem, that is $f=0$. 

\begin{lemma}\label{R2}
If $w_1$ and $w_2$ are two solutions to the linear homogeneous equation (that is, \eqref{7a} with $f\equiv0$) with initial data $g_1$ and $g_2$, respectively, then
\[
\int w_1(t)g_2\, d\mu_{\sigma} = \int g_1 w_2(t)\, d\mu_{\sigma}
\]
for all $t\ge 0$.
\end{lemma}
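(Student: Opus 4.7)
The statement is a manifestation of the self-adjointness of the spatial operator $\L = -\rho\laplace + (\sigma+1)z\cdot\grad$ on $L^2_\sigma$ established through the symmetry identity \eqref{60} and the Friedrichs extension procedure mentioned just before Definition \ref{D1}. The plan is to differentiate the pairing
\[
\phi(s) := \int w_1(s)\, w_2(t-s)\, d\mu_\sigma, \quad s\in[0,t],
\]
and show $\phi'\equiv 0$. Since $\phi(0)=\int g_1\, w_2(t)\, d\mu_\sigma$ and $\phi(t)=\int w_1(t)\, g_2\, d\mu_\sigma$, the conclusion follows.

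For smooth solutions the computation is immediate: each $w_i$ solves $\partial_t w_i + \L w_i = 0$, so
\[
\phi'(s) = -\int (\L w_1(s))\, w_2(t-s)\, d\mu_\sigma + \int w_1(s)\, (\L w_2(t-s))\, d\mu_\sigma,
\]
and both terms equal $\int \grad w_1(s)\cdot \grad w_2(t-s)\, d\mu_{\sigma+1}$ by the symmetry identity \eqref{60}, so they cancel.

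To justify this for weak solutions at the regularity level of Lemma \ref{L6}, I would proceed by approximation. Choose sequences $g_i^{(n)}\in C^\infty(\overline{B_1(0)})$ converging to $g_i$ in $L^2_\sigma$. The associated weak solutions $w_i^{(n)}$ are smooth enough (by the regularization provided via Lemma \ref{L7} applied after a trivial time-shift, together with an iteration of the energy estimate on higher derivatives) to run the computation above, yielding
\[
\int w_1^{(n)}(t)\, g_2^{(n)}\, d\mu_\sigma = \int g_1^{(n)}\, w_2^{(n)}(t)\, d\mu_\sigma.
\]
Lemma \ref{L6} provides $C([0,T];L^2_\sigma)$ continuous dependence of the weak solution on its initial datum (via the energy identity with $f\equiv 0$: $\|w_1^{(n)}(t)-w_1(t)\|_{L^2_\sigma}\le \|g_1^{(n)}-g_1\|_{L^2_\sigma}$), so each side passes to the limit and we recover the claim.

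The only potentially delicate point is the smoothness of $w_i^{(n)}$ needed to differentiate $\phi$ in time; if one prefers to avoid invoking additional regularity, an equivalent route is to insert a time-mollified version of $(s,z)\mapsto w_2(t-s,z)$ as a test function in the weak formulation \eqref{61} for $w_1$ — the weak formulation is precisely designed to encode the symmetry \eqref{60}, and the mollification in time together with the cutoff near $s=t$ (to respect the support condition $\spt\zeta\subset[0,T)\times\overline{B_1(0)}$) makes the argument rigorous without leaving the Hilbert framework of Lemma \ref{L6}.
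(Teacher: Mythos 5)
Your proof follows the same approach as the paper's: both compute with the pairing $\int w_1(s)\,w_2(t-s)\,d\mu_\sigma$ (the paper phrases it via the time-inversion operator $\cS(\tau)=t-\tau$), differentiate in the time parameter, and cancel the two terms using the symmetry identity \eqref{60}. Your additional discussion of how to justify the differentiation at the regularity level of weak solutions — via smooth approximation of the initial data together with the $L^2_\sigma$ contraction from the energy identity of Lemma \ref{L6}, or alternatively via a time-mollified test function in \eqref{61} — is a sound way to firm up what the paper leaves as a formal computation.
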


This lemma shows in particular that the semigroup operator $e^{-t\L}$ is self-adjoint on $L^2_{\sigma}$.

\begin{proof}
We define the time inversion operator $\cS:[0,t]\to[0,t]$ by $\cS(\tau) = t- \tau $. Then
\begin{eqnarray*}
\lefteqn{\int w_1(t)g_2\, d\mu_{\sigma} - \int g_1 w_2(t)\, d\mu_{\sigma}}\\
&=& \int w_1(t) (w_2\circ \cS) (t) \, d\mu_{\sigma} - \int w_1(0) (w_2\circ \cS) (0)\, d\mu_{\sigma}\\
&=& \int_0^{t}  \int \left( \partial_t w_1 (w_2\circ \cS) - w_1 (\partial_t  w_2)\circ \cS\right)\, d\mu_{\sigma}dt.
\end{eqnarray*}
Using the equations for $w_1$ and $w_2$ and integrating by parts yields the desired result.
\end{proof}

\subsection{Carleson measure estimates for the homogeneous problem} \label{S:5.2}

In this subsection, we derive regularity estimates for the homogeneous equation
\begin{eqnarray}
\partial_t w - \rho^{-\sigma} \div\left(\rho^{\sigma+1}\grad w\right) &=& 0,\label{17a}\\
w(0,\tacka)&=& g.\label{17b}
\end{eqnarray}
We suppose that the initial datum is a Lipschitz function with respect to the Euclidean topology, that means $\|g\|_{\Lip} = \|\grad g\|_{L^{\infty}}<\infty$. 

In a first step, we derive local $L^2_{\sigma}$ regularity estimates.

\begin{lemma}\label{L8a}
Let $0< \eps<\tilde\eps<1$ and $0<\tilde \delta <\delta <1$. Let $w$ be a solution to the homogeneous equation \eqref{17a}.
If $ \delta $ and $\tilde \delta /\delta$ are sufficiently small, then the following holds for any $k\in \N_0$, $\beta\in \N_0^N$ and $0<r\le\sqrt2$:
\begin{enumerate}
\item If $z_0\in \overline{ B_1(0)}$ is such that $\sqrt{\rho(z_0)}\le \delta r$ and $\tau\ge 0$, then
\[
\int_{\tau + \tilde \eps r^2}^{\tau + r^2} \|\partial_t^k \partial_z^{\beta} w\|^2_{L_{\sigma}^2(B_{\tilde \delta r}^d(z_0))}\, dt \lesssim r^{-4|\beta| - 4k} \int_{\tau+\eps r^2}^{\tau + r^2} \|w\|^2_{L_{\sigma}^2(B_{\delta r}^d(z_0))}\, dt.
\]
\item  If $z_0\in \overline{ B_1(0)}$ is such that $\sqrt{\rho(z_0)}\ge \delta r$ and $\tau \ge 0$, then
\[
\int_{\tau + \tilde \eps r^2}^{\tau + r^2} \|\partial_t^k \partial_z^{\beta} w\|^2_{L_{\sigma}^2(B_{\tilde \delta r}^d(z_0))}\, dt \lesssim   r^{-4k-2|\beta|}\rho(z_0)^{-|\beta|}  \int_{\tau+\eps r^2}^{\tau + r^2} \|w\|^2_{L_{\sigma}^2(B_{\delta r}^d(z_0))}\, dt.
\]
\end{enumerate}
\end{lemma}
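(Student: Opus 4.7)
The argument is a Caccioppoli bootstrap on nested intrinsic parabolic cylinders. The two cases in the statement reflect the two scaling regimes of the metric $d$ identified in Lemma \ref{L8c}: in Case 1 the intrinsic ball $B^d_r(z_0)$ has Euclidean radius of order $r^2$ with $\rho\lesssim r^2$ on it, whereas in Case 2 it has Euclidean radius of order $r\sqrt{\rho(z_0)}$ with $\rho\sim\rho(z_0)$ throughout. In both, the intrinsic temporal scale is $r^2$, which accounts for the common factor $r^{-4k}$ from time derivatives.

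For the base step $(k,|\beta|)=(0,1)$, I would choose a space-time cutoff $\eta$ equal to $1$ on the smaller cylinder and vanishing outside an intermediate one, with $|\partial_t\eta|\lesssim r^{-2}$ and $|\grad\eta|\lesssim 1/[r(r+\sqrt{\rho(z_0)})]$. Testing \eqref{17a} against $\eta^2 w$ in $L^2_\sigma$, integrating by parts (the asymptotic boundary condition from Remark \ref{R1} discards the boundary term at $\partial B_1$), and absorbing via Young's inequality would give
\[
\int_{\tau+\tilde\eps r^2}^{\tau+r^2}\int \eta^2|\grad w|^2\,d\mu_{\sigma+1}\,dt \;\lesssim\; r^{-2}\int_{\tau+\eps r^2}^{\tau+r^2}\int w^2\,d\mu_\sigma\,dt,
\]
the factor $|\grad\eta|^2\rho\lesssim r^{-2}$ being absorbed by $\rho\lesssim(r+\sqrt{\rho(z_0)})^2$ on the support of $\eta$. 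In Case 2, Lemma \ref{L8c}(2) yields $\rho\gtrsim\rho(z_0)$ on $B^d_{\tilde\delta r}(z_0)$ whenever $\tilde\delta/\delta$ is small, so $d\mu_\sigma\le\rho(z_0)^{-1}d\mu_{\sigma+1}$ there, which delivers the base case with the exact power $r^{-2}\rho(z_0)^{-1}$. Higher derivatives then follow by iterating this estimate on a telescoping family of nested intrinsic cylinders after differentiating \eqref{17a}: the commutator $[\partial_i,\L]=z_i\laplace-(\sigma+1)\partial_i$ produces only strictly lower-order terms, controlled by the inductive hypothesis, and time derivatives are handled identically since $\partial_t w$ satisfies the same homogeneous equation.

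The main obstacle is Case 1, where $\rho$ can vanish inside $B^d_{\tilde\delta r}(z_0)$ and the pointwise weight switch used in Case 2 breaks down. I would handle it by a local rescaling argument: when $\sqrt{\rho(z_0)}\le\delta r$, change to coordinates $y$ adapted to $\partial B_1$ in which both normal and tangential directions are dilated by $r^2$ (matching the Euclidean radius of the intrinsic ball given by Lemma \ref{L8c}(1)) and time by $r^2$. In these rescaled variables the weighted operator $\partial_t-\rho^{-\sigma}\div(\rho^{\sigma+1}\grad\cdot)$ becomes uniformly parabolic on a unit-size region with smooth, $\sigma$-dependent coefficients, to which classical interior $L^2$ parabolic regularity applies. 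Pulling back, every spatial derivative contributes a factor $r^{-2}$, giving the $r^{-4|\beta|}$ scaling of Case 1, and $d\mu_\sigma$ matches Lebesgue measure on the rescaled region up to a universal constant by Lemma \ref{L8b}. Iteration in $(k,|\beta|)$ then proceeds as in Case 2.
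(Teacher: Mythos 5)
The base step and Case~2 are sound, but Case~1 has a genuine gap, and it is precisely the part of the argument that carries all of the difficulty. You claim that after dilating by $r^2$ in both normal and tangential directions the weighted operator ``becomes uniformly parabolic on a unit-size region''. This is not true. When $\sqrt{\rho(z_0)}\le\delta r$, the intrinsic ball $B^d_{\tilde\delta r}(z_0)$ \emph{contains} boundary points of $B_1(0)$ (for instance $z_0/|z_0|$), so the weight $\rho$ vanishes on part of its closure. Under the isotropic rescaling $z\mapsto r^2 y$, the degenerate coefficient $\rho$ rescales to another weight that still vanishes on part of the rescaled boundary; the operator remains a degenerate one of Grushin/weighted type and classical interior $L^2$ parabolic regularity does not apply. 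No smooth change of variables removes this degeneracy --- restoring strict parabolicity is exactly why the paper introduces the hypocycloid metric and the $\mu_\sigma$-weighted framework in the first place. Moreover, the claim that the commutator $[\partial_i,\L]$ produces ``only strictly lower-order terms, controlled by the inductive hypothesis'' hides a weight mismatch: $[\partial_i,\L]w = z_i\laplace w + (\sigma+1)\partial_i w$, and the top term $z_i\laplace w$ is a second-order derivative of $w$ \emph{without} the $\rho$ factor, whereas the Caccioppoli/maximal-regularity step only controls $\rho\grad^2 w$ in $L^2_\sigma$, not $\grad^2 w$. Absorbing this loss of $\rho$ is precisely what the statement of the lemma asks for in Case~1 ($r^{-4|\beta|}$, i.e.\ the full intrinsic scaling, in $L^2_\sigma$ with no extra $\rho$ weight).

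The paper's proof of Case~1 is organized around this weight-reduction problem. After the Caccioppoli estimate and one application of Lemma~\ref{L7}, it first differentiates in the tangential directions $\tilde\partial_i=(R_iz)\cdot\grad$ with $R_iz\perp z$: the commutator with $\rho\laplace$ then produces $-2\rho R_i{:}\grad^2 w$, which keeps the favorable $\rho$ weight, so tangential derivatives can be iterated freely. It then differentiates in the normal direction $\tilde\partial_1=z\cdot\grad$; the equation for $\tilde\partial_1 w$ has the operator with $\sigma$ replaced by $\sigma+1$, with a right-hand side consisting of $\rho\grad^2 w$ and tangential terms $\tilde\grad'\grad w$, and finally a Hardy-type inequality drops the measure from $\mu_{\sigma+2}$ back to $\mu_\sigma$. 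None of this structure appears in your sketch; without it the $r^{-4|\beta|}$ scaling in $L^2_\sigma$ is not reached.

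Two minor points: the computation of the commutator gives $[\partial_i,\L]=z_i\laplace+(\sigma+1)\partial_i$ (sign aside, this does not change the issue above), and the observation that the asymptotic boundary condition kills the boundary term in the Caccioppoli estimate is fine and matches the paper.
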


Of course, the constants in the inequalities (which are suppressed in our notation) depend on the particular values of $k,\, \beta,\, \eps,\, \tilde \eps,\, \delta$, and $ \tilde \delta$. For our subsequent arguments, only the dependence on $r$, $\tau$, and $z_0$ is of relevance. We will thus neglect the dependency on the other variables for notational convenience.

\begin{proof}Upon a shift in the time variable, we may without loss of generality assume that $\tau=0$. 

We first consider the case $\sqrt{\rho(z_0)}\le \delta r$.
In view of Lemma \ref{L8c}, it suffices to prove the statement for $z_0\in \partial B_1(0)$, provided that $\tilde \delta/ \delta$ is small enough. In the first statement considered here, we may thus replace the intrinsic balls by Euclidean balls with squared radii.

We choose $\eps <\hat \eps<\tilde \eps$ and $\tilde \delta <\hat \delta <\delta$ and let $\eta$ be a smooth cut-off function that is supported in $( \eps r^2,r^2]\times \overline{B_{ \delta^2 r^2}(z_0)\cap  B_1(0)}$, and satisfies  $\eta =1$ in $[ \hat\eps r^2,r^2]\times \overline{B_{\hat \delta^2 r^2}(z_0)\cap B_1(0)}$ and $|\partial_t^k\partial_z^{\beta} \eta|\lesssim r^{-2k-2|\beta|}$. For the arguments that follow it is convenient to first consider the inhomogeneous equation
\[
\partial_t w - \rho^{-\sigma}\div\left(\rho^{\sigma+1}\grad w\right) = f,
\]
for some $f$ that will be specified later. Then $\eta w$ solves
\[
\partial_t (\eta w) - \rho^{-\sigma} \div\left(\rho^{\sigma+1}\grad(\eta w)\right) = \eta f + \left(\partial_t \eta - \rho^{-\sigma}\div\left(\rho^{\sigma+1}\grad \eta\right)\right)w - 2\rho\grad \eta\cdot \grad w.
\]
For further reference, we denote the left-hand side of this equation by $\tilde f$.
Notice that $\eta$ acts as a spatially constant function on the differential equation if $\hat \delta^2 r^2 \ge 2$. In this case, the equation and most of the arguments that follow simplify dramatically. We will thus focus on the case where $\eta$ is not spatially constant.
 Testing with $\rho^{\sigma} \eta w$, using the properties of $\eta$ and observing that $\rho(z)\lesssim r^2$ for $z\in \overline{ B_{\delta^2 r^2 }(z_0)\cap B_1(0)}$, we derive the Caccioppoli estimate
\begin{eqnarray}
\lefteqn{\int_{\hat \eps r^2}^{r^2} \int_{B_{\hat \delta^2 r^2} (z_0)} |\grad w|^2\, d\mu_{\sigma+1} dt}\nonumber\\
&\lesssim& r^2 \int_{\eps r^2}^{r^2} \int_{B_{\delta^2 r^2}(z_0)} f^2\, d\mu_{\sigma} dt+  \frac1{r^2} \int_{\eps r^2}^{r^2} \int_{B_{\delta^2 r^2}(z_0)} w^2\, d\mu_{\sigma} dt.\label{19aa}
\end{eqnarray}
Here we have written $B_{\ell}(z_0)$ for $B_{\ell}(z_0)\cap \overline{B_1(0)}$ and will do so in the following whenever it seems convenient.

The estimate in \eqref{19aa} is not yet the statement we are looking for since the weight on the left-hand side is still too small. We choose a new cut-off function by replacing $(\eps,\delta)$ by $(\hat \eps,\hat \delta)$ and $(\hat \eps,\hat \delta)$ by $(\tilde \eps,\tilde \delta)$ in the above definition. We denote this function again by $\eta$.  Invoking the estimate from Lemma \ref{L7} with $w$ and $f$ replaced by $\eta w$ and $\tilde f$ we obtain
\[
\int_0^T \int \left( (\partial_t (\eta w))^2 + \left|\grad(\eta w)\right|^2 + |\rho \grad^2(\eta w)|^2\right)\, d\mu_{\sigma} dt\lesssim \int_0^T \int \tilde f^2\, d\mu_{\sigma}dt.
\]
By the definition of $\tilde f$, using the properties of $\eta$ and the fact that $\rho(z)\lesssim r^2$ in the support of $\eta$, 
\begin{eqnarray*}
\lefteqn{\int_0^T \int \tilde f^2\, d\mu_{\sigma}dt}\\
& \lesssim& \int_{\hat \eps r^2}^{r^2} \int_{B_{\hat \delta^2 r^2}(z_0)} \left( f^2 +\frac1{r^4} w^2\right)\, d\mu_{\sigma}dt + \frac1{r^2}  \int_{\hat \eps r^2}^{r^2} \int_{B_{\hat \delta^2 r^2}(z_0)}|\grad w|^2\, d\mu_{\sigma +1}dt.
\end{eqnarray*}
Combining the previous two estimates now yields
\begin{eqnarray}
\lefteqn{\int_{\tilde \eps r^2}^{r^2} \int_{B_{\tilde \delta^2 r^2} (z_0)} \left((\partial_t w)^2 +|\grad w|^2 + |\rho \grad^2 w|^2\right)\, d\mu_{\sigma} dt}\nonumber \\
&\lesssim &  \int_{\eps s^2}^{r^2} \int_{B_{\delta^2 r^2}(z_0)} f^2\, d\mu_{\sigma} dt+  \frac1{ r^4} \int_{\eps r^2}^{r^2} \int_{B_{\delta^2 r^2}(z_0)} w^2\, d\mu_{\sigma} dt.\label{19a}
\end{eqnarray}
In particular, choosing $f=0$, we obtain the statement in the cases $(k,|\beta|)=(0,1)$ and $(k,|\beta|) = (1,0)$.

To prove the analogous statement for the second-order spatial derivatives, we have to lower the exponent on weight factor. For this purpose, we first differentiate the equation in direction of the tangent vectors close to $z_0\in \partial B$. Without loss of generality we assume that $z_0=e_1$. We let $R_2,\dots, R_N$ be rotation matrices in the sense that $R_i z\perp z$ for all $i$ and all $z\in \R^N$, and such that $\{z, R_2 z, \dots, R_N z\}$ forms a basis of $\R^N$ for any $z\in B_{\ell}(e_1)$, provided that $\ell $ is chosen sufficiently small. Suppose moreover that the $R_i$'s are such that $R_i e_1=e_i$.\footnote{E.g., in 3D, we let
\[
R_2= \left(
\begin{array}{ccc} 0&-1&0\\ 1&0&-1\\ 0&1&0\end{array}
\right)
\quad\mbox{and}\quad
R_3 =\left(
\begin{array}{ccc} 0&0&-1\\ 0&0&-1\\1&1&0\end{array}
\right),
\]
so that $R_2 e_1 = e_2$ and $R_3 e_1 = e_3$. Then, for all $z\in\R^3$ it holds $R_1 z\perp z$ and $R_2z\perp z$. Moreover, for any $z\in B_{\ell}(e_1)$ with $\ell $ sufficiently small, the set $\{z, R_2 z, R_3 z\}$ forms a basis of $\R^3$.}
At any point $z\in B_{\delta^2 r^2}(z_0)$ we let $\tilde \partial_i  = \tilde z_i \cdot \grad$ where $\tilde z_i = R_i z$ for some fixed $i$. 
Differentiating the parabolic equation yields
\[
\partial_t \tilde \partial_i w - \rho^{-\sigma} \div (\rho^{\sigma +1} \grad\tilde \partial_i w) =-2\rho R_i:\grad^2 w +(\sigma +1) \tilde \partial_i w,
\]
where $A:B := \sum_{i j} A_{ij} B_{ij}$ for any two matrices $A$ and $B$. Applying \eqref{19a} to this equation (with suitable changes between $(\eps,\delta)$, $(\tilde \eps, \tilde \delta)$, and $(\hat \eps,\hat \delta)$) and then once again to the original equation yields
\begin{eqnarray*}
\lefteqn{\int_{\tilde \eps r^2}^{r^2} \int_{B_{\tilde \delta^2 r^2} (z_0)}\left( (\partial_t \tilde \partial_i w)^2 +  |\grad\tilde \partial_i w|^2 +  |\rho\grad^2 \tilde \partial_i w|^2 \right) \, d\mu_{\sigma} dt}\\
&\lesssim &\int_{\hat \eps r^2}^{r^2} \int_{B_{\hat \delta^2 r^2}(z_0)} |\rho \grad^2 w|^2\, d\mu_{\sigma} dt +  \frac1{ r^4} \int_{\hat \eps r^2}^{r^2} \int_{B_{\hat \delta^2 r^2}(z_0)} |\grad w|^2\, d\mu_{\sigma} dt\\
&\stackrel{\eqref{19a}}{\lesssim} & \frac1{r^8} \int_{\eps r^2}^{r^2} \int_{B_{\delta^2 r^2}(z_0)}  w^2\, d\mu_{\sigma} dt.
\end{eqnarray*}
Applying \eqref{19a} again, we can change the order of differentiation on the left-hand side and obtain

\begin{eqnarray*}
\lefteqn{\int_{\tilde \eps r^2}^{r^2} \int_{B_{\tilde \delta^2 r^2} (z_0)} \left( |\tilde \partial_i\partial_t w|^2 +|\tilde \partial_i\grad w|^2 +  |\rho \tilde \partial_i\grad^2 w|^2\right) \, d\mu_{\sigma} dt}\\
&\lesssim&  \frac1{r^8 }  \int_{\eps r^2}^{r^2} \int_{B_{\delta^2 r^2}(z_0)}  w^2\, d\mu_{\sigma} dt,\hspace{8em}
\end{eqnarray*}
and the estimate is independent of the particular value of  $i\in\{2,\dots,N\}$. This procedure can be iterated and yields for any multi-index $\beta\in \N_0^{N-1}$
\begin{eqnarray}
\lefteqn{\int_{\tilde \eps r^2}^{r^2} \int_{B_{\tilde \delta^2 r^2} (z_0)} \left( |\tilde \partial_{z'}^{\beta}\partial_t w|^2 +|\tilde\partial_{z'}^{\beta}\grad w|^2 + |\rho\tilde \partial_{z'}^{\beta}\grad^2 w|^2\right) \, d\mu_{\sigma} dt}\nonumber\\
&\lesssim&  \frac1{r^{4(|\beta|+1)}}  \int_{\eps r^2}^{r^2} \int_{B_{\delta^2 r^2}(z_0)}  w^2\, d\mu_{\sigma} dt.\label{19b}\hspace{8em}
\end{eqnarray}
Here, we have used the notation $z'=(z_2,\dots,z_N)^T$ and defined $\tilde \partial_{z'}^{\beta}$ in the obvious way.

In the following, we will write $\tilde \grad' $ to denote the tangential gradient operator, that means,  $(\tilde \partial_2,\dots,\tilde \partial_N)^T$.

We turn to the estimates of the derivatives in direction normal to the boundary.  Differentiating the equation in normal direction, using $\tilde \partial_1 = z\cdot \grad$ yields
\[
\partial_t \tilde \partial_1 w - \rho^{-\sigma}\div(\rho^{\sigma+1} \grad \tilde \partial_1 w) + \laplace w=0.
\]
Observing that $z\cdot \grad\tilde \partial_1 w = \tilde \partial_1 w +  (z-\hat z)\otimes (z+\hat z):\grad^2 w + \hat z\otimes \hat z:\grad^2 w$, where $\hat z = z/|z|$,
 and $\laplace w - \hat z\otimes \hat z:\grad^2 w  = O(|\tilde \grad' \grad w|)$ because $\{\hat z, R_1 z,\dots R_N z\}$ is a basis of $\R^N$ and $R_i z\perp z$,
\footnote{Since $\{\hat z, R_2z,\dots,R_Nz\}$ is a basis of $\R^N$, we find $\lambda_{ij} \in \R$ such that $e_i = \lambda_{i1} \hat z + \sum_{j=2}^N \lambda_{ij} R_j z$. In particular, for any matrix $A\in\R^{N\times N}$, we have that $A_{ii} = e_i\otimes e_i:A = \left(\lambda_{i1}\hat z + \sum_{j=2}^N \lambda_{ij}R_j z\right)\otimes\left(\lambda_{i1}\hat z + \sum_{j=2}^N \lambda_{ij}R_j z\right):A$. If $A$ is symmetric, an expansion yields
\[
A_{ii} = (\lambda_{1i})^2 \hat z\otimes \hat z: A + \sum_{j,k=2}^N \lambda_{ij}\lambda_{ik} (R_jz)\otimes (R_k z):A + 2\sum_{j,k=2}^N \lambda_{ij}\lambda_{ik} (R_j z)\otimes \hat z:A.
\]
Notice that $e_i\cdot \hat z = \lambda_{i1}$ by orthogonality, and thus $1 = |\hat z|^2 = \sum_{i=1}^N (e_i\cdot \hat z)^2  = \sum_{i=1}^N (\lambda_{i1})^2$. In particular, summing over all $i$ yields
\[
\tr(A)  = \sum_{i=1}^N A_{ii} = \hat z\otimes \hat z:A + \sum_{i=1}^N \sum_{j,k=2}^N \lambda_{ij}\lambda_{ik} (R_jz)\otimes (R_k z):A + 2\sum_{i=1}^N\sum_{j,k=2}^N \lambda_{ij}\lambda_{ik} (R_j z)\otimes \hat z:A.
\]
}
we can rewrite this equation as
\begin{equation}
\label{19bb}
\partial_t\tilde \partial_1 w - \rho^{-(\sigma+1)}\div(\rho^{\sigma+2}\grad\tilde \partial_1 w) =  \tilde \partial_1 w + O\left(|\rho\grad^2 w|\right) +  O\left(|\tilde \grad'\grad w|\right).
\end{equation}
As in the derivation of \eqref{19a}, we can smuggle a cut-off function into the equation. Applying \eqref{19a} with $f$ being the right-hand side of the resulting equation and with $\sigma$ replaced by $\sigma+1$ yields the estimate
\begin{eqnarray*}
\lefteqn{\int_{\tilde \eps r^2}^{ r^2} \int_{B_{\tilde \delta^2 r^2} (z_0)}\left(( \partial_t \tilde \partial_1 w)^2 + |\grad \tilde \partial_1 w|^2 + |\rho \grad^2 \tilde \partial_1 w|^2\right)\, d\mu_{\sigma+1} dt}\\
&\lesssim&
\int_{\eps r^2}^{r^2} \int_{B_{\delta^2 r^2}(z_0)} \left(  |\rho\grad^2w|^2 + |\tilde \grad' \grad w|^2\right)\, d\mu_{\sigma+1}dt\\
&&\mbox{} + 
\frac1{r^2} \int_{ \eps r^2}^{r^2} \int_{B_{\delta^2 r^2}(z_0)}  |\rho\grad \tilde \partial_1 w|^2\, d\mu_{\sigma+1} dt+
\frac1{r^4} \int_{ \eps r^2}^{r^2} \int_{B_{\delta^2 r^2}(z_0)}  |\grad w|^2\, d\mu_{\sigma+1} dt.
\end{eqnarray*}
The terms on the right can be estimated using that $\mu_{\sigma+1}\lesssim \min\{1, r^2\} \mu_{\sigma}$ in the domain of integration. We change the order of differentiation and obtain via \eqref{19b}
\begin{eqnarray*}
\lefteqn{
\int_{\tilde \eps r^2}^{r^2} \int_{B_{\tilde \delta^2 r^2} (z_0)}\left( (\tilde \partial_1 \partial_t w)^2 +|\tilde \partial_1\grad  w|^2 +|\rho\tilde \partial_1 \grad^2 w|^2 \right) \, d\mu_{\sigma+1} dt }\\
&\lesssim&  \frac1{r^6} \int_{\eps r^2}^{r^2} \int_{B_{\delta^2 r^2}(z_0)}  w^2\, d\mu_{\sigma}dt.\hspace{8em}
\end{eqnarray*}
We can argue similarly for the third order derivatives. Differentiating once more in transversal direction yields the estimate
\[
\int_{\tilde \eps r^2}^{r^2} \int_{B_{\tilde \delta^2 r^2} (z_0)}\left( (\tilde \partial_1^2w)^2 + |\tilde \partial_1^2\grad  w|^2 \right) \, d\mu_{\sigma+2} dt \lesssim  \frac1{r^8}  \int_{ \eps r^2}^{r^2} \int_{B_{\delta^2 r^2}(z_0)}  w^2\, d\mu_{\sigma}dt.
\]

We finally use a Hardy-type inequality to control the second order derivatives with respect to the measure $\mu_{\sigma}$: Using the fact that $(1-|z|^2)^{\sigma} \sim -\div(z(1-|z|^2)^{\sigma+1})$ close to the boundary, it can be proved via integration by parts that
\begin{eqnarray*}
\lefteqn{
\int_{\tilde \eps r^2}^{r^2} \int_{B_{\tilde \delta^2 r^2} (z_0)}\left( |\tilde \partial_1 \partial_t w|^2 +|\tilde \partial_1 \grad w|^2\right) \, d\mu_{\sigma}dt}\\
&\lesssim& \int_{\eps r^2}^{r^2} \int_{B_{\delta^2 r^2}(z_0)} \left( |\tilde \partial_1^2 \partial_t w|^2 + |\tilde \partial_1^2 \grad w|^2\right)\, d\mu_{\sigma +2}dt .
\end{eqnarray*}
A combination of the previous estimates now yields
\[
\int_{\tilde \eps r^2}^{r^2} \int_{B_{\tilde \delta^2 r^2} (z_0)} \left( |\tilde \partial_1 \partial_t w|^2+ |\tilde \partial_1 \grad w|^2\right)\, d\mu_{\sigma}dt \lesssim\frac1{r^8} \int_{\eps r^2}^{r^2} \int_{B_{\delta^2 r^2}(z_0)}  w^2\, d\mu_{\sigma}dt,
\]
and thus, together with \eqref{19b} we get the first statement of the lemma for $(k,|\beta|)\in\{(1,1),(0,2)\}$. Control on higher order estimates can be obtained via iteration.

The second statement can be proved in a very similar (but easier) way. Indeed, choosing $\delta$ smaller if necessary,  Lemma \ref{L8c} enables us to derive the statement  in the Euclidean and unweighted setting. (In other words, the equation is strictly parabolic in this situation.) Such a result being standard, we omit further details.
\end{proof}

In the following lemma, we establish uniform bounds on solutions and their derivatives by combining the estimates from the previous lemma with Morrey-type inequalities.

\begin{lemma}\label{L9}
Let $w$ be a solution to the homogeneous equation \eqref{17a}
and let $z_0\in \overline{ B}$ and $\tau \ge 0$. Let $0<\eps< 1$ and $0<\delta < 1$ sufficiently small. Then, for all $k\in \N_0$, $\beta\in \N_0^N$ and $0<r\lesssim 1$, the following holds:
\begin{enumerate}
\item If $\sqrt{\rho(z_0)}\le \delta r$, then
\begin{equation}\label{23}
|\partial_t^k \partial_z^{\beta} w(t,z)|^2 \lesssim \frac{r^{-4k-4|\beta|}}{r^2 |B_r^d(z_0)|_{\sigma}} \int_{\tau}^{\tau+r^2} \int_{B_r^d(z_0)} w^2\, d\mu_{\sigma}dt
\end{equation}
for all $(t,z) \in [\tau + \eps r^2,\tau +r^2]\times B_{\delta r}^d(z_0)$.
\item If $\sqrt{\rho(z_0)}\ge \delta r$, then
\begin{equation}\label{23b}
|\partial_t^k \partial_z^{\beta} w(t,z)|^2 \lesssim \frac{r^{-4k-2|\beta|} \left(\rho(z_0)\right)^{-|\beta|}}{r^2|B_r^d(z_0)|_{\sigma}} \int_{\tau}^{\tau+r^2} \int_{B_r^d(z_0)} w^2\, d\mu_{\sigma}dt
\end{equation}
for all $(t,z) \in [\tau + \eps r^2,\tau +r^2]\times B_{\delta r}^d(z_0)$.
\end{enumerate}
\end{lemma}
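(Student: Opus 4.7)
The strategy is to combine the iterated local $L^2_\sigma$ estimates of Lemma \ref{L8a} with a Morrey-type embedding adapted to the intrinsic geometry, in order to upgrade integral bounds to pointwise ones. Throughout, I fix $(t,z)\in[\tau+\eps r^2,\tau+r^2]\times B^d_{\delta r}(z_0)$ and I write $Q^d_r = (\tau,\tau+r^2)\times B^d_r(z_0)$ for brevity.

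First, I would apply Lemma \ref{L8a} with the order of derivatives $(k+k',\beta+\beta')$ for arbitrary additional $k'\in\N_0$ and $\beta'\in\N_0^N$, together with parameters $(\eps,\tilde\eps,\delta,\tilde\delta)$ chosen so that the resulting inner cylinder still contains a full intrinsic parabolic neighborhood of $(t,z)$. A standard iteration over a finite sequence of shrinking radii (or equivalently a single application with slightly adjusted parameters) yields, in case~(1),
\[
\int_{Q^d_{r/2}}|\partial_t^{k+k'}\partial_z^{\beta+\beta'}w|^2\,d\mu_\sigma\,dt\;\lesssim\;
r^{-4(k+k')-4(|\beta|+|\beta'|)}\int_{Q^d_r}w^2\,d\mu_\sigma\,dt,
\]
and in case~(2), the same estimate with the weight factor $r^{-2(|\beta|+|\beta'|)}\rho(z_0)^{-|\beta|-|\beta'|}$ in place of $r^{-4(|\beta|+|\beta'|)}$.

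Next, I would invoke a Sobolev/Morrey embedding on the parabolic cylinder $Q^d_{r/2}$. Choosing $k'$ and $|\beta'|$ large enough (depending only on $N$ and $\sigma$) to exceed the Sobolev threshold, I can bound $|\partial_t^k\partial_z^\beta w(t,z)|^2$ by a normalized $L^2_\sigma$ average of $\partial_t^{k+k'}\partial_z^{\beta+\beta'}w$ over $Q^d_{r/2}$, multiplied by the reciprocal volume factor $\bigl(r^2|B^d_{r/2}(z_0)|_\sigma\bigr)^{-1}$. In case~(2), where $\rho\sim\rho(z_0)$ and the metric $\g$ is comparable to the Euclidean metric on all of $B^d_{r/2}(z_0)$ by Lemma \ref{L8c}, this embedding is simply standard parabolic Morrey after the uniform rescaling $(t,z)\mapsto(r^{-2}\rho(z_0)^{-1}t,\,r^{-1}\rho(z_0)^{-1/2}(z-z_0))$, which reduces \eqref{7a} to a uniformly parabolic equation on a unit-sized cylinder. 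In case~(1), where $B^d_{r/2}(z_0)$ is comparable to a Euclidean ball of radius $\sim r^2$ near a boundary point with weight $\rho^\sigma\lesssim r^{2\sigma}$, I would either appeal to a weighted Sobolev embedding on this Euclidean region (valid since $\sigma>-1$ makes $\mu_\sigma$ a doubling measure on intrinsic balls, as follows from Lemma \ref{L8d}), or alternatively straighten the boundary via the change of variables $z\mapsto(\arccos|z|,z/|z|)$ and work with the cigar-type coordinates in which the equation becomes uniformly parabolic up to a smooth weight.

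Combining the two steps and book-keeping the powers of $r$ and $\rho(z_0)$ produces precisely the factors $r^{-4k-4|\beta|}$ and $r^{-4k-2|\beta|}\rho(z_0)^{-|\beta|}$ in \eqref{23} and \eqref{23b} respectively, together with the normalization $r^{-2}|B^d_r(z_0)|_\sigma^{-1}$ coming from the volume of the parabolic cylinder, at which point I would replace $Q^d_{r/2}$ by $Q^d_r$ in the right-hand side (absorbing a harmless constant) to match the stated conclusion.

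The routine part is the iteration of Lemma \ref{L8a} and the accounting of scalings. The main obstacle is the Morrey step in case~(1): near the boundary, derivatives in the normal and tangential directions scale differently under $d$, so a naive Sobolev embedding produces the wrong power of $r$. This is precisely why in Lemma \ref{L8a} the near-boundary estimate loses $r^{-4|\beta|}$ rather than $r^{-2|\beta|}\rho(z_0)^{-|\beta|}$: each spatial derivative is worth $r^{-2}$ in Euclidean scale, i.e., $r^{-2}$ per $\partial_z$, which is consistent with the intrinsic scaling $\partial_z\sim r^{-2}$ for $\sqrt{\rho(z_0)}\lesssim r$. After one carries out the embedding in the right (anisotropic) scaling dictated by the intrinsic distance $d$, the powers match up.
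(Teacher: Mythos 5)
Your plan follows essentially the same route as the paper: iterate Lemma~\ref{L8a} to obtain $L^2_\sigma$ control of sufficiently many derivatives, then upgrade to a pointwise bound via a Morrey-type embedding scaled according to the intrinsic geometry, with case~(2) reducing to the standard Euclidean parabolic Morrey inequality after the uniform rescaling you describe. The one place where your sketch is thinner than the paper's is precisely the near-boundary embedding in case~(1), which you correctly identify as the main obstacle. The paper does \emph{not} appeal to an abstract weighted Sobolev embedding nor to the cigar coordinates $z\mapsto(\arccos|z|,\,z/|z|)$: instead it rescales $t=r^2\hat t$, $z=r^2\hat z$ onto a fixed unit-scale cube in the half-space $\{z_N>0\}$ with weight $z_N^\sigma$, invokes the elementary one-variable bound $\sup|f|\lesssim\|f\|_{L^1}+\|f'\|_{L^1}$ in all but the normal direction, and then proves by hand a one-dimensional weighted estimate of the form
\[
|\hat v(\hat z_N)|^2\lesssim \sum_{n=0}^{\max\{p,1\}}\int_0^\delta(\partial_{\hat z_N}^n\hat v)^2\,z_N^{\sigma}\,dz_N,
\]
via Cauchy--Schwarz against $z_N^{-q}$ with $q=\sigma+1-p\in[0,1)$ and repeated Hardy-type integrations by parts, where $p\in\N_0$ is chosen with $0\le \sigma+1-p<1$. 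Your appeal to the doubling property of $\mu_\sigma$ (Lemma~\ref{L8d}) does not by itself yield this weighted trace estimate, so if you go that route you should spell out the specific $W^{k,2}(z_N^\sigma)\hookrightarrow L^\infty$ inequality on the half-space and track how the index $p$ enters; alternatively, the cigar-coordinate change you mention would also work but leads you down a different computational path than the paper's.
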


\begin{proof}
Upon a shift in the time variable it is enough to consider the case $\tau=0$ only. As in the proof of the previous lemma, we will often write the handier $B_{\ell}(z_0)$ instead of $B_{\ell}(z_0)\cap \overline{B_1(0)}$.

 We will start considering the scenario $\sqrt{\rho(z_0)}\le \delta r$. According to Lemma \ref{L8c}, 
it is enough to prove \eqref{23} in the situation where $z_0$ is located on the boundary of the unit ball. In this case $B_{\delta r}^d(z_0)$ can be replaced by $B_{\delta^2r^2}(z_0)$. The statement in \eqref{23} now follows from the previous lemma via the estimate
\[
| v(t,z)|^2 \lesssim \sum_{\ell = 0}^{1} \sum_{|\gamma|=0}^{ N + p} \frac{ r^{4\ell + 4|\gamma|}}{r^2 |B_{\delta^2 r^2}(z_0)|_{\sigma}} \int_{ \eps r^2}^{r^2} \int_{B_{\delta^2 r^2}(z_0)\cap B}(\partial_t^{\ell}\partial_z^{\gamma} v)^2\, d\mu_{\sigma}dt,
\]
setting $v = \partial_t^k\partial_z^{\beta}w$ and where $p\in \N_0$ is such that $0\le \sigma+1 - p<1$ and $(t, z)\in[\eps r^2, r^2]\times  B_{\delta^2 r^2}(z_0)$.
(We do not attempt to give a sharp estimate here.)
Since $z_0$ is on the boundary, if $\delta $ is sufficiently small, we may replace $B_1(0)$ by the half-space $H=\{z\in \R^N:\: z_N>0\}$ and assume that $z_0=0$. By the equivalence of norms, we may also consider cubes instead of balls, and upon rescaling $t = r^2 \hat t$, $z = r^2\hat z$ and $v=\hat v$, the previous estimate follows from
\[
|\hat v(\hat t,\hat z)|^2 \lesssim \sum_{\ell = 0}^{1} \sum_{|\gamma|=0}^{ N + p}   \int_{  \eps}^{1 } \int_{Q_{\delta^2}(0)\cap H}(\partial_{\hat t}^{\ell}\partial_{\hat z}^{\gamma} \hat v)^2\, \tilde z_N^{\sigma}\, d\tilde zd\tilde t.
\]
Here, $Q_r(0)$ denotes a cube in $\R^N$, centered at $0$ and with side length $2r$.
In view of the elementary inequality $\sup_{[a,b]}|f|\lesssim \|f\|_{L^1(a,b)} + \|f'\|_{L^1(a,b)}$ (with constants dependent on $a$ and $b$), we further reduce the statement we have to prove to
\begin{equation}\label{24}
|\hat v (\hat z_N)|^2\lesssim \sum_{n=0}^{\max\{p,1\}} \int_0^{\delta} (\partial_{\hat z_N}^n \hat v)^2 \tilde z_N^{\sigma}\, d\tilde z_N.
\end{equation}
The latter is established as follows: We drop the hats and the subscript $N$ from $z$. For $z\in [0,\delta]$ and $q = \sigma +1 -p\in[0,1)$, we have
\[
v(z)\lesssim \int_0^{\delta} |v|\, dz +  \int_{0}^{\delta} |v'|\, dz \le \left(\int_0^{\delta} v^2 z^{q}\, dz + \int_0^{\delta} (v')^2 z^{q}\, dz\right)^{1/2} \left(\int_0^1\frac1{z^{q}}\, dz\right)^{1/2},
\]
where we have used the Cauchy--Schwarz inequality. Since $q<1$, the latter integral converges. Notice first that if $p=0$, that is $\sigma<0$, we can lower the weight, $z^q  =z^{\sigma+1}\le z^{\sigma}$, and \eqref{24} follows from the previous estimate. The same holds true if $p=1$, because then $q=\sigma$. From now on we assume that $p\ge 2$. 
Letting $\eta$ denote a cut-off function supported in $[0,2\delta]$ with $\eta=1$ in $[0,\delta]$, we obtain via integration by parts
\[
\int_0^1 \eta f^2 z^{q}\, dz \lesssim  \int_0^1 \eta (f')^2 z^{q+1}\, dz + \int_0^1 (\eta + |\eta'|) (f)^2 z^{q+1}\, dz.
\]
Choosing $f=v$ and $f=v'$, and repeating this procedure $p-1$ times yields
\[
\int_0^{\delta} (v^2 + (v')^2) z^{q}\, dz \lesssim \sum_{n=0}^p \int_0^{2\delta} (\partial_z^n v)^2 z^{q +p-1}\, dz.
\]
By the choice of $p$ and the definition of $q$, it is $q +p-1 = \sigma$, which proves \eqref{24}.

We finally comment on the proof of the second assertion. Thanks to Lemma \ref{L8c}, the assumption $\sqrt{\rho(z_0)}\ge \delta r$ converts the problem into the unweighted Euclidean setting. Thus, a standard Morrey inequality is applicable and yields the desired result. We omit details.
\end{proof}

For two arbitrarily fixed real numbers $a$ and $b$, $b>0$, we define the auxiliary function $\chi_{a,b}: \overline{B_1(0)}\times \overline{B_1(0)} \to [0,\infty)$ by
\[
\chi_{a,b}(z,z_0):= \frac{a\hat d(z,z_0)^2}{\sqrt{b+\hat d(z,z_0)^2}},
\]
where
\[
\hat d(z,z_0)^2 := \frac{|z-z_0|}{\rho(z) + \rho(z_0) + |z-z_0|} \sim d(z,z_0)^2.
\]
A short computation shows that
\begin{equation}\label{25a}
\sqrt{\rho(z)} |\grad_z \chi_{a,b} (z,z_0)|\lesssim |a|,
\end{equation}
which in particular implies that $\chi_{a,b}(\tacka,z_0)$ is Lipschitz continuous with respect to the induced metric $ d$, that is
\begin{equation}\label{26}
|\chi_{a,b}(z,z_0) - \chi_{a,b}(z',z_0)|\lesssim |a| d(z,z').
\end{equation}
Indeed, if we denote the gradient on $(\M,\g)$ by $\grad_{\g}$, then Lipschitz continuity of a mapping $\chi: \M\to \R$ with Lipschitz constant proportional to $|a|$ follows from the gradient estimate $\sqrt{\g(\grad_{\g} \chi, \grad_{\g}\chi)}\lesssim |a|$. Since in our case $\g \sim \rho^{-1}(dz)^2$ and $\grad_g \sim \rho\grad$, the latter is equivalent to \eqref{25a}.

The function $\chi_{a,b}$ will be a key ingredient in the derivation of the Carleson measure estimate (Proposition \ref{P2a}) and the Gaussian estimate (Proposition \ref{P2b}) below. The following result is an intermediate step towards both estimates.

\begin{lemma}\label{L10}Let $w$ be the solution to the initial value problem \eqref{17a}, \eqref{17b}. Then there exists a constant $C>0$ such that for all $z, z_0\in \overline{B_1(0)}$, $0<r\le \sqrt2$ and $t\ge r^2$ and any $k\in\N_0$ and $\beta\in\N_0^N$ it holds 
\[
|\partial_t^k\partial_z^{\beta} w(t,z)|\lesssim \frac{  r^{-2k-|\beta|} (r+ \sqrt{\rho(z)})^{-|\beta|}}{|B_r^d(z)|_{\sigma}^{1/2}} e^{ C a^2 t -\chi_{a,b}(z,z_0)} \|e^{\chi_{a,b}(\tacka,z_0)}g\|_{L_{\sigma}^2}.
\]
\end{lemma}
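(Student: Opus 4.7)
The plan is to combine the local interior estimates of Lemma \ref{L9} with a weighted $L^2$ energy estimate of Davies--Gaffney type, using the exponential weight $e^{\chi_{a,b}(\cdot,z_0)}$ whose spatial gradient is controlled by \eqref{25a}. The two ingredients will be married by exploiting the Lipschitz property \eqref{26}, which guarantees that $\chi_{a,b}(\cdot,z_0)$ is essentially constant on any intrinsic ball $B_r^d(z)$ up to an acceptable error of order $|a|r$.

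\textbf{Step 1: Weighted energy estimate.} I first claim the propagation bound
\[
\int e^{2\chi_{a,b}(\cdot,z_0)} w(t)^2 \, d\mu_\sigma \;\le\; e^{Ca^2 t}\,\|e^{\chi_{a,b}(\cdot,z_0)}g\|_{L^2_\sigma}^2.
\]
To obtain this, I multiply \eqref{17a} by $\rho^\sigma e^{2\chi} w$ (writing $\chi=\chi_{a,b}(\cdot,z_0)$) and integrate by parts, which yields
\[
\tfrac{1}{2}\frac{d}{dt}\int e^{2\chi}w^2\,d\mu_\sigma = -\int e^{2\chi}|\grad w|^2\,d\mu_{\sigma+1} - 2\int e^{2\chi}w\,\grad\chi\cdot\grad w\,d\mu_{\sigma+1}.
\]
The last term is absorbed by Young's inequality into the dissipation term and into $2\int e^{2\chi}w^2 \rho|\grad\chi|^2\,d\mu_\sigma$; by \eqref{25a}, $\rho|\grad\chi|^2\lesssim a^2$, so Gr\"onwall's inequality gives the claim. (The argument is formal but standard to justify via the density of smooth functions in $H^1_{\sigma,\sigma+1}$, as in Lemma \ref{L7}.)

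\textbf{Step 2: Swap the weight inside a small ball.} For $z'\in B_r^d(z)$ we have $d(z',z)<r$, so \eqref{26} gives $|\chi(z')-\chi(z)|\le C|a|r$, and Young's inequality yields $2C|a|r\le C+Ca^2r^2\le C+Ca^2 t$ since $t\ge r^2$. Consequently
\[
\int_{B_r^d(z)} w^2\,d\mu_\sigma \;\lesssim\; e^{Ca^2 t}\,e^{-2\chi(z,z_0)}\int_{B_r^d(z)} e^{2\chi(\cdot,z_0)}w^2\,d\mu_\sigma,
\]
and integrating this over $s\in[t-r^2,t]$ and using Step 1 produces
\[
\int_{t-r^2}^{t}\!\!\int_{B_r^d(z)} w^2\,d\mu_\sigma\,ds \;\lesssim\; r^2\,e^{Ca^2 t}\,e^{-2\chi(z,z_0)}\,\|e^{\chi(\cdot,z_0)}g\|_{L^2_\sigma}^2.
\]

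\textbf{Step 3: Pass from $L^2$ to pointwise via Lemma \ref{L9}.} Applying Lemma \ref{L9} with the evaluation point as the ball center (centered at $z$, with $\tau=t-r^2$) yields in either alternative
\[
|\partial_t^k\partial_z^\beta w(t,z)|^2 \;\lesssim\; \frac{r^{-4k-2|\beta|}\,(r+\sqrt{\rho(z)})^{-2|\beta|}}{r^2\,|B_r^d(z)|_\sigma}\int_{t-r^2}^{t}\!\!\int_{B_r^d(z)} w^2\,d\mu_\sigma\,ds,
\]
where the unified scaling factor follows because in case (1) of Lemma \ref{L9} one has $r+\sqrt{\rho(z)}\sim r$, while in case (2) one has $r+\sqrt{\rho(z)}\sim\sqrt{\rho(z)}$. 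Combining with the bound of Step 2 and taking square roots gives the asserted estimate.

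\textbf{Anticipated obstacle.} The only non-cosmetic difficulty is the rigorous justification of Step 1 at the boundary, where $\chi_{a,b}$ is merely Lipschitz in the intrinsic metric and the formal integration by parts requires care: one should first truncate $\chi$ at some level $M$, approximate by smooth functions (using the density of $C^\infty(\overline{B_1(0)})$ in $H^1_{\sigma,\sigma+1}$ and the asymptotic boundary condition noted in Remarks \ref{R1}--\ref{R1a}), run Gr\"onwall on the truncated quantity, and then pass $M\uparrow\infty$ by monotone convergence. Beyond this, unifying the two cases of Lemma \ref{L9} into the single factor $(r+\sqrt{\rho(z)})^{-|\beta|}$ is straightforward but must be checked at the transition $\sqrt{\rho(z)}\sim\delta r$.
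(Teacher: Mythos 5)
Your proposal is correct and follows essentially the same path as the paper: the weighted Gaffney--Davies energy estimate via the gradient bound \eqref{25a}, the Lipschitz swap of the weight over $B_r^d(z)$ via \eqref{26}, and the $L^\infty$--$L^2$ bounds of Lemma \ref{L9}, merged into the factor $(r+\sqrt{\rho(z)})^{-|\beta|}$. One small remark on your ``anticipated obstacle'': truncating $\chi_{a,b}$ at a level $M$ and passing $M\uparrow\infty$ is unnecessary, since $\hat d\le 1$ makes $\chi_{a,b}(\cdot,z_0)$ globally bounded by $|a|/\sqrt{b}$; the only regularization needed is the standard one (density of $C^\infty(\overline{B_1(0)})$ in $H^1_{\sigma,\sigma+1}$), exactly as in Lemma \ref{L7}.
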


\begin{proof}For abbreviation, we will write $\chi:= \chi_{a,b}(\tacka,z_0)$. Notice that $e^{\chi} w$ solves the equation
\[
\partial_t (e^{\chi}w) - \rho^{-\sigma}\div\left(\rho^{\sigma+1}\grad(e^{\chi}w)\right) = -\rho^{-\sigma}\div\left(\rho^{\sigma+1}\grad e^{\chi}\right)w - 2\rho\grad w\cdot \grad e^{\chi}.
\]
Hence, testing with $\rho^{\sigma}e^{\chi}w$ and integrating by parts a multiple times yields the identity
\[
\frac{d}{dt} \frac12\int (e^{\chi}w)^2\, d\mu_{\sigma}  + \int|\grad(e^{\chi}w)|^2\, d\mu_{\sigma+1} = \int |\grad e^{\chi}|^2 w^2\, d\mu_{\sigma+1}.
\]
Invoking the gradient estimate \eqref{25a} and a Gronwall argument, this implies
\begin{equation}\label{27}
\int (e^{\chi} w(t))^2\, d\mu_{\sigma} \le e^{2Ca^2 t}\int (e^{\chi} g)^2\, d\mu_{\sigma}
\end{equation}
for some $C>0$. We now combine this estimate with the $L^{\infty}$--$L^2$ bounds from Lemma \ref{L9}. Estimates \eqref{23} and \eqref{23b} together imply the bound
\[
|\partial_t^k \partial_z^{\beta} w(t,z)|^2 \lesssim \frac{ r^{-4k-2|\beta|} (r + \sqrt{\rho(z)})^{-2|\beta|} }{|B_r^d(z)|_{\sigma}} \sup_{\tilde z\in B_r^d(z)} e^{-2\chi(\tilde z)} \sup_{t\le\tau+r^2} \int_{B_r^d(z)} (e^{\chi} w(t))^2\, d\mu_{\sigma}
\]
for all $0<r\lesssim 1$ and $t\in[\tau+\eps r^2,\tau +r^2]$. Hence, applying \eqref{27}, we have
\[
|\partial_t^k \partial_z^{\beta} w(t,z)|\lesssim \frac{ r^{-2k-|\beta|} (r+\sqrt{\rho(z)})^{-|\beta|}}{|B_r^d(z)|^{1/2}_{\sigma}} \sup_{\tilde z\in B_r^d(z)} e^{-\chi(\tilde z)+C a^2 (\tau+r^2)} \|e^{\chi} g\|_{L_{\sigma}^2}.
\]
It remains to  invoke the Lipschitz bound \eqref{26} and we choose $t=\tau+r^2$ for conclusion.
\end{proof}

The above estimate is valuable for estimating the small time behavior of solutions. For large times, we can do much better by taking into account the spectral information on the linear operator obtained by the author in \cite{Seis14}. We will recall the  results of \cite{Seis14} in every detail in Section \ref{S7} below. At this stage, only the following is relevant: A solution $w$ to the initial value problem \eqref{17a}, \eqref{17b} can be written as the semi-flow $w(t) = e^{-t\Ha} g$ on the homogeneous Sobolev space $\dot H^1_{\sigma+1} $. Here $\Ha$ is the self-adjoint (Friedrichs) extension of $-\rho\laplace +(\sigma +1)z\cdot \grad$ to $\dot H^1_{\sigma+1}$. The regularity obtained in Lemma \ref{L10} guarantees that solutions are in the domain of $\Ha$ for positive times. The work \cite{Seis14} contains a computation of the purely discrete spectrum of $\Ha$ which lies in the set $(0,\infty)$. In particular, if we denote by $\lambda_1$ the smallest eigenvalue, we have the spectral gap estimate
\begin{equation}
\label{27a}
\int \grad w\cdot \grad \Ha w\, d\mu_{\sigma+1} \ge \lambda_1 \int |\grad w|^2\, d\mu_{\sigma+1}.
\end{equation}
Notice that the latter is not true for the Hilbert space $L^2_{\sigma}$. Indeed, the latter contains nonzero constants (which are modded out in the diagonalization \cite{Seis14}), and thus, the corresponding operator on $L^2_{\sigma}$ has, in addition, a zero eigenvalue.

With these preparations, we are now in the position to state and prove the following:

\begin{lemma}\label{L10a} Let $w$ be the solution to the initial value problem \eqref{17a}, \eqref{17b}. Let $k\in\N_0$ and $\beta\in\N_0^N$ be such that $k+|\beta|\ge 1$. Then, for all $t\ge \frac12$ and $z\in \overline{B_1(0)}$ it holds
\[
|\partial_t^k\partial_z^{\beta} w(t,z)| \lesssim e^{-\lambda_1 t} \|\grad g\|_{L^2_{\sigma+1}}.
\]
\end{lemma}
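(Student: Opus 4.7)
The plan is to combine the spectral gap \eqref{27a} with the regularization estimates from Lemma \ref{L9} via a two-step argument: first obtain exponential decay of $\|\grad w(t)\|_{L^2_{\sigma+1}}$ through an energy/Gronwall computation, and then pass from this global $L^2$ bound to a pointwise bound on derivatives by running the local regularization on a unit time interval ending at $t$.

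The first step uses the fact that $w(t) = e^{-t\Ha}g$ on $\dot H^1_{\sigma+1}$, which (in conjunction with Lemma \ref{L10}, guaranteeing $w(t)$ lies in the domain of $\Ha$ for positive times) allows the classical Hilbert-space computation
\[
\frac{d}{dt}\frac12\int|\grad w|^2\,d\mu_{\sigma+1}=-\int\grad w\cdot\grad\Ha w\,d\mu_{\sigma+1}\le-\lambda_1\int|\grad w|^2\,d\mu_{\sigma+1},
\]
and Gronwall yields $\|\grad w(t)\|_{L^2_{\sigma+1}}\le e^{-\lambda_1 t}\|\grad g\|_{L^2_{\sigma+1}}$. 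Because constants solve \eqref{17a} and the weak formulation with test function $\zeta\equiv 1$ shows that $\bar g:=|B_1(0)|_\sigma^{-1}\int g\,d\mu_\sigma$ is preserved in time, the function $\tilde w(t):=w(t)-\bar g$ is itself a solution with mean zero. The Hardy–Poincaré inequality cited in the proof of Lemma \ref{L7} then upgrades the gradient decay to
\[
\|\tilde w(t)\|_{L^2_\sigma}\lesssim\|\grad w(t)\|_{L^2_{\sigma+1}}\lesssim e^{-\lambda_1 t}\|\grad g\|_{L^2_{\sigma+1}}.
\]

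The second step is to run Lemma \ref{L9} on $\tilde w$, with $z_0=z$, $r=1/\sqrt 2$ and $\tau=t-1/2\ge 0$. Since $k+|\beta|\ge 1$, the constant $\bar g$ is annihilated, so $\partial_t^k\partial_z^\beta w=\partial_t^k\partial_z^\beta\tilde w$ at the point of evaluation, and the lemma gives
\[
|\partial_t^k\partial_z^\beta w(t,z)|^2\lesssim\frac{1}{|B_r^d(z)|_\sigma}\int_{t-1/2}^{t}\int_{B_r^d(z)}\tilde w^2\,d\mu_\sigma\,ds\lesssim\frac{1}{|B_r^d(z)|_\sigma}\int_{t-1/2}^t\|\tilde w(s)\|_{L^2_\sigma}^2\,ds.
\]
Plugging in the decay from the first step and observing that $|B_r^d(z)|_\sigma\gtrsim 1$ uniformly in $z\in\overline{B_1(0)}$ for this fixed $r$ (by Lemma \ref{L8b}) produces the claimed bound $|\partial_t^k\partial_z^\beta w(t,z)|\lesssim e^{-\lambda_1 t}\|\grad g\|_{L^2_{\sigma+1}}$.

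The only delicate point I foresee is justifying the Hilbert-space manipulation in the first step, namely that $w(t)$ genuinely lies in the domain of $\Ha$ so that $\grad w\cdot\grad\Ha w$ is integrable against $d\mu_{\sigma+1}$; this is where the regularization from Lemma \ref{L10} is indispensable, and one has to make sense of the differentiation of $t\mapsto\|\grad w(t)\|_{L^2_{\sigma+1}}^2$ starting from $t=0$ (which one circumvents by restricting to $t\ge\eta>0$ and letting $\eta\downarrow 0$ at the end, since the ambient initial energy $\|\grad g\|_{L^2_{\sigma+1}}^2$ is finite by the Lipschitz assumption on $g$). Everything else is bookkeeping: the matching of scales $r\sim 1$, the equivalence of $\tilde w$ and $w$ up to derivatives of positive order, and the uniform lower bound on $|B_r^d(z)|_\sigma$.
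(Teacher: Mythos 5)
Your proof is correct and takes essentially the same route as the paper: spectral gap \eqref{27a} plus Gronwall gives exponential decay of $\|\grad w(t)\|_{L^2_{\sigma+1}}$, conservation of the $\mu_\sigma$-mean and the Hardy--Poincar\'e inequality convert this into decay of $\|w(t)-\bar g\|_{L^2_\sigma}$, and Lemma \ref{L9} then upgrades the $L^2_\sigma$ decay to a pointwise bound on derivatives of positive order (since those annihilate the constant $\bar g$). The one cosmetic difference is your choice of $z_0=z$ with $r=1/\sqrt{2}$ in Lemma \ref{L9}, which makes the uniformity in $z\in\overline{B_1(0)}$ explicit, whereas the paper takes $z_0=0$, $r=1$; your choice sidesteps the constraint $z\in B^d_{\delta r}(z_0)$ in the statement of Lemma \ref{L9} without any covering argument, so it is, if anything, a bit tidier.
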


\begin{proof}Since $w$ is a solution to the homogeneous equation $\partial_t w + \Ha w=0$, it holds that
\[
\frac{d}{dt} \frac12 \int |\grad w|^2\, d\mu_{\sigma+1}   =  - \int \grad w\cdot \grad \Ha w\, d\mu_{\sigma+1}.
\]
The spectral gap estimate \eqref{27a} and a Gronwall argument then yield
\[
\|\grad w\|_{L^2_{\sigma+1}} \le e^{-\lambda t} \|\grad g\|_{L^2_{\sigma+1}}.
\]
Notice that $c = \int w\, d\mu_{\sigma}$ is a conserved quantity for the homogeneous equation \eqref{17a}. Hence, with the help of Lemma \ref{L9} in which we choose $r=1$ , $\eps=1/2$ and $z_0=0$, and a Hardy--Poincar\'e estimate (e.g., \cite[Lemma 3]{Seis14}) we obtain for every $(t,z)\in [\tau + 1/2,\tau+1]\times \overline{B_1(0)}$ that
\[
|\partial_t^k\partial_z^{\beta} w(t,z)|^2 \lesssim \int_{\tau}^{\tau+ 1}  \int (w-c)^2\, d\mu_{\sigma} \lesssim \int_{\tau}^{\tau+1} \int |\grad w|^2\, d\mu_{\sigma+1} .
\]
We easily infer the statement of the lemma.
\end{proof}

The first main result of this subsection is the control of the Carleson measures $\|w\|_{X(p)}$ in terms of the Lipschitz norm of the initial datum.

\begin{prop}\label{P2a}
Let $w$ be the solution to the initial value problem \eqref{17a}, \eqref{17b}. Then
\[
\| w\|_{X(p)}+ \| w\|_{ \Lip}\lesssim \| g\|_{\Lip}.
\]
\end{prop}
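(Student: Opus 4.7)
The proof proposal splits naturally by scale into a small-cylinder Carleson estimate and a large-time estimate, both reducing to the pointwise bounds of Lemma \ref{L10} (small scale) and Lemma \ref{L10a} (large time).

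For the large-time part of $\|w\|_{X(p)}$, i.e.\ the supremum over $T \ge 1$, I would invoke Lemma \ref{L10a} directly. Since each of $\partial_t \grad w$, $\grad^2 w$, and $\rho\grad^3 w$ has $k+|\beta|\ge 1$, Lemma \ref{L10a} yields $|\partial_t^k\partial_z^\beta w(t,z)| \lesssim e^{-\lambda_1 t}\|\grad g\|_{L^2_{\sigma+1}} \lesssim e^{-\lambda_1 T}\|g\|_{\Lip}$ uniformly on $Q(T)$, where the last inequality uses that $\mu_{\sigma+1}$ is a finite measure. Because $|Q(T)|=|B_1(0)|$ is bounded, taking $L^p(Q(T))$ norms and then $\sup_{T\ge 1}$ gives the desired bound by $\|g\|_{\Lip}$.

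For the small-cylinder part, I fix $z_0\in\overline{B_1(0)}$ and $r\in(0,\sqrt 2]$. The crucial reduction is that constants solve the homogeneous equation, so replacing $(w,g)$ by $(w-g(z_0),g-g(z_0))$ preserves all derivatives appearing in $\|w\|_{X(p)}$ while arranging $g(z_0)=0$, so that $|g(y)|\le \|g\|_{\Lip}|y-z_0|$. Now I apply Lemma \ref{L10} with a Gaussian weight $\chi_{a,b}$ whose parameters track the intrinsic scale of $B_r^d(z_0)$: I pick $b\sim r^2$ and $a\sim 1/r$, which keeps $Ca^2 t\lesssim 1$ on $Q_r^d(z_0)$ and $\chi_{a,b}(z,z_0)\lesssim 1$ for $z\in B_r^d(z_0)$. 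The heart of the argument is then the weighted $L^2$ bound
\[
\|e^{\chi_{a,b}(\cdot,z_0)}\,g\|_{L^2_\sigma}^2 \lesssim \|g\|_{\Lip}^2\,r^2\bigl(r+\sqrt{\rho(z_0)}\bigr)^2\,|B_r^d(z_0)|_\sigma,
\]
which I expect to prove by splitting the integral $\int e^{2\chi}|g|^2\rho^\sigma$ into $B_r^d(z_0)$, where Lemma \ref{L8} gives $|y-z_0|\lesssim r(r+\sqrt{\rho(z_0)})$ and $e^{2\chi}$ is bounded, and the complement, where the saturation of $\chi_{a,b}$ (via $\hat d\le 1$) combined with integrability of $\rho^\sigma$ and polynomial smallness of the right volumetric factor yields the same bound.

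Plugging this into Lemma \ref{L10} and using Lemma \ref{L8d} to replace $|B_r^d(z)|_\sigma$ and $r+\sqrt{\rho(z)}$ by $|B_r^d(z_0)|_\sigma$ and $r+\sqrt{\rho(z_0)}$ (for $z\in B_r^d(z_0)$) delivers the pointwise estimate
\[
|\partial_t^k\partial_z^\beta w(t,z)| \lesssim \|g\|_{\Lip}\,r^{1-2k-|\beta|}\bigl(r+\sqrt{\rho(z_0)}\bigr)^{1-|\beta|},\qquad (t,z)\in Q_r^d(z_0).
\]
Raising to the $p$-th power, integrating over $Q_r^d(z_0)$ (whose Lebesgue volume is $\sim r^{N+2}(r+\sqrt{\rho(z_0)})^N$), and dividing by $|Q_r^d(z_0)|^{1/p}$, the three choices $(k,|\beta|)=(1,1),(0,2),(0,3)$ produce exactly the three weighted terms in $\|w\|_{X(p)}$, each bounded by $\|g\|_{\Lip}$. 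Finally, the Lipschitz bound $\|w\|_{\Lip}\lesssim\|g\|_{\Lip}$ follows from the same pointwise inequality with $(k,|\beta|)=(0,1)$: for any $(t_0,z_*)$ with $t_0\le 4/3$, one can choose $r\in(\sqrt{t_0},\sqrt{2t_0})$ and $z_0=z_*$ so that $(t_0,z_*)\in Q_r^d(z_*)$, while for $t_0>4/3$ Lemma \ref{L10a} directly gives $|\grad w(t_0,z)|\lesssim e^{-\lambda_1 t_0}\|g\|_{\Lip}$.

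The main obstacle is the weighted $L^2$ estimate above: the subtlety lies outside $B_r^d(z_0)$ where the naive Gaussian damping is lost because $\chi_{a,b}$ is bounded on all of $\overline{B_1(0)}$ (as $\hat d\le 1$). Overcoming this requires a careful bookkeeping of $|y-z_0|^2\rho(y)^\sigma$ against the intrinsic volume $|B_r^d(z_0)|_\sigma\sim r^N(r+\sqrt{\rho(z_0)})^{N+2\sigma}$, distinguishing the cases $\sqrt{\rho(z_0)}\lesssim r$ and $\sqrt{\rho(z_0)}\gtrsim r$ so as to extract the correct power $r^{N+2}(r+\sqrt{\rho(z_0)})^{N+2\sigma+2}$.
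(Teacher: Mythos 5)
Your high‐level strategy matches the paper's: pass to $w - g(z_0)$ so the initial datum vanishes at $z_0$, apply Lemma~\ref{L10} with the Gaussian weight $\chi_{a,b}$ at the intrinsic scale $b\sim r^2$, $|a|\sim 1/r$ (the paper takes $a=-1/r$; the negative sign is not optional, since otherwise $\|e^{\chi}(g-g(z_0))\|_{L^2_\sigma}$ is uncontrolled near $\partial B_1(0)$), and use Lemma~\ref{L10a} for $T\ge 1$. The reduction of the Carleson‐measure and Lipschitz bounds to the single weighted $L^2$ estimate
\[
\|e^{\chi_{-1/r,r^2}(\cdot,z_0)}\,|\cdot-z_0|\,\|_{L^2_\sigma}\lesssim r\bigl(r+\sqrt{\rho(z_0)}\bigr)|B_r^d(z_0)|_\sigma^{1/2}
\]
is also exactly as in the paper.

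The gap is in your proof of this weighted estimate. You propose a binary split into $B_r^d(z_0)$ and its complement, and on the complement you invoke ``the saturation of $\chi_{a,b}$ (via $\hat d\le 1$)'' together with integrability of $\rho^\sigma$. That cannot work. If one replaces $e^{2\chi}$ by a constant on $B_1(0)\setminus B_r^d(z_0)$, the resulting integral $\int_{B_1(0)\setminus B_r^d(z_0)}|y-z_0|^2\rho(y)^\sigma\,dy$ is bounded below by a positive number independent of $r$ (as soon as $r$ is small), whereas the target $r^2(r+\sqrt{\rho(z_0)})^2|B_r^d(z_0)|_\sigma\sim r^{N+2}(r+\sqrt{\rho(z_0)})^{N+2\sigma+2}$ tends to $0$ as $r\downarrow 0$. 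So no amount of bookkeeping of $|y-z_0|^2\rho^\sigma$ versus $|B_r^d(z_0)|_\sigma$, and no case distinction on $\sqrt{\rho(z_0)}$ versus $r$, can close this step without the weight.

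The point is that $\chi_{-1/r,r^2}$ does not saturate just outside $B_r^d(z_0)$: with $\hat d\sim jr$ one has $\chi\approx -(j-1)/\sqrt{5}$, and this keeps decreasing all the way to $\approx -1/r$ at the Euclidean boundary. It is this \emph{progressive} decay of $e^{2\chi}$ — genuinely Gaussian in the intrinsic metric, and by no means ``lost'' — that kills the extra volume. The paper makes this explicit by covering $\overline{B_1(0)}$ with annuli $A_j=B_{jr}^{\hat d}(z_0)\setminus B_{(j-1)r}^{\hat d}(z_0)$, bounding $|A_j|_\sigma\lesssim j^\kappa|B_r^d(z_0)|_\sigma$ and $|y-z_0|\lesssim jr(jr+\sqrt{\rho(z_0)})$ via Lemmas~\ref{L8} and~\ref{L8b}, and summing the resulting geometric‐times‐polynomial series. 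Your argument needs this annular (or an equivalent dyadic/layer‐cake) decomposition of the complement; as written, the binary split plus ``saturation'' remark would lead you to a false estimate.
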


\begin{proof}Upon a shift in the time variable, the exponential decay estimates of Lemma \ref{L10a} imply that
\[
|\partial^k_t\partial_z^{\beta} w(t,z)|\lesssim e^{-\lambda_1 t} \|w\left(1/2\right)\|_{\Lip},
\]
for ant $t\ge 1$, and thus the control of large time terms ($r\gtrsim 1$) in the Carleson measures follows from the control of the small time terms ($r\lesssim 1$).

Let $z_0\in \overline{B_1(0)}$, $0<r\lesssim  1$ and $t\in \left(\frac{r^2}2,r^2\right)$. Observing that $w-g(z_0)$ is a solution to the homogeneous equation with initial datum $g-g(z_0)$ and using that $\chi_{a,b}(z_0,z_0)=0$, we deduce from Lemma \ref{L10} that
\begin{eqnarray*}
\lefteqn{|\partial_t^k \left.\partial_z^{\beta}\right|_{z=z_0} (w(t,z) -g(z_0))| }\\
&\lesssim&\frac{r^{-2k-|\beta|} (r+\sqrt{\rho(z_0)})^{-|\beta|}}{|B_r^d(z_0)|_{\sigma}^{1/2}} e^{ C a^2t} \|e^{\chi_{a,b}(\tacka,z_0)}|\tacka - z_0|\|_{L_{\sigma}^2}\|g\|_{\Lip}
\end{eqnarray*}
for all $k\in\N_0$ and $\beta\in\N_0^N$. We now specify the choice of $a$ and $b$ by setting $a=-\frac1{r}$, $b=r^2$. We claim that
\begin{equation}\label{28}
\|e^{\chi_{-\frac1{r},r^2}(\tacka,z_0)}|\tacka - z_0|\|_{L_{\sigma}^2}\lesssim r(r+\sqrt{\rho(z_0)})|B_r^d(z_0)|_{\sigma}^{1/2}.
\end{equation}
Prior to establishing this estimate, we derive the statement of proposition. Thanks to \eqref{28}, the estimate from Lemma \ref{L10} (now with the above choice of $a$ and $b$) becomes
\[
|\partial_t^k\partial_z^{\beta} w(t,z_0)| \lesssim 
 r^{1-2k-|\beta|} (r+\sqrt{\rho(z_0)})^{1-|\beta|} \|g\|_{\Lip},
\]
for all $z_0\in \overline{B_1(0)}$, provided that $k+|\beta|\ge 1$.
In the case $(k,|\beta|) = (0,1)$, the terms depending on $r$ drop out and we obtain uniform control on $\grad w$ and thus on the Lipschitz norm of $w$.
Otherwise, if $(k,|\beta|)\in\{(1,1),(0,2),(0,3)\}$, integration over $Q_{r}^d(z) $ and rearranging terms yields the desired estimates on $\partial_t \grad w$, $\grad^2 w$, and $\grad^3 w$. Notice that in this argument we additionally use the estimate $r\le r+ \sqrt{\rho(z_0)}\lesssim r+ \sqrt{\rho(z)}$ for all $z_0\in B_r^d(z)$, which is a consequence of Lemma \ref{L8c}.

We finally turn to the proof of \eqref{28}. For $j\in\N$ consider the annuli $A_j := B_{jr}^{\hat d}(z_0)\setminus B_{(j-1)r}^{\hat d}(z_0)$, which cover $\overline{B_1(0)}$ and satisfy $A_j=\emptyset$ if $j\gg\frac1r$. An elementary computation shows that $\chi_{-\frac1{r},r^2}(z,z_0) \le-\frac{j-1}{\sqrt{5}}$ for $z\in A_j$. Hence, for all $j\lesssim \frac1r$,
\[
\int_{A_j} e^{2\chi_{-\frac1{r},r^2}(z,z_0)} |z-z_0|^2\, d\mu_{\sigma}(z)\lesssim e^{-\frac2{\sqrt{5}}j} j^2r^2 (jr +\sqrt{\rho(z_0)})^2 |A_j|_{\sigma}
\]
by the comparison formula for Euclidean and intrinsic balls, cf.\ Lemma \ref{L8}. 
Invoking the volume formula for intrinsic balls from Lemma \ref{L8b} we further estimate
\[
|A_j|_{\sigma}  \lesssim (jr)^N (jr +\sqrt{\rho(z_0)})^{N+2\sigma} \lesssim j^{\kappa} |B_r^d(z_0)|_{\sigma}
\]
for some $\kappa	>0$. Notice that the latter is only valid because $j\lesssim \frac1r$. Hence, combining the previous two estimates and summing over all $j\in\N_0$ yields \eqref{28} as desired. This concludes the proof.
\end{proof}

From Lemma \ref{L10}, we also deduce the existence of a heat kernel, that is, the integral kernel of the heat semi-group operator $e^{-t\L}$ (or $e^{t\laplace_{\mu}}$, 
if the stick to the notation of the introduction to this section). It is well-known that heat kernels are smooth and symmetric functions and inherit the semi-group property from the semi-group operator, cf.\ \cite[Theorem 3.3]{Grigoryan06}. For the sake of a self-contained presentation, we derive the properties that are relevant for subsequent analysis directly from the Lemmas \ref{R2}, \ref{L10},  and \ref{L10a} above. In particular, we establish sharp decay estimates on the kernel, that are called Gaussian estimates in analogy to the Euclidean situation. For general parabolic equation, such estimates we derived by Aronson in \cite{Aronson67}.

\begin{prop}\label{P2b}
There exists a function $G:(0,\infty)\times \overline{B_1(0)}\times \overline{B_1(0)}\to \R$ with the following properties:
\begin{enumerate}
\item For any $g\in L^2_{\sigma}$, the solution $w$ to the initial value problem \eqref{17a}, \eqref{17b} has the representation
\[
\partial_t^k \partial_z^{\beta} w(t,z) = \int \partial_t^k \partial_z^{\beta}G(t,z,z')g(z')\, d\mu_{\sigma}(z')
\]
for all $k\in \N_0$, $\beta\in\N_0^N$, $t>0$ and $z\in \overline{B_1(0)}$.
\item For every $(t,z,z')\in(0,\infty)\times \overline{B_1(0)}\times \overline{B_1(0)}$, it holds
\[
G(t,z,z') = G(t,z',z),
\]
i.e., $G(t,\tacka,\tacka)$ is symmetric for every $t>0$.
\item It holds
\[
\partial_t G - \rho^{-\sigma}\grad_z\!\cdot\left(\rho^{\sigma+1}\grad_z G\right) = 0
\]
and, for all $z\in \overline{B_1(0)}$
\[
\rho^{\sigma} G(t,z,\tacka) \to \delta_z\quad\mbox{as }t\downarrow 0\mbox{ in the sense of distributions}.
\]
In particular, $G$ is a fundamental solution of \eqref{17a}.
\item For all $z,z'\in \overline{B_1(0)}$, $t\in(0,\infty)$ and $k\in\N_0$, $\beta\in\N_0^N$, it holds that
\[
|\partial_t^k\partial_z^{\beta} G(t,z,z')| \lesssim \frac{\sqrt{t}^{-2k-|\beta|}\left(\sqrt{t} + \sqrt{\rho(z)}\right)^{-|\beta|}}{|B_{\sqrt{t}}^d(z)|_{\sigma}^{1/2} |B_{\sqrt{t}}^d(z')|_{\sigma}^{1/2}} e^{-C d(z,z')^2/t}.
\]
\item For all $z,z'\in \overline{B_1(0)}$, $t\in(1,\infty)$ and $k\in\N_0$, $\beta\in\N_0^N$ with $k+|\beta|\ge1$ it holds
\[
|\partial_t^k\partial_z^{\beta} G(t,z,z')|\lesssim e^{-\lambda_1t}.
\]
\end{enumerate}
\end{prop}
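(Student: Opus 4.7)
The plan is to construct $G$ by Riesz representation from the $L^\infty$--$L^2_\sigma$ bound of Lemma \ref{L10} and to read items (1)--(3) off earlier lemmas; then to derive the Gaussian bound (4) by combining the Davies-type weighted energy inequality buried in the proof of Lemma \ref{L10} with the interior $L^\infty$--$L^2_\sigma$ estimate of Lemma \ref{L9} and the semi-group property; and finally to read item (5) off Lemma \ref{L10a}. More precisely, Lemma \ref{L10} with $a=0$ shows that, for each fixed $(t,z,k,\beta)$, the functional $g \mapsto \partial_t^k\partial_z^\beta w(t,z)$ is bounded on $L^2_\sigma$ with operator norm controlled by $|B_{\sqrt{t}}^d(z)|_\sigma^{-1/2}$ times the weight factors. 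Riesz representation produces kernels $G^{(k,\beta)}(t,z,\cdot) \in L^2_\sigma$; uniqueness forces $G^{(k,\beta)}(t,z,\cdot) = \partial_t^k\partial_z^\beta G(t,z,\cdot)$ and yields (1). Item (2) follows by inserting the integral representation into the bilinear identity of Lemma \ref{R2} and comparing double integrals in $(g_1,g_2)$. Item (3) holds because $w$ solves \eqref{17a} for every $g$ (giving the PDE for $G$ in $(t,z)$) and $w(t) \to g$ in $L^2_\sigma$ by Lemma \ref{L6} (dualizing to $\rho^\sigma G(t,z,\cdot) \to \delta_z$).

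For (4), the key input is the Davies-type inequality proved in the course of Lemma \ref{L10}, namely
\[
\|e^{\chi_{a,b}(\cdot,z_0)} w(t)\|_{L^2_\sigma} \le e^{Ca^2 t}\|e^{\chi_{a,b}(\cdot,z_0)}g\|_{L^2_\sigma},
\]
valid for any $a, b$ and any anchor $z_0$. Dualizing this via the kernel representation, and coupling it with the interior derivative bound of Lemma \ref{L9} applied to $G(t,z,\cdot)$ in its second variable (which solves the homogeneous equation by the symmetry of (2)), produces the weighted pointwise estimate
\[
|\partial_t^k\partial_z^\beta G(t,z,z')| \lesssim \frac{\sqrt{t}^{-2k-|\beta|}(\sqrt{t}+\sqrt{\rho(z)})^{-|\beta|}}{|B_{\sqrt{t}}^d(z)|_\sigma^{1/2}|B_{\sqrt{t}}^d(z')|_\sigma^{1/2}} \, e^{Ca^2 t - \chi_{a,b}(z,z')}
\]
after setting $b \sim t$, $z_0 = z'$ and using $\chi_{a,b}(z',z')=0$. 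The symmetric volume factor will be obtained by first invoking the semi-group identity $G(t,z,z') = \int G(t/2,z,z'')G(t/2,z'',z')\, d\mu_\sigma(z'')$ --- which follows from uniqueness in Lemma \ref{L6} applied to $w$ with initial datum at time $t/2$ --- and then splitting by Cauchy--Schwarz with a weight $e^{\pm\chi_{a,b}(z'',z^\star)}$ about a suitable anchor $z^\star$, so that each factor inherits one of the two volume bounds. The Gaussian factor $e^{-Cd(z,z')^2/t}$ is then extracted by optimizing $a$: taking $a \sim d(z,z')/t$ in the regime $d(z,z')^2 \gtrsim t$ and $a = 0$ otherwise, and using the definition of $\chi_{a,b}$ together with the gradient bound \eqref{25a}, one verifies $C a^2 t - \chi_{a,b}(z,z') \le -c\,d(z,z')^2/t$ uniformly. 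The main obstacle will be the bookkeeping of the parameters $a, b, r$ and, above all, producing the symmetric volume prefactor in $z$ and $z'$; the competition between the Davies drift term $Ca^2 t$ and the Lipschitz weight $\chi_{a,b}$ is delicate, and the optimal choice depends on the relative size of $d(z,z')^2$ and $t$.

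Finally, item (5) is an immediate consequence of Lemma \ref{L10a}. For $t \ge 1$, write $G(t,z,z')$ as the value at time $t - 1/2$ and point $z$ of the solution to \eqref{17a}--\eqref{17b} with initial datum $G(1/2,\cdot,z')$. That initial datum is controlled in $\dot H^1_{\sigma+1}$ by a universal constant: indeed, the Gaussian bound from (4) (with $k=0, |\beta|=1$) combined with the volume asymptotics of Lemma \ref{L8b} shows that $\|\grad G(1/2,\cdot,z')\|_{L^2_{\sigma+1}} \lesssim 1$ uniformly in $z'$. Lemma \ref{L10a} then gives $|\partial_t^k\partial_z^\beta G(t,z,z')| \lesssim e^{-\lambda_1(t-1/2)}\|\grad G(1/2,\cdot,z')\|_{L^2_{\sigma+1}} \lesssim e^{-\lambda_1 t}$, which is the claimed estimate.
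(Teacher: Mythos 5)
The proposal is correct and essentially complete, and for items (1), (2), (3) and (5) your argument coincides with the paper's: Riesz representation from the $a=0$ case of Lemma~\ref{L10}, symmetry via Lemma~\ref{R2}, the PDE for $G$ in its first variables, and Lemma~\ref{L10a} applied to $G(\cdot,\cdot,z')$ with the Gaussian bound controlling $\|\grad G(1/2,\cdot,z')\|_{L^2_{\sigma+1}}$. The genuine divergence is in item (4), and it is worth spelling out. You produce the symmetric volume prefactor $|B_{\sqrt t}^d(z)|_\sigma^{1/2}|B_{\sqrt t}^d(z')|_\sigma^{1/2}$ by the Chapman--Kolmogorov identity $G(t,z,z')=\int G(t/2,z,z'')G(t/2,z'',z')\,d\mu_\sigma(z'')$ followed by a weighted Cauchy--Schwarz split with $e^{\pm\chi_{a,b}(\cdot,z')}$, applying the Riesz-dualized form of Lemma~\ref{L10} to each factor; the symmetric prefactor then falls out because one factor carries $|B^d_{\sqrt{t/2}}(z)|_\sigma^{-1/2}$ and the other $|B^d_{\sqrt{t/2}}(z')|_\sigma^{-1/2}$. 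The paper instead avoids invoking the semigroup identity explicitly: it sets up a bounded operator $\A:L^2_\sigma\to L^\infty$ built from $g\mapsto |B_r^d(\cdot)|_\sigma^{1/2}e^{\chi}w(t/2,\cdot)$, uses the $L^\infty\cong(L^1_\sigma)^*$ duality and $\|\A\|=\|\A^*\|$ together with the self-adjointness of the semigroup (Lemma~\ref{R2}) to obtain the estimate $\|e^{-\chi}w_\xi(t/2)\|_{L^2_\sigma}\lesssim e^{Ca^2t/2}\|\xi\|_{L^1_\sigma}$, and feeds this back into Lemma~\ref{L10}. Both routes are standard (Chapman--Kolmogorov/Cauchy--Schwarz versus Aronson-type $L^1$--$L^\infty$ duality) and both lean on the same two inputs: the weighted energy inequality \eqref{27} and Lemma~\ref{R2}. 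Your route is arguably more transparent and avoids identifying adjoints on $(L^\infty)^*$, at the cost of needing to justify the semigroup identity and its differentiation under the integral sign (which does follow readily from Lemma~\ref{L6} and Proposition~\ref{P2b}.1, as you indicate). Two cosmetic points worth tidying up in a full write-up: the optimization of $a$ uses only the Lipschitz bound \eqref{26} and the definition of $\chi_{a,b}$, not the gradient bound \eqref{25a} (the latter enters only inside the Davies estimate); and the signs in $Ca^2t-\chi_{a,b}(z,z')$ need care, since whether one takes $a>0$ or $a<0$ depends on how the two weights in the Cauchy--Schwarz split are arranged, though either convention produces the Gaussian decay after optimization.
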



%

\begin{proof}
For the existence of the heat kernel, we observe that Lemma \ref{L10} applied with $a=0$, and thus $\chi\equiv0$, shows the boundedness of the linear mapping
\[
L_{\sigma}^2\ni g\mapsto \partial_t^k\partial_z^{\beta} w(t,z)\in \R,
\]
for all  $(t,z)\in (0,\infty)\times \overline{B_1(0)}$.
Hence Riesz' representation theorem yields the existence of a unique function $G_{k,\beta}(t,z,\tacka)\in L_{\sigma}^2$ such that
\[
\partial_t^k \partial_z^{\beta} w(t,z) = \int G_{k,\beta}(t,z,z')g(z')\, d\mu_{\sigma}(z').
\]
By uniqueness, it is clear that $\partial_t^k\partial_z^{\beta} G = G_{k,\beta}$, where $G = G_{0,0}$. Moreover, symmetry follows from the symmetry of the heat semi-group operator in  Lemma \ref{R2}.

We now prove the Gaussian estimate. It is a consequence of the auxiliary estimate
\begin{equation}\label{29}
|\partial_t^k\partial_z^{\beta} G(t,z,z')| \lesssim \frac{ r^{-2k-|\beta|} (r+\sqrt{\rho(z)})^{-|\beta|}}{|B_r^d(z)|_{\sigma}^{1/2} |B_r^d(z')|_{\sigma}^{1/2} } e^{ C a^2 t} e^{\chi_{a,b}(z,z_0) - \chi_{a,b}(z',z_0)},
\end{equation}
which holds for any $z,z',z_0\in \overline{B_1(0)}$, $t\ge r^2/2\ge0$ and $a\in\R$, $b>0$. Indeed, choosing $z'=z_0$, $b=3\hat  d(z,z')^2$ and $a=-2\ell$ with $\ell>0$, we deduce from \eqref{29} that
\[
|\partial_t^k\partial_z^{\beta} G(t,z,z')| \lesssim \frac{ r^{-2k-|\beta|} (r+\sqrt{1-|z|})^{-|\beta|}}{|B_r^d(z)|_{\sigma}^{1/2} |B_r^d(z')|_{\sigma}^{1/2} } e^{ 4 C \ell ^2 t - \ell \hat d(z,z') }.
\]
Optimizing the exponent with respect to $\ell$ yields $\ell t\sim \hat d(z,z')\sim d(z,z')$. The Gaussian estimate follows by choosing $r=\sqrt{t}$.

We turn to the proof of \eqref{29}.
Keeping $a\in \R$ and $b>0$ be arbitrarily fixed, we write $\chi = \chi_{a,b}(\tacka,z_0)$ for abbreviation.
Let $\xi \in L^1_{\sigma}: = L^1(\mu_{\sigma})$ be fixed and such that $g_{\xi}:= e^{\chi} |B_r(\tacka)|_{\sigma}^{1/2} \xi\in L_{\sigma}^2$. Then $w_{\xi}$ given by
\[
w_{\xi}(t,z) = \int  G(t,z,z') g_{\xi} (z')\, d\mu_{\sigma}(z')
\]
is a solution to the linear homogeneous equation with initial datum $g_{\xi}$. Applying Lemma \ref{L10} with $a$ replaced by $-a$, i.e., $\chi$ replaced by $-\chi$, and with initial time $t/2$, we obtain
\begin{equation}
\label{30}
|\partial_t^k\partial_z^{\beta} w_{\xi}(t,z)|\lesssim \frac{r^{-2k-|\beta|} (r + \sqrt{\rho(z)})^{-|\beta|}}{|B_r^d(z)|_{\sigma}^{1/2} } e^{ C a^2t/2 + \chi(z)}\|e^{-\chi} w_{\xi}\left(\frac{t}2\right)\|_{L_{\sigma}^2}.
\end{equation}
In particular, since $g\in L_{\sigma}^2$ precisely if $h= e^{\chi}g\in L_{\sigma}^2$, there exists a well-defined operator
\[
\A: L_{\sigma}^2\ni h\mapsto e^{-\chi}h=g\mapsto w\mapsto w\left(\frac{t}2,\tacka\right) \mapsto |B_r^d(\tacka)|_{\sigma}^{1/2} e^{\chi} w\left(\frac{t}2,\tacka\right) \in L^{\infty}.
\]
This operator is linear and bounded. Indeed, thanks to Lemma \ref{L10} it holds
\[
|B^d_r(z)|_{\sigma}^{1/2} e^{\chi(z)}|w\left(\frac{t}2,z\right)|  \lesssim e^{ C a^2 t/2} \|h\|_{L_{\sigma}^2},
\]
or, equivalently,
\[
\|\A h\|_{L^{\infty}} \lesssim  e^{ C a^2 t/2} \|h\|_{L_{\sigma}^2}.
\]
By the definition of $\A h$ and the symmetry of the heat semi-group operator, Lemma \ref{R2}, we have the identity
\[
\int \A h\xi\, d\mu_{\sigma}  =  \int e^{-\chi} w_{\xi}\left(\frac{t}2,\tacka\right) h\, d\mu_{\sigma}.
\]
Therefore, the action of the dual operator $\A^*: (L^{\infty})^* \to L_{\sigma}^2$ on functions $\xi\in L^1_{\sigma}$ with $g_{\xi}\in L^2_{\sigma}$ is $\A^*\xi = 
e^{-\chi}w_{\xi} \left(\frac{t}2,\tacka\right)$. In particular, because $\|\A\| = \|\A^*\|$, we must have
\begin{equation}
\label{33}
\|e^{-\chi} w_{\xi}\left(\frac{t}2,\tacka\right)\|_{L_{\sigma}^2} \lesssim e^{ C a^2 t/2}\|\xi\|_{L^1_{\sigma}}.
\end{equation}
%
%
We now conclude
\begin{eqnarray*}
\lefteqn{\int \partial_t^k\partial_z^{\beta} G(t,z,\tacka) g_{\xi} \, d\mu_{\sigma}}\\
&=& \partial_t^k\partial_z^{\beta} w_{\xi}(t, z)\\
&\stackrel{\eqref{30}}{\lesssim}& \frac{r^{-2k-|\beta|} (r+\sqrt{\rho(z)})^{-|\beta|}}{|B_r^d(z)|_{\sigma}^{1/2}} e^{C a^2 t/2 + \chi(z)} \|e^{-\chi} w_{\xi}\left(\frac{t}2\right)\|_{L_{\sigma}^2}\\
&\stackrel{\eqref{33}}{\lesssim}& \frac{r^{-2k-|\beta|} (r+\sqrt{\rho(z)})^{-|\beta|}}{|B_r^d(z)|_{\sigma}^{1/2}} e^{Ca^2 t/2 + \chi(z)} \|\xi\|_{L^1_{\sigma}}.
\end{eqnarray*}
The last expression in the above chain of inequalities is bounded for any $\xi \in L^1_{\sigma}$ and thus, by approximation, the estimate holds for any such $\xi$. On the other hand, thanks to the duality $L^{\infty}  \cong (L^1_{\sigma})^*$, we have that
\begin{eqnarray*}
\lefteqn{e^{\chi(z')} |B_r^d(z')|_{\sigma}^{1/2} |\partial_t^k \partial_z^{\beta} G(t,z,z')|}\\
&\le & \sup_{\|\xi\|_{L^{1}_{\sigma}}\le 1} \iint  e^{\chi} |B_r^d(\tacka)|_{\sigma}^{1/2} \partial_t^k\partial_z^{\beta} G(t,z,\tacka)\xi\, d\mu_{\sigma},
\end{eqnarray*}
which in combination with the previous estimate yields \eqref{29}.

Since $G$ is a smooth kernel, it satisfies \eqref{17a} itself, and $\rho^{\sigma}G(t,z,\tacka) \to \delta_{z}$ in the sense of distributions.

Finally, since $G(\tacka,\tacka,z')$ is a solution to \eqref{17a}, an application of Lemma \ref{L10a} and the Gaussian estimate yields
\[
|\partial_t^k\partial_z^{\beta} G(t,z,z')| \lesssim e^{-\lambda_1 t} \|\grad G\left(1/2,\tacka,z'\right)\|_{L^2_{\sigma+1}} \lesssim e^{-\lambda_1t}.
\]
This concludes the proof of Proposition \ref{P2b}.
\end{proof}

\subsection{Carleson measure estimates for the inhomogeneous problem}\label{S:5.3}

This subsection is devoted to the study of the initial value problem for the inhomogeneous parabolic equation
\begin{eqnarray}
\partial_t w -  \rho^{-\sigma}\div\left(\rho^{\sigma+1}\grad w\right) &=& f,\label{32a}\\
w(0,\tacka) &=& 0\label{32b}.
\end{eqnarray}
Thanks to the existence of a smooth solution kernel $G(t,z,z')$ for the homogeneous initial value problem, cf.\ Proposition \ref{P2b} above, solutions to \eqref{32a}, \eqref{32b} have an integral representation. This is the content of the following

\begin{lemma}[Duhamel's principle]\label{R4}
For $(t,t',z,z')\in (0,\infty)\times (0,\infty)\times\overline{B_1(0)}\times \overline{B_1(0)}$ with $t'<t$ define $G(t,t'z,z'): = G(t-t',z,z')$. If $f\in L^2((0,\infty); L^2_{\sigma})$ and $w$ is the solution to the initial value problem \eqref{32a}, \eqref{32b}, then
\[
w(t,z) = \int_0^t \int G(t,t',z,z') f(t',z')\, d\mu_{\sigma}(z') dt',
\]
for all $(t,z)\in(0,\infty)\times \overline{B_1(0)}$.
\end{lemma}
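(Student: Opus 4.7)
My plan is to prove the identity by defining the right-hand side as a candidate solution $\tilde w$, verifying that it solves the initial value problem \eqref{32a}--\eqref{32b}, and then invoking the uniqueness statement of Lemma \ref{L6} to conclude $\tilde w = w$.

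\textbf{Step 1: Well-definedness of the candidate.} For $f\in L^2((0,\infty);L^2_\sigma)$, set
\[
\tilde w(t,z) \;:=\; \int_0^t \int G(t-t',z,z')\, f(t',z')\, d\mu_\sigma(z')\,dt'.
\]
Using the Gaussian kernel bound from Proposition \ref{P2b}(4) together with Cauchy--Schwarz and the standard Gaussian integral against $f(t',\cdot)\in L^2_\sigma$, I would show that $\tilde w$, $\grad\tilde w$ and $\partial_t\tilde w$ are all well-defined and that $\tilde w\in L^2((0,T);L^2_\sigma)$ for every $T>0$, with the appropriate regularity in $t$ so that the weak formulation of Definition \ref{D1} can be tested against $\tilde w$. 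A density argument (first taking $f\in C^\infty_c$, then passing to the limit in the a priori bounds of Lemma \ref{L7}) justifies all the subsequent formal manipulations.

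\textbf{Step 2: $\tilde w$ solves the homogeneous equation with source $f$.} Here I use Proposition \ref{P2b}(3): for each fixed $t'\in(0,T)$ and each fixed $z'\in\overline{B_1(0)}$, the map $(t,z)\mapsto G(t-t',z,z')$ satisfies $\partial_t G = \rho^{-\sigma}\div(\rho^{\sigma+1}\grad_z G)$ on $\{t>t'\}$, and $\rho^\sigma G(\tau,z,\cdot)\to \delta_z$ as $\tau\downarrow 0$. Differentiating under the integral,
\[
\partial_t \tilde w(t,z) \;=\; \lim_{\tau\downarrow 0}\int G(\tau,z,z')\,f(t-\tau,z')\,d\mu_\sigma(z') \;+\; \int_0^t \int \partial_t G(t-t',z,z')\,f(t',z')\,d\mu_\sigma(z')\,dt'.
\]
The boundary term equals $f(t,z)$ by the delta-initial-data property in Proposition \ref{P2b}(3), while replacing $\partial_t G$ by $\rho^{-\sigma}\div(\rho^{\sigma+1}\grad_z G)$ in the bulk integral and pulling the divergence outside gives $\rho^{-\sigma}\div(\rho^{\sigma+1}\grad \tilde w)$. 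Thus $\partial_t\tilde w-\rho^{-\sigma}\div(\rho^{\sigma+1}\grad\tilde w)=f$. The initial condition $\tilde w(0,z)=0$ is immediate since the $t'$-integral collapses.

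\textbf{Step 3: Uniqueness.} By Lemma \ref{L6}, the weak initial value problem \eqref{32a}--\eqref{32b} admits a unique solution in $C([0,T];L^2_\sigma)$. Since both $w$ and $\tilde w$ satisfy this formulation with the same data, they agree, proving the representation formula.

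\textbf{Main obstacle.} The only delicate point is the rigorous justification of the differentiation under the integral sign in Step 2, because of the singular behavior of $G(\tau,\cdot,\cdot)$ as $\tau\downarrow 0$ and the fact that $f$ is only $L^2$ in time and space. I would handle this by first assuming $f\in C^\infty_c((0,\infty)\times\overline{B_1(0)})$, where the identity holds by direct calculation using the fundamental-solution property, and then extending to $f\in L^2((0,\infty);L^2_\sigma)$ by density, relying on the maximal regularity estimate of Lemma \ref{L7} to control $\tilde w$ uniformly in the approximation. The symmetry property in Proposition \ref{P2b}(2) can alternatively be exploited to test the weak formulation directly, converting the problem into integrating $f$ against a solution of the backward homogeneous equation, which is again covered by the earlier well-posedness theory.
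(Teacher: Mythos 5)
Your proposal is correct and follows essentially the same route as the paper: the paper's proof is the one-line observation that the representation follows from $G$ being a fundamental solution (Proposition \ref{P2b}), combined implicitly with the uniqueness from Lemma \ref{L6}, and your Steps 1--3 simply spell out the details of that argument, including the density/approximation device needed to justify differentiation under the integral.
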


\begin{proof}
The statement follows from the fact that $G$ is a fundamental solution to \eqref{17a}, see Proposition \ref{P2b}.
\end{proof}

In Lemma \ref{L7}, we established $L^2(L^2_{\sigma})$ estimates on $\partial_t w$, $\grad w$ and $\rho \grad w$
for solutions to the inhomogeneous problem \eqref{32a}, \eqref{32b}. As a consequence of the Gaussian estimates in Proposition \ref{P2b}, it turns out that these estimates carry over to any $L^p(L^p_{\sigma})$ space, where $L^p_{\sigma}:= L^p(\mu_{\sigma})$, provided that $p\in (1,\infty)$. 
We will thus call $\ell$, $k$, and $\beta$ in $\rho^{\ell}\partial_t^k\partial_z^{\beta}w $ {\em Calderon--Zygmund exponents} whenever
\[
(\ell,k,|\beta|) \in \left\{(0,1,0),(0,0,1), (1,0,2)\right\}.
\]
For these exponents, the kernels $\rho^{\ell} \partial_t^k\partial_z^{\beta} G$ satisfy so-called Calderon--Zygmund cancellation conditions, which in turn yield the $L^p(L^p_{\sigma})$ estimates. Going one step further, Muckenhoupt theory allows for dropping the weights if $p<\infty$ is chosen sufficiently large. We thus deduce maximal regularity estimates in regular $L^p = L^p(L^p)$ spaces.

We refrain from establishing the cancellation conditions in this paper, which would be a straightforward but tedious calculation based on the Gaussian estimates of Proposition \ref{P2b}. We refer the interested reader to Kienzler's work \cite{Kienzler14}, specifically Corollary 3.18, therein.

\begin{lemma}\label{L12}
Let $w$ be the solution to the initial value problem \ref{32a}, \eqref{32b}. Then for any $p>\max\left\{1,\frac1{\sigma+1}\right\}$ it holds
\[
\| \partial_t  w\|_{L^p} + \|\grad w\|_{L^p} + \|\rho \grad^2 w\|_{L^p}\lesssim  \| f\|_{L^p}
\]
and
\[
\| \partial_t \grad w\|_{L^p} + \|\grad^2 w\|_{L^p} + \|\rho \grad^3 w\|_{L^p}\lesssim  \|\grad f\|_{L^p}.
\]
\end{lemma}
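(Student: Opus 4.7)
The plan is to start from Duhamel's representation in Lemma \ref{R4} and recognize $\rho^\ell \partial_t^k \partial_z^\beta w$ as an operator $T_{\ell,k,\beta} f$ whose kernel is $K_{\ell,k,\beta}(t,t',z,z') = \rho(z)^\ell \partial_t^k \partial_z^\beta G(t-t',z,z')\mathbf{1}_{\{t'<t\}}$, where $(\ell,k,|\beta|)$ ranges over the three Calderon--Zygmund triples $(0,1,0)$, $(0,0,1)$, $(1,0,2)$. The goal is then to verify that $T_{\ell,k,\beta}$ is bounded on $L^p(\R_+; L^p_\sigma)$ for every $p\in(1,\infty)$ by invoking the Calderon--Zygmund theorem on spaces of homogeneous type. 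The underlying space is $\R_+ \times \overline{B_1(0)}$ equipped with the parabolic quasi-distance $D((t,z),(t',z')) = \max\{d(z,z'),\sqrt{|t-t'|}\}$ and the product measure $dt\otimes d\mu_\sigma$; Lemma \ref{L8d} (combined with Lemma \ref{L8b}) gives the doubling property for the corresponding parabolic balls.

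The $L^2(L^2_\sigma)$ boundedness of each $T_{\ell,k,\beta}$ is already furnished by Lemma \ref{L7}. What remains is to check standard Calderon--Zygmund size and H\"ormander-type cancellation estimates for the kernels $K_{\ell,k,\beta}$. Specifically, one needs pointwise bounds of the form
\begin{equation*}
|K_{\ell,k,\beta}(t,t',z,z')| \lesssim \frac{1}{D^2 \sqrt{|B_D^d(z)|_\sigma |B_D^d(z')|_\sigma}},
\end{equation*}
together with a Lipschitz-type decay when one of the arguments is replaced by a nearby point (at scale smaller than $D$). These follow directly from the Gaussian estimates of Proposition \ref{P2b}(4): the factor $\rho(z)^\ell$ precisely compensates the factor $(\sqrt{t}+\sqrt{\rho(z)})^{-|\beta|}$ occurring in the derivative bounds for the admissible triples, while the Gaussian weight $e^{-Cd(z,z')^2/t}$ and the parabolic rescaling deliver the off-diagonal decay. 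The smoothness in $z'$ follows by applying the same bounds to an additional derivative of $G$ via the symmetry $G(t,z,z')=G(t,z',z)$. Once these structural properties are established, the $L^p(L^p_\sigma)$ boundedness for $1<p<\infty$ follows by the standard Calderon--Zygmund theorem on spaces of homogeneous type.

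The final step is to pass from the weighted $L^p(\mu_\sigma)$ norm to the unweighted $L^p$ norm demanded in the statement. This is where the threshold $p > 1/(\sigma+1)$ enters: for such $p$ the weight $\rho^\sigma$ belongs to the Muckenhoupt class $A_p$ with respect to Lebesgue measure on $B_1(0)$ (the constraint coming from local integrability of $\rho^{-\sigma/(p-1)}$ near $\partial B_1(0)$), and by Rubio de Francia extrapolation (or by re-running the CZ argument with Lebesgue measure in place of $\mu_\sigma$, since the kernel estimates are insensitive to the choice of underlying doubling measure) the operators extend boundedly to $L^p$ itself. This yields the first displayed estimate. The second estimate is obtained from the first by an entirely analogous argument applied to $\grad w$: either one differentiates Duhamel's formula in $z$ directly, or one uses the identity obtained by integrating by parts in $z'$ to transfer the derivative from $G$ onto $f$.

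The main obstacle is the careful verification of the H\"ormander cancellation conditions for the weighted kernels $\rho^\ell \partial_t^k\partial_z^\beta G$, because the anisotropic geometry of the intrinsic balls (two scaling regimes depending on whether $\sqrt{\rho(z)} \lesssim r$ or $\sqrt{\rho(z)}\gtrsim r$) forces a case distinction and the degenerate weight $\rho$ must be tracked precisely. This computation is mechanical but lengthy; rather than reproducing it, I intend to invoke the parallel analysis carried out by Kienzler in \cite[Corollary 3.18]{Kienzler14}, which treats a kernel of essentially the same structure and whose conclusions transfer to the present setting upon replacing the subelliptic operator there with the operator $\L$ here.
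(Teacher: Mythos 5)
Your approach for the first displayed estimate is essentially the paper's: start from the $L^2(L^2_\sigma)$ bound of Lemma \ref{L7}, verify (or cite Kienzler's Corollary 3.18 for) the Calder\'on--Zygmund size and cancellation conditions on the kernels $\rho^\ell\partial_t^k\partial_z^\beta G$ for the three admissible triples, apply CZ theory on the space of homogeneous type $(\overline{B_1(0)}, d, \mu_\sigma)$, and then use Muckenhoupt theory to pass from $\mu_\sigma$ to Lebesgue measure. One small caution: your parenthetical alternative --- ``re-running the CZ argument with Lebesgue measure in place of $\mu_\sigma$, since the kernel estimates are insensitive to the choice of underlying doubling measure'' --- is not quite right. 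The Gaussian bounds in Proposition \ref{P2b} are written in terms of $|B_{\sqrt t}^d(\cdot)|_\sigma$, and $|B_r^d(z)|_\sigma \sim r^N(r+\sqrt{\rho(z)})^{N+2\sigma}$ differs from $|B_r^d(z)| \sim r^N(r+\sqrt{\rho(z)})^N$ by a factor comparable to $\rho^\sigma$, so the kernel size estimate genuinely depends on the underlying measure. The extrapolation route (equivalently, checking that $\rho^{-\sigma}\in A_p(\mu_\sigma)$, which is exactly where $p>1/(\sigma+1)$ enters) is the correct mechanism.

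The real gap is in the second displayed estimate. You propose to ``differentiate Duhamel's formula in $z$ directly, or to integrate by parts in $z'$ to transfer the derivative from $G$ onto $f$,'' but neither alternative closes the argument. Integrating by parts against the weighted measure $d\mu_\sigma = \rho^\sigma\,dz$ produces a term $\sigma\,\rho^{-1}\grad\rho\cdot f$ that is not controlled by $\grad f$, and the operator $\L$ is not translation-invariant, so there is no identity $\grad_z G = -\grad_{z'}G$ to exploit. Differentiating the PDE instead of Duhamel's formula gives a useful equation for $\partial_i w$, but the commutator of $\partial_i$ with $\rho^{-\sigma}\div(\rho^{\sigma+1}\grad\,\cdot\,)$ produces lower-order terms that, via the first estimate, are bounded only by $\|f\|_{L^p}$; and $\|f\|_{L^p}$ is \emph{not} controlled by $\|\grad f\|_{L^p}$. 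The paper closes this gap with a mean-subtraction trick you do not mention: set $h(t) := |B_1(0)|^{-1}\int f(t,z)\,dz$ and $H(t):=\int_0^t h$; then $w-H$ solves \eqref{32a} with right-hand side $f-h$ (since adding a spatially constant function is in the kernel of the spatial operator), one applies the intermediate estimate with $f$ replaced by $f-h$, uses Poincar\'e's inequality $\|f-h\|_{L^p}\lesssim \|\grad f\|_{L^p}$, and observes $\partial_t^k\partial_z^\beta\grad(w-H) = \partial_t^k\partial_z^\beta\grad w$ so that the left-hand side is unchanged. Without this step, your estimate for the second display retains a spurious $\|f\|_{L^p}$ term and does not prove the lemma as stated.
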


\begin{proof}
We first notice that Lemma \ref{L7} ensures that the mappings 
\[
L^2(L_{\sigma}^2)\ni f\mapsto \rho^{\ell} \partial_t^k\partial_z^{\beta}w \in L^2(L_{\sigma}^2)
\]
are continuous if $\ell$, $k$, and $\beta$ are Calderon--Zygmund exponents. The corresponding kernels,  $\rho^{\ell}\partial_t^k\partial_z^{\beta} G$, satisfy the Calderon--Zygmund cancellation condition, cf.\ \cite[Corollary 3.18]{Kienzler14}, and thus, maximal regularity theory applies and yields
\[
\|\rho^{\ell} \partial_t^k\partial_z^{\beta} w\|_{L^p(L^p_{\sigma})}\lesssim \|f\|_{L^p(L^p_{\sigma})}.
\]
Furthermore, the Muckenhoupt theory allows for dropping the weights. More precisely, $\rho^{\theta - \sigma}$ is a $p$-Muckenhoupt weight with respect to $\mu_{\sigma}$ if and only if $-1<\theta< p(\sigma +1) - 1$. We can thus translate the maximal regularity theory from $L^p(L^p_{\sigma})$ to $L^p(L^p(\rho^{\theta-\sigma}\, \mu_{\sigma})) = L^p(L^p_{\theta})$. In particular, for $\theta=0$, it holds
\begin{equation}\label{34}
\|\rho^{\ell} \partial_t^k\partial_z^{\beta} w\|_{L^p}\lesssim \|f\|_{L^p}
\end{equation}
provided that $p> \max\left\{1, \frac1{\sigma+1}\right\}$.

The second estimate is obtained from \eqref{34} by differentiating the equation and iteration. Thanks to strict parabolicity, this amounts to a standard exercise in the interior of the unit ball. Close to the boundary, however, we have to argue more carefully. In fact, after a suitable localization, the techniques developed in the proof of Lemma \ref{L8a} are applicable. We claim that
\[
\|\rho^{\ell} \partial_t^k \partial_z^{\beta}\grad w\|_{L^p} \lesssim \|f\|_{L^p(J\times B)} + \|\grad f\|_{L^p},
\]
and leave the details to the reader. 
Let now $h(t) := |B_1(0)|^{-1}\int f(t,z)\, dz$ and $H(t):= \int_0^t h(\tau)\, d\tau$. Obviously, $w-H$ is a solution to the equation with inhomogeneity $f-h$, and the previous estimate applies. Invoking Poincar\'e's inequality $\|f-h\|_{L^p}\lesssim \|\grad f\|_{L^p}$ and observing that $\partial_t^k\partial_z^{\beta}\grad (w-H) = \partial_t^k\partial_z^{\beta} \grad w$ for any choice of $k$ and $\beta$ yields the second statement of the lemma.
\end{proof}

In the proof of Proposition \ref{P3} it will be convenient to decompose $f = \eta f + (1-\eta) f$ where $\eta$ denotes a cut-off function which is constantly $1$ in the cylinder $Q_r^d(z_0)$ and vanishes outside the larger cylinder
\[
\widehat Q_{r}^d (z_0) :=\left(\frac{r^2}4,r^2\right)\times B_{2r}^d(z_0).
\]
First, we provide a ``diagonal''  $L^p$ estimate.

\begin{lemma}\label{L13}
Let $w$ be a solution to the initial value problem \eqref{32a}, \eqref{32b}. Suppose that $\spt f\subset \widehat Q_r^d(z_0)$ for some $z_0\in \overline{B_1(0)}$ and $0<r\le \sqrt2$. Then for all Calderon--Zygmund exponents $\ell$, $k$ and $\beta$ and $p>\max\left\{1,\frac1{\sigma+1}\right\}$ it holds
\[
r^2 |Q_{r}^d(z_0)|^{-\frac1p} \|\rho^{\ell} \partial_t^k\partial_z^{\beta} \grad w\|_{L^p(Q_{r}^d(z_0))}
\lesssim \| f\|_{Y(p)}.
\]\end{lemma}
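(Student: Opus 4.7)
The plan is to combine the global maximal regularity estimate of Lemma \ref{L12} with a covering argument that exploits the support condition on $f$ together with the definition of $\|f\|_{Y(p)}$. First, since the estimate to be proved is local on the left-hand side but we have a global $L^p$-to-$L^p$ maximal regularity inequality at our disposal, it is natural to throw away the localization on the left and write
\[
\|\rho^{\ell}\partial_t^k\partial_z^{\beta}\grad w\|_{L^p(Q_r^d(z_0))}
\le \|\rho^{\ell}\partial_t^k\partial_z^{\beta}\grad w\|_{L^p}
\lesssim \|\grad f\|_{L^p},
\]
using the second estimate in Lemma \ref{L12} (which is applicable for any Calderon--Zygmund triple $(\ell,k,\beta)$ and any $p>\max\{1,1/(\sigma+1)\}$). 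The support hypothesis $\spt f\subset \widehat{Q}_r^d(z_0)$ reduces this to controlling $\|\grad f\|_{L^p(\widehat{Q}_r^d(z_0))}$ by $\|f\|_{Y(p)}$, with the correct scale factor $r^{-2}|Q_r^d(z_0)|^{1/p}$.

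The second step is to cover $\widehat{Q}_r^d(z_0)=(r^2/4,r^2)\times B_{2r}^d(z_0)$ by finitely many ``admissible'' cylinders $Q_{r_i}^d(z_i)$ with $r_i\sim r$ and $z_i\in B_{2r}^d(z_0)$. In the time variable this is trivial: the two intervals $(r^2/4,r^2/2)$ and $(r^2/2,r^2)$ correspond to $r_i=r/\sqrt 2$ and $r_i=r$ respectively. In the space variable, the Vitali-type covering lemma together with the doubling property for intrinsic volumes (Lemma \ref{L8d}) allow one to cover $B_{2r}^d(z_0)$ by a number $M$ of intrinsic balls $B_r^d(z_i)$ with $M$ uniformly bounded independently of $z_0$ and $r\le\sqrt 2$. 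Applying the definition of $\|f\|_{Y(p)}$ at each $(z_i,r_i)$ and summing gives
\[
\|\grad f\|_{L^p(\widehat Q_r^d(z_0))}^p
\le \sum_{i=1}^{M}\|\grad f\|_{L^p(Q_{r_i}^d(z_i))}^p
\lesssim \sum_{i=1}^M \frac{|Q_{r_i}^d(z_i)|}{r_i^{2p}}\,\|f\|_{Y(p)}^p
\lesssim \frac{|Q_r^d(z_0)|}{r^{2p}}\,\|f\|_{Y(p)}^p,
\]
where in the last step we used once more Lemma \ref{L8d} to replace $|Q_{r_i}^d(z_i)|$ by $|Q_r^d(z_0)|$ (up to a multiplicative constant), since $d(z_0,z_i)\lesssim r$ and $r_i\sim r$. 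Combining with the first display then produces the claimed estimate after multiplying by $r^2|Q_r^d(z_0)|^{-1/p}$.

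The only genuine difficulty is the doubling and comparison of the intrinsic volumes $|Q_{r_i}^d(z_i)|$ near the boundary of $B_1(0)$, since $B_{2r}^d(z_0)$ may reach regions where $\sqrt{\rho}$ changes scale relative to $r$; however this is exactly what Lemma \ref{L8d} is designed to handle, yielding a polynomial-in-$d(z_0,z_i)/r$ distortion that remains bounded on our cover. A small technical point is that the upper cutoff $r\le\sqrt 2$ keeps us in the small-scale regime of $\|f\|_{Y(p)}$, so the large-time part of the $Y(p)$ norm does not enter. Everything else is bookkeeping of constants.
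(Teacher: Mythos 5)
Your proof is correct and follows essentially the same route as the paper: it applies the second estimate of Lemma \ref{L12} together with the support hypothesis on $f$, and then covers $\widehat Q_r^d(z_0)$ by comparable cylinders $Q_{r_i}^d(z_i)$ to bound $\|\grad f\|_{L^p(\widehat Q_r^d(z_0))}$ by $r^{-2}|Q_r^d(z_0)|^{1/p}\|f\|_{Y(p)}$ via the definition of $Y(p)$ and the volume comparisons of Lemmas \ref{L8b}--\ref{L8d}. The only cosmetic difference is that you sum $p$-th powers of $L^p$ norms over the cover, whereas the paper sums the norms themselves; both yield the same conclusion.
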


\begin{proof}
We deduce from Lemma \ref{L12} the estimate
\[
\|\rho^{\ell}\partial_t^k \partial_z^{\beta} \grad  w\|_{L^p(Q_{r}^d(z_0))} \lesssim \| \grad f\|_{L^p(\widehat Q_{r}^d(z_0))}.
\]
Let $\{Q_{r_i}^d(z_i)\}_{i\in I}$ be a finite cover of $\widehat Q_r^d(z_0)$ such that $r_i\sim r$ and $\sum_{i\in I} |Q_{r_i}^d(z_i)|\lesssim |\widehat Q_r^d(z_0)|$. Then
\begin{eqnarray*}
\|\grad f\|_{L^p(\widehat Q_{r}^d(z_0))} &\le& \sum_{i\in I} \| \grad f\|_{L^p(Q_{r_i}^d (z_{i}))}\\
&\lesssim &\frac1{r^2} \sum_{i\in I}|Q_{r_i}^d (z_{i})|^{\frac1p}
r_i^2|Q_{r_i}^d (z_{i})|^{-\frac1p}
\| \grad f\|_{L^p(Q_{r_i}^d (z_{i}))}\\
&\lesssim& \frac1{r^2} |\widehat Q_{r}^d(z_0)|^{\frac1p}  \|  f\|_{Y(p)}.
\end{eqnarray*}
Notice that $|\widehat Q_{r}^d (z_0)| =\frac34r^2|B_{2r}^d(z_0)| \lesssim r^2 |B_{r}^d(z_0)| = |Q_{r}^d(z_0)|$ by Lemma \ref{L8b}. This proves the lemma.
\end{proof}

In the case $(\ell,k,|\beta|) = (0,0,1)$, if $\sqrt{\rho(z_0)}\gg r$, the estimate of Lemma \ref{L13} is not strong enough to obtain a Carleson measure bound. The stronger estimate will be derived in the following lemma. Simultaneously, we establish a pointwise estimate on $\grad w$.

\begin{lemma}\label{L11a}
Let $w$ be a solution to the initial value problem \eqref{32a}, \eqref{32b}. Suppose that $\spt f\subset \widehat{Q}_{r}^d(z_0)$ for some $z_0\in \overline{B_1(0)}$ and $0<r\le \sqrt2$. Let $p> \max\left\{N+1,\frac1{\sigma+1}\right\}$. Then, for any $t\in [0,r^2]$ it holds
\[
|\grad w(t,z_0)|\lesssim \|f\|_{Y(p)}.
\]
Moreover, if $\sqrt{\rho(z_0)}\gg r$ and $p> \max\left\{N+2,\frac1{\sigma+1}\right\}$, it holds
\[
r\left(r+\sqrt{\rho(z_0)}\right)|Q_r^d(z_0)|^{-\frac1p}\|\grad^2 w\|_{L^p(Q_r^d(z_0))}\lesssim \|f\|_{Y(p)}.
\]
\end{lemma}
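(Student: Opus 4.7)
Proof plan. Both estimates start from Duhamel's principle (Lemma~\ref{R4}), which for $z_0\in\overline{B_1(0)}$ and $t\in(0,r^2]$ gives
\[
\grad w(t,z_0)\;=\;\int_0^t\!\!\int \grad_z G(t-t',z_0,z')\,f(t',z')\,d\mu_\sigma(z')\,dt',
\]
with the integration effectively confined to $\widehat Q_r^d(z_0)$ by the support hypothesis on $f$. Note that if $t\le r^2/4$ there is nothing to prove, since then the integrand vanishes.

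For the pointwise bound I would apply H\"older's inequality with conjugate exponents $p',p$ to split
\[
|\grad w(t,z_0)|\;\le\;\Bigl(\iint_{\widehat Q_r^d(z_0)}|\grad_z G(t-t',z_0,z')|^{p'}\,d\mu_\sigma(z')\,dt'\Bigr)^{1/p'}\|f\|_{L^p(\widehat Q_r^d(z_0);d\mu_\sigma dt')}.
\]
The second factor is controlled by $\|f\|_{Y(p)}$ (with a harmless factor built from $r$ and $\sqrt{\rho(z_0)}$) via the definition of $Y(p)$ together with the volume comparison of Lemma~\ref{L8b} and the elementary bound $\rho(z')^\sigma\lesssim(r+\sqrt{\rho(z_0)})^{2\sigma}$ on $B_{2r}^d(z_0)$ in the relevant regime. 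The first factor is handled by the Gaussian bound of Proposition~\ref{P2b}~(4): at fixed $s=t-t'$, a dyadic decomposition of $\widehat Q_r^d(z_0)$ in rings of the form $\{2^k\sqrt s\le d(z_0,z')<2^{k+1}\sqrt s\}$, combined with the doubling inequality of Lemma~\ref{L8d}, reduces the spatial integral to the standard estimate
\[
\int |B_{\sqrt s}^d(z')|_\sigma^{-p'/2}\,e^{-C p'\,d(z_0,z')^2/s}\,d\mu_\sigma(z')\;\lesssim\;|B_{\sqrt s}^d(z_0)|_\sigma^{1-p'/2},
\]
in which the Gaussian factor absorbs the polynomial volume distortion between neighbouring scales. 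Inserting the volume formula and performing the change of variable $u=\sqrt s$ reduces the time integration to an explicit one-dimensional integral whose integrability at $u\downarrow 0$ enforces the threshold on $p$.

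For the $L^p$ estimate in the interior regime $\sqrt{\rho(z_0)}\gg r$, I would differentiate Duhamel's formula once more, representing $\grad^2 w(t,z)$ for $(t,z)\in Q_r^d(z_0)$ as a space-time convolution against $\grad_z^2 G$. The Gaussian bound now yields the factor $(\sqrt s+\sqrt{\rho_0})^{-2}\sim\rho(z_0)^{-1}$, which is constant in the region of integration and can therefore be extracted. What remains is a convolution with a kernel that is essentially $\grad_z G$ in the scaling of the interior Euclidean problem; applying Young's inequality in $t'$ and H\"older in $z'$, together with Lemma~\ref{L8d} to redistribute the weight $|B_{\sqrt s}^d|_\sigma$ between $z$ and $z'$, produces the $L^p$ bound with the prefactor $r(r+\sqrt{\rho(z_0)})\,|Q_r^d(z_0)|^{-1/p}$ arising naturally from the parabolic dimensional scaling.

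The main obstacle is the sharp integrability analysis of $|\grad_z G|^{p'}$ near the diagonal singularity $s\downarrow 0$: the non-standard volume growth $|B_{\sqrt s}^d(z_0)|_\sigma\sim s^{N/2}(\sqrt s+\sqrt{\rho(z_0)})^{N+2\sigma}$ has two distinct regimes ($\sqrt s\lessgtr\sqrt{\rho(z_0)}$) and one must keep track of both of them to avoid logarithmic losses and to obtain the stated threshold on $p$. The thresholds $p>N+1$ and $p>N+2$ correspond exactly to the convergence of the resulting one-dimensional integrals in these two regimes.
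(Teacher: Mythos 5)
Your overall strategy — Duhamel, H\"older against $\|f\|_{Y(p)}$, Gaussian bounds, dyadic rings, then a one-dimensional time integral — is exactly the \emph{ingredient list} the paper uses, but you apply it to the wrong object, and the step you gloss over is precisely where your route breaks down. You propose to bound $\grad w$ by hitting $f$ with $\grad_z G$ under H\"older. Near the boundary (say $\rho(z_0)=0$), Proposition~\ref{P2b}(4) gives $|\grad_z G(\tau,z_0,z')|\lesssim \tau^{-1}|B_{\sqrt\tau}^d(z_0)|_\sigma^{-1/2}|B_{\sqrt\tau}^d(z')|_\sigma^{-1/2}e^{-Cd^2/\tau}$ (because $\sqrt\tau^{-1}(\sqrt\tau+\sqrt{\rho})^{-1}\sim\tau^{-1}$ when $\rho\lesssim\tau$). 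After the spatial $L^q_{q\sigma}$ integration the time integral is
\[
\int_0^{r^2}\tau^{-q}\,|B_{\sqrt\tau}^d(z_0)|^{1-q}\,d\tau
\;\sim\;\int_0^{r^2}\tau^{\,N-q(N+1)}\,d\tau,
\]
which converges at $\tau=0$ only when $q<1$, i.e.\ never for a finite $p$. So the claimed threshold $p>N+1$ does \emph{not} ``correspond exactly to the convergence'' of this integral; the direct approach via $\grad_z G$ fails for every $p<\infty$ as soon as $z_0$ is close to $\partial B_1(0)$. The same problem recurs in your second part: estimating $\grad^2 w$ directly against $\grad_z^2 G$ brings an extra $\tau^{-1}$ from the $\sqrt\tau^{-|\beta|}$ factor, and the resulting one-dimensional integral again diverges for all finite $p$, even in the interior regime.

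The idea you are missing is that the paper differentiates the \emph{equation}, not the kernel. Writing the equation for $\partial_i w$ gives a new inhomogeneous problem with zero initial datum and right-hand side $\tilde f = \partial_i f + z_i\laplace w + (\sigma+1)(\partial_i w + z\cdot\grad\partial_i w)$, and then one applies the undifferentiated pointwise bound
\[
|\tilde w(t,z)|\lesssim r^2|Q_r^d(z)|^{-1/p}\|\tilde f\|_{L^p},\qquad p>N+1,
\]
to $\tilde w=\partial_i w$; the lower-order terms $z_i\laplace w$, $\partial_i w$, $z\cdot\grad\partial_i w$ in $\tilde f$ are then absorbed by the $L^p$ maximal-regularity bounds of Lemma~\ref{L12}. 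This is why the threshold is $p>N+1$ rather than the (nonexistent) one your route would require, and it is also why the $Y(p)$ norm is built to contain $\|\grad f\|_{L^p}$: the term $\partial_i f$ must be estimated. Your proposal only invokes the $\|f\|_{L^p}$ piece of $Y(p)$, which is a further sign the argument is not aligned with the lemma's formulation. The same commuting trick applied to (35c) (one derivative on the kernel, valid for $p>N+2$ in the interior) yields the $\grad^2 w$ estimate in the second statement. In short: replace ``differentiate Duhamel's formula'' by ``differentiate the equation and apply the undifferentiated kernel bound to the derivative of $w$,'' and the rest of your outline can be salvaged.
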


\begin{proof}
It is convenient to deduce the two statements from the following estimates on $w$ and $\grad w$, respectively: Let $\tilde f\in L^p$. Suppose that $\tilde w$ is a solution to \eqref{32a}, \eqref{32b} with $f$ replaced by $\tilde f$. If $p> N+1$, then
\begin{equation}
\label{35b}
|\tilde w(t,z)|\lesssim r^2 |Q_r^d(z)|^{-\frac1p}\|\tilde f\|_{L^p}
\end{equation}
for all $(t,z)\in\left[0,r^2\right]\times \overline{B_1(0)}$, and if $p> N+2$ and $\sqrt{\rho(z)}\gtrsim r$ then
\begin{equation}
\label{35c}
r\sqrt{\rho(z)}|\grad\tilde w(t,z)|\lesssim r^2 |Q_r^d(z)|^{-\frac1p}\|\tilde f\|_{L^p}
\end{equation}
for all $(t,z)\in[0,r^2]\times\overline{B_1(0)} $.

Indeed, differentiation of \eqref{32a} with respect to the $i$-th coordinate gives the equation
\[
\partial_t\partial_i w - \rho^{-\sigma} \div\left(\rho^{\sigma+1} \grad\partial_i w\right) = \partial_i f + z_i\laplace w + (\sigma +1)\left(\partial_i w + z\cdot \grad\partial_i w\right).
\]
Let $\tilde f$ be the right-hand side of this identity. Applying \eqref{35b} to $\tilde w = \partial_i w$ and summing over $i$ yields
\[
|\grad w(t,z)|\lesssim r^2 |Q_{r}^d(z)|^{-\frac1p} \left(\|\grad f\|_{L^p} + \|\grad w\|_{L^p}+ \|\grad^2 w\|_{L^p}\right),
\]
for all $(t,z)\in\left[0,r^2\right]\times \overline{B_1(0)}$. With the help of Lemma \ref{L12}, using that $f$ is supported in $\widehat Q^d_r(z_0)$ and choosing $z=z_0$, the latter turns into
\[
|\grad w(t,z_0)|\lesssim r^2 |Q_{r}^d(z_0)|^{-\frac1p} \| f\|_{W^{1,p}(\widehat Q_r^d(z_0))}.
\]
Arguing as in the proof of the previous lemma, the right-hand side is controlled by $\|f\|_{Y(p)}$, which shows the first statement of the lemma.

In a very similar way, we deduce from \eqref{35c} the estimate
\[
r\sqrt{\rho(z)} |\grad^2 w(t,z)|\lesssim |Q_r^d(z)|^{-\frac1p}|Q_r^d(z_0)|^{\frac1p}\|f\|_{Y(p)},
\]
for all $(t,z)\in [0,r^2]\times B_r^d(z_0)$. In view of Lemma \ref{L8c}, it is $\rho(z)\sim \rho(z_0)$ for all $z\in B_r^d(z_0)$ provided that $\sqrt{\rho(z_0)}\gg r$. Hence $|Q_r^d(z)|\sim |Q_r^d(z_0)|$ and integrating the previous estimate over $Q_r^d(z_0)$ yields the second statement of the lemma.

It remains thus to prove \eqref{35b} and \eqref{35c}. 
Both estimates can be established simultaneously. We use the heat kernel to represent the solution, cf.\ Lemma \ref{R4}, and estimate with H\"older's inequality
\[
|\partial_z^{\beta}\tilde w(t,z)|\le \| \partial_z^{\beta} G(t,\tacka,z,\tacka)\|_{L^{q}\left((0,r^2);L^{q}_{q\sigma}\right)} \|\tilde f\|_{L^p(\widehat Q_r^d(z_0))},
\]
for all $(t,z)\in \left[0,r^2\right]\times \overline{B_1(0)}$, and where $\frac1p+\frac1q=1$ as usual and $\beta\in\N_0^N$ such that $|\beta|\in\{0,1\}$. The estimates \eqref{35b} and \eqref{35c} follow thus from appropriate $L^q$ estimates on the heat kernel. First, recall the Gaussian estimate from Proposition \ref{P2b}
\begin{equation}
\label{33a}
|\partial_z^{\beta}G(t,t',z,z')|\lesssim \sqrt{\tau}^{-|\beta|} \left(\sqrt{\tau} +\sqrt{\rho(z)}\right)^{-|\beta|}|B_{\sqrt{\tau}}^d(z)|_{\sigma}^{-1} e^{-Cd(z,z')^2/\tau},
\end{equation}
where we have set $\tau=t-t'$ and we have used Lemma \ref{L8d} to replace the ball centered at $z'$ by the ball centered at $z$.
We let $\left\{B_{j\sqrt{\tau}}^d(z)\right\}_{1\le j\le J}$ be a cover finite cover of $\overline{B_1(0)}$ and estimate
\begin{eqnarray*}
\int e^{-qCd(z,z')^2/\tau}\, d\mu_{q\sigma}(z') &\lesssim&\sum_{j=1}^J\int_{B_{j\sqrt{\tau}}^d(z)\setminus B_{(j-1)\sqrt{\tau}}^d(z)} e^{-qCd(z,z')^2/\tau}\, d\mu_{q\sigma}(z')\\
&\lesssim &\sum_{j=1}^J e^{-qC(j-1)^2} |B_{j\sqrt{\tau}}^d(z)|_{q\sigma}.
\end{eqnarray*}
Invoking Lemma \ref{L8b}, we compute $|B_{j\sqrt{\tau}}^d(z)|_{q\sigma} \lesssim j^{\kappa}|B_{\sqrt{\tau}}^d(z)|_{q\sigma}\sim j^{\kappa}|B_{\sqrt{\tau}}^d(z)|^{1-q} |B_{\sqrt{\tau}}^d(z)|_{\sigma}^{q}   $  for some $\kappa>0$. Hence
\[
\int e^{-qCd(z,z')^2/\tau}\, d\mu_{q\sigma}(z')\lesssim \sum_{j\in\N} j^{\kappa} e^{-qC(j-1)^2}  |B_{\sqrt{\tau}}^d(z)|^{1-q} |B_{\sqrt{\tau}}^d(z)|_{\sigma}^{q} .
\]
Because the series is convergent, an integration of \eqref{33a} over $(0,r^2)\times B_1(0)$ yields
\[
\int_0^{r^2} \int |\partial_z^{\beta} G(t,t',z,z')|^q\, d\mu_{q\sigma}(z')dt'\lesssim \int_0^{r^2}  \sqrt{\tau}^{-q|\beta|}\left(\sqrt{\tau} + \sqrt{\rho(z)}\right)^{-q|\beta|}|B_{\sqrt{\tau}}^d(z)|^{1-q}\, d\tau.
\]
Once again, we distinguish the cases where $z$ is relatively close to the boundary and the case where it is not. In the first case, if $\sqrt{\rho(z)}\lesssim r$, we choose $\beta=0$ and estimate with the help of Lemma \ref{L8}
\[
\int_0^{r^2}  |B_{\sqrt{\tau}}^d(z)|^{1-q}\, d\tau 
\lesssim \int_0^{r^2}\tau^{(1-q)N}\,d\tau
\lesssim  r^{2(1-q)N +2} 
\lesssim  r^2 |B_{r}^d(z)|^{1-q},
\]
provided that $p>N+1$. In the second case, if $\sqrt{\rho(z)}\gtrsim r$, we have 
\begin{eqnarray*}
\lefteqn{\int_0^{r^2} \sqrt{\tau}^{-q|\beta|}\left(\sqrt{\tau} + \sqrt{\rho(z)}\right)^{-q|\beta|} |B_{\sqrt{\tau}}^d(z)|^{1-q}\, d\tau}\\
 &\lesssim& \sqrt{\rho(z)}^{-q|\beta| +(1-q)N}\int_0^{r^2}\sqrt{\tau}^{-q|\beta + (1-q)N}\,d\tau\\
&\lesssim & \sqrt{\rho(z)}^{-q|\beta| + (1-q)N} r^{-q|\beta| + (1-q)N +2} \\
&\lesssim & \left(r \sqrt{\rho(z)}\right)^{-|\beta|} r^2 |B_{r}^d(z)|^{1-q},
\end{eqnarray*}
provided that $(2-|\beta|)p >N+2$.
Therefore, in either case, if $p>N+1$ it holds
\[
\| G(t,\tacka,z,\tacka)\|_{L^q\left(\left(0,r^2\right); L^q_{q\sigma}\right)} \lesssim r^{\frac2q} |B_{r}^d(z)|^{\frac1q-1} \sim r^2 |Q_{r}^d(z)|^{-\frac1p},
\]
which implies \eqref{35b}. The estimate \eqref{35c} follows analogously with $|\beta|=1$ and $p>N+2$.
\end{proof}

We come now to the ``off-diagonal'' estimates. We define
\[
\widetilde Q_r^d(z_0) : = \left( \frac34r^2,r^2\right)\times B_{\frac{r}2}^d(z_0),
\]
 so that $\widetilde Q_r^d(z_0) \subset Q_r^d(z_0)\subset \widehat Q_r^d(z_0)$.

\begin{lemma}\label{L15}
Let $w$ be the  solution to the initial value problem \eqref{32a}, \eqref{32b}. Suppose that $\spt f\subset \left[0,r^2\right]\times \overline{B_1(0)}\setminus Q_{r}^d(z_0)$ for some $z_0\in \overline{B_1(0)}$ and $0<r\le \sqrt2$. Then for all $(t,z)\in \widetilde Q_r^d(z_0)$ and $p>\max\left\{1,\frac1{\sigma+1}\right\}$ it holds
\begin{eqnarray*}
\lefteqn{|\grad w(t,z)|+ r^2|\partial_t \grad w(t,z)| + r\left(r+\sqrt{\rho(z)}\right)|\grad^2 w(t,z)| + r^2 \rho(z)|\grad^3 w(t,z)|}\\
&\lesssim    &\|f\|_{Y(p)}.\hspace{25em}
\end{eqnarray*}
\end{lemma}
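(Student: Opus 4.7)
\emph{Plan.} The strategy is to combine (i) the interior pointwise regularity bounds of Lemma \ref{L9}, applied to $w$ as a homogeneous solution on a cylinder slightly larger than $\widetilde Q_r^d(z_0)$, with (ii) an off-diagonal pointwise bound on $w$ itself obtained from Duhamel's formula and the Gaussian estimate of Proposition \ref{P2b}.

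The first step is to notice that the support hypothesis $\spt f \subset [0, r^2] \times \overline{B_1(0)} \setminus Q_r^d(z_0)$ forces $f \equiv 0$ on the extended cylinder $(r^2/2,\, 3r^2/2) \times B_r^d(z_0)$: for $t' \in [r^2/2, r^2]$ this holds by the spatial exclusion, and for $t' > r^2$ because $\spt f \subset [0, r^2]$ in time. Hence $w$ solves the homogeneous equation \eqref{17a} on this enlarged cylinder, and I can invoke Lemma \ref{L9} with $\tau = r^2/2$, $\eps = 1/4$, $\delta = 1/2$. Distinguishing the two regimes $\sqrt{\rho(z_0)} \le \delta r$ and $\sqrt{\rho(z_0)} \ge \delta r$, and using Lemma \ref{L8c} to secure $\rho(z) \sim \rho(z_0)$ on $B_{r/2}^d(z_0)$ in the far-from-boundary case, a direct computation with the two scaling laws in Lemma \ref{L9} shows that each of the four quantities on the left of the claimed inequality is controlled by
\[
\frac{1}{r\bigl(r + \sqrt{\rho(z_0)}\bigr)} \, \|w\|_{L^\infty((r^2/2,\, 3r^2/2) \times B_r^d(z_0))}.
\]
Thus the task reduces to proving the single pointwise estimate
\[
\|w\|_{L^\infty((r^2/2,\, 3r^2/2) \times B_r^d(z_0))} \;\lesssim\; r \bigl(r + \sqrt{\rho(z_0)}\bigr) \, \|f\|_{Y(p)}.
\]

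For this bound I would fix $(t,z)$ in the extended cylinder and apply Duhamel (Lemma \ref{R4}) to write $w(t,z) = \int_0^t \int G(t - t', z, z') f(t', z') \, d\mu_\sigma(z') \, dt'$. The support of $f$ is separated from $(t,z)$ by parabolic distance $\sqrt{|t-t'|} + d(z, z') \gtrsim r$. I would partition $\spt f$ into dyadic shells $A_k = \{(t',z') \in \spt f : \sqrt{|t-t'|} + d(z,z') \in [2^{k-1} r, 2^k r)\}$ for $k \ge 1$, cover each $A_k$ by $O(1)$ intrinsic parabolic cylinders of scale $2^k r$, and on each such cylinder apply H\"older's inequality combined with the Gaussian estimate of Proposition \ref{P2b} and the volume formula of Lemma \ref{L8b}. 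This is essentially a dyadic off-diagonal variant of the $L^q$ kernel computation in the proof of \eqref{35b}: the factor $e^{-C d(z,z')^2/(t-t')}$ produces super-geometric decay $e^{-c 4^k}$ on the $k$-th shell, while the $Y(p)$ norm controls $\|f\|_{L^p}$ on each cylinder of scale $2^k r$ up to the factor $(2^k r + \sqrt{\rho(\cdot)})/(2^k r)$. Summing the resulting geometric series, with Lemma \ref{L8d} used to compare weight factors across shells, yields the prefactor $r(r + \sqrt{\rho(z_0)})$ in exact analogy with the derivation of \eqref{35b}.

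The hard part will be the geometric bookkeeping in the dyadic sum. The intrinsic volume growth of Lemma \ref{L8b}, the doubling exponent $\max\{N, 2N + 2\sigma\}$ of Lemma \ref{L8d}, and the $Y(p)$-weight $(r + \sqrt{\rho(z)})/r$ must be tracked consistently across the scales $2^k r$, and one must verify that the super-geometric Gaussian decay $e^{-c 4^k}$ indeed dominates the polynomial growth of these factors. The hypothesis $p > \max\{N+2,\, 1/(\sigma+1)\}$ inherited from Lemma \ref{L11a} is precisely what makes the spatial $L^q$ integral of $G$ converge with the correct scaling both near the boundary and in the interior. Beyond this balancing act no fundamentally new ingredient enters; the argument is a careful assembly of Lemma \ref{L9}, the Duhamel--H\"older--Gaussian computation underlying Lemma \ref{L11a}, and Proposition \ref{P2b}.
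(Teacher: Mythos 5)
Your strategy is genuinely different from the paper's: rather than estimating $\partial_t^k\partial_z^\beta w$ directly from Duhamel's formula together with the Gaussian bounds on the derivatives of the heat kernel and the auxiliary estimate \eqref{33b}, you propose to factor everything through a single pointwise bound on $w$ and invoke the interior regularity of Lemma \ref{L9}. The arithmetic of your reduction --- that in both scaling regimes of Lemma \ref{L9} the four weighted derivative quantities are controlled by $\left(r(r+\sqrt{\rho(z_0)})\right)^{-1}\|w\|_{L^\infty}$ --- does check out. The plan breaks, however, at the step where you produce the required $L^\infty$ bound.

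You claim that the support of $f$ is separated from every $(t,z)$ in the extended cylinder $(r^2/2,\,3r^2/2)\times B_r^d(z_0)$ by parabolic distance $\gtrsim r$. This is false. The hypothesis only excludes $\spt f$ from $Q_r^d(z_0)=(r^2/2,r^2)\times B_r^d(z_0)$ and from $\{t>r^2\}$; it says nothing about $t'<r^2/2$ inside $B_r^d(z_0)$, nor about $z'$ just outside $B_r^d(z_0)$. For $z$ with $d(z,z_0)$ close to $r$ and $t\in(r^2/2,r^2)$, or for $t$ just above $r^2/2$ and $z\in B_r^d(z_0)$, points of $\spt f$ come arbitrarily close, so the Gaussian factor yields no gain and the dyadic off-diagonal argument cannot deliver the claimed $\|w\|_{L^\infty}$ bound on the whole extended cylinder. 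The separation $\gtrsim r$ holds only from the smaller cylinder $\widetilde Q_r^d(z_0)$, which is precisely why the paper estimates the derivatives at points of $\widetilde Q_r^d(z_0)$ directly rather than first bounding $w$ on a neighborhood. Salvaging your route would require an intermediate cylinder compactly contained in the homogeneous region $\left((r^2/2,r^2)\times B_r^d(z_0)\right)\cup\{t\ge r^2\}$ with parabolic margins $\gtrsim r$ both to $\widetilde Q_r^d(z_0)$ and to $\spt f$, together with a matching re-tuning of $\tau$, the radius, and $\delta$ in Lemma \ref{L9}; your sketch does not supply this bookkeeping. Note also that patching the nearby contributions via the diagonal kernel computation behind \eqref{35b} would force $p>N+1$, whereas the lemma (and the paper's off-diagonal argument) require only $p>\max\{1,1/(\sigma+1)\}$.
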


\begin{proof}
We start the proof with an elementary auxiliary estimate whose proof we postpone until later: We claim that there exists a constant $\tilde C>0$ such that
\begin{equation}\label{33b}
\sqrt{t-t'}^{-\theta} e^{- Cd(z,z')/(t-t')} \lesssim r^{-\theta} e^{-\tilde Cd(z,z')/r}
\end{equation}
for all $(t,z)\in \widetilde Q_r^d(z_0)$ and $(t',z')\in \left[0,r^2\right]\times\overline{B_1(0)}\setminus Q_r^d(z_0)$. In the following $\tilde C$ will denote a universal constant whose value may change from line to line.

For all $(t,z)\in \widetilde Q_r^d(z_0)$ it is $f(t,z)=0$ and thus
\begin{eqnarray*}
\lefteqn{\left|\partial_t^k\partial_z^{\beta} w(t,z) \right|}\\
&\lesssim&  \int_0^{r^2} \int \frac{\sqrt{\tau}^{-2k-|\beta|}\left(\sqrt{\tau}+\sqrt{\rho(z)}\right)^{-|\beta|}e^{-Cd(z,z')^2/\tau}}{|B_{\sqrt{\tau}}^d(z)|^{1/2}_{\sigma}|B_{\sqrt{\tau}}^d(z')|_{\sigma}^{1/2}} |f(t-\tau,z')|\, d\mu_{\sigma}(z')d \tau
 \end{eqnarray*}
by Duhamel's principle (Lemma \ref{R4}) and the Gaussian estimates  (Proposition \ref{P2b}). The estimate \eqref{33b} and the monotonicity of the function $s\mapsto \frac{s}{s+c}$ show that we can replace $\sqrt{\tau}$ by $r$ in the above estimate. Thanks to Lemma \ref{L8d}, we can moreover substitute balls centered at $z'$ by balls centered at $z$ and vice versa. We thus have
\begin{eqnarray*}
\lefteqn{ r^{2k+|\beta|-1} \left(r+\sqrt{\rho(z)}\right)^{|\beta|-1}\left|\partial_t^k\partial_z^{\beta} w(t,z) \right|}\\
&\lesssim&
 \int_0^{r^2} \int \frac{e^{-\tilde C d(z,z')/r}}{r|B_{r}^d(z)|_{\sigma}}\left(r+\sqrt{\rho(z')}\right)^{-1}|f(t',z')|\, d\mu_{\sigma}(z')d t'.
 \end{eqnarray*}
Let $\left\{B_r^d(z_i)\right\}_{1\le i\le n}$ be a family of balls covering $\overline{B_1(0)}$. Observing that  $d(z,z_i)\lesssim d(z,z') + r$ for any $z'\in B_r^d(z_i)$, we further estimate the expression on the right-hand side by
\[
\left(\sum_{i=1}^n e^{-\tilde C d(z,z_i)/r}\right)
 \sup_{\tilde z\in B} \frac{e^{-\tilde C d(z,\tilde z)/r}}{r|B_{r}^d(z)|_{\sigma}}
 \int_0^{r^2} \int_{B_r^d(\tilde z)} \left(r+\sqrt{\rho}\right)^{-1} |f|
\, d\mu_{\sigma}(z') d  t'.
\]
The sum  is bounded uniformly in $r$ and $z$. Moreover, thanks to Lemma \ref{L8d}, we can replace $z$ by $\tilde z$ in the multiplicative factor in front of the integral. Hence, to prove the statement of the lemma, we have to show that
\begin{equation}
\label{36a}
\frac{1}{r|B_{r}^d(\tilde z)|_{\sigma}}\|\rho^{\sigma}\left(r+\sqrt{\rho}\right)^{-1}f\|_{L^1\left(\left(0,r^2\right)\times B_r^d(\tilde z)\right)}\lesssim \|f\|_{Y(p)}.
\end{equation}
For that purpose, we cover the domain of integration  with a countable number of intrinsic cylinders. More precisely, with $r_{j}  = \sqrt{3/2}^{\, -j} r$ for $j\in\N_0$, we find a finite family of balls $\left\{B_{r_j}^d(z_{ij})\right\}_{1\le j\le n_j}$ with $z_{ij}\in B_r^d(\tilde z)$ such that
\begin{equation}
\label{37}
\sum_{i=1}^{n_j} |B_{r_j}^d(z_{ij})|_{\sigma} \lesssim |B_r^d(\tilde z)|_{\sigma}
\end{equation}
uniformly in $j$ and $r$. (The latter follows from a non-Euclidean version of Vitali's covering lemma, cf.\ \cite[Lemma 2.2.2]{Koch99}). Then $\left(0,r^2\right)\times B_r^d(\tilde z)\subset \cup_{j\in\N_0} \cup_{1\le i\le n_j} Q_{r_j}^d(z_{ij})$ and thus, via H\"older's inequality
\[
\|\rho^{\sigma}\left(r+\sqrt{\rho}\right)^{-1}f\|_{L^1\left(\left(0,r^2\right)\times B_r^d(\tilde z)\right)}
 \lesssim
 \sum_{j\in\N_0} \sum_{i=1}^{n_j}\|\rho^{\sigma}\left(r_j + \sqrt{\rho}\right)^{-1}\|_{L^q(Q_{r_j}^d(z_{ij}))} \|f\|_{L^p(Q^d_{r_j}(z_{ij}))},
\]
where $\frac1p +\frac1q=1$. It follows from Lemma \ref{L8c} that $(r_j+\sqrt{\rho})^{-1}\lesssim (r_j + \sqrt{\rho(z_{ij})})^{-1}$ in $B_{r_j}^d(z_{ij})$. Therefore, 
\begin{eqnarray*}
\|\rho^{\sigma}\left(r_j + \sqrt{\rho}\right)^{-1}\|_{L^q(Q_{r_j}^d(z_{ij}))}
&\lesssim&
 \left(r_{j} +\sqrt{\rho(z_{ij})}\right)^{-1} |Q_{r_j}^d(z_{ij})|_{q\sigma}^{\frac1q}\\
 & \sim& 
  \frac{r^2_j}{r_{j} +\sqrt{\rho(z_{ij})}} |B_{r_j}^d(z_{ij})|_{\sigma} |Q_{r_j}^d(z_{ij})|^{-\frac1p}.
\end{eqnarray*}
Notice that the measure $\mu_{q\sigma} $ is finite only  if $q\sigma>-1$, or equivalently, $p>\frac1{\sigma+1}$. Combining the previous two estimates and using \eqref{37} and the convergence of the geometric series yields \eqref{36a}.

We finally turn to the proof of \eqref{33b}. we first consider the case where $z'\in B_{r}^d(z_0)$. We then must have $t'\not\in \left( \frac{r^2}2,r^2\right)$, and thus $\tau =t-t'\ge \frac14r^2$. Then
\[
\sqrt{\tau}^{-\theta} e^{-Cd(z,z')^2/\tau} \le \sqrt{\tau}^{-\theta} \lesssim r^{-\theta} \lesssim r^{-\theta} e^{-Cd(z,z')/r},
\]
because $d(z,z') \lesssim d(z,z_0) + d(z_0,z') \le \frac32r$. In the complementary case, where $z'\not\in B_r^d(z_0)$, it holds that $\frac{r}2\le d(z,z')$ because $z\in B_{\frac{r}2}^d(z_0)$\footnote{Here we assume that the triangle inequality holds for $d$ for notational convenience. In fact, to guarantee a lower bound $d(z,z')\gtrsim r$ in this case, we would have to consider intrinsic balls of smaller radius in the definition of $\widetilde Q_r^d(z_0)$.}  Now, a small computation shows that the function $\tau\mapsto e^{-C d(z,z')^2 /\tau} \sqrt{\tau}^{-\theta}$ is increasing as long as $0<\tau \lesssim d(z,z')^2$. Since $\tau\le r^2\lesssim d(z,z')^2$, it thus holds that
\[
\sqrt{\tau}^{-\theta} e^{-Cd(z,z')^2/\tau} \lesssim r^{-\theta} e^{-Cd(z,z')^2/r^2} \le r^{-\theta} e^{-Cd(z,z')/r}.
\]
This proves \eqref{33b}.

\end{proof}

We are finally in the position to derive the Caleson-measure estimate for the initial value problem \eqref{32a}, \eqref{32b}.

\begin{prop}\label{P3}
Let $w$ be a solution to the initial value problem \eqref{32a}, \eqref{32b}. Suppose that $p>\max\left\{N+2,\frac1{\sigma+1}\right\}$. Then
\[
\|w\|_{X(p)}+ \|w\|_{\Lip} +   \lesssim \|f\|_{Y(p)} .
\]
\end{prop}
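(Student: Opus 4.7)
The plan is a standard Calderon--Zygmund localization with respect to the intrinsic geometry. For each cylinder $Q_r^d(z_0)$ entering the Carleson part of $\|w\|_{X(p)}$, decompose $f = f_1 + f_2$ with $\spt f_1 \subset \widehat Q_r^d(z_0)$ and $f_2 \equiv 0$ on $\widehat Q_r^d(z_0)$, and write $w = w_1 + w_2$ accordingly. For $w_1$, Lemma \ref{L13} provides
\[
r^2 |Q_r^d(z_0)|^{-1/p}\bigl(\|\partial_t\grad w_1\|_{L^p(Q_r^d(z_0))} + \|\grad^2 w_1\|_{L^p(Q_r^d(z_0))} + \|\rho\grad^3 w_1\|_{L^p(Q_r^d(z_0))}\bigr) \lesssim \|f\|_{Y(p)},
\]
which directly matches the target $X(p)$-quantities in the boundary regime $\sqrt{\rho(z_0)} \lesssim r$, where $r + \sqrt{\rho(z_0)} \sim r$. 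In the interior regime $\sqrt{\rho(z_0)} \gg r$, the sharper factor $r(r + \sqrt{\rho(z_0)}) \sim r\sqrt{\rho(z_0)}$ in front of $\grad^2 w_1$ is supplied by the second part of Lemma \ref{L11a}. For $w_2$, cover $Q_r^d(z_0)$ by cylinders $\widetilde Q_{r'}^d(z')$ with $r' \sim r$ satisfying $Q_{r'}^d(z') \subset \widehat Q_r^d(z_0)$ (where $f_2$ vanishes); Lemma \ref{L15} then gives the uniform pointwise bounds
\[
|\grad w_2| + r^2|\partial_t\grad w_2| + r(r+\sqrt{\rho(z)})|\grad^2 w_2| + r^2\rho(z)|\grad^3 w_2| \lesssim \|f\|_{Y(p)}
\]
throughout $Q_r^d(z_0)$. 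Integrating the $p$-th powers and using $r + \sqrt{\rho(z)} \gtrsim r + \sqrt{\rho(z_0)}$ on $B_r^d(z_0)$ (Lemma \ref{L8c}) converts these into the required Carleson estimates for $w_2$.

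The Lipschitz bound $\|\grad w\|_{L^\infty} \lesssim \|f\|_{Y(p)}$ is obtained pointwise: at any $(t^\ast, z^\ast)$ with $t^\ast \leq 2$, set $r = \sqrt{t^\ast}$, decompose as above with $z_0 = z^\ast$, apply the first part of Lemma \ref{L11a} to $w_1$, and use $(t^\ast, z^\ast) \in \widetilde Q_r^d(z^\ast)$ together with Lemma \ref{L15} for $w_2$. For the large-time part $\sup_{T\geq 1}$ of $\|w\|_{X(p)}$ (and the analogous pointwise Lipschitz bound for $t^\ast > 2$), split $f = f^{\text{new}} + f^{\text{old}}$ with $f^{\text{new}}$ supported in $(T-1, T+1)\times B_1(0)$. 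The maximal regularity of Lemma \ref{L12} applied to the time-shifted Cauchy problem bounds the contribution of $f^{\text{new}}$ by $\|\grad f\|_{L^p((T-1,T+1)\times B_1(0))} \lesssim \|f\|_{Y(p)}$. For $f^{\text{old}}$, Duhamel's formula (Lemma \ref{R4}) combined with the exponential kernel decay $|\partial_t^k\partial_z^\beta G| \lesssim e^{-\lambda_1(t-t')}$ for $t-t' \geq 1$ (item 5 of Proposition \ref{P2b}) and a geometric summation over earlier unit time layers handle the remaining contribution.

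The main obstacle is the covering argument in the $w_2$ estimate: one needs $r' \sim r$ and centers $z'$ so that $\{\widetilde Q_{r'}^d(z')\}$ collectively covers $Q_r^d(z_0)$ while each $Q_{r'}^d(z')$ remains inside $\widehat Q_r^d(z_0)$. This is sensitive to the precise constants in the definitions of $\widehat Q_r^d$ and $\widetilde Q_r^d$; if necessary one enlarges $\widehat Q_r^d$ by a harmless constant factor in the initial decomposition to guarantee the inclusion. A secondary subtlety is the boundary-versus-interior regime split in the $\grad^2$ bound for $w_1$, which is why Lemma \ref{L13} alone is insufficient in the interior regime and must be supplemented by the sharper Lemma \ref{L11a}.
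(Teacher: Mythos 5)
Your proof follows the paper's strategy almost exactly: for small times, decompose $f$ with a cutoff adapted to $\widehat Q_r^d(z_0)$, control the ``diagonal'' piece $w_1$ by Lemma~\ref{L13} (supplemented in the interior regime $\sqrt{\rho(z_0)}\gg r$ by the sharper $\grad^2$ bound from Lemma~\ref{L11a}) and the ``off-diagonal'' piece $w_2$ by the pointwise estimates of Lemma~\ref{L15}; for large times, use Lemma~\ref{L12} for the recent part of $f$ and the exponential decay of the heat kernel (Proposition~\ref{P2b}, item 5) for the rest. Your treatment is correct; the only substantive point you raise beyond what the paper states---the covering of $Q_r^d(z_0)$ by smaller $\widetilde Q_{r'}^d$ cylinders whose associated $Q_{r'}^d$ regions lie where $f_2$ vanishes, which forces one to interpose an additional enlarged cylinder between $Q$ and the support of the cutoff---is a genuine but routine refinement of the constants, which the paper elides with ``the Carleson measure estimates\dots are derived similarly.''
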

 
\begin{proof}
We start with the estimates for small times, i.e., we assume that $r\le \sqrt{2}$. There is no loss of generality to assume that $f(t,\tacka) = 0$  for $t\ge r^2$. 
We fix $(t,z_0)\in (0,r^2)\times \overline{B_1(0)}$ let $\eta $ denote a cut-off function such that $\eta=1$ in $Q_r^d(z_0)$ and $\eta=0$ outside $\widehat Q_{r}^d(z_0)$. We decompose $f=f_1+f_2$ with $f_1=\eta f$ and $f_2=(1-\eta)f$, so that $\spt f_1\subset \widehat  Q_r^d(z_0)$ and $\spt f_2\subset[0,r^2]\times B\setminus Q_r^d(z_0)$. Let $w_1$ and $w_2$ denote the solutions to the inhomogeneities $f_1$ and $f_2$, respectively. By the linearity of the equation, it holds $w=w_1+w_2$.
We deduce the control of $w_1$ from Lemmas \ref{L13} and \ref{L11a} and of $w_2 $ from Lemma \ref{L15}. For instance, for all $t\in\left[0,r^2\right]$,
\[
|\grad w(t,z_0)|\le |\grad w_1(t,z_0)| + |\grad w_2(t,z_0)|\lesssim \|f_1\|_{Y(p)} + \|f_2\|_{Y(p)}.
\]
The latter is controlled by $\|f\|_{Y(p)}$ because
\[
|\grad \eta|\lesssim r^{-1}\left(r+\sqrt{\rho(z_0)}\right)^{-1}.
\]
We thus have $|\grad w(t,z_0)|\lesssim \|f\|_{Y(p)}$ for all $(t,z_0)\in \left[0,r^2\right]\times \overline{B_1(0)}$. The right-hand side being independent of $r$ and $z_0$, this proves the desired Lipschitz bound for $w$ for small times $t\in[0,2]$. The Carleson measure estimates in this time interval are derived similarly.

It remains to derive the estimates for large times, $T\ge 1$. By superposition, in view of the above estimates and Proposition \ref{P2a} we can thus assume that $f\equiv0$ in $[0,1)\times \overline{B_1(0)}$. We let $\chi=\chi_{(T,T+1)}$ and decompose $f = f_1+f_2$ with $f_1 = \chi f$ and $f_2 = (1-\chi)f$. Let $w_1$ and $w_2$ denote the solutions to the inhomogeneities $f_1$ and $f_2$, respectively.

From Lemma \ref{L12}, we deduce that
\begin{equation}\label{40}
\|\partial_t \grad w_1\|_{L^p(Q(T))} + \|\grad^2 w_1\|_{L^p(Q(T))}  + \|\rho\grad^3 w_1\|_{L^p(Q(T))}
\lesssim 
\|\grad f_1\|_{L^p(Q(T))}.
\end{equation}
To gain control on the Lipschitz norm of $w_1$, we invoke the  Sobolev embedding theorem, which, if $p>N$ reads
\[
\|\zeta\|_{L^{\infty}(Q(T))} \lesssim  \|\zeta\|_{L^p(Q(T))} +  \|\partial_t \zeta\|_{L^p(Q(T))} + \|\grad \zeta\|_{L^p(Q(T))}.
\]
Choosing $\zeta = \grad w_1$ and using a Poincar\'e estimate in time with $\grad w_1(T)=0$, \eqref{40} yields
\[
\|\grad w_1\|_{L^{\infty}(Q(T))}\lesssim \|\grad f_1\|_{L^p(Q(T))}\le  \|f_1\|_{Y(p)}.
\]
In order to derive the analogous estimates for $w_2$, we use the exponential decay of the Gaussian in Proposition \ref{P2b} to estimate
\[
|\partial_t^k\partial_z^{\beta} w_2(t,z)|\lesssim \int_1^{T} \int e^{-\lambda_1(t-t')} |f_2(t',z')|\, d\mu_{\sigma}(z')dt',
\]
for all $(t,z)\in \left(T+\frac12,T\right)\times B_1(0)$. We employ H\"older's inequality and use that $\mu_{q\sigma}$ is a finite measure if its dual $p>\frac1{\sigma+1}$. Then
\[
|\partial_t^k\partial_z^{\beta} w_2(t,z)|\lesssim e^{-\lambda_1 t}\sum_{n=1}^{[T]} \left(\int_{n}^{n+1} e^{\lambda q t'}\, dt'\right)^{1/q}\|f_2\|_{L^p(Q(k))}.
\]
The sum is estimated by $e^{\lambda_1 t}$, and we obtain thus control in the  Lipschitz norm and the $L^p$ norm in the cylinder $\left(T+\frac12,T\right)\times B_1(0)$. Arguing as before, we obtain control of $w=w_1+w_2$ in that cylinder because $\|f_i\|_{Y(p)}\le \|f\|_{Y(p)}$, and the bound is independent of the decomposition. The desired statement easily follows. 
\end{proof}

%

\newpage

\section{The nonlinear problem}\label{S:6}

In this section, we turn to the proof of Theorem \ref{T1}, that is, we will show well-posedness of the 
 nonlinear perturbation equation \eqref{13} in a neighborhood of the constant $w_*\equiv 0$ and establish analytic dependence of solutions on the initial data in that neighborhood. For convenience, we rewrite \eqref{13} as:
\begin{equation}\label{42a}
\partial_t w -\rho^{-\sigma}\div\left(\rho^{\sigma+1} \grad w\right) = \beta \rho F(w,\grad w)-\rho^{-\sigma }\div\left(\rho^{\sigma+1} z F(w,\grad w)\right)   ,
\end{equation}
where $\beta= N+2\sigma +1$ and
\[
F(q,p)  =  \frac{|p|^2}{1 + q + z\cdot p}
\]
for $(q,p)\in \R\times \R^N$. For abbreviation, we will sometimes denote the right-hand side of \eqref{42a} by $f(w)$. Well-posedness for this equation will be derived by combining the linear theory of the previous section with a fixed point argument, that relies on the observation  that the nonlinearity is essentially quadratic in a neighborhood of the constant $w_*\equiv0$. However, a direct application of the linear theory presents us with a technical problem: The (semi-)norm on the solution space $X(p)$ in Section \ref{S:5} contains only the homogeneous part of the Lipschitz norm, whereas the nonlinearity $F$ above demands control of the full $C^{0,1}$ norm to prevent the denominator from degenerating. We will face a second problem later in Section \ref{S7}: Theorem \ref{T1} shows that the nonlinear problem \eqref{42a} generates a $C^{0,1}$ semi-flow  {\em locally} in a neighborhood of the constant $w_*\equiv0$. However, in order to fit into the dynamical systems framework needed to establish  invariant manifold theorems, it is necessary to extend the local flow into a global one.

We will tackle both problems simultaneously by considering a truncated version of \eqref{42a} first. 
 More precisely, we introduce a smooth function that cuts the nonlinear terms off at points where the solution or its gradient is too large. The resulting equation is linear at such points. If, however, a solution is globally sufficiently small in $C^{0,1}$, then the cut-off is inactive and the truncated equation coincides with the original one. The latter is guaranteed by a smallness condition on the initial datum thanks to the a priori estimates from Theorem \ref{T6} and a (weak) comparison principle, cf.\ Lemmas \ref{L15bis} and \ref{L15a} below.
 
With regard to the invariant manifold theorem that will be derived in Section \ref{S7}, working with the truncated equation instead of \eqref{42a} has one more advantage: The truncated equation generates not only a semi-flow in $C^{0,1}$, but also in $L^2_{\sigma}$. We prefer to exploit the latter as it closely connects to the spectral analysis conducted in \cite{Seis14}. Moreover, many of the concepts used in the derivation of the theorem are easier explained and formulated in the Hilbert space setting. We will eventually transfer the results from $L^2_{\sigma}$ to $C^{0,1}$ with the help of smoothing estimates. 
 
This section is organized as follows: In Subsection \ref{S:6.1}, we introduce and study the truncated problem. We prove well-posedness in $L^2_{\sigma}$ and establish a priori estimates in smoother spaces. Subsection \ref{S:6.2} contains the proof of well-posedness of the full nonlinear perturbation equation, Theorem \ref{T1}.

\subsection{The truncated problem}\label{S:6.1}

We let $\hat \eta: [0,\infty)\to[0,1] $ be a smooth cut-off function that is supported on $[0,2)$ and constantly one on $[0,1]$. Let $0<\eps,\,\delta <1$ be fixed such that $\sqrt2\left(\eps+\delta \right)<1$. We then set for $(q,p)\in \R\times \R^N$
\[
\eta_{\eps,\delta}(q,p)  =  \hat \eta\left( \frac{q^2}{\delta^2}\right)\hat \eta\left(\frac{|p|^2}{\eps^2}\right),
\]
and with that $F_{\eps,\delta} = \eta_{\eps,\delta} F$. The {\em truncated equation} is
\begin{equation}
\label{43}
\partial_t w -\rho^{-\sigma}\div\left(\rho^{\sigma+1} \grad w\right) =  \beta\rho F_{\eps,\delta}(w,\grad w)-\rho^{-\sigma }\div\left(\rho^{\sigma+1} z F_{\eps,\delta}(w,\grad w)\right)   .
\end{equation}
For abbreviation, we denote the right-hand side by $f_{\eps,\delta}(w)$. It is clear that this equation is equivalent to \eqref{42a} as long as $ |w|\le\delta$ and $ |\grad w| \le  \eps$.

Notice that in \eqref{43}, the cut-off $\eta_{\eps,\delta}$ acts on both $w$ and $\grad w$, so that the nonlinearity $F_{\eps,\delta}(w,\grad w)$ is globally defined. Conveniently, the cut-off not only allows for a global $C^{0,1}$ semi-flow as a consequence of the estimates derived in Theorem \ref{T6}. It also has a regularizing effect as now a Hilbert space theory becomes applicable. Via a fixed point argument, it yields the existence of a global $L^2_{\sigma}$ semi-flow. It seems hence natural to first construct invariant manifolds for the flow for the truncated equation in the Hilbert space setting and then carry the results over to the original equation for the $C^{0,1}$ semi-flow. This will be done in Section \ref{S7} below.

We finally remark that for the derivation of Theorem \ref{T1}, the study of the truncated equation is redundant and Theorem \ref{T1} can be established directly using Carleson measure estimates and a fixed point argument exploiting the quadratic behavior of $F(q,p)$ in $p$ near the origin. For this, however, in addition to the Carleson measures in Theorem \ref{T6}, which are formulated for the gradient of $w$, we would have to establish the analogous bounds for $w$ itself to gain control over the full $C^{0,1}$ norm in terms of the initial datum. In this regard, the situation considered in this paper is slightly more complex than the one studied in 
 \cite{Kienzler14}.

We now state and prove

\begin{theorem}\label{T4}
There exists $\eps_0>0$ such that for every $0<\eps\le \eps_0$ and $0<\delta<1$ with $\sqrt2\left(\eps+\delta\right)<1$ the following holds:
For every $g\in L^2_{\sigma}$, there exists a unique solution $w$ to the truncated equation \eqref{43} with initial datum $g$. Moreover, $w\in L^{\infty}((0,\infty);L^2_{\sigma})\cap L^2((0,\infty);\dot H^1_{\sigma+1})$.

\end{theorem}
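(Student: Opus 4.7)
The plan is to apply Banach's fixed-point theorem in the Hilbert space
$Y:=L^\infty((0,\infty);L^2_\sigma)\cap L^2((0,\infty);\dot H^1_{\sigma+1})$
with norm $\|w\|_Y:=\|w\|_{L^\infty(L^2_\sigma)}+\|\grad w\|_{L^2(L^2_{\sigma+1})}$. Given $\bar w\in Y$, I would define $T(\bar w):=w$ as the weak solution of the \emph{linear} problem
$\partial_t w+\L w=\beta\rho F_{\eps,\delta}(\bar w,\grad\bar w)-\rho^{-\sigma}\div(\rho^{\sigma+1}z F_{\eps,\delta}(\bar w,\grad\bar w))$
with $w(0)=g$, whose existence is furnished by Lemma \ref{L6} once the divergence part of the source is absorbed into the weak formulation in the spirit of Definition \ref{D1}. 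A fixed point of $T$ is then the desired weak solution of \eqref{43}. The structural input driving the argument is that on $\spt\eta_{\eps,\delta}$ the denominator $1+q+z\cdot p$ is uniformly bounded below by $1-\sqrt{2}(\eps+\delta)>0$, so that $|F_{\eps,\delta}(q,p)|\le C|p|^2\le C\eps|p|$ and $|\partial_{(q,p)}F_{\eps,\delta}|\le C\eps$ (with constants depending on $\delta$), which in particular yields $\|F_{\eps,\delta}(\bar w,\grad\bar w)\|_{L^2_{\sigma+1}}\le C\eps\|\grad\bar w\|_{L^2_{\sigma+1}}$ and an analogous Lipschitz estimate for differences.

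To extract a priori bounds in $Y$, I would split $w=c(t)+w^\perp(t)$ orthogonally in $L^2_\sigma$, with $c(t):=\mu_\sigma(B_1(0))^{-1}\int w\,d\mu_\sigma$. Integrating the equation against $d\mu_\sigma$ (and noting that the $\L w$-contribution and the divergence part of the source both drop out by integration by parts) gives $\dot c=\beta\mu_\sigma(B_1(0))^{-1}\int F_{\eps,\delta}\,d\mu_{\sigma+1}$, and the pointwise bound $|F_{\eps,\delta}|\le C|\grad\bar w|^2$ then converts this into $|\dot c(t)|\le C\|\grad\bar w(t)\|^2_{L^2_{\sigma+1}}$, so that
\[
|c(t)-c(0)|\le C\|\grad\bar w\|^2_{L^2((0,\infty);L^2_{\sigma+1})}
\]
\emph{uniformly} in $t\in(0,\infty)$. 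For $w^\perp$, testing the equation against $w^\perp$, together with the Hardy--Poincar\'e inequality $\|w^\perp\|^2_{L^2_\sigma}\le C_P\|\grad w^\perp\|^2_{L^2_{\sigma+1}}$ from \cite[Lemma 3]{Seis14} and Young's inequality, produces
\[
\frac{d}{dt}\|w^\perp\|^2_{L^2_\sigma}+\alpha\|w^\perp\|^2_{L^2_\sigma}\le C\eps^2\|\grad\bar w\|^2_{L^2_{\sigma+1}}
\]
for some $\alpha>0$, and Gronwall together with integration in time yields $\|w^\perp\|_{L^\infty(L^2_\sigma)}+\|\grad w^\perp\|_{L^2(L^2_{\sigma+1})}\le C\|g^\perp\|_{L^2_\sigma}+C\eps\|\grad\bar w\|_{L^2(L^2_{\sigma+1})}$. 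Collecting, $\|T(\bar w)\|_Y\le C\|g\|_{L^2_\sigma}+C\eps\|\bar w\|_Y+C\|\bar w\|_Y^2$, which for $\eps_0$ small stabilizes a suitable closed ball of $Y$.

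Contraction follows from the same machinery applied to the difference $w_1-w_2=T(\bar w_1)-T(\bar w_2)$, which satisfies the linear equation with source $f_{\eps,\delta}(\bar w_1)-f_{\eps,\delta}(\bar w_2)$; the $O(\eps)$ Lipschitz estimate on $F_{\eps,\delta}$ makes the corresponding energy estimate give $\|T(\bar w_1)-T(\bar w_2)\|_Y\le C\eps\|\bar w_1-\bar w_2\|_Y$, so Banach's theorem supplies the unique $w\in Y$, and uniqueness in all of $Y$ follows by the same energy identity applied to the difference of two solutions. The main obstacle is the global-in-time $L^\infty(L^2_\sigma)$ control on the mean $c(t)$: from the crude $L^\infty$ bound $|F_{\eps,\delta}|\le C\eps^2$ alone one would only recover linear-in-$t$ growth, and the resolution --- which explains why the truncation $\eta_{\eps,\delta}$ must act on $\grad w$ and not only on $w$ --- is the pointwise quadratic estimate $|F_{\eps,\delta}|\lesssim|\grad\bar w|^2$, which upgrades the $L^2$-in-time integrability of $\grad\bar w$ to $L^1$-in-time integrability of $\dot c$.
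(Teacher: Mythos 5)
The overall architecture — Banach fixed point in $L^\infty(L^2_\sigma)\cap L^2(\dot H^1_{\sigma+1})$ against the linearized problem, using the $O(\eps)$ structure of $F_{\eps,\delta}$ on the support of the cut-off — is indeed the right framework, and your decomposition into mean $c(t)$ and Hardy--Poincar\'e-controlled remainder $w^\perp$ is a natural way to organize the energy estimates. But there is a genuine gap: the theorem asserts existence for \emph{every} $g\in L^2_\sigma$, while your fixed-point map cannot be closed on the global time interval $(0,\infty)$ for large data. Concretely, your self-map estimate $\|T(\bar w)\|_Y\le C\|g\|_{L^2_\sigma}+C\eps\|\bar w\|_Y+C\|\bar w\|_Y^2$ carries a quadratic term with no $\eps$-gain (it comes from $|c(t)-c(0)|\lesssim\|\grad\bar w\|^2_{L^2(L^2_{\sigma+1})}$), so stabilizing a ball of radius $R$ forces $C\|g\|+CR^2\lesssim R$, i.e.\ smallness of $g$. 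The contraction estimate has a parallel defect on the mean: after integration by parts the difference $\dot c_1-\dot c_2$ is controlled by $\eps$ times $\|\bar w_1-\bar w_2\|_{L^2_{\sigma+1}}$, whose $L^2$-in-time integral involves $\int_0^\infty|\bar c_1(t)-\bar c_2(t)|^2\,dt$, a quantity your norm on $Y$ (which only has $L^\infty_t L^2_\sigma$ on the mean) does not control.

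The paper sidesteps both issues by splitting the argument. It first runs the fixed point on a \emph{finite} interval $(0,T)$, where the Gronwall constant $C_T$ is time-dependent and the embedding $L^\infty_t\hookrightarrow L^2_t$ removes the obstruction in the contraction estimate; this gives local existence for arbitrary $g$ once $\eps$ is small relative to $C_T$ with $T$ fixed. The global extension then comes from a separate a priori estimate: testing \eqref{43} with $\rho^\sigma(w-c)$ for a \emph{fixed} constant $c\ge\sqrt2\,\delta$. The crucial structural observations are that $F_{\eps,\delta}\ge 0$ (since $1+q+z\cdot p\ge 1-\sqrt2(\eps+\delta)>0$ on $\spt\eta_{\eps,\delta}$) and that $w-c\le\sqrt2\,\delta-c\le 0$ wherever $F_{\eps,\delta}(w,\grad w)\ne 0$; hence the term $\beta\int(w-c)F_{\eps,\delta}\,d\mu_{\sigma+1}$ has a favourable sign, the remaining term $\int(z\cdot\grad w)F_{\eps,\delta}\,d\mu_{\sigma+1}$ is absorbed for $\eps$ small, and one obtains the monotone inequality
\[
\frac{d}{dt}\int(w-c)^2\,d\mu_\sigma+\frac1C\int|\grad w|^2\,d\mu_{\sigma+1}\le 0,
\]
which is uniform in $T$ and in the size of $g$. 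This sign argument is the key missing ingredient in your proposal; without it, the global-in-time $L^\infty(L^2_\sigma)$ bound for the mean cannot be obtained for large initial data within the fixed-point scheme alone.
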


\begin{proof}
For every $\tilde F\in L^2(L^2_{\sigma+1})$, $g\in L^2_{\sigma}$, and $T>0$, the initial value problem for the linear equation
\begin{equation}\label{43c}
\partial_t \tilde w -  \rho^{-\sigma} \div\left(\rho^{\sigma+1}\grad \tilde w\right) = \beta \rho \tilde F - \rho^{-\sigma}\div\left(\rho^{\sigma+1} z\tilde F\right)
\end{equation}
has a unique solution in the time interval $(0,T)$. It satisfies the  a priori estimate
\begin{equation}
\label{43b}
\|\tilde w\|_{L^{\infty}(L^2_{\sigma})} + \| \grad\tilde  w\|_{L^2(L^2_{\sigma +1})} \le C_T\left(\| \tilde F\|_{L^2(L^2_{\sigma +1})}  + \|g\|_{L^2_{\sigma}}\right) ,
\end{equation}
where $C_T$ is a generic constant that depends on time $T$.  (Here and in the following discussion, the norm dependences  on the time interval $(0,T)$ are suppressed.)
Existence follows from standard Hilbert space methods and \eqref{43b} can be derived by testing the equation with $\rho^{\sigma }\tilde w$ --- we omit the details.

To prove well-posedness for the nonlinear equation \eqref{43} for any initial datum in $L^2_{\sigma}$, we invoke a   fixed point argument. Given a function $w\in L^{\infty}(L^2_{\sigma})\cap L^2(\dot H^1_{\sigma+1})$, we denote by $\tilde w = \tilde w(w,g)$ the solution to \eqref{43c} with $\tilde F=  F_{\eps,\delta}(w,\grad w)$  and with initial value $g$. Because $|F_{\eps,\delta}(q,p)|\lesssim \eps |p|$, we have $\| \tilde F\|_{L^2(L^2_{\sigma +1})}  \lesssim \eps \|\grad w\|_{L^2(L^2_{\sigma+1})}$, and thus by \eqref{43b}
\[
\|\tilde w\|_{L^{\infty}(L^2_{\sigma})} + \|\grad \tilde w\|_{L^2(L^2_{\sigma +1})} \le C_T\eps \|\grad w\|_{L^2(L^2_{\sigma+1})} + C_T\|g\|_{L^2_{\sigma}}
\]
for some universal (but time-dependent) constant $C_T>0$. Moreover, because $|F_{\eps,\delta}(q_1,p_1)- F_{\eps,\delta} (q_2,p_2)| \lesssim \eps\left(|q_1-q_2| + |p_1-p_2|\right)$ --- as the reader may straightforwardly compute --- applying \eqref{43b} again, we obtain for two solutions $\tilde w_1 = \tilde w(w_1,g)$ and $\tilde w_2 = \tilde w(w_2,g)$ that
\begin{eqnarray*}
\lefteqn{\|\tilde w_2-\tilde w_1\|_{L^{\infty}(L^2_{\sigma})} + \|\grad \tilde w_2-\grad\tilde w_1 \|_{L^2(L^2_{\sigma +1})} }\\
&\le& C_T \eps \left( \| w_2- w_1\|_{L^{\infty}(L^2_{\sigma})} + \|\grad  w_2-\grad w_1 \|_{L^2(L^2_{\sigma +1})}\right).
\end{eqnarray*}
Hence, for $\eps $ sufficiently small, the solution map $w\mapsto \tilde w(w,g)$  is a contraction in $L^{\infty}(L^2_{\sigma})\cap L^2(\dot H^1_{\sigma+1})$, and thus, for such $\eps$, there exists a unique local solution in $L^{\infty}(L^2_{\sigma})\cap L^2(\dot H^1_{\sigma+1})$ to  \eqref{43} with initial datum $g\in L^2_{\sigma}$.

To show the existence of a global solution, it suffices to derive a bound on solutions of \eqref{43} that is uniform in $T$. For this purpose, we test the equation with $w-c$ where $c\ge \sqrt2\delta$ is a fixed constant. Then
\begin{eqnarray*}
\lefteqn{
\frac{d}{dt}\frac12 \int (w-c)^2\, d\mu_{\sigma} + \int |\grad w|^2\, d\mu_{\sigma+1} }\\
&=&  \beta \int (w-c) F_{\eps,\delta}(w,\grad w) \, d\mu_{\sigma+1} + \int ( z\cdot \grad w )F_{\eps,\delta}(w,\grad w)\, d\mu_{\sigma +1}.
\end{eqnarray*}
Since $0\le F_{\eps,\delta}(w,\grad w)\lesssim \eps |\grad w|$, the second term on the right can be easily absorbed into the left-hand side provided that $\eps$ is chosen sufficiently small. Moreover, because  $|w|\le \sqrt2\delta$ in the support of $F_{\eps,\delta}(w,\grad w)$, the first term on the right-hand side is non-positive:
\[
\int (w-c) F_{\eps,\delta}(w,\grad w) \, d\mu_{\sigma+1} \le (\delta - c) \int F_{\eps,\delta}(w,\grad w) \, d\mu_{\sigma+1}\le 0.
\]
Thus
\[
\frac{d}{dt} \int (w-c)^2\, d\mu_{\sigma} + \frac1C \int |\grad w|^2\, d\mu_{\sigma+1}\le 0,
\]
for some universal constant $C>0$. Integration yields the desired uniform bound. This concludes the proof of Theorem \ref{T4}.

\end{proof}

Our first higher regularity estimate is for Lipschitz initial data.

\begin{lemma}\label{L15bis}
There exists $\eps_0>0$ such that for every $0<\eps\le \eps_0$ and $0<\delta<1$ with $\sqrt2\left(\eps+\delta\right)<1$ the following holds:
If $w$ is the solution to \eqref{43} with initial datum $g\in C^{0,1}$, then $w$ is smooth and, for any $p\in[1,\infty)$ it holds
\[
\|w\|_{X(p)} + \|w\|_{\Lip}\lesssim \|g\|_{\Lip}.
\]
\end{lemma}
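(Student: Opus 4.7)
The plan is to view the truncated equation \eqref{43} as the linear problem
\[
\partial_t w - \rho^{-\sigma}\div(\rho^{\sigma+1}\grad w) = f_{\eps,\delta}(w)
\]
and run a Banach fixed point argument in the space $X(p)\cap C^{0,1}$, using Theorem \ref{T6} as the linear building block. The crucial expansion is
\[
f_{\eps,\delta}(w) = \bigl[(2\sigma+1)\rho + (\sigma+1)|z|^2\bigr]\,F_{\eps,\delta}(w,\grad w) - \rho\, z\cdot \grad F_{\eps,\delta}(w,\grad w),
\]
obtained from $-\rho^{-\sigma}\div(\rho^{\sigma+1}zF) = [(\sigma+1)|z|^2 - N\rho]F - \rho z\cdot \grad F$ and the fact that $\grad\rho = -z$. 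The coefficients being smooth and bounded on $\overline{B_1(0)}$, only $F_{\eps,\delta}$ and $\rho\,\grad F_{\eps,\delta}$ need to be estimated in $Y(p)$.

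The key estimate is that for $w\in X(p)\cap C^{0,1}$ with $\|w\|_{\Lip}\le \eps$ and $\|w\|_{L^\infty}\le \delta$ one has
\[
\|f_{\eps,\delta}(w)\|_{Y(p)} \lesssim \eps\bigl(\|w\|_{X(p)} + \|w\|_{\Lip}\bigr).
\]
This follows from the quadratic nature of $F$: since the truncation enforces $|\grad w|\lesssim \eps$ in the support of $F_{\eps,\delta}$ and the denominator $1+w+z\cdot\grad w$ stays bounded below, we have $|F_{\eps,\delta}(w,\grad w)|\lesssim \eps|\grad w|$ and $|\grad F_{\eps,\delta}(w,\grad w)|\lesssim \eps|\grad^2 w| + \eps^2|\grad w|$. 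The weight $\frac{r}{r+\sqrt{\rho(z)}}$ in $\|\cdot\|_{Y(p)}$ absorbs one missing power of $r+\sqrt{\rho}$: the term $r^2|Q_r^d(z)|^{-1/p}\|\rho\grad^2 w\|_{L^p}$ is dominated by $r\sqrt{\rho(z)}\cdot r|Q_r^d(z)|^{-1/p}\|\grad^2 w\|_{L^p}$, which matches the $X(p)$ weight $r(r+\sqrt{\rho(z)})$. Terms involving $\|\grad w\|_{L^p}$ are controlled via the $L^\infty$-bound by $\|w\|_{\Lip}|Q_r^d(z)|^{1/p}$.

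Combining this with Theorem \ref{T6} yields
\[
\|\tilde w\|_{X(p)} + \|\tilde w\|_{\Lip} \le C\eps\bigl(\|w\|_{X(p)}+\|w\|_{\Lip}\bigr) + C\|g\|_{\Lip}
\]
for the map $T:w\mapsto\tilde w$ that solves the linear problem with forcing $f_{\eps,\delta}(w)$ and initial datum $g$. For $\eps$ small enough, $T$ leaves the ball $\{\|w\|_{X(p)}+\|w\|_{\Lip}\le 2C\|g\|_{\Lip}\}$ invariant. The Lipschitz estimate $|F_{\eps,\delta}(q_1,p_1)-F_{\eps,\delta}(q_2,p_2)|\lesssim \eps(|q_1-q_2|+|p_1-p_2|)$ already used in the proof of Theorem \ref{T4}, together with the analogous bound for the gradient, yields that $T$ is a strict contraction on this ball, providing a unique fixed point with the desired Carleson estimate. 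By the uniqueness in $L^2_\sigma$ from Theorem \ref{T4}, this fixed point coincides with the solution constructed there.

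Smoothness is obtained by bootstrapping: once $w\in X(p)\cap C^{0,1}$ is known, $f_{\eps,\delta}(w)$ lies in a space at least as regular as $X(p)$ (the truncation makes $\eta_{\eps,\delta}$ everywhere smooth in its arguments), and one iterates Theorem \ref{T6} on successively differentiated equations, or equivalently reruns the fixed point argument in higher regularity norms scaling like those in Theorem \ref{T6}. The main obstacle will be the bookkeeping in the second step, namely matching the $\rho$-weights of the nonlinear terms $\rho F$ and $\rho z\cdot\grad F$ against the anisotropic Carleson weights $r/(r+\sqrt\rho)$ and $r(r+\sqrt\rho)$ of $Y(p)$ and $X(p)$; this is where the precise definitions of these norms from Section \ref{S:5} pay off and ensure a clean factor of $\eps$.
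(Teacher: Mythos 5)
Your proposal takes a genuinely different route from the paper: you construct the solution by a fixed-point argument in $X(p)\cap C^{0,1}$, using Theorem \ref{T6} as the linear building block, and then identify the fixed point with the $L^2_\sigma$-solution by uniqueness. The paper instead treats Lemma \ref{L15bis} as an a priori estimate: the solution $w$ is already furnished by Theorem \ref{T4}, and one applies Theorem \ref{T6} directly to it with forcing $f_{\eps,\delta}(w)$, then absorbs the nonlinear contribution using the smallness bound $\|f_{\eps,\delta}(w)\|_{Y(p)}\lesssim\eps(\|w\|_{X(p)}+\|w\|_{\Lip})$. Both routes are legitimate; your version is actually more transparent about why the $X(p)$-norm is finite in the first place, at the expense of having to check the contraction in the full $X(p)\cap C^{0,1}$ norm.

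There is, however, a genuine gap in the nonlinear estimate. The $Y(p)$-norm also requires controlling $r^2|Q_r^d(z)|^{-1/p}\|\grad f_{\eps,\delta}\|_{L^p(Q_r^d(z))}$, and once you take the spatial gradient of $f_{\eps,\delta}$, a term $\rho|\grad^2 w|^2$ appears (from $\rho\,\grad^2F_{\eps,\delta}$, which contains $D^2w\otimes D^2w$ contributions). This term is quadratic in $\grad^2 w$ and cannot be bounded pointwise by $\eps\rho|\grad^2 w|$ or any other quantity that is linear in the $X(p)$-semi-norm pieces; a naive pointwise estimate only gives $|\rho\grad^2w|^2\lesssim\|\grad^2w\|_{L^\infty}\cdot\rho|\grad^2w|$, and $\|\grad^2w\|_{L^\infty}$ is not controlled by $X(p)$. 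The paper resolves this with the interpolation inequality $\|\grad\zeta\|_{L^{2p}}^2\lesssim\|\zeta\|_{L^\infty}\|\grad^2\zeta\|_{L^p}$ applied with $\zeta=\grad w$, which extracts the crucial factor $\|\grad w\|_{L^\infty(\spt\eta_{\eps,\delta})}\lesssim\eps$ from the quadratic term. You flag the ``bookkeeping'' as the main obstacle but do not carry out this step; without it, the clean factor of $\eps$ in front of $\|w\|_{X(p)}$ cannot be extracted, and the contraction argument stalls. A smaller issue: in the paragraph on weight matching, you write the factor as $r^2|Q_r^d(z)|^{-1/p}\|\rho\grad^2w\|_{L^p}$, whereas the $Y(p)$-weight in front of $\|f\|_{L^p}$ is $r/(r+\sqrt{\rho(z)})$, not $r^2$; your conclusion is still correct once this is fixed, since $\tfrac{r}{r+\sqrt{\rho(z)}}\,\rho\lesssim r(r+\sqrt{\rho(z)})$ on $Q_r^d(z)$ by Lemma \ref{L8c}. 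Finally, for smoothness the paper invokes the regularity theory for degenerate parabolic equations with rough coefficients from \cite{Koch99} rather than re-running the fixed point in higher-order norms; your proposed bootstrap is plausible but left as an outline.
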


\begin{proof}
It is enough to prove the estimate for $p>\max\left\{N+2,\frac1{\sigma+1}\right\}$. The extension to the full range $p\in[1,\infty)$ then follows by H\"older's inequality. If $\eps_0$, $\eps$, and $\delta$ are as in the assumption of Theorem \ref{T6}, then we have the estimate
\begin{equation}
\label{43f}
\|w\|_{X(p)} +\|w\|_{\Lip}\lesssim \|f_{\eps,\delta}(w)\|_{Y(p)} + \|g\|_{\Lip}.
\end{equation}
Therefore, the statement of the lemma follows provided that
\begin{equation}
\label{43g}
\|f_{\eps,\delta}(w)\|_{Y(p)}\lesssim \eps\left(  \|w\|_{X(p)} + \|w\|_{\Lip}\right),
\end{equation}
and upon choosing $\eps_0$ (and thus $\eps$) smaller if necessary.

We start noticing that $|f_{\eps,\delta}|\lesssim \rho| \grad F|+ \rho|F||\grad w| + \eps^{-1}\rho|F| |\grad^2 w|+ |F|$ and $|\grad f_{\eps,\delta}|\lesssim |\rho \grad^2 F| + \eps^{-1}\rho|\grad F| + |\grad F| + \eps^{-2}\rho|F| + \eps^{-1}|F|$ in the support of the cut-off function. (The computations leading to these and the following estimates are straightforward but tedious. We omit the details.) Hence, by chain rule and the definition of $F$,
\[
|f_{\eps,\delta} |\lesssim \eps \rho |\grad^2 w| +  \eps |\grad w|
\]
and 
\[
|\grad f_{\eps,\delta} |\lesssim \eps |\grad w| + \eps |\grad^2 w| + \rho|\grad^2 w|^2 + \eps \rho|\grad^3 w|
\]
in the support of $\eta_{\eps,\delta}$. In the estimates of $f_{\eps,\delta}$ and $\grad f_{\eps,\delta}$, the terms that deserve a special consideration are those which have second order derivatives, i.e., $\eps\rho|\grad^2 w|$ and $\eps|\grad^2 w|^2$. We restrict our attention to the estimates for the time interval $(0,1)$. First, an application of Lemma \ref{L8c} yields
\[
\frac{r}{r+\sqrt{\rho(z)}} \|\rho \grad^2 w\|_{L^p(Q_r^d(z))} \lesssim r\left(r+\sqrt{\rho(z)}\right)\|\grad^2 w\|_{L^p(Q_r^d(z))}.
\]
For the quadratic term, we use the interpolation inequality $\|\grad\zeta\|_{L^{2p}_p}^2\lesssim \|\zeta\|_{L^{\infty}}\|\grad^2 \zeta\|_{L^p_p}$, cf.\ \cite[Proposition 2.18]{Kienzler13}, and obtain
\[
\|\rho |\grad^2 w|^2\|_{L^p(Q_r^d(z)\cap \spt \eta_{\eps,\delta})} \lesssim \|\grad w\|_{L^{\infty}(\spt \eta_{\eps,\delta} )} \|\rho \grad^3w\|_{L^p(Q_r^d(z)\cap \spt \eta_{\eps,\delta})}.
\]
Then \eqref{43g} follows because $\|\grad w\|_{L^{\infty}}\lesssim\eps$ in the support of $\eta_{\eps,\delta}$.

Since $w$ is Lipschitz, \eqref{43} can be seen as the parabolic equation
\[
\partial_t w - \rho^{-\sigma} \div\left(\rho^{\sigma+1} (\I - A)\grad w\right) = \beta \rho F_{\eps,\delta}(w,\grad w),
\]
where $A$ given by 
\[
A = \eta_{\eps,\delta}(w,\grad w)\frac{z\otimes \grad w} {1+ w + z\cdot \grad w}
\]
is bounded and $\I- A$ is elliptic. Hence,  higher regularity can be obtained from regularity theory for parabolic equations with rough coefficients, see, for instance, \cite[Theorem 5.6.1]{Koch99} for a theory in the degenerate setting.
\end{proof}

We finally establish the following weak comparison principle for the nonlinear equation \eqref{43}.
\begin{lemma}\label{L15a}
Let $\eps_0,\, \eps$ and $\delta$ be as in Lemma \ref{L15bis}.
Let $w$ be the solution to  \eqref{43} with initial datum $g\in C^{0,1}$. Suppose that $a\le g(z)\le b$ for some $a,b\in \R$ and all $z\in B_1(0)$. Then
\[
a\le w(t,z)\le b\quad\mbox{for all }(t,z)\in(0,\infty)\times  B_1(0).
\]
In particular,
\[
\|w\|_{L^{\infty}}\le \|g\|_{L^{\infty}}.
\]
\end{lemma}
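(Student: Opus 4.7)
The strategy is a Stampacchia-type energy argument. By Lemma \ref{L15bis}, $w$ is smooth, so I would test the truncated equation \eqref{43} pointwise in time with $\rho^\sigma(w-b)_+$ for the upper bound and with $\rho^\sigma(a-w)_+$ for the lower bound. The required integrations by parts are legitimate: solutions are $C^1$ up to $\partial B_1(0)$ while the weight $\rho^{\sigma+1}$ vanishes there, so the asymptotic boundary contributions of Remark \ref{R1} drop out.

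For the upper bound, testing with $\rho^\sigma(w-b)_+$ and integrating by parts as in the proof of Theorem \ref{T4} produces the energy identity
\[
\frac{d}{dt}\frac12\int (w-b)_+^2\,d\mu_\sigma + \int|\grad(w-b)_+|^2\,d\mu_{\sigma+1} = \beta\int (w-b)_+ F_{\eps,\delta}\,d\mu_{\sigma+1} + \int z\!\cdot\!\grad(w-b)_+\,F_{\eps,\delta}\,d\mu_{\sigma+1}.
\]
On the support of $\eta_{\eps,\delta}$ one has $|w|\le\sqrt2\,\delta$, $|\grad w|\le\sqrt2\,\eps$, and the denominator satisfies $1+w+z\!\cdot\!\grad w\ge 1-\sqrt2(\eps+\delta)>0$. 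If $b\ge\sqrt2\,\delta$, then on the set $\{w>b\}$ the factor $\hat\eta(w^2/\delta^2)$ vanishes identically, so the whole right-hand side is zero; together with $(g-b)_+\equiv0$ this forces $(w-b)_+\equiv0$. In the remaining case $b<\sqrt2\,\delta$, the bounds $(w-b)_+\le\sqrt2\,\delta$ and $F_{\eps,\delta}\le|\grad(w-b)_+|^2/(1-\sqrt2(\eps+\delta))$ on $\spt\eta_{\eps,\delta}$ yield a right-hand side bounded by $C(\eps+\delta)\int|\grad(w-b)_+|^2\,d\mu_{\sigma+1}$; after possibly shrinking $\eps_0$, this is absorbed into the left-hand side and a Gronwall argument again gives $(w-b)_+\equiv0$.

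For the lower bound, testing with $\rho^\sigma(a-w)_+$ gives the analogous identity
\[
\frac{d}{dt}\frac12\int (a-w)_+^2\,d\mu_\sigma + \int|\grad(a-w)_+|^2\,d\mu_{\sigma+1} = -\beta\int (a-w)_+ F_{\eps,\delta}\,d\mu_{\sigma+1} - \int z\!\cdot\!\grad(a-w)_+\,F_{\eps,\delta}\,d\mu_{\sigma+1},
\]
which is actually easier: since $F_{\eps,\delta}\ge0$, the first term on the right is nonpositive and can be discarded, while the second is absorbed into the gradient term on the left exactly as before once $\eps$ is small.

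I expect the main obstacle to be the case $b<\sqrt2\,\delta$ of the upper bound, where the term $\beta\int(w-b)_+F_{\eps,\delta}\,d\mu_{\sigma+1}$ has the unfavorable sign and must be quantitatively absorbed; this may force a further restriction of $\eps_0$ (and hence of the admissible range of $\delta$) beyond what Lemma \ref{L15bis} requires.
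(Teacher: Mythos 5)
Your Stampacchia argument is a genuinely different route from the paper's. The paper gives a pointwise comparison-principle argument: using the smoothness from Lemma \ref{L15bis}, it looks at a first time $t_*$ where $w$ touches the barrier $a-\gamma$ (resp.\ $b+\gamma$) at some $z_*\in\overline{B_1(0)}$. At an interior $z_*$, $\grad w(t_*,z_*)=0$ forces $f_{\eps,\delta}(w)(t_*,z_*)=0$, while the parabolic operator has the wrong sign; at a boundary $z_*$, the operator reduces to $\partial_t w+(\sigma+1)z_*\!\cdot\!\grad w\le 0$ whereas $f_{\eps,\delta}(w)(t_*,z_*)=(\sigma+1)F_{\eps,\delta}\ge 0$. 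No integral estimate or absorption is involved, which is precisely why that argument tolerates the full range of $\delta$ admitted by Lemma \ref{L15bis} (any $\delta<1$ with $\sqrt2(\eps+\delta)<1$).

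The obstacle you flagged in your last paragraph is a genuine gap, not a cosmetic one, and it is worth being precise about why. In the case $b<\sqrt2\,\delta$ (which is exactly the case that arises in the paper's application, where $b=\|g\|_{L^\infty}\le\delta$), your bound on the bad term is
$\beta\int(w-b)_+F_{\eps,\delta}\,d\mu_{\sigma+1}\lesssim \tfrac{\beta(\sqrt2\delta-b)}{1-\sqrt2(\eps+\delta)}\int|\grad(w-b)_+|^2\,d\mu_{\sigma+1}$.
Absorbing this into the left-hand side therefore requires roughly $\beta\delta\lesssim 1$, i.e.\ $\delta\lesssim 1/(N+2\sigma+1)$, which is strictly stronger than what Lemma \ref{L15bis} (and hence the statement of Lemma \ref{L15a}) imposes, and blows up as $N$ or $\sigma$ grows. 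So as written your argument does not establish the lemma with the stated constants; it establishes a version with $\delta$ additionally restricted by $\beta$. (It also quietly assumes $b\ge 0$ when bounding $(w-b)_+$ by $\sqrt2\delta$; for $b<0$ one would need $\sqrt2\delta-b$.) Since Theorem \ref{T1} shrinks $\eps$ and $\delta$ anyway, the weaker version would still serve the paper's downstream use, but it is not a proof of Lemma \ref{L15a} as stated. The pointwise maximum principle is the cleaner tool here precisely because it never needs to pay the factor $\beta$: at the extremum the divergence term in $f_{\eps,\delta}$ is evaluated, not integrated, and it degenerates by sign considerations alone.
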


\begin{proof}By Lemma \ref{L15bis}, $w$ is smooth, so that we can give a pointwise argument.
We only prove the lower bound, the upper bound is established analogously. Let $\gamma>0$ be arbitrarily given. We will show that $a-\gamma <w(t,z)$ for all $t>0$ and $z\in B_1(0)$. Since $\gamma>0$ was arbitrary, the result follows. We argue by contradiction. Suppose that there exists $t_*>0$ and $z_*\in\overline{ B_1(0)}$ such that $a-\gamma =w(t_*,z_*)$ and $a-\gamma <w(t,z)$ for all $t<t_*$ and $z\in\overline{ B_1(0)}$. In the case where $z_*$ lies in the interior of the domain, $z_*\in B_1(0)$, then $\partial_t w(t_*,z_*) < 0$, $\grad w(t_*,z_*)=0$ and $\grad^2 w(t_*,z_*)$ is positive definite. It immediately follows that $\left(\partial_t w - \rho^{-\sigma }\div\left(\rho^{\sigma +1}\grad w\right)\right) (t_*,z_*)<0$ while $f_{\eps,\delta}(w,)(t_*,z_*)=0$ --- a contradiction. Otherwise, if $z_*\in \partial B_1(0)$, then $\partial_t w(t_*,z_*)<0$ and $z_* \cdot \grad w(t_*,z_*)\le 0$. We deduce that 
\[
\left(\partial_t w -\rho^{-\sigma}\div\left(\rho^{\sigma+1} \grad w\right) \right)(t_*,z_*)  = \partial_t w(t_*,z_*) + (\sigma+1) z_* \cdot \grad w(t_*,z_*) <0,
\]
which contradicts $f(w)(t_*,z_*) = (\sigma +1)F(w,\grad w)(t_*,z_*)\ge 0$.
\end{proof}

\subsection{The full problem. Proof of Theorem \ref{T1}}\label{S:6.2}

It is convenient to make the following definition: For $\eps,\, \delta>0$, we consider
\[
\B_{\eps,\delta} : = \left\{w\in C^{0,1}\!:\: \|w\|_{L^{\infty}}\le \delta,\, \|w\|_{\Lip}\le\eps\right\}.
\]

We are finally turn to the

\begin{proof}[Proof of Theorem \ref{T1}] 
Existence of a Lipschitz solution for sufficiently small initial data follows immediately from the theory for the truncated equation \eqref{43}: Theorem \ref{T4} provides us with a unique solution for the truncated equation, which is also a solution to the original equation thanks to Lemmas \ref{L15bis} and \ref{L15a}. It is clear that this solution is unique by the uniqueness for \eqref{43}.

We shall show that solutions depend analytically on the initial datum. For this purpose, we restrict our attention to finite time intervals $(0,T)$, but suppress the dependence on $T$ in the following. We first notice that both
\[
\B_{\eps,\delta} \ni w\mapsto F(w,\grad w) \in L^2(L^2_{\sigma+1}),
\]
and
\[
X(p)\cap \B_{\eps,\delta} \ni w\mapsto f(w,)\in Y(p)
\]
are analytic mappings.  Moreover, because of the elementary estimates $\|\zeta\|_{L^{\infty}} \lesssim \|\zeta\|_{L^2_{\sigma}} + \|\zeta\|_{\Lip}$ and $\|\zeta\|_{L^2_{\sigma}}\lesssim \|\zeta\|_{L^{\infty}}$, we deduce from Lemma \ref{L6} and Theorem \ref{T6} that
\[
\|w\|_{C^{0,1}}\lesssim \|F\|_{L^2(L^2_{\sigma+1})} + \|f\|_{Y(p)} + \|g\|_{C^{0,1}}.
\]
Therefore, if $\tilde w = \tilde w(w,g)$ denotes the solution to the linear perturbation equation with $f = f(w)$ for some $w\in X(p)\cap \B_{\eps,\delta}$ and initial datum $g\in \B_{\eps,\delta}$, then $\tilde w: X(p)\cap \B_{\eps,\delta} \times \B_{\eps,\delta} \to X(p)\cap C^{0,1}$ is analytic. Hence, also $\Xi: X(p)\cap \B_{\eps,\delta} \times \B_{\eps,\delta} \to X(p)\cap C^{0,1}$ with $\Xi(w,g) = w-\tilde w(w,g)$ is analytic. It moreover satisfies $\Xi(0,0) = 0$ and $D_w\Xi(0,0) = \I$, and thus, the analytic implicit function theorem shows (upon decreasing $\delta$ and $\eps$ if necessary) that the unique solution $w=w(g)$ of $\Xi(w,g)=0$ depends analytically on $g$. However, by the definition of $\Xi$, $w$ is the unique solution to the nonlinear perturbation equation \eqref{42a} with initial datum $g$.

The proof \eqref{7az} proceeds along the same lines as Kienzler's proof of the analogous statement in \cite[Theorem 4.1]{Kienzler14}. Only instead of distinguishing between ``horizontal'' and ``vertical'' directions, we should argue as in the proof of Lemma \ref{L8a}, that is, we rather distinguish between ``angular'' and ``radial'' directions.
\end{proof}

\newpage

\section{Invariant manifolds}\label{S7}

In this section, we turn to the proof of Theorem \ref{T5}.

The operator obtained from linearizing the truncated equation in the Hilbert space $L^2_{\sigma}$ around the constant function $w_*\equiv0$ differs from the linear operator studied in \cite{Seis14} only in the underlying Hilbert space (and a multiplicative factor, that we neglect in the following discussion): As in \cite{Seis14}, the action of the linear operator $\L$ on a smooth function $w$ is given by 
\begin{equation}
\label{43d}
\int \grad\varphi\cdot \grad w\, d\mu_{\sigma+1} = \int \varphi\L w\, d\mu_{\sigma}\quad\mbox{for all }\varphi\in C^{\infty}(\overline{B_1(0)}),
\end{equation}
i.e., $\L w = -\rho\laplace w +(\sigma+1)z\cdot \grad w$ for smooth $w$. The role of boundary conditions for this elliptic problem is discussed in Remarks \ref{R1} and \ref{R1a} in Section \ref{S:5} above.

In the present paper, we choose the Lebesgue space $L^2_{\sigma}$ as the underlying Hilbert space, while in \cite{Seis14}, the author diagonalized the operator in the Sobolev space $\dot H^1_{\sigma +1}/ \R$.  To distinguish these two operators, we will in the following denote by $\Ha$ the operator on $\dot H^1_{\sigma+1}/\R$. 

We shall be more specific with regard to the underlying functional analytical concepts. The operator $\left.\L\right|_{C^{\infty}(\overline{B_1(0)})}$ defined via \eqref{43d} on $C^{\infty}(\overline{B_1(0)})$ is a densely defined nonnegative symmetric operator both on $L^2_{\sigma}$ and on $\dot H^1_{\sigma+1}/\R$ (cf.\ \cite[Appendix]{Seis14}). Hence, $\left.\L\right|_{C^{\infty}(\overline{B_1(0)})}$ is closable in both Hilbert spaces and we denote its closure in $L^2_{\sigma}$ by $\L$, and its closure in $\dot H_{\sigma+1}^1/\R$ by $\Ha$. Since both operators are defined through the closure of a quadratic form, $\L$ and $\Ha$ are Friedrichs self-adjoint extensions (cf.\ \cite[Chapter X.3]{ReedSimon2}) of $\left.\L\right|_{C^{\infty}(\overline{B_1(0)})}$ in $L^2_{\sigma}$  and $\dot H_{\sigma+1}^1/\R$, respectively.

Comparing functions in $\dot H^1_{\sigma+1}/\R$ and $L^2_{\sigma}$ reveals some information on the spectrum. 
In either case, eigenfunction are smooth and lie in any Hilbert space $ H^k_{\sigma, \dots, \sigma+k}$. Hence, even though $\L$ is set in a larger space, we expect that $0$ is the only new eigenvalue in the discrete spectrum, i.e.,
\[
\Sigma(\L) = \Sigma(\Ha)\cup\{0\}.
\]
The emergence of the new zero eigenvalue stems from the fact that $L^2_{\sigma}$ contains the constant (eigen-)functions, which were modded out in the quotient space $\dot H^1_{\sigma+1}/\R$. This observation is the content of the proposition that follows. The computational results are taken from \cite{Seis14} and are appropriately rescaled.

\begin{prop}[The spectrum of the linearized operator]\label{P2}
The spectrum of $\L$ consists entirely of isolated eigenvalues with finite multiplicity. They are given by
\[
\lambda_{\ell k} = (\sigma+1)(\ell + 2k) + k(2k+2\ell+N-2),
\]
where $(\ell,k)\in \N_0\times\N_0$ if $N\ge 2$ and $(\ell,k)\in \{0,1\}\times\N_0$ if $N=1$. The corresponding eigenfunctions are polynomials of the form
\begin{eqnarray*}
\psi_{010}(z)&=&1,\\
\psi_{\ell nk}(z) &= &F\left(-k,\sigma+\ell +\frac{N}2+k; \ell+\frac{N}2; |z|^2\right)Y_{\ell n}\left(\frac{z}{|z|}\right)|z|^{\ell},
\end{eqnarray*}
where $n\in \{1,\dots,N_{\ell}\}$ with $N_{\ell}=1$ if $\ell=0$ or $\ell=N=1$ and $N_{\ell}=\frac{(N+\ell-3)!(N+2\ell-2)}{\ell!(N-2)!}$ else. Here, $F(a,b;c;z)$ is a hypergeometric function and in the case $N\ge 2$, $Y_{\ell n}$ is a spherical harmonic corresponding to the eigenvalue $\ell(\ell+N-2)$ of $\laplace_{\S^{N-1}}$ with multiplicity $N_{\ell}$. Otherwise, if $N=1$, it is $Y_{\ell 1}(\pm 1)=(\pm 1)^{\ell}$.
\end{prop}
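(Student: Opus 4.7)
The plan is to reduce the computation largely to the analogous statement for the operator $\Ha$ acting on $\dot H^1_{\sigma+1}/\R$ established in \cite{Seis14}, and then to account for the additional zero eigenvalue that appears once we move to the larger space $L^2_\sigma$ which contains the constants.

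First, I would show that $\L$ has compact resolvent on $L^2_\sigma$, which by the spectral theorem for compact self-adjoint operators guarantees that $\Sigma(\L)$ consists of isolated eigenvalues of finite multiplicity accumulating only at $+\infty$. The form domain of $\L$ is the weighted Sobolev space $H^1_{\sigma,\sigma+1}$, and a Hardy--Poincar\'e inequality of the form $\inf_{c \in \R}\|w - c\|_{L^2_\sigma}^2 \lesssim \|\grad w\|_{L^2_{\sigma+1}}^2$ (cf.\ \cite[Lemma 3]{Seis14}), together with a weighted Rellich--Kondrachov-type compactness result, yields that $H^1_{\sigma,\sigma+1} \hookrightarrow L^2_\sigma$ is compact; self-adjointness comes from the Friedrichs construction.

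Next, I would identify the eigenfunctions and eigenvalues via separation of variables. Writing $w(z) = R(|z|)\, Y_{\ell n}(z/|z|)$ and using that spherical harmonics diagonalize $\laplace_{\S^{N-1}}$ with eigenvalue $\ell(\ell+N-2)$, the eigenvalue equation reduces to a radial ODE in $r = |z|$. After the substitution $s = r^2$ and a factorization $R(r) = r^{\ell} \Phi(r^2)$, this ODE becomes a hypergeometric equation in $s$ whose polynomial (and hence $L^2_\sigma$-integrable) solutions are exactly the terminating Gauss hypergeometric series $F(-k, \sigma + \ell + N/2 + k; \ell + N/2; s)$ for $k \in \N_0$. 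The integer parameter $k$ yields the quantization $\lambda_{\ell k} = (\sigma+1)(\ell+2k) + k(2k+2\ell+N-2)$, and in dimension $N = 1$ the restriction $\ell \in \{0,1\}$ simply reflects the two one-dimensional spherical harmonics $Y_{\ell 1}(\pm 1) = (\pm 1)^\ell$. All of these computations are essentially the same as those carried out for $\Ha$ in \cite{Seis14}, modulo a normalization factor.

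The main obstacle is completeness: proving that no further eigenvalues exist beyond those identified above. For this I would argue that the linear span of the family $\{\psi_{\ell n k}\}$ contains all polynomials on $\overline{B_1(0)}$, by an explicit degree count combined with linear independence of the hypergeometric polynomials, and that polynomials are dense in $L^2_\sigma$ (Stone--Weierstrass on the compact set $\overline{B_1(0)}$ with the finite measure $\mu_\sigma$). Together with self-adjointness of $\L$ and compactness of the resolvent, this forces $\{\psi_{\ell n k}\}$ to be a complete orthogonal basis, and thus the list of eigenvalues is exhaustive. Finally, since $\L$ and $\Ha$ coincide on $C^\infty(\overline{B_1(0)})$, which is dense in both form domains, their nonzero eigenvalues and the associated eigenfunctions match; the only extra contribution in the $L^2_\sigma$ setting is the eigenmode $\psi_{010} \equiv 1$ with eigenvalue $0$, consistent with the statement $\Sigma(\L) = \Sigma(\Ha) \cup \{0\}$ anticipated in the introduction.
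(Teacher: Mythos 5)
Your proposal is correct but takes a genuinely different (and longer) route than the paper for the two substantive steps, namely the identification of eigenvalues/eigenfunctions and the proof of completeness. The paper does not re-derive the radial hypergeometric ODE, nor does it argue completeness via density of polynomials. Instead, after establishing compactness of the resolvent exactly as you propose, it makes a short two-line comparison with the operator $\Ha$ on $\dot H^1_{\sigma+1}/\R$ diagonalized in \cite{Seis14}: the eigenfunctions of $\Ha$ are smooth (and in particular lie in every weighted Sobolev space $H^k_{\sigma,\dots,\sigma+k}$), so they are admissible for $\L$ and hence $\Sigma(\Ha)\subset\Sigma(\L)$; conversely, the energy identity $\|\grad w\|_{L^2_{\sigma+1}}^2 = \lambda\|w\|_{L^2_\sigma}^2$ for an eigenpair $(\lambda,w)$ of $\L$ shows that any nonconstant eigenfunction of $\L$ lies in $\dot H^1_{\sigma+1}$ with nonzero norm, hence gives an eigenpair of $\Ha$, so $\Sigma(\L)\subset\Sigma(\Ha)\cup\{0\}$. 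This immediately yields $\Sigma(\L)=\Sigma(\Ha)\cup\{0\}$ and transfers the explicit spectral computation of \cite{Seis14} wholesale. Your approach, by contrast, re-derives the radial reduction to the hypergeometric equation and then proves exhaustiveness by a degree count showing the $\psi_{\ell nk}$ span all polynomials, together with Stone--Weierstrass density of polynomials in $L^2_\sigma(\overline{B_1(0)})$. This works and has the merit of being self-contained (it also produces the completeness of the eigenbasis as a byproduct, which the paper uses later without separate comment), but it is considerably more work than the paper's energy-identity shortcut; if you adopt it, be sure to carry out the dimension count carefully, matching the number of $\psi_{\ell nk}$ of degree at most $d$ against $\binom{N+d}{N}$ via the standard decomposition of homogeneous polynomials into $|z|^{2j}$ times harmonics.
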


The literature on hypergeometric functions and spherical harmonics is vast, see, e.g., \cite{Rainville1,Rainville2,BealsWong10,Groemer96}.

Hypergeometric functions $F(a,b;c;z)$ can be defined as the power series
\[
F(a,b;c;x) = 1 + \sum_{j=1}^{\infty} \frac{(a)_j(b)_j}{(c)_j j!} z^j,
\]
for any $a,\, b,\, c,\,  x\in \R$ and $c$ is not a non-positive integer. The definition involves the extended factorials or Pochhammer symbols
\[
(s)_j = s(s+1)\dots (s+j-1),\quad\mbox{for }j\in \N,\, s\in\R.
\]
The power series is convergent if $|x|<1$. In the situation a hand, hypergeometric functions come into play in the study of the radial part of the eigenvalue equation. The latter is a second order Fuchsian ODE with three regular singular points that can be transformed into the hypergeometric differential equation. In \cite{Seis14}, the author solved this equation and selected those solutions as eigenfunctions that lie in the domain of the linear operator.

Plugging the value $a=-k$  into the definition of the hypergeometric functions, we see that in Proposition \ref{P2} the hypergeometric functions reduce to polynomials of degree $k$ in $x=|z|^2$. Recalling that spherical harmonics $Y_{\ell n}$ are homogeneous harmonic polynomials of degree $\ell$ (defined on the unit sphere), we thus observe that the eigenfunction $\psi_{\ell nk}$ is  a polynomial of degree $\ell +2k$.

At this point, we shall briefly compare our spectrum with that found by Denzler and  McCann in \cite{DenzlerMcCann05} for the linearized fast diffusion equation. Since the solutions to the fast diffusion equation are positive instantaneously, the linearized operator is defined on all of $\R^N$, and thus, due to spatially algebraic decay of solutions, the operator has non-compact resolvents. It follows that  the spectrum is non-discrete. To be more specific, Denzler and  McCann found the same eigenvalues $\lambda_{\ell k}$ for values of $m= \frac{\sigma+2}{\sigma+1}<1$ close to one. These eigenvalues dissolve into continuous spectrum if $m $ becomes small. At the threshold $m=1$ between fast diffusion and porous medium dynamics, after rescaling with $\sigma+1$, we recover the spectrum of the Ornstein--Uhlenbeck operator $-\laplace + z\cdot \grad$.

\begin{proof}[Proof of Proposition \ref{P2}] The discreteness of the spectrum follows from the fact that $\L$ has a compact resolvent, see e.g., \cite[Ch.\ III, \textsection 6.8]{Kato}. The latter is a consequence of standard Hilbert space methods, cf.\ \cite[Appendix]{Seis14}: For every $f\in L^2_{\sigma}$, there exists a unique $w$ satisfying $\L w + w=f$. Via the energy estimate
$\|\grad w\|_{L^2_{\sigma+1}} + \|w\|_{L^2_{\sigma}} \lesssim \|f\|_{L^2_{\sigma}}$ and the compact embedding $H^1_{\sigma, \sigma+1}\subset L^2_{\sigma}$, we deduce that $(\L +1)^{-1}$ is well-defined and compact.

The eigenfunctions of $\Ha$ computed in \cite{Seis14} are regular and thus $\Sigma(\Ha)\subset \Sigma(\L)$. On the other hand, by the energy identity for the eigenvalue equation $\|\grad w\|_{L^2_{\sigma+1}} = \lambda \|w\|_{L^2_{\sigma}}$, every nonconstant eigenfunction of $\L$ is also an eigenfunction of $\Ha$. The statement of the proposition is thus proved.
\end{proof}

We can deduce the long-time limit of solutions to the perturbation equation \eqref{42a} from the known limiting behavior of  solutions in the original variables. Since changes of mass are allowed after the change of variables, we have to allow for perturbations of arbitrary mass. Solutions to the self-similar pressure equation $v(t,x)$ converge to a Barenblatt profile of some radius $r$ which is determined by the initial condition,
\[
v(t,x)\approx \frac12(r-|x|^2)_+\quad\mbox{as }t\gg1,
\]
cf.\ \eqref{1b}. By the change of coordinates introduced in Section \ref{S:main} above and made rigorous in Section \ref{S:8} below, it is $v(t,x) = \frac12(1-|x|)^2 + w(t,z) +\frac12 w(t,z)^2$ in the positivity set of $v$. Hence, considering the long-time limit and using that $\|w(t)\|_{L^{\infty}}\le \delta\le 1$, we deduce that
\[
w(t,z) \to \sqrt{r} -1\quad\mbox{as }t\gg1.
\]

In the following, we will study the higher order asymptotics of this limit via invariant manifolds. As outlined in the introduction to the previous section, we will first construct invariant manifolds for the truncated equation \eqref{43} and then carry the results over to the original equation with the help of smoothing estimates.

The formulation of the invariant manifold theorem requires some preparations. Theorem \ref{T4} guarantees the existence of a semi-flow
\[
S^t_{\eps,\delta} : L^2_{\sigma} \to L^2_{\sigma}
\]
defined by $S_{\eps,\delta}^t(g) = w(t)$,  if $w$ denotes the solution to the truncated equation \eqref{43} with initial datum $g$. The map $[0,\infty)\times L^2_{\sigma} \ni (t,g) \mapsto S_{\eps,\delta}^t (g)\in L^2_{\sigma}$ is continuous. 
Following Koch's (among others) approach to invariant manifolds based on discrete semi-flows, see \cite{Koch97}, it is enough to study the corresponding time-one maps. That is, we consider the mapping $S = S^1_{\eps,\delta} :L^2_{\sigma}\to L^2_{\sigma}$. By the construction in Theorem \ref{T4}, it is clear that $S$ is Lipschitz,
\begin{equation}
\label{43e}
\| S(g_1) - S(g_2)\|_{L^2_{\sigma}}\lesssim \|g_1-g_2\|_{L^2_{\sigma}},
\end{equation}
for any two $g_1,g_2\in L^2_{\sigma}$. Moreover, the following smoothing property holds:

\begin{lemma}\label{L15b}
There exists $\eps_0>0$ such that for every $0<\eps\le \eps_0$ and $0<\delta<1$ with $\sqrt2\left(\eps+\delta\right)<1$ the following holds: For any $g_1,\, g_2\in L^2_{\sigma}\cap \B_{\eps_0,\delta}$, it holds that
\[
\|S(g_1) - S(g_2)\|_{C^{0,1}}\lesssim \|g_1-g_2\|_{L^2_{\sigma}}.
\]
\end{lemma}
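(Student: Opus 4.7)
The plan is to observe that $v := S(g_1) - S(g_2) = w_1(1,\cdot) - w_2(1,\cdot)$ solves a linear degenerate parabolic equation that is an $O(\eps)$-perturbation of the linear operator of Section \ref{S:5}, then to combine an $L^2_\sigma$ energy estimate with the heat-kernel smoothing of Section \ref{S:5.2}--\ref{S:5.3} on $[0, 1/2]$ and a Lipschitz propagation argument on $[1/2, 1]$.

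First I would set $w_i := S^\cdot(g_i)$. By Lemmas \ref{L15bis} and \ref{L15a}, both $w_i$ stay in $\B_{\eps_0,\delta}$ and are smooth for $t>0$. Since $F_{\eps,\delta}$ is $C^1$ with $|\partial_{(q,p)} F_{\eps,\delta}| \lesssim \eps$ (on $\spt \eta_{\eps,\delta}$ one has $|F|\lesssim \eps^2$ and $1+q+z\cdot p \gtrsim 1$, so the chain rule applied to $\eta_{\eps,\delta} F$ yields the bound), the mean value theorem writes
\[
F_{\eps,\delta}(w_1, \grad w_1) - F_{\eps,\delta}(w_2, \grad w_2) = a\, v + b \cdot \grad v
\]
with measurable $a, b$ satisfying $\|a\|_{L^\infty} + \|b\|_{L^\infty} \lesssim \eps$. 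Consequently $v := w_1 - w_2$ solves the linear equation
\[
\partial_t v - \rho^{-\sigma}\div(\rho^{\sigma+1}\grad v) = \beta\rho (av + b\cdot\grad v) - \rho^{-\sigma}\div(\rho^{\sigma+1} z (av + b\cdot\grad v)),
\]
with initial datum $g := g_1 - g_2$. Testing against $v$ in $d\mu_\sigma$ and absorbing the gradient contributions on the right into the dissipation using $\|a\|_{L^\infty} + \|b\|_{L^\infty} \lesssim \eps$ (as in the proof of Theorem \ref{T4}) gives, for $\eps_0$ small enough, the basic stability
\[
\sup_{t \in [0,1]} \|v(t)\|_{L^2_\sigma}^2 + \int_0^1 \|\grad v(s)\|_{L^2_{\sigma+1}}^2 \, ds \lesssim \|g\|_{L^2_\sigma}^2. \qquad (\ast)
\]

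Next I would show $\|v(1/2)\|_{C^{0,1}} \lesssim \|g\|_{L^2_\sigma}$ by splitting $v = V_h + V_p$ on $[0, 1/2]$, where $V_h$ solves the \emph{homogeneous} linear equation $\partial_t V_h - \rho^{-\sigma}\div(\rho^{\sigma+1}\grad V_h) = 0$ with initial datum $g$. An application of Lemma \ref{L10} with $a = 0$ and $r = 1/\sqrt{2}$ yields $\|V_h(1/2)\|_{C^{0,1}} \lesssim \|g\|_{L^2_\sigma}$ directly. The remainder $V_p$ solves the inhomogeneous linear equation with zero initial datum and source $\beta\rho \tilde F - \rho^{-\sigma}\div(\rho^{\sigma+1} z \tilde F)$, where $\tilde F := a v + b\cdot\grad v$. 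Applying Duhamel (Lemma \ref{R4}) and integrating by parts in $z'$ to remove the divergence,
\[
\partial_z^\beta V_p(\tfrac12, z) = \int_0^{1/2} \int [\beta\partial_z^\beta G + \partial_z^\beta(z'\cdot\grad_{z'} G)](\tfrac12 - s, z, z')\, \tilde F(s, z')\, d\mu_{\sigma+1}(z')\, ds
\]
for $|\beta| \le 1$. Using the Gaussian kernel estimates of Proposition \ref{P2b} paired with the source bound $|\tilde F| \lesssim \eps(|v| + |\grad v|)$ via the large-$p$ H\"older argument from Lemma \ref{L11a} (with $p > \max\{N+2, 1/(\sigma+1)\}$, so that the dual-$L^q$ kernel norms are integrable in $s$) and combining with $(\ast)$, one obtains $\|V_p(1/2)\|_{C^{0,1}} \lesssim \eps \|g\|_{L^2_\sigma}$. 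Finally, on $[1/2, 1]$ I would apply Theorem \ref{T6} to the linear equation for $v$ with initial datum $v(1/2) \in C^{0,1}$, absorbing the source $\beta\rho \tilde F - \rho^{-\sigma}\div(\rho^{\sigma+1} z \tilde F)$ in the $Y(p)$ norm by its $\eps$-small coefficients as in the proof of Lemma \ref{L15bis}, to conclude $\|v(1)\|_{C^{0,1}} \lesssim \|v(1/2)\|_{C^{0,1}} \lesssim \|g\|_{L^2_\sigma}$.

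The main obstacle lies in the smoothing step for $V_p$: the differentiated kernel $\partial_z \grad_{z'} G(\tau, z, z')$ required to control $\grad_z V_p(1/2, z)$ exhibits a $\tau^{-1}$ singularity as $\tau \downarrow 0$, so a naive $L^2$-Cauchy--Schwarz in $s$ fails to converge. This forces the H\"older pairing with large exponent $p$ developed in Section \ref{S:5.3}, and an interpolation of $\tilde F$ in $L^p$ between its $L^\infty$ bound $|\tilde F|\lesssim \eps\eps_0$ and the $L^2$ bound from $(\ast)$. Balancing these so that the bound comes out linearly in $\|g\|_{L^2_\sigma}$, rather than sublinearly through interpolation, is the delicate technical point of the argument.
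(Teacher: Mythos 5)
Your proposal identifies the right high-level idea (linearize the equation for $v = w_1 - w_2$ with $\eps$-small coefficients, get the $L^2_\sigma$ energy bound from Theorem \ref{T4}, then smooth). But the step you yourself flag as ``delicate'' is a genuine gap, not a technicality: it fails. The source $\tilde F = av + b\cdot\grad v$ is controlled only in $L^2((0,1);L^2_{\sigma+1})$ by $\eps\|g\|_{L^2_\sigma}$, and in $L^\infty$ by the $g$-independent constant $\eps\eps_0$. Interpolating these to get an $L^p$ bound with $p > N+2$ produces $\|\tilde F\|_{L^p} \lesssim (\eps\eps_0)^{1-2/p}(\eps\|g\|_{L^2_\sigma})^{2/p} \lesssim \|g\|_{L^2_\sigma}^{2/p}$, a strictly \emph{sublinear} bound in $\|g\|_{L^2_\sigma}$. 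There is no balancing that rescues linear dependence from this interpolation, so the Duhamel argument starting from $t=0$ cannot close. The root of the obstruction is that you integrate the kernel against the source all the way back to $t=0$, where $v$ is only an $L^2_\sigma$ function.

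The paper's proof avoids Duhamel and Gaussian estimates altogether and instead cuts off \emph{in time}. It sets $\eta$ with $\eta = 0$ on $[0, 1/4]$ and $\eta = 1$ on $[1/2,1]$, so that $\eta(w_1-w_2)$ has zero initial data and its source, $\eta(f_1-f_2) + \partial_t\eta\,(w_1-w_2)$, is only active at times $t \ge 1/4$, where by Theorem \ref{T1}/Lemma \ref{L15bis} both $w_1$ and $w_2$ are already smooth with derivative bounds $\lesssim\eps$ (estimate \eqref{43ec}). Applying the $L^2$ maximal-regularity Lemma \ref{L7} to $\eta(w_1-w_2)$, absorbing the $\eps$-small gradient and Hessian terms into the left-hand side, and using $\|w_1-w_2\|_{L^\infty(L^2_\sigma)}\lesssim\|g\|_{L^2_\sigma}$ (from Theorem \ref{T4}) for the remaining term gives a clean linear $L^2$-bound on derivatives of $w_1-w_2$ on $(1/2,1)$. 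Higher spatial derivatives are then obtained by the angular/radial iteration of Lemma \ref{L8a}, temporal derivatives from the equation, and pointwise $C^{0,1}$ bounds at $t=1$ follow by Morrey-type embedding as in Lemma \ref{L9}. If you want to rescue your decomposition, the fix is precisely this: localize in time so the inhomogeneous piece is driven only by data on $[1/4,1/2]$, where the smoothing of Lemma \ref{L15bis} is already available and gives uniform pointwise control of $\grad v$ in terms of $\|v\|_{L^\infty(L^2_\sigma)}$.
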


\begin{proof}Let us start by choosing $\eps_0,\, \eps$ and $\delta$ as in Lemma \ref{L15bis}. 
We will prove the stronger statement
\begin{equation}
\label{43ea}
|\partial_t^k\partial_z^{\beta} (w_1(t,z)-w_2(t,z))|\lesssim \|g_1-g_2\|_{L^2_{\sigma}},
\end{equation}
for all $(t,z)\in \left(\frac12,1\right)\times B_1(0)$ and $k\in\N_0,\, \beta\in\N_0^N$, where $w_1$ and $w_2$ denote the solutions to the nonlinear equation \eqref{42a} with initial values $g_1$ and $g_2$, respectively. We argue similarly as in the proofs of Lemmas \ref{L8a} and \ref{L9}. That is, we deduce \eqref{43ea} from the estimate
\begin{equation}
\label{43eb}
\|\partial_t^k\partial_z^{\beta} (w_1-w_2)\|_{L^2\left(\left(\frac12,1\right);L^2_{\sigma}\right)}\lesssim \|g_1-g_2\|_{L^2_{\sigma}}.
\end{equation}
For this, consider a smooth temporal cut-off function $\eta$ such that $\eta=1$ in $\left[\frac12,1\right]$ and $\eta=0$ in $\left[0,\frac14\right]$. Then $\eta (w_1 -w_2)$ satisfies the equation
\[
\partial_t \left(\eta (w_1-w_2)\right) +\L\left(\eta(w_1-w_2)\right) = \eta (f_1-f_2) + \partial_t \eta (w_1-w_2),
\]
where $f_i = f(w_i,\grad w_i)$ for $i=1,2$. By Lemma \ref{L7}, it holds that
\begin{eqnarray*}
\lefteqn{ \|\grad(\eta(w_1-w_2))\|_{L^2(L^2_{\sigma})} +  \|\rho \grad^2(\eta(w_1-w_2))\|_{L^2(L^2_{\sigma})} }\\
&\lesssim& \|\eta(f_1-f_2)\|_{L^2(L^2_{\sigma})} + \|w_1-w_2\|_{L^2(L^2_{\sigma})}.
\end{eqnarray*}
Thanks to the decay estimate \eqref{7az} in Theorem \ref{T1}, it holds that
\begin{equation}
\label{43ec}
\|\partial_z^{\beta} \partial_q^{\ell}\partial_p^{\gamma} F(w,\grad w)\|_{L^{\infty}\left(\left(\frac14,1\right)\times B_1(0)\right)} \lesssim \eps,
\end{equation}
for any $\ell\in\N_0$, $\beta,\, \gamma\in \N_0^N$ and any solution $w\in \B_{\eps,\delta}$. We thus have
\begin{eqnarray*}
\lefteqn{\|\eta(f_1-f_2)\|_{L^2(L^2_{\sigma})}}\\
& \lesssim & \eps  \|\grad(\eta(w_1-w_2))\|_{L^2(L^2_{\sigma})} +  \eps\|\rho \grad^2(\eta(w_1-w_2))\|_{L^2(L^2_{\sigma})} + \|w_1-w_2\|_{L^2(L^2_{\sigma})}.
\end{eqnarray*}
Because $\|w_1-w_2\|_{L^{\infty}(L^2_{\sigma})} \lesssim \|g_1-g_2\|_{L^2_{\sigma}}$ holds as a by-product of the proof of Theorem \ref{T4}, the latter implies \eqref{43eb} for $(k,|\beta|)=(0,1)$, 
provided that $\eps$ is sufficiently small.  Moreover, since
\[
\|\partial_z^{\beta} \left(\eta(f_1-f_2)\right)\|_{L^2(L^2_{\sigma})} \lesssim \eps \sum_{\gamma\le\beta} \|\partial_z^{\gamma} \left(\eta(w_1-w_2)\right)\|_{L^2(L^2_{\sigma})},
\]
we can argue as in Lemma \ref{L8a} (distinguishing ``angular'' and ``radial'' derivatives) and derive \eqref{43eb} for $k=0$. The control over the temporal derivatives is obtained by using the equation. Finally, \eqref{43ea} follows from \eqref{43eb} by the use of Morrey-type estimates. This concludes the proof.
\end{proof}

The following decomposition simplifies the discussion. If we denote by $L = e^{-\L}: L^2_{\sigma}\to L^2_{\sigma}$ the time-one map corresponding to the heat semi-group, then we write
\[
S = L+R.
\]
It is clear that $R: L^2_{\sigma}\to L^2_{\sigma}$ is Lipschitz because both $S$ and $L$ are.
Furthermore, $R$ is a contraction on $L^2_{\sigma}$ provided that $\eps$ is chosen sufficiently small and differentiable as a map from $\dot C^{0,1}$ to $L^2_{\sigma}$ and from $C^{0,1}$ to $C^{0,1}$:

\begin{lemma}\label{L17} There exists $\eps_0>0$ such that for all $0<\eps\le \eps_0$ and $\delta>0$ with $\sqrt2\left(\eps+\delta\right)<1$ the following holds: For any $g_1,\, g_2\in L^2_{\sigma}$, 
\[
\|  R(g_1) -  R (g_2)\|_{L^2_{\sigma}} \lesssim \eps \|  g_1 -  g_2\|_{L^2_{\sigma}}.
\]
Moreover,
\[
\|R (g)\|_{L^2_{\sigma}} \lesssim \|g\|_{\Lip}^2\quad\mbox{and}\quad \|R(g)\|_{C^{0,1}} \lesssim \|g\|_{C^{0,1}}^2
\]
for all $g\in C^{0,1}$. 
\end{lemma}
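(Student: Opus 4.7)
The plan is to treat $R(g) = S(g) - L(g)$ as the value at time $t=1$ of $v := w - \tilde w$, where $w$ solves the truncated nonlinear equation \eqref{43} with initial datum $g$ and $\tilde w$ solves the linear homogeneous equation \eqref{17a} with the same initial datum. Then $v(0)=0$ and
\[
\partial_t v + \L v = f_{\eps,\delta}(w) = \beta\rho F_{\eps,\delta}(w,\grad w) - \rho^{-\sigma}\div\bigl(\rho^{\sigma+1} z F_{\eps,\delta}(w,\grad w)\bigr),
\]
so all three estimates reduce to regularity estimates on $v$ of the form obtained in Section \ref{S:5}, combined with structural properties of $F_{\eps,\delta}$.

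For the contraction estimate, I would apply the above representation to the difference: let $v := (w_1-w_2) - (\tilde w_1-\tilde w_2)$, so that $v(0)=0$, $v(1) = R(g_1)-R(g_2)$, and $\partial_t v + \L v = f_{\eps,\delta}(w_1) - f_{\eps,\delta}(w_2)$. Testing with $\rho^\sigma v$, integrating by parts the divergence term, and using that $F_{\eps,\delta}$ is globally Lipschitz with constant $\lesssim\eps$ in $(q,p)$ (this already appeared in the proof of Theorem \ref{T4}), I obtain
\[
\tfrac{d}{dt}\|v\|_{L^2_\sigma}^2 + \|\grad v\|_{L^2_{\sigma+1}}^2 \lesssim \|v\|_{L^2_\sigma}^2 + \eps^2\bigl(\|w_1-w_2\|_{L^2_\sigma}^2 + \|\grad(w_1-w_2)\|_{L^2_{\sigma+1}}^2\bigr)
\]
after applying Young's inequality and $\rho\le\tfrac12$ to absorb the gradient terms of $v$. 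Gronwall on $[0,1]$ combined with the a priori bound $\|w_1-w_2\|_{L^\infty(L^2_\sigma)} + \|\grad(w_1-w_2)\|_{L^2(L^2_{\sigma+1})} \lesssim \|g_1-g_2\|_{L^2_\sigma}$ (which is exactly the estimate used in the contraction step of Theorem \ref{T4}) then yields $\|v(1)\|_{L^2_\sigma} \lesssim \eps\|g_1-g_2\|_{L^2_\sigma}$.

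For the quadratic $L^2_\sigma$-bound, the key observation is that $F(q,p) = |p|^2/(1+q+z\cdot p)$ vanishes to second order at the origin, so on the support of $\eta_{\eps,\delta}$ (where the denominator is bounded below) one has $|F_{\eps,\delta}(w,\grad w)| \lesssim |\grad w|^2$. Combined with Lemma \ref{L15bis} which gives $\|\grad w\|_{L^\infty} \lesssim \|g\|_{\Lip}$, we get $\|F_{\eps,\delta}(w,\grad w)\|_{L^\infty} \lesssim \|g\|_{\Lip}^2$. Testing the equation for $v$ (with $v(0)=0$) against $\rho^\sigma v$, integrating by parts in the divergence term, applying Cauchy--Schwarz and Young, and running Gronwall on $[0,1]$ produces $\|R(g)\|_{L^2_\sigma} = \|v(1)\|_{L^2_\sigma} \lesssim \|g\|_{\Lip}^2$.

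For the $C^{0,1}$-estimate, I would replace the $L^2$ energy estimate by the Carleson-measure estimate of Proposition \ref{P3} applied to the inhomogeneous problem solved by $v$: for any admissible $p$,
\[
\|v\|_{X(p)} + \|v\|_{\Lip} \lesssim \|f_{\eps,\delta}(w)\|_{Y(p)}.
\]
The remaining task is to show $\|f_{\eps,\delta}(w)\|_{Y(p)} \lesssim \|g\|_{C^{0,1}}^2$. Differentiating $F_{\eps,\delta}(w,\grad w)$ produces terms of the schematic forms $\grad w \cdot \grad^2 w$, $|\grad w|^2$, $\grad w\cdot\grad^3 w + (\grad^2 w)^2$, each bilinear in derivatives of $w$. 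For every such bilinear expression I would put one factor in $L^\infty$ (using either $\|w\|_{\Lip} \lesssim \|g\|_{C^{0,1}}$ directly or the pointwise decay \eqref{7az} of Theorem \ref{T1}) and the other in the appropriate Carleson norm (controlled by $\|w\|_{X(p)} \lesssim \|g\|_{\Lip}$ via Lemma \ref{L15bis}), together with the interpolation estimate $\|\grad^2 w\|_{L^{2p}}^2 \lesssim \|\grad w\|_{L^\infty}\|\grad^3 w\|_{L^p}$ already used in the proof of Lemma \ref{L15bis}. This yields $\|f_{\eps,\delta}(w)\|_{Y(p)} \lesssim \|g\|_{C^{0,1}} \|w\|_{X(p)} \lesssim \|g\|_{C^{0,1}}^2$, from which the $C^{0,1}$-bound follows by combining $\|v\|_{\Lip}$ from Proposition \ref{P3} with the $L^\infty$ bound deducible from the $L^2_\sigma$ estimate in (2) together with the Morrey-type embedding in the spirit of Lemma \ref{L9}. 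The main technical obstacle I foresee is book-keeping the weights $\rho$ in the definition of $Y(p)$ when distributing derivatives of $F$ between an $L^\infty$-factor and a Carleson-measure factor near the boundary; the decay estimate \eqref{7az} is tailor-made for this, but matching the $\rho^\ell(r+\sqrt{\rho(z)})^?$ prefactors in the definitions of $X(p)$ and $Y(p)$ requires some care.
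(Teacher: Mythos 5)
Your decomposition $v = w - \tilde w$ (and its difference version for the contraction bound), the energy estimate $\|v(1)\|_{L^2_\sigma}^2 \lesssim \int_0^1 \|F_{\eps,\delta}(w,\grad w)\|^2_{L^2_{\sigma+1}}\,dt$ via testing against $\rho^\sigma v$ and Gronwall, the use of the $\eps$-Lipschitz property of $F_{\eps,\delta}$ with the a priori bound from Theorem \ref{T4}, and the use of $|F_{\eps,\delta}(q,p)|\lesssim|p|^2$ plus Lemma \ref{L15bis} for the $\|R(g)\|_{L^2_\sigma}\lesssim\|g\|_{\Lip}^2$ bound all coincide with the paper's argument. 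The passage from the gradient bound to the full $C^{0,1}$ bound via $\|\zeta\|_{L^\infty}\lesssim\|\zeta\|_{L^2_\sigma}+\|\zeta\|_{\Lip}$ is also the same in substance, if phrased differently.

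The one place where your route genuinely diverges is the $Y(p)$-estimate on $f_{\eps,\delta}(w)$. The paper does not re-estimate the nonlinearity: it re-uses \eqref{43g} from Lemma \ref{L15bis}, which gives $\|f_{\eps,\delta}(w)\|_{Y(p)}\lesssim\eps\,\|w\|_{X(p)}\lesssim\eps\,\|g\|_{\Lip}$, and then upgrades this linear bound to a quadratic one by observing that the truncated flow $S^t_{\eps,\delta}(g)$ is unchanged if one lowers $\eps$ to $\eps\sim\|g\|_{\Lip}$ (the a priori bound $\|w\|_{\Lip}\lesssim\|g\|_{\Lip}$ keeps the cutoff inactive), so $\eps\|g\|_{\Lip}\lesssim\|g\|_{\Lip}^2$ for free. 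Your plan instead redoes the pointwise estimate on $\grad f_{\eps,\delta}$, splitting each bilinear term between $L^\infty$ and a Carleson norm. That can be made to work, but you must be careful with the terms coming from $\grad\eta_{\eps,\delta}$, which carry factors of $\eps^{-1}$ and $\eps^{-2}$: these are only compensated by using $|\grad w|\le\sqrt2\,\eps$ on the support of $\grad\eta_{\eps,\delta}$ (not by $\|\grad w\|_{L^\infty}\lesssim\|g\|_{\Lip}$, which fails if $\eps\ll\|g\|_{\Lip}$), and then $\eps\lesssim\|g\|_{\Lip}$ in the only regime where that support is nonempty. You flag ``book-keeping the $\rho$-weights'' as the obstacle, but the $\eps^{-1}$ factors from the cutoff are the more serious one; addressing it explicitly would either reproduce the paper's ``lower $\eps$'' shortcut in disguise, or require the case distinction above.
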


The contraction property will be a crucial ingredient in the construction of the invariant manifolds below. We will use the quadratic bound on $R$ to show that the center manifold meets the eigenspaces tangentially at the origin.

\begin{proof}
In the beginning, we choose $\eps_0,\, \eps$ and $\delta$ as in Lemma \ref{L15bis}. 
We first prove the Lipschitz estimate. Let $\tilde w$ denote the solution to the equation
\[
\partial_t \tilde w - \rho^{-\sigma}\div\left(\rho^{\sigma +1}\grad \tilde w\right)  = f_{\eps,\delta} (w_2) - f_{\eps,\delta}(w_1)
\]
with zero initial datum. Here $w_i(t) = S^t_{\eps,\delta} (g_i)$ and therefore $\tilde w(1) = R (g_2)-R(g_1)$. Using the elementary estimate $|F_{\eps,\delta}(q_1,p_1) - F_{\eps,\delta}(q_2,p_2)| \lesssim \eps \left(|q_1-q_2| + |p_1-p_2|\right)$, we compute
\begin{eqnarray*}
 \int \tilde w(1)^2\, d\mu_{\sigma}  
 &\lesssim&  \int_0^1 \int (F_{\eps,\delta}(w_1,\grad w_1) - F_{\eps,\delta}(w_2,\grad w_2))^2\, d\mu_{\sigma+1}dt\\
 &\lesssim &  \eps^2 \int_0^1 \int (w_2-w_1)^2\, d\mu_{\sigma}dt+ \eps^2 \int_0^1 \int |\grad w_2-\grad w_1|^2\, d\mu_{\sigma+1} dt.
\end{eqnarray*}
The fixed point argument in the construction of solutions to the truncated equation, Theorem \ref{T4}, yields that the latter is bounded by $\eps^2 \|g_1-g_2\|_{L^2_{\sigma}}^2$. This proves the first statement.

If, however, $g_1=g$ and $g_2=0$, then the previous argument also shows that
\[
\int \tilde w(1)^2\, d\mu_{\sigma} \lesssim \int_0^1 \int F(w,\grad w)^2\, d\mu_{\sigma+1}dt,
\]
with $w(t) = S_{\eps,\delta}^t(g)$. Since $|F_{\eps,\delta}(q,p)|\lesssim |p|^2$, we deduce the first quadratic bound on $R$ via Lemma \ref{L15bis}.

Moreover, because $\tilde w(0)=0$, Theorem \ref{T6} yields the estimate
\[
\|\tilde w\|_{\Lip} \lesssim \|f_{\eps,\delta}(w)\|_{Y(p)},
\]
where $p$ as in the assumption of the theorem. By Lemma \ref{L15bis}, the solution $w(t) = S^t_{\eps,\delta}(g)$ remains unchanged if we lower $\eps$ (if necessary) such that $\eps\lesssim \|g\|_{\Lip}$. Now, applying estimates \eqref{43f} and \eqref{43g} from the proof of Lemma \ref{L15bis}, we further have
\[
\|f_{\eps,\delta}(w)\|_{Y(p)} \lesssim \eps \|g\|_{\Lip} \lesssim \|g\|_{\Lip}^2,
\]
provided that $\eps$ is sufficiently small. We thus have
\[
\|R(g)\|_{\Lip}\lesssim \|g\|_{\Lip}^2.
\]
Thanks to the elementary estimate $\|\zeta\|_{L^{\infty}}\lesssim \|\zeta\|_{L^2_{\sigma}} + \|\zeta\|_{\Lip}$, the second quadratic bound follows.

\end{proof}

We are finally in the position to begin our dynamical systems argument.

The operators $\L$ and $L$ share the same eigenfunctions and the spectrum of $L$ is given by the eigenvalues $e^{-\lambda_{\ell k}}$, with the $\lambda_{\ell k}$'s given in Proposition \ref{P2} above.
We relabel these eigenvalues by $\{\lambda_{j}\}_{j\in\N_0}$, where $\lambda_j < \lambda_{j+1}$, that is, eigenvalues will not be repeated. The eigenfunctions of $\L$ (and thus of $L$) can be chosen in such a way that they form an orthonormal basis of $L^2_{\sigma}$. By construction, they are orthogonal in $\dot H^1_{\sigma+1}$.

From now on, we keep $K\in \N_0$ arbitrarily fixed. We denote by $P_c$ the spectral projection onto the (finite dimensional) subspace of $L^2_{\sigma}$ spanned by the eigenfunctions corresponding to the eigenvalues $\{\lambda_0,\dots ,\lambda_{K}\}$. Furthermore, we set $P_s := I - P_c$ and define $E_c = P_c L^2_{\sigma}$ and $E_s = P_s L^2_{\sigma}$, so that $L^2_{\sigma} = E_c\oplus E_s$. We decompose $L$ accordingly by setting $L_c = P_c L P_c$ and $L_s = P_s L P_s$. The subscripts ``{\it c}'' and ``{\it s}'' stand for ``{\em center}'' and ``{\em stable}'', respectively. It is clear that $L_c$ has a bounded inverse and
\[
\| L_c^{-\ell}\| \le e^{\ell\lambda_K}\quad\mbox{for all } \ell\in\N,
\]
where $\|\tacka\|$ denotes the operator norm. Indeed, if $\Pi_k$ projects onto the eigenspace spanned by the $k$th eigenfunction $\psi_k$, then $\Pi_k w(t) = e^{-\lambda_k t} \Pi_k g$. Hence, $\|L_c^{-1} w\|_{L^2_{\sigma}}^2 = \|P_c g\|_{L^2_{\sigma}}^2 = \sum_{k\le K} \la \Pi_k g, \psi_k\ra^2_{L^2_{\sigma}}  = \sum_{k\le K} e^{2\lambda_k} \la \Pi_k w,\psi_k\ra^2_{L^2_{\sigma}}  \le e^{2\lambda_K} \|w\|_{L^2_{\sigma}}^2$. The above bound follows by iteration. Likewise,
\[
\|L_s^{\ell} \| \le e^{-\ell\lambda_{K+1}}\quad\mbox{for all } \ell\in\N.
\]
The latter follows from the energy decay rate for the linear equation and a spectral gap estimate on $E_s$. Indeed, on the one hand $\partial_t  w + \L  w =0$ implies that $\frac{d}{dt} \frac12\|P_s w\|^2_{L^2_{\sigma}}  + \|\grad P_s  w\|^2_{L^2_{\sigma+1}} = 0$. On the other hand, $\lambda_{K+1} \| \hat w\|^2_{L^2_{\sigma}}\le \|\grad \hat w\|_{L^2_{\sigma+1}}^2$ for all $\hat w\in E_s$. Combining both inequalities and iteration yields the desired bound.

Following the notation introduced in \cite{Koch97}, we finally define
\[
\Lambda_s = e^{-\lambda_{K+1}},\quad \Lambda_c  = e^{-\lambda_K},\quad\mbox{and}\quad \Lambda_{\max} = 1.
\]
With these estimates, we have
\begin{equation}
\label{44az}
\|L_c^{-1}\|\le \Lambda_c^{-1},\quad \|L_s\| \le \Lambda_s,\quad \|L\|\le \Lambda_{\max}.
\end{equation}
Furthermore, if $\epsgap\in(0,1)$ is chosen small enough, we can find $\Lambda_-<\Lambda_+$ such that
\[
\begin{array}{rcccl}
 \Lambda_s + \epsgap & < &  \Lambda_- &<& \Lambda_c  - \epsgap,\\
 \Lambda_{\max} + \epsgap &<&\Lambda_+ .&&
 \end{array}
\]
For any function $w\in L_{\sigma}^2$, we define $\|w\| = \max\left\{\|P_c w\|_{L^2_{\sigma}}, \|P_s w\|_{L^2_{\sigma}}\right\}$.
Moreover, for any sequence $\{w_k\}_{k\in\Z}$ we set
\[
\|\{w_k\}_{k\in\Z}\|_{\Lambda_-,\Lambda_+} : = \sup_{k\in \N_0} \max\left\{\Lambda_+^{-k} \|w_k\|, \Lambda_-^k\|w_{-k}\|\right\},
\]
and for any sequence $\{w_k\}_{k\in\N_0}$,
\[
\|\{w_k\}_{k\in\N_0}\|_{\Lambda_-,+} : = \sup_{k\in \N_0} \Lambda_-^{-k} \|w_k\|.
\]
The corresponding Banach spaces will be denotes by $\ell_{\Lambda_-,\Lambda_+}$ and $\ell_{\Lambda_-,+}$, respectively. 

In a first step, we construct the center manifold.

\begin{prop}[Center Manifold]
\label{P3b}
Let $\eps$ be small enough so that $\Lip(R) \le \epsgap$. There exists a map $\theta: E_c\to E_s$ with $\theta(0)=0$ and $D\theta(0)=0$ such that the submanifold
\[
W_c = \left\{ w_c + \theta(w_c):\: w_c\in E_c\right\}
\]
has the following properties:
\begin{enumerate}
\item ($L^2_{\sigma}$ regularity) The map $\theta$ is Lipschitz with $\Lip(\theta)\lesssim \epsgap$. Moreover,
\begin{equation}
\label{44a}
\|\theta(g_c)\|_{L^2_{\sigma}}\lesssim \|g_c\|^2_{L^2_{\sigma}}
\end{equation}
for all $g_c\in \B_{\eps,\delta}\cap E_c$.
\item ($C^{0,1}$ regularity) The map $\theta$ from $C^{0,1}$ to $C^{0,1}$ is Lipschitz on $\B_{\eps,\delta}$. Moreover,
\begin{equation}
\label{44c}
\|\theta(g_c)\|_{C^{0,1}}\lesssim \|g_c\|^2_{C^{0,1}}
\end{equation}
for all $g_c\in \B_{\eps,\delta}\cap E_c$. 
\item (Invariance) The restriction on $W_c$ of the semi-flow $\{ S_{\eps,\delta}^t \}_{t\ge 0}$ can be extended to a Lipschitz flow on $W_c$. In particular, $S_{\eps,\delta}^t( W_c) = W_c$ for all $t\ge 0$, and for any $g\in W_c$ there exists a negative semi-orbit $\{w(t)\}_{t\le 0}$ in $W_c$ such that $w(0)=g$.
\item (Lyapunov exponent) 
A point $g$ belongs to $W_c$ if and only if there exists a flow $\{w(t)\}_{t\in\R}$ with $w(0)=g$ and
\[
\|w(t)\|_{L^2_{\sigma}}\lesssim  \left\{
\begin{array}{ll}\Lambda_+^t &\mbox{for all }t\ge 0,\\
\Lambda_-^t &\mbox{for all }t\le 0.
\end{array}\right.
\]
\item (Optimal Lyapunov exponent) A point $g$ belongs to $W_c$ if and only if there exists a unique flow $\{w(t)\}_{t\in\R}$ with $w(0)=g$ and
\[
\|w(t)\|_{L^2_{\sigma}}\lesssim  \left\{
\begin{array}{ll}\left(\Lambda_{\max}+\epsgap\right)^t &\mbox{for all }t\ge 0,\\
\left(\Lambda_c -\epsgap\right)^t &\mbox{for all }t\le 0.
\end{array}\right.
\]
\end{enumerate}
\end{prop}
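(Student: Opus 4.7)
The plan is to use the discrete Lyapunov--Perron method on the time-one map $S = L + R$, characterizing $W_c$ as the set of initial data admitting bi-infinite orbits $\{w_k\}_{k\in\Z}$ of $S$ with controlled growth at both ends of time, i.e.\ $\|\{w_k\}\|_{\Lambda_-,\Lambda_+} < \infty$. For each prescribed $w_c \in E_c$, the orbit equation $w_{k+1} = Lw_k + R(w_k)$ decouples along $E_c \oplus E_s$ and, once combined with the growth constraint, can be inverted into the Duhamel-type form
\[
w_k^c = L_c^k w_c + \sum_{j=0}^{k-1} L_c^{k-1-j} P_c R(w_j) - \sum_{j=k}^{-1} L_c^{k-1-j} P_c R(w_j), \qquad w_k^s = \sum_{j=-\infty}^{k-1} L_s^{k-1-j} P_s R(w_j),
\]
where invertibility of $L_c$ is exploited in the backward center sum. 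By the spectral bounds \eqref{44az} and the ordering $\Lambda_s < \Lambda_- < \Lambda_c$, both sums are absolutely convergent in $\ell_{\Lambda_-,\Lambda_+}$, and the smallness $\Lip(R) \le \epsgap$ makes the resulting operator $T_{w_c}$ a contraction. Its unique fixed point $\{w_k(w_c)\}$ depends Lipschitz-continuously on $w_c$, and I set $\theta(w_c) := P_s w_0(w_c)$; since $T_0$ has the zero sequence as its fixed point, $\theta(0) = 0$.

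Lipschitz dependence of the fixed point on $w_c$ directly gives $\Lip(\theta) \lesssim \epsgap$ in $L^2_\sigma$. For the quadratic estimate \eqref{44a}, I would first show that if $w_c \in \B_{\eps,\delta} \cap E_c$, then the whole orbit $\{w_k(w_c)\}$ lies in $\B_{\eps,\delta}$: this follows by iterating the smoothing estimate of Lemma \ref{L15b}, which controls $\|w_k\|_{C^{0,1}}$ by $\|w_{k-1}\|_{L^2_\sigma}$. Once the orbit is confined to $\B_{\eps,\delta}$, Lemma \ref{L17} yields $\|R(w_j)\|_{L^2_\sigma} \lesssim \|w_j\|_{\Lip}^2 \lesssim \|w_c\|_{L^2_\sigma}^2$; substituting into the Duhamel formula for $w_0^s$ and summing the geometric series produces $\|\theta(w_c)\|_{L^2_\sigma} \lesssim \|w_c\|_{L^2_\sigma}^2$. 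The $C^{0,1}$ statements in item (2) are then obtained by applying $S$ once more and invoking Lemma \ref{L15b} to upgrade the $L^2_\sigma$ control to $C^{0,1}$; in particular this forces $D\theta(0) = 0$.

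Items (3)--(4) are essentially built into the construction: flow invariance follows by time-shifting orbits, extension to a two-sided Lipschitz flow uses invertibility of $S$ on the finite-dimensional graph of $\theta$, and item (4) is exactly the defining property of the fixed-point space. The main obstacle will be item (5), the optimal Lyapunov-exponent characterization. The forward direction is immediate; for the converse I would rerun the Lyapunov--Perron argument in the smaller sequence space $\ell_{\Lambda_c - \epsgap,\Lambda_{\max} + \epsgap}$. The key point is that the spectral-gap inequality $\Lambda_s + \epsgap < \Lambda_c - \epsgap$ keeps the contraction estimates valid in this smaller space, so that its fixed point also parametrizes $W_c$ and must, by uniqueness, coincide with the one constructed above. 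The transition from discrete-time orbits of $S$ to continuous flows $\{w(t)\}_{t \in \R}$ is handled by interpolating via the semi-flow on each interval $[k,k+1]$ and invoking the uniform $L^2_\sigma$ bounds from Theorem \ref{T4}. I expect the main technical difficulty throughout to lie in tracking the interplay between the smallness parameters $\eps$, $\delta$, and $\epsgap$ so that all contraction constants remain compatible.
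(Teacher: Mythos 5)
Your overall plan---Lyapunov--Perron on the time-one map $S=L+R$, with the manifold characterized by bi-infinite orbits of bounded growth in $\ell_{\Lambda_-,\Lambda_+}$---is the approach the paper also takes (the paper uses Koch's one-step recursion operator $J_k$ rather than your explicit Duhamel sums, but this is only an algebraic repackaging of the same fixed-point argument). The items (3)--(5) and the strategy of upgrading $L^2_\sigma$ regularity to $C^{0,1}$ via the smoothing estimate of Lemma~\ref{L15b} also match the paper. However, two of your steps do not go through as stated.

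The serious gap is in the proof of the quadratic bound \eqref{44a}. You claim that if $w_c\in\B_{\eps,\delta}\cap E_c$, then the whole orbit $\{w_k(w_c)\}_{k\in\Z}$ lies in $\B_{\eps,\delta}$, deduced ``by iterating the smoothing estimate of Lemma~\ref{L15b}''. This is false: the smoothing estimate controls the \emph{forward} iterate $w_k=S(w_{k-1})$ in terms of $w_{k-1}$, but going backward in time the orbit grows. Indeed, the only a priori control on the backward half is $\|w_{-k}\|\lesssim\Lambda_-^{-k}\|\hat\theta(g_c)\|_{\Lambda_-,\Lambda_+}$, and since $\Lambda_-<1$ this diverges as $k\to\infty$, so the backward orbit escapes any bounded neighbourhood. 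Consequently your chain $\|R(w_j)\|_{L^2_\sigma}\lesssim\|w_j\|_{\Lip}^2\lesssim\|w_c\|_{L^2_\sigma}^2$ fails for $j<0$. The paper circumvents this by \emph{retaining} the backward growth: it writes $\theta(g_c)=P_sw_0 = L_s^kP_sw_{-k}+\sum_{\ell=1}^kL_s^{\ell-1}P_sR(w_{-\ell})$, uses $\|R(w_{-\ell})\|_{L^2_\sigma}\lesssim\|w_{-\ell-1}\|_{L^2_\sigma}^2$ (via Lemmas~\ref{L17} and \ref{L15b}) together with $\|w_{-\ell-1}\|\lesssim\Lambda_-^{-\ell-1}\|\hat\theta(g_c)\|$, and lets the ratio $\Lambda_s/\Lambda_-^2$ control the resulting geometric series. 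You need this bookkeeping; confinement of the backward orbit is not available.

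A secondary issue: you assert that Lipschitz dependence of the fixed point on $w_c$ ``directly gives $\Lip(\theta)\lesssim\epsgap$''. Directly it only gives $\Lip(\theta)\lesssim1$, since $\|\hat\theta(g_c)-\hat\theta(\tilde g_c)\|_{\Lambda_-,\Lambda_+}\lesssim\|g_c-\tilde g_c\|$. The factor $\epsgap$ is not automatic; it reflects that $\theta\equiv0$ when $R\equiv0$, and must be extracted by iterating $P_s(w_0-\tilde w_0)=L_sP_s(w_{-1}-\tilde w_{-1})+P_s(R(w_{-1})-R(\tilde w_{-1}))$ and invoking $\Lip(R)\le\epsgap$, exactly as the paper does in the displayed estimate
\[
\|\theta(g_c)-\theta(\tilde g_c)\|_{L^2_\sigma}\le\left(\left(\frac{\Lambda_s}{\Lambda_-}\right)^k+\epsgap\sum_{\ell=1}^k\left(\frac{\Lambda_s}{\Lambda_-}\right)^{\ell-1}\right)\|\hat\theta(g_c)-\hat\theta(\tilde g_c)\|_{\Lambda_-,\Lambda_+}.
\]
Both gaps concern only the extraction of the sharp constants in items (1)--(2); the structural skeleton of your argument is correct.
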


In the statement of the proposition, $D\theta$ denotes the Fr\'echet derivative of $\theta$. 

The manifold $W_c$ is not unique but depends on the cut-off function $\eta_{\eps,\delta}$.

With slightly more effort, we could prove \eqref{44a} for all $g\in E_c$.

\begin{proof}Most parts of the theorem are an immediate consequence of \cite[Theorem 2.3]{Koch97}. We thus only sketch the proof. The results which are new or adapted to our particular study, however, will receive a detailed presentation.

For any $g_c\in E_c$ and $\{w_{\ell}\}_{\ell\in\Z}$ we define $\{J_k(g_c,\{w_{\ell}\}_{\ell\in\Z})\}_{k\in\Z}$ by
\[
J_k\left(g_c, \{w_{\ell}\}_{\ell\in\Z}\right) : = \left\{\begin{array}{ll} P_s S(w_{k-1}) + L_c^{-1} P_c(w_{k+1} - R(w_k)) & \mbox{for }k\le -1,\\
P_s S(w_{-1}) +g_c&\mbox{for }k=0,\\
S(w_{k-1})&\mbox{for }k\ge 1.
\end{array}\right.
\]
It is easily checked that $J = \{J_k\}_{k\in\Z}$ maps $E_c\times \ell_{\Lambda_-,\Lambda_+}$ to $\ell_{\Lambda_-,\Lambda_+}$, and for every fixed $g_c\in E_c$, $J(g_c,\tacka)$ is a contraction on $\ell_{\Lambda_-,\Lambda_+}$. Thus, by Banach's fixed point theorem, for every $g_c\in E_c$, $J(g_c,\tacka)$ has a unique fixed point $\{w_k\}_{k\in\Z}$ in $\ell_{\Lambda_-,\Lambda_+}$.
It is a solution to the discrete semi-flow with $P_c w_0=g_c$ and extends to a global flow, that is
\[
w_k = S^k(w_0)\quad\mbox{for all }k\in\Z\quad\mbox{and}\quad P_c w_0 = g_c.
\]
We then set $\hat \theta (g_c)  = \{w_k\}_{k\in\Z}$ and $\theta (g_c) = P_s \hat \theta_0 (g_c) = P_s w_0 $, that is the projection of first coordinate of the sequence $\{w_k\}_{k\in\Z}$ onto $E_s$. Thus $\hat \theta: E_c\to \ell_{\Lambda_-,\Lambda_+}$ and $\theta: E_c\to E_s$, and both maps are Lipschitz as a consequence of the fixed point argument.
Moreover, it can be shown that $\Lip(\theta) \lesssim \epsgap$. Indeed, using \eqref{44az} and Lemma \ref{L17}, via iteration, we observe that
\[
\|\theta(g_c)-\theta(\tilde g_c)\|_{L^2_{\sigma}} \le\left( \left(\frac{\Lambda_s}{\Lambda_-}\right)^k + \epsgap \sum_{\ell=1}^k\left(\frac{\Lambda_s}{\Lambda_-}\right)^{\ell-1} \right) \|\hat \theta(g_c) - \hat \theta(\tilde g_c)\|_{\Lambda_-,\Lambda_+}.
\]
Then the statement follows from using the Lipschitz estimate for $\hat\theta$ and letting $k\uparrow \infty$.

The argument for \eqref{44a} proceeds similar. Notice that $\| R(w_{-k})\|_{L^2_{\sigma}} \lesssim \| S (w_{-k-1})\|_{\Lip}^2 \lesssim \|w_{-k-1}\|_{L^2_{\sigma}}^2$ via Lemmas \ref{L17} and \ref{L15b}. Hence, via iteration
\[
\|\theta(g_c)\|_{L^2_{\sigma}} \le  \left(\frac{\Lambda_s}{\Lambda_-}\right)^k \|\hat \theta(g_c)\|_{\Lambda_-,\Lambda_+} + C\sum_{\ell=1}^k \left(\frac{\Lambda_s}{\Lambda_-^2} \right)^{\ell-1} \|\hat \theta(g_c)\|_{\Lambda_-,\Lambda_+}^2,
\]
and the statement follows from letting $k\uparrow\infty$ via the Lipschitz estimate for $\hat \theta$ because $\hat \theta(0)=0$. Notice that \eqref{44a} entails that $D\theta(0)=0$, where $D\theta$ is the Fr\'echet derivative with respect to $L^2_{\sigma}$.

That $\theta$ is also a contraction as a mapping from $C^{0,1}$ to $C^{0,1}$ can be seen as follows: By the definition of $\theta$ and the regularity estimate from Lemma \ref{L15b}, for any $g_c$ and $\tilde g_c$ in $E_c$, it holds that
\[
\|\theta(g_c) - \theta(\tilde g_c)\|_{C^{0,1}} \lesssim \|S(w_{-1}) - S(\tilde w_{-1})\|_{C^{0,1}} \lesssim \| w_{-1} - \tilde w_{-1}\|_{L^2_{\sigma}},
\]
where $\{w_{k}\}_{k\in\Z} = \hat \theta(g_c)$ and $\{\tilde w_k\}_{k\in\Z} = \hat \theta(\tilde g_c)$. Hence
\[
\|\theta (g_c)-\theta(\tilde g_c)\|_{C^{0,1}}\lesssim \|\hat \theta(g_c) - \hat \theta(\tilde g_c)\|_{\Lambda_-,\Lambda_+} \lesssim \|g_c-\tilde g_c\|_{L^2_{\sigma}},
\]
and the last inequality is due to the Lipschitz dependence of $\hat \theta$ on $g_c$. The $L^2_{\sigma}$ norm being trivially estimated by the $C^{0,1}$ norm, we conclude the Lipschitz regularity with respect to $C^{0,1}$. The quadratic bound \eqref{44c} is established as follows: Via triangle inequality it holds $\|\theta(g_c)\|_{C^{0,1}} \lesssim \|L_s w_{-1}\|_{C^{0,1}} + \|R(w_{-1})\|_{C^{0,1}}$. We use the quadratic estimate for $R$ in Lemma \ref{L17} and the smoothing estimates for $L$ and $S$ in Lemmas \ref{L9} and \ref{L15b} to deduce $\|\theta(g_c)\|_{C^{0,1}} \lesssim \|P_s w_{-2}\|_{L^2_{\sigma}} + \|w_{-2}\|_{L^2_{\sigma}}^2$, and the statement follows via iteration as above.

The remaining parts follow by construction, see \cite{Koch97}.
\end{proof}

\begin{prop}[Stable Manifold]\label{P4}
Let $\eps$ be small enough so that $\Lip(R)\le \epsgap$. 
There is a continuous map $\nu : E_s\times L^2_{\sigma} \to E_c$ such that for each $g\in W_c$ , $\nu ( -P_s g, g) = - P_c g$ and the manifold
\[
M_{g}  = g + \left\{ \nu(w_s,g) + w_s:\: w_s\in E_s\right\}
\]
has the following properties:
\begin{enumerate}
\item ($L^2_{\sigma}$ regularity) For every fixed $g\in L^2_{\sigma}$, the map $\nu(\tacka,g): E_s\to E_c$ is Lipschitz continuous with $\Lip(\nu(\tacka,g))\lesssim \epsgap$.
\item ($C^{0,1}$ regularity) The map $\nu$ is continuous as a mapping from $C^{0,1}\times C^{0,1}$ to $C^{0,1}$. Moreover, for every fixed $g\in C^{0,1}$, the map $\nu(\tacka,g): C^{0,1}\cap E_s\to C^{0,1}$ is Lipschitz with $\Lip(\nu(\tacka,g))\lesssim \epsgap$.
\item (Invariant Foliation) It holds $S_{\eps,\delta}^t(M_{g})\subset M_{S_{\eps,\delta}^t(g)}$ for all $t\ge 0$ and
\[
M_g = \left\{ \tilde g\in L^2_{\sigma}:\: \sup_{k\in\N_0} \Lambda_-^{-k} \| S^k_{\eps,\delta} (g) - S^k_{\eps,\delta}(\tilde g)\|_{L^2_{\sigma}} \le (1-\kappa)^{-1}\|P_s(g-\tilde g)\|_{L^2_{\sigma}}\right\},
\]
where
\[
\kappa = \max\left\{\frac{\Lambda_s+ \epsgap}{\Lambda_-}, \frac{\Lambda_- +\epsgap}{\Lambda_c}\right\}.
\]
\item (Completeness) For every $g\in L^2_{\sigma}$ the set $M_g\cap W_c$ is exactly a single point. In particular, $\{M_g\}_{g\in L^2_{\sigma}}$ is a foliation of $L^2_{\sigma}$  over $W_c$. 
\end{enumerate}
\end{prop}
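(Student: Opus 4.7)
The plan is to construct $\nu$ by a Lyapunov--Perron-type fixed-point argument that complements the construction of the center manifold in Proposition \ref{P3b}. Geometrically, I will characterize a point $\tilde g$ as belonging to the ``stable leaf'' $M_g$ through $g$ exactly when the orbit difference $v_k:=S^k(\tilde g)-S^k(g)$ lies in the weighted sequence space $\ell_{\Lambda_-,+}$. Evolving the stable component forwards is natural since $L_s$ contracts at rate $\Lambda_s<\Lambda_-$, while for the center component I invert the recursion and sum backwards; this is admissible because although $L_c^{-1}$ expands at rate $\Lambda_c^{-1}$, the orbit perturbation is constrained to decay at the strictly faster rate $\Lambda_-<\Lambda_c$, so the backward Duhamel series converges geometrically.

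Concretely, for fixed $g\in L^2_\sigma$ and $w_s\in E_s$, I would define on $\ell_{\Lambda_-,+}$ the operator
\[
\tilde J_k(g,w_s,\{v_\ell\}) = \begin{cases}
w_s-\sum_{\ell=0}^\infty L_c^{-(\ell+1)}P_c\bigl[R(S^\ell(g)+v_\ell)-R(S^\ell(g))\bigr], & k=0,\\
Lv_{k-1}+R(S^{k-1}(g)+v_{k-1})-R(S^{k-1}(g)), & k\ge1.
\end{cases}
\]
Combining \eqref{44az} with the contraction estimate $\Lip(R)\le\epsgap$ from Lemma \ref{L17}, I expect $\tilde J(g,w_s,\cdot)$ to be a contraction of $\ell_{\Lambda_-,+}$ with constant $\kappa<1$ as in the statement of the proposition. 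Banach's theorem then yields a unique fixed point $\{v_k(g,w_s)\}$ depending Lipschitz-continuously on $(g,w_s)$, and I set $\nu(w_s,g):=P_c v_0(g,w_s)$. The bound $\Lip(\nu(\tacka,g))\lesssim\epsgap$ is obtained by subtracting the fixed-point equations for two values of $w_s$ and estimating the difference in $\ell_{\Lambda_-,+}$; the normalization $\nu(-P_sg,g)=-P_cg$ for $g\in W_c$ follows from uniqueness, since the constant sequence $v_\ell\equiv -g$ solves the fixed-point problem for $w_s=-P_sg$ because $0$ is a fixed point of $S$.

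For the $C^{0,1}$ regularity, I intend to use the smoothing estimate of Lemma \ref{L15b}: applying one step of the flow to an $L^2_\sigma$-small perturbation yields a $C^{0,1}$-small one, so each term of the Lyapunov--Perron series defining $\nu$ lies in $C^{0,1}$; moreover $P_c$ is a finite-rank projection onto smooth eigenfunctions, so any $L^2_\sigma$-bound upgrades automatically to a $C^{0,1}$-bound. Invariance is built into the construction, since the shift $\{v_k\}\mapsto\{v_{k+1}\}$ takes a fixed point for base point $g$ to a fixed point for base point $S(g)$, giving $S^t_{\eps,\delta}(M_g)\subset M_{S^t_{\eps,\delta}(g)}$ for integer $t$ and, by continuity, for all $t\ge0$. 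The quantitative characterization of $M_g$ through $(1-\kappa)^{-1}\|P_s(g-\tilde g)\|_{L^2_\sigma}$ is precisely the standard estimate for the displacement of a fixed point under a perturbation of size $\|P_s(g-\tilde g)\|_{L^2_\sigma}$ in the data. For completeness, a point in $M_g\cap W_c$ corresponds to a solution of the coupled system $P_c\tilde g = P_c g + \nu(P_s\tilde g-P_sg,g)$ together with $P_s\tilde g=\theta(P_c\tilde g)$; substituting the latter into the former yields a fixed-point problem on the finite-dimensional space $E_c$ whose Lipschitz constant is $\lesssim\Lip\theta\cdot\Lip\nu(\tacka,g)\lesssim\epsgap^2$, hence a unique solution.

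The hardest step will be establishing the $C^{0,1}$ regularity of $\nu$ while preserving the smallness $\Lip\nu(\tacka,g)\lesssim\epsgap$. The contraction bound $\Lip(R)\le\epsgap$ is only valid on $L^2_\sigma$, and naively running the fixed-point argument in the $C^{0,1}$ topology would lose this smallness. The remedy I plan to use is to run the contraction in $\ell_{\Lambda_-,+}$ (an $L^2_\sigma$-based space) and, once the fixed point is obtained, recover its $C^{0,1}$ regularity via Lemma \ref{L15b} applied inside the series, exploiting the finite-rank structure of $P_c$ to keep the estimates uniform.
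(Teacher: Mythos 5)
Your Lyapunov--Perron operator $\tilde J$ is not a contraction on $\ell_{\Lambda_-,+}$, and this is a genuine gap. For $k\ge 1$ you set $\tilde J_k = Lv_{k-1} + R(S^{k-1}(g)+v_{k-1}) - R(S^{k-1}(g))$, i.e.\ you forward-evolve the \emph{entire} orbit difference, including its center component. In the weighted norm $\sup_k \Lambda_-^{-k}\|\cdot\|$ the Lipschitz constant of this step is at least $\Lambda_-^{-1}\|L_c\|$, and here $\|L_c\|=1$ (the eigenvalue $\lambda_0=0$ of $\L$ gives the eigenvalue $1$ of $L$), so the factor is $\Lambda_-^{-1}\cdot 1>1$. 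More generally $\|L_c\|\ge\Lambda_c>\Lambda_-$ by design of the dichotomy, so forward-evolving $P_c v$ cannot gain a factor $\Lambda_-$ per step. Your $k=0$ formula correctly determines $P_c v_0$ by a backward Duhamel sum, but you must do the same at \emph{every} index $k$: determine $P_c v_k$ from the future and only forward-evolve the stable part $P_s v_k$. That is exactly what the paper's map $I$ does, $I_k = P_s S(w_{k-1}) + L_c^{-1}P_c(w_{k+1}-R(w_k))$, and it is for that split recursion that the contraction constant $\kappa = \max\{(\Lambda_s+\epsgap)/\Lambda_-,\,(\Lambda_-+\epsgap)/\Lambda_c\}$ arises: the first fraction bounds the forward-evolved stable part, the second the backward-read center part. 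With the map as you wrote it, the estimate you ``expect'' does not hold, and the fixed point argument does not close.

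A secondary remark: the paper runs the fixed point in the full orbit variable $\{w_k\}=\{S^k(\tilde g)\}$ rather than in the difference variable $\{v_k\}$; this is cosmetic, but it makes the translation invariance $\{w_k\}\mapsto\{w_{k+1}\}$ (hence the foliation invariance) completely transparent and gives the characterization of $M_g$ via $\|\{S^k(g)\}-\{S^k(\tilde g)\}\|_{\Lambda_-,+}\le (1-\kappa)^{-1}\|P_s(g-\tilde g)\|_{L^2_\sigma}$ directly from the contraction constant. Your remaining steps --- recovering $C^{0,1}$ regularity from the $L^2_\sigma$ fixed point via the smoothing Lemma \ref{L15b} and the finite rank of $P_c$, the normalization on $W_c$, and the completeness argument via a fixed point for the coupled system $\theta$ and $\nu$ --- match the paper's strategy and would go through once the contraction is repaired.
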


\begin{proof}
Up to the proof of the second item and the precise decay rate in the third item, the statement can be found, e.g., in \cite[Theorem 2.4]{Koch97}. For the new results (which are peculiar to the situation at hand), we have to review the construction of $\nu$. For this purpose, consider the mapping $ I = \{I_k\}_{k\in\N_0}: E_s\times \ell_{\Lambda_-,+} \to \ell_{\Lambda_-,+}$ defined by
\[
I_k(g_s,\{w_{\ell}\}_{\ell\in\N_0}) =\left\{\begin{array}{ll}
g_s + L_c^{-1} P_c (w_1 - R(w_0)) &\mbox{for }k=0,\\
P_s S(w_{k-1}) + L_c^{-1} P_c(w_{k+1} - R(w_k)) &\mbox{for }k\ge 1.
\end{array}\right.
\]
Let $g_s$ be fixed for a moment. Then $I(g_s,\tacka)$ is a contraction on $\ell_{\Lambda_-,+}$ with contraction constant
\[
\kappa = \max\left\{\frac{\Lambda_s+ \epsgap}{\Lambda_-}, \frac{\Lambda_- +\epsgap}{\Lambda_c}\right\}.
\]
We let $R \ge (1-\kappa)^{-1} \|g_s\|_{L^2_{\sigma}}$. Since $I_k(g_s + P_sg, \{S^{\ell}(g)\}_{\ell\in\N_0}) - S^{k}(g) = g_s\delta_{k0}$ for every $g\in L_{\sigma}^2$, we notice that $I(g_s+P_sg,\tacka)$ maps the $\ell_{\Lambda_-,+}$-ball of radius $R$ around $\{S^k(g)\}_{k\in\N_0}$ onto itself. Hence, Banach's fixed point theorem yields the existence of a unique sequence $\{w_k\}_{k\in\N_0}$ such that $\{w_k\}_{k\in\N_0} = I(g_s + P_sg,\{w_{\ell}\}_{\ell\in\N_0})$ and satisfying
\[
\|\{w_k\}_{k\in\N_0} - \{S^k(g)\}_{k\in\N_0}\|_{\Lambda_-,+} \le R,
\]
By construction, $\{w_k\}_{k\in\N_0}$ is a discrete semi-flow with $P_s w_0 = g_s+P_sg$. We now define $\hat \nu(g_s,g) = \{w_k\}_{k\in\N_0}$ and $\nu(g_s,g):= P_c(w_0-g)$. Via iteration we obtain
\[
\|\nu(g_s,g) - \nu(\tilde g_s,g)\|_{L^2_{\sigma}} \le
\left( \left(\frac{\Lambda_-}{\Lambda_c }\right)^k + \frac{\epsgap}{\Lambda_c } \sum_{\ell=0}^{k-1}\left(\frac{\Lambda_-}{\Lambda_c }\right)^{\ell} \right) \|\hat\nu(g_s,g) - \hat\nu(\tilde g_s,g)\|_{\Lambda_-,+},
\]
and since $\hat \nu(\tacka,g)$ is Lipschitz by construction, we see that $\Lip(\nu(\tacka,g) )\lesssim\epsgap$ by letting $k\uparrow\infty$.

Then, for any two $g_s$ and $\tilde g_s$ in $L^2_{\sigma}$ and $\{w_k\}_{k\in\N_0} = \hat\nu(g_s,g)$ and $\{\tilde w_k\}_{k\in\N_0} = \hat \nu(\tilde g_s,g)$, it holds that
\[
\|\nu(g_s,g) - \nu(\tilde g_s,g)\|_{C^{0,1}}  = \|P_c(w_0-\tilde w_0)\|_{C^{0,1}} \lesssim \|L_cP_c(w_0-\tilde w_0)\|_{C^{0,1}},
\]
by the invertibility of $L_c$ in $C^{0,1}$. We now apply Lemma \ref{L9} to deduce
\[
\|\nu(g_s,g) - \nu(\tilde g_s,g)\|_{C^{0,1}}\lesssim \|P_c(w_0-\tilde w_0)\|_{L^2_{\sigma}} = \|\nu(g_s,g) - \nu(\tilde g_s,g)\|_{L^2_{\sigma}}.
\]
Invoking the Lipschitz bound on $\nu(\tacka,g)$ in $L^2_{\sigma}$ and the trivial embedding of $C^{0,1}$ into $L^2_{\sigma}$, $\nu(\tacka,g)$ is Lipschitz with constant $\lesssim \epsgap$.
A very similar argument shows that $\nu$ is also $C^{0,1}$ Lipschitz in the second argument. 

Based on the construction of $\{w_k\}_{k\in\N_0}$, most of the properties in the statement of the proposition are readily verified. For instance, invariance follows from the construction by uniqueness. Moreover, if $\tilde g= g+\nu(g_s,g) + g_s\in M_g$ for some $g_s\in E_s$, then $\tilde g  =w_0$ where $\{w_k\}_{k\in\N_0} = \hat \nu(g_s,g)$. In particular $w_k = S^k(\tilde g)$ for all  $k\in \N_0$ and thus $\|\{S^k(g)\}_{k\in\N_0} - \{S^k(\tilde g)\}_{k\in\N_0}\|_{\Lambda_-,+}\le R$, with $R = (1-\kappa)^{-1}\|P_s(g-\tilde g)\|_{L^2_{\sigma}}$. On the other hand, if the latter holds for some $\tilde g\in L^2_{\sigma}$, then a similar argument shows that $\tilde g\in M_g$.

Finally, completeness can be seen as follows: Let $\chi: E_s\times L^2_{\sigma}\to E_s$ be given by
\[
\chi(g_s,g) = \theta(\nu(g_s-P_s g,g) + P_cg).
\]
Since $\theta$ and $\nu(\tacka,g)$ are both contractions with respect to the $L^2_{\sigma}$ topology, so is $\chi(\tacka,g)$. Hence, for every $g\in L^2_{\sigma}$, there exists a unique fixed point $g_s = \chi(g_s,g)$. We now set
\[
g_c: =\nu(g_s-P_sg,g) + P_c g,\quad\mbox{and}\quad g_s = \theta(g_c).
\]
In particular, the definitions imply that $g_c + g_s\in W_c\cap M_g$. However, every intersection point is a fixed point of $\chi(\tacka,g)$. Hence, by the uniqueness of the fixed point, $W_c\cap M_g = \{g_c+g_s\}$.

\end{proof}

In the previous propositions, we have derived an invariant manifold theorem for the flow of the truncated equation \eqref{43}. This theorem can be carried over to the original problem.

\begin{proof}[Proof of Theorem \ref{T5}]
That $W_c^{\loc} $ is locally flow invariant is clear from Theorem \ref{T1} and Proposition \ref{P3b}. For the same reason, $W_c^{\loc}$ contains all the negative semi-orbits that satisfy the constraint.

Let $\tilde g$ be the unique point in $M_g\cap W_c$ found in Proposition \ref{P4}. We claim that
\begin{equation}\label{44b}
S_{\eps,\delta}^t(\tilde g) = S^t(\tilde g) \quad\mbox{for all }t\ge 0,
\end{equation}
if $0<\eps<\eps_*$ for some $\eps_*$ sufficiently small if $\| g\|_{\Lip}\le \eps_0$ for some $0< \eps_0\le \eps$ sufficiently small. From this, the characterization of the stable manifold $M_g$ in Proposition \ref{P4} yields that
\[
\|S^t(g) - S^t(\tilde g)\|_{L^2_{\sigma}}\lesssim \max\left\{\frac{\Lambda_-}{\Lambda_- - \Lambda_s -\epsgap}, \frac{\Lambda_c}{\Lambda_c - \Lambda_- - \epsgap}\right\} \Lambda_-^t\quad\mbox{for all }t\ge 0.
\]
We now invoke the smoothing estimate from Lemma \ref{L15b} to deduce the statement of the theorem.

It remains to verify \eqref{44b}. The construction of $W_c$ and $M_g$ in Propositions \ref{P3b} and \ref{P4} yields the existence of points $g_c\in E_c$ and $g_s\in E_s$ such that $\tilde g = g_c+g_s$ and $g_c = \nu(g_s - P_sg,g) + P_c g$ and $g_s=\theta(g_c)$. Hence $g_c$ solves the fixed point equation
\begin{equation}
\label{45}
g_c = \nu(\theta(g_c)-P_sg,g)+P_cg = \nu(\theta(g_c) - P_s g,g) - \nu(0,g).
\end{equation}
With respect to the $C^{0,1}$ topology, $\theta$ is Lipschitz in $\B_{\eps,\delta}$ by Proposition \ref{P3b} and $\nu(\tacka, g)$ is a contraction by Proposition \ref{P4}, provided that $\eps$ is chosen sufficiently small. It follows that the mapping $g_c\to \nu(\theta(g_c)-P_sg,g)+ P_cg$ is a contraction with respect to $C^{0,1}$ in $\B_{\eps,\delta}$, and thus, in $\B_{\eps,\delta}$, \eqref{45} has a unique solution $g_c = g_c(g)$ which depends continuously on $g$ by the $C^{0,1}$ continuity of $\nu$. Now, since $\theta(0)=0$, $\nu(0,0)=0$ and $g_c(0)=0$, by continuity, there exist $\tilde \delta>0$ and $\tilde \eps>0$ such that $\tilde g\in \B_{\eps,\delta}$ if $g\in \B_{\tilde \eps,\tilde \delta}$. This proves \eqref{44b}.
\end{proof}

\newpage

\section{Applications of the invariant manifold theorem}\label{S:8}

In this final section, we give four applications of the invariant manifold theorem. We prove the long-time asymptotics for solutions to the perturbation equation with small initial data: solutions to \eqref{42a} converge to (small) constants, see Theorem \ref{T10} above.  We refine the estimate that is deduced from Theorem \ref{T5} and compute the precise rate of convergence. On the level of the porous medium equation, this rate is saturated by spatial translations of the Barenblatt solution. This first result is obtained by choosing $K=0$ in the notation of the previous section, that is, we investigate the asymptotics towards the one-dimensional manifold spanned by the constant eigenfunctions corresponding to the eigenvalue $\lambda_0=0$. In the second example, we fix the long-time limit of solutions by letting the mean vanish asymptotically. Hence, the long-time limit is the zero function. According to the invariant manifold theorem, solutions converge to the $1+N$ dimensional center manifold corresponding to the eigenvalues $\lambda_	0$ and $\lambda_1$ (thus $K=1$) with an improved rate $\lambda_2-\eps$, see Theorem \ref{T11}. On the level of the porous medium equation, we subtract out the spatial translations. Climbing the eigenvalue ladder one rung up ($K=2$), in Theorem \ref{T17}, we additionally asymptotically suppress the center of mass of solutions, so that the second order corrections (affine transformations) in the asymptotic expansion becomes accessible. Finally, in  Theorem \ref{T14}, we consider the dynamics on the rate $\lambda_4$ (hence $K=3$) in the case where $N<\sigma+1$ , which, on the porous medium level, is governed by dilations. 

In principle, we can access all eigenmodes through the invariant manifolds of Theorem \ref{T5}. Since the mathematical arguments behind the computation of even higher modes do not change significantly, we will finish this study with the third order corrections.

The proofs of Theorems \ref{T10} to \ref{T14} are in order.

\begin{proof}[Proof of Theorem \ref{T10}]
With regard to the notation introduced in the previous section, we suppose that $K=0$, and thus $E_c = \Span\{\psi_{010}\}\cong \R$. We choose $\lambda$ between $\lambda_1$ and $\lambda_0$ and find $\epsgap>0$ such that $\Lambda_- = e^{-\lambda}\in (e^{- (\sigma+1)} + \epsgap, 1 - \epsgap)$. Let $\eps,\, \delta,\, \tilde \eps$ and $\tilde \delta$ be given as in the statement of Theorem \ref{T5}.
Our first claim is
\begin{equation}
\label{46}
W_c^{\loc} \cong \left\{a\in \R:\: |a|\le \delta\right\}.
\end{equation}
Indeed, on the one hand, if $w\in W_c^{\loc}$, then $w\equiv a$ for some $|a|\le \delta$, which shows the inclusion ``$\subset$''. On the other hand, if $|a|\le \delta$ and $w(t)=\delta$ for all $t\le 0$, then $\{w(t)\}_{t\le0}$ is a negative semi-orbit meeting the norm constraints. Hence, by Theorem \ref{T5}, $w(t)\in W_c^{\loc}$ for all $t\le 0$. This implies the inclusion ``$\supset$''.

In particular, by Theorem \ref{T5}, there exists a unique semi-flow $\{\tilde w(t)\}_{t\ge 0}$ in $W_c^{\loc} $ such that
\begin{equation}
\label{46azz}
\|w(t) - \tilde w(t)\|_{C^{0,1}}\lesssim  \Lambda_-^t\quad\mbox{for all }t\ge0.
\end{equation}
However, \eqref{46} implies that $\tilde w$ is constant with modulus less than $\delta$, say $\tilde w \equiv \tilde a$.

It remains to optimize the rate of convergence to $e^{-(\sigma+1)t}$. Consider now $a(t)  =|B_1(0)|^{-1}_{\sigma}\int w(t)\, d\mu_{\sigma}$. A short computation reveals that
\[
\frac{d}{dt} a(t) = \beta \int F(w,\grad w)\, d\mu_{\sigma+1},
\]
and thus, taking into account $F(q,p)\lesssim |p|^2$ and the above decay estimate \eqref{46azz}, we deduce that $0< \frac{d}{dt} a(t) \lesssim e^{-2\lambda t}$. Hence, there exists a constant $a\in\R$ such that $|a(t) - a|\lesssim e^{-2\lambda t}$. On the other hand, $|a(t) - \tilde a|\lesssim e^{-\lambda t}$ by the definition of $a(t)$ and \eqref{46azz}, and thus $\tilde a = a$.

The above computation shows that $P_s w(t) = w(t) -a(t) \in E_s$  satisfies
\begin{equation}
\label{46azza}
\|P_s w(t)\|_{C^{0,1}}\lesssim e^{-\lambda t}\quad\mbox{for all }t\ge 0.
\end{equation}
(Notice that $E_s$ is the space of all mean-zero $L^2_{\sigma}$ functions.) We shall go over to the discrete semi-flow for a moment, that is, we set $w_k = w(k)$ for $k\in\N_0$. Using the decomposition $S = L+R$ introduced in the previous section, we write
\[
w_k  = L^k  w_0 + \sum_{j=1}^k L^{k-j} R(w_{j-1}).
\]
From a variant of Lemma \ref{L17} we deduce the estimate $\|R(g)\|_{L^2_{\sigma}}\lesssim \eps^{1/2} \|P_s g\|_{\Lip}^{1/2} \|P_s g\|_{L^2_{\sigma}}$, which applied to the above identity, yields
\begin{equation}
\label{46azzc}
\|P_s w_k\|_{L^2_{\sigma}} \le \Lambda_s^k \|P_s w_0\|_{L^2_{\sigma}} +C \eps^{1/2}\sum_{j=1}^k \Lambda_s^{k-j} \|P_s w_{j-1}\|_{\Lip}^{1/2} \|P_s w_{j-1}\|_{L^2_{\sigma}}
\end{equation}
for some $C>0$. We claim that for $\eps$ small (but independent of $k$) it holds that
\begin{equation}
\label{46azzb}
\|P_s w_k\|_{L^2_{\sigma}}\le 2C\Lambda_s^k \|w_0\|_{L^2_{\sigma}}\quad\mbox{for all }k\in\N_0.
\end{equation}
We argue by induction. The statement if trivial for $k=0,\,1$. Assuming that \eqref{46azzb} holds true for $\ell=1,\dots,k-1$, we deduce from \eqref{46azzc}
\[
\|P_s w_k\|_{L^2_{\sigma}} \le \left(1 + 2 C^{3/2} \eps^{1/2} \Lambda_s^{-1} \sum_{j=1}^k \|P_s w_{j-1}\|_{\Lip}^{1/2}\right)\Lambda_s^k \|w_0\|_{L^2_{\sigma}} .
\]
Because $ \|P_s w_{j-1}\|_{\Lip}$ is exponentially decaying by \eqref{46azza}, the statement  \eqref{46azzc} is true for any $k$ provided that $\eps$ is sufficiently small. From \eqref{46azzb} we infer the statement for the continuous flow with the help of smoothing estimates, cf.\ \eqref{43ea} in the proof Lemma \ref{L15b}.

(The derivation  of \eqref{46azzb} was inspired by a similar argument in the proof of \cite[Theorem 1.1]{DenzlerKochMcCann15}.)
\end{proof}

\begin{proof}[Proof of Theorem \ref{T11}]
Suppose that $K=1$ and thus $E_c \cong \Span\left\{1,z_1,\dots,z_N\right\}$. Moreover, $\Lambda_s = e^{-2(\sigma+1)}$ and $\Lambda_c = e^{-(\sigma+1)}$. We choose $\lambda\in (\sigma+1,2(\sigma+1))$ and set $\Lambda_- = e^{-\lambda}$. There exists a small $\epsgap>0$ such that $\Lambda_-\in \left(\Lambda_s + \epsgap, \Lambda_c-\epsgap\right)$. Let $0<\tilde \eps<\eps$, $0<\tilde \delta<\delta$, and $\tilde g$ be given as in Theorem \ref{T5}. Then, setting $\tilde w(t) = S^t(\tilde g)$, it holds
\begin{equation}
\label{46az}
\|w(t) - \tilde w(t)\|_{C^{0,1}}\lesssim e^{-\lambda t}\quad\mbox{for all }t\ge 0.
\end{equation}
Because $\tilde w(t)\in W_c$, we find $\tilde a(t)\in \R$ and $\tilde b(t)\in \R^N$ such that
\[
\tilde w(t) =\tilde a(t) + \tilde b(t)\cdot z + \theta(\tilde a(t),\tilde b(t),z).
\]
The statement now follows from the estimates
\begin{eqnarray}
|\tilde a(t)|&\lesssim& e^{-2(\sigma+1)t}\quad\mbox{for all }t\ge0,\label{47}\\
|\tilde b(t) - e^{-(\sigma+1)t}  b|&\lesssim & e^{-2(\sigma+1)t}\quad\mbox{for all }t\ge0,
\label{48}
\end{eqnarray}
and the quadratic bound for $\theta$ in \eqref{44c}.

The proof of \eqref{47} proceeds in the same way as we obtained  the estimate for $a(t) $ in the proof of Theorem \ref{T10}. Notice that
\[
\tilde a(t) = \frac1{|B_1(0)|_{\sigma}}\int \tilde w(t,z)\, d\mu_{\sigma}(z)\to 0\quad\mbox{as }t\uparrow \infty
\]
by the virtue of Theorem \ref{T10} and the assumed long-time limit of $w(t)$. Now, since $\tilde w$ solves the perturbation equation, it holds that
\[
\frac{d}{dt} \tilde a(t) = \frac{\beta}{|B_1(0)|_{\sigma}} \int F(\tilde  w,\grad\tilde w)(t) \, d\mu_{\sigma+1}\lesssim \int |\grad \tilde w(t)|^2\, d\mu_{\sigma+1},
\]
and thus, using the decay rate for $\grad \tilde w$ in Theorem \ref{T10}, we deduce \eqref{47}. 

The second estimate \eqref{48} is established in a similar way. We notice first that
\[
\tilde b_i(t)  = c_{\sigma,N}  \int \tilde w z_i\, d\mu_{\sigma},
\]
for all $i\in\{1,\dots,N\}$ and some constant $c_{\sigma,N}$, and thus, differentiation and integration by parts yields
\[
\frac{d}{dt}\tilde  b_i(t)  = -(\sigma +1) \tilde b_i(t) +c_{\sigma,N} (\beta+1 \int F(\tilde w,\grad \tilde w) z_i\, d\mu_{\sigma+1}.
\]
With the help of Theorem \ref{T10}, the latter leads to the estimate
\[
\left|\frac{d}{dt}\left(e^{(\sigma+1)t} \tilde b_i(t)\right)\right| \lesssim e^{-(\sigma+1)t},
\]
and we deduce \eqref{48} with $ b = \lim_{t\uparrow\infty} e^{(\sigma+1)t} \tilde b(t)$. This concludes the proof.
\end{proof}

\begin{proof}[Proof of Theorem \ref{T17}]
As for the notation of the previous section, we are now in the situation $K=2$. Compared to the previous theorem, we augment the finite dimensional projection space $E_c$ by the eigenspace associated to the new eigenvalue $\lambda_{20} = 2(\sigma+1)$. This eigenvalue corresponds to affine transformations of the attracting ground state and the eigenfunctions are harmonic homogeneous polynomials of degree two, which can be expressed in the form $\psi(z) = z\cdot Az$ for some symmetric, trace-free matrix $A$. We will further split $A = B+C$, where $B$ is off-diagonal and symmetric and $C$ is diagonal and trace-free. 
Therefore, if $\tilde w$ is as in Theorem \ref{T5}, it holds
\[
\tilde w (t,z)  = \tilde a(t) + \tilde b(t)\cdot z + z\cdot \tilde B(t) z  + z\cdot \tilde C(t)z+ \theta(\tilde a(t),\tilde b(t),\tilde B(t),\tilde C(t),z),
\]
for some $\tilde a(t)\in \R$, $\tilde b(t)\in \R^N$, $\tilde B(t)\in \R^{N\times N}$ off-diagonal, symmetric and $\tilde C(t)\in R^{N\times N}$ diagonal, trace-free. Besides showing convergence for $\tilde B(t)$ and $\tilde C(t)$, we need new estimates for $\tilde a(t)$ and $\tilde b(t)$. Repeating the estimates form Theorem \ref{T11} and using the assumptions, we first notice  that $|\tilde a(t)|,\, |\tilde b(t)|\lesssim e^{-2(\sigma+1)t}$ since $b=0$, cf.\ \eqref{47}, \eqref{48}, and thus $\|\tilde w(t)\|_{C^{0,1}} \lesssim e^{-(2(\sigma+1)-\eps)t}$ from the statement of Theorem \ref{T11}.

Let $\{E^n\}$ and $\{F^n\}$ be orthogonal bases for the spaces of off-diagonal symmetric matrices and trace-free diagonal matrices, respectively. Then $\tilde B(t) = \sum_n \tilde \beta_n(t) E^n$ and $\tilde C(t) = \sum_n\tilde \gamma_n(t) F^n$ for some $\tilde \beta_n(t), \,\tilde \gamma_n(t)\in \R$. It is clear that
\[
\tilde \beta_n(t) = c_{\sigma, N}\sum_{i,j=1}^N E_{ij}^n \int z_iz_j\tilde w(t)\, d\mu_{\sigma},\quad	 \tilde \gamma_n(t) =  c_{\sigma, N}\sum_{i,j=1}^N F_{ij}^n \int z_iz_j\tilde w(t)\, d\mu_{\sigma},
\]
where $c_{\sigma,N}$ may be different in both formulas. 
 Using the perturbation equation and integrating by parts a couple of times, we compute
\begin{eqnarray}
\lefteqn{\frac{d}{dt} \int z_iz_j\tilde w\, d\mu_{\sigma}}\nonumber \\
& =& 2\delta_{ij} \int \tilde w\, d\mu_{\sigma+1} - 2(\sigma+1) \int z_iz_j \tilde w\, d\mu_{\sigma} + (\beta+2) \int z_iz_j \tilde F\, d\mu_{\sigma+1},\label{48a}
\end{eqnarray}
where $\tilde F = F(\tilde w,\grad\tilde w)\lesssim |\grad\tilde w|^2$ is bounded by $e^{-(4(\sigma+1) -2\eps)t}$. First, if $i\not=j$, then the above implies that $|B - e^{2(\sigma+1)t} \tilde B(t)| \lesssim e^{-2(\sigma+1-\eps)t}$ for some off-diagonal symmetric matrix $B\in \R^{N\times N}$.
Similarly, because $\sum_{i,j} F^n_{ij}\delta_{ij} = \tr(F^n)=0$, it holds $|C - e^{2(\sigma+1)t} \tilde C(t)| \lesssim e^{-2(\sigma+1-\eps)t}$ for some diagonal trace-free matrix $C\in \R^{N\times N}$. 
 The above bounds on $\tilde B(t)$ and $\tilde C(t)$  reveal that $\tilde A(t) = \tilde B(t) + \tilde C(t)$  is bounded by $ e^{-2(\sigma+1)t}$. Using this new estimate in the expansion of $\tilde w$ and applying the quadratic bound on $\theta$ in \eqref{44c} yields that $\|\tilde w(t)\|_{C^{0,1}}\lesssim e^{-2(\sigma+1)t}$. As a consequence, we can choose $\eps=0$ in the previous computations. We have thus shown that
\[
\left|\tilde A(t) - e^{-2(\sigma+1)t}A\right| \lesssim e^{-4(\sigma+1)t}
\]
where $A=B+C$ and $|\tilde a(t)|,\, |\tilde b(t)| \lesssim e^{-4(\sigma+1)t} $. In particular, the both, the affine term and $\theta$ in the expansion of $\tilde w$ are of higher order. This proves the statement of the theorem.
\end{proof}

\begin{proof}[Proof of Theorem \ref{T14}]
We consider the case $K=3$, that is, compared to Theorem \ref{T17}, the new eigenvalue $\lambda_{01}$ has multiplicity one and the corresponding eigenfunction is $\psi_{011}(z) =1-\gamma |z|^2$, where $\gamma = N^{-1}\left(2(\sigma +1) +N\right)$. Consequently, 
\[
E_c  =\Span\left\{1,z_1,\dots,z_N,1-\gamma|z|^2\right\}\cup \Span\{z\cdot Az:\: A=A^T,\tr(A)=0\},
\]
 and thus, according to Theorem \ref{T5}, there exists a solution $\tilde w$ to the perturbation equation which is of the form
\[
\tilde w(t,z) = \tilde a(t) + \tilde b(t)\cdot z +z\cdot \tilde A(t)z + \tilde  c(t)\left(1-\gamma |z|^2\right) + \theta\left(\tilde a(t),\tilde b(t),\tilde A(t), \tilde c(t),z\right),
\]
and which satisfies
\begin{equation}\label{50}
\|w(t) - \tilde w(t)\|_{C^{0,1}}\lesssim e^{-\lambda t}\quad\mbox{for all }t\ge0.
\end{equation}
By the orthogonality of the eigenbasis, $\tilde c(t)$ has the representation
\[
\tilde c(t) = c_{N,\sigma} \int \tilde w(t)\, d\mu_{\sigma} - c_{N,\sigma} \gamma \int \tilde w(t)|z|^2\, d\mu_{\sigma},
\]
for some constant $c_{N,\sigma}>0$. It is clear that $|\tilde a(t)|, \, |\tilde b(t)|,\, |\tilde A(t)|\lesssim e^{-4(\sigma+1)t}$ by the proof of the previous theorem. Hence, recalling the formula for $\tilde a(t)$, it holds that $\tilde c(t) =  - c_{N,\sigma} \gamma \int \tilde w(t)|z|^2\, d\mu_{\sigma} +O(e^{-4(\sigma+1)t})$. To determine the decay behavior of $\tilde c(t)$, we use \eqref{48a} in the form of
\[
\frac{d}{dt} \int |z|^2\tilde w\, d\mu_{\sigma} = -(2(\sigma+1) +N) \int |z|^2\tilde w\, d\mu_{\sigma} +  N \int \tilde w\, d\mu_{\sigma}  + (\beta+2)\int|z|^2 \tilde F\,d\mu_{\sigma},
\]
where $\tilde F = F(\tilde w,\grad\tilde w)$. Since $A=0$ in the statement of  Theorem \ref{T17}, it holds that $\|\tilde w\|_{C^{0,1}}\lesssim e^{-(3(\sigma+1) -\eps)t}$. In particular, since $\tilde F$ is essentially quadratic in $|\grad \tilde w|$, the third term on the right-hand side of the above identity is of higher order. Recalling the decay behavior of $\tilde a(t)$, we thus have $0 <\frac{d}{dt}\left(e^{(2(\sigma+1)+N)t} \tilde c(t)\right) \lesssim e^{N-2(\sigma+1)t}$. Because $N<2(\sigma+1)$, there exists thus a constant $c\in\R$ such that
\[
\left|\tilde c(t) - e^{-(2(\sigma+1) +N)t} c\right| \lesssim e^{-4(\sigma+1)t}.
\]
Hence, by the virtue of \eqref{44c} and the estimates on $\tilde a(t)$, $\tilde b(t)$, $\tilde A(t)$, the statement of the theorem follows from \eqref{50} and the eigenvalue expansion of $\tilde w$.
\end{proof}



\section*{Acknowledgement}
It is a great pleasure to acknowledge fruitful discussions with Clemens Kienzler and Herbert Koch.

\bibliography{pme_lit}
\bibliographystyle{acm}
\end{document}